\theoremstyle{plain}
\newtheorem{thm}{Theorem}[section]
\newtheorem{prop}[thm]{Proposition}
\newtheorem{lemma}[thm]{Lemma}
\newtheorem{cor}[thm]{Corollary}
\theoremstyle{definition}
\newtheorem{defn}[thm]{Definition}
\newtheorem*{defn*}{Definition}
\newtheorem*{question*}{Question}
\newtheorem{problem}[thm]{Problem}
\newtheorem{example}[thm]{Example}
\newtheorem*{example*}{Example}
\newtheorem{rem}[thm]{Remark}
\newtheorem*{rem*}{Remark}
\newtheorem{construction}[thm]{Construction}
\newtheorem*{nota*}{Notation}
\newcommand{\field}[1]{\mathbb{#1}}
\newcommand{\N}{\field{N}}
\newcommand{\Z}{\field{Z}}
\newcommand{\Q}{\field{Q}}
\newcommand{\F}{\field{F}}
\newcommand{\A}{\field{A}}
\newcommand{\ideal}[1]{\mathfrak{#1}}
\newcommand{\p}{\ideal{p}}
\newcommand{\q}{\ideal{q}}
\newcommand{\func}[1]{\mathrm{#1} \,}
\DeclareMathOperator{\Ass}{Ass}
\newcommand{\im}{\func{im}}
\newcommand{\ra}{\rightarrow}
\DeclareMathOperator{\ann}{ann}
\newcommand{\be}{\begin{enumerate}}
\newcommand{\ee}{\end{enumerate}}
\newcommand{\li}
 {\leftfootline}
\newcommand{\onto}{\twoheadrightarrow}
\newcommand{\into}{\hookrightarrow}
\renewcommand{\phi}{\varphi}
\let\int\relax
\DeclareMathOperator{\int}{i}
\DeclareMathOperator{\End}{End}
\DeclareMathOperator{\id}{id}
\newcommand{\reg}{\mathrm{reg}}
\DeclareMathOperator{\spn}{span}
\author[Jesse Elliott]{Jesse Elliott{$^*$}}
\address{Department of Mathematics\\
California State University, Channel Islands\\
Camarillo, CA\\ USA
}
\thanks{{$^*$}Jesse Elliott passed away June 29, 2025, after acceptance but before publication of this article. Jesse was a crucially valued member of the commutative ring theory community, typically finding the right generalization of a concept before anyone else, and will be greatly missed.}
\author{Neil Epstein}
\address{Department of Mathematical Sciences \\ George Mason University \\ Fairfax, VA \\
USA}
\email{nepstei2@gmu.edu}
\title[Factroids]{Additive subgroups of a module that are saturated with respect to a subset of the ring}
\date{August 1, 2025}
\begin{document}

\dedicatory{Dedicated to K. Alan Loper on the occasion of his retirement}
\begin{abstract}
Let $T$ be a subset of a ring $A$, and let $M$ be an $A$-module.  We study the additive subgroups $F$ of $M$ such that, for all $x \in M$, if $tx \in F$ for some $t \in T$, then $x \in F$.  We call any such subset $F$ of $M$ a \emph{$T$-factroid of $M$}, which is a kind of dual to the notion of a $T$-submodule of $M$.  We connect the notion with the zero-divisors on $M$, various classes of primary and prime ideals of $A$, Euclidean domains, and the recent concepts of unit-additive commutative rings and of Egyptian fractions with respect to a multiplicative subset of a commutative ring.  We also introduce a common generalization of local rings and unit-additive rings, called \emph{sublocalizable} rings, and relate them to $T$-factroids.
\end{abstract}
\maketitle
\tableofcontents

\section{Introduction}

Throughout this paper, $A$ denotes a ring, $M$ denotes a (left) $A$-module, and $W \subseteq A$ denotes a multiplicatively closed set.   All rings are unital unless otherwise stated. 

We introduce here a new type of algebraic structure  that is motivated on several grounds.  Specifically, for a given subset $T$ of $A$, we consider the additive subgroups $F$ of $M$ such that, for all $x \in M$, if $tx \in F$ for some $t \in T$, then $x \in F$.  We call any such subset $F$ of $M$ a \emph{$T$-factroid of $M$}.  Its obvious dual notion is that of a \emph{$T$-submodule of $M$}, which is an additive subgroup $F$ of $M$ such that $tx \in F$ whenever $x \in F$ and $t \in T$.  Since a $T$-submodule of $M$ is equivalently a $\Z\langle T \rangle$-submodule of $M$ by restriction of scalars, where $\Z\langle T \rangle$ is the subring of $A$ generated by $T$, the notion of a $T$-submodule can be reduced to that of a $B$-submodule for the subrings $B$ of $A$.  The notion of a $T$-factroid, however, appears to be new, and it is thus motivated abstractly as a dual to the notion of a $T$-submodule.

However, the initial motivation for our exploration of $T$-factroids was our discovery of their intimate connection the study of Egyptian domains, the reciprocal complement of an integral domain, and their respective generalizations to commutative rings with zerodivisors.  Recall \cite{GLO-Egypt} that for an
integral domain $A$ with fraction field $K$, an element $\alpha \in K$ is \emph{$A$-Egyptian} if there exist distinct nonzero elements $d_1, \ldots, d_n$ of $A$ such that $\alpha = \sum_{i=1}^n \frac 1{d_i}$, so-called because of Egyptian fractions in number theory \cite{BlEl-Egypt} (and ultimately in medieval European \cite{DuGr-FibE} and ancient Egyptian 
\cite{Rei-count} mathematics).  It follows from \cite[Theorem 1]{GLO-Egypt} and \cite[Remark 2.2]{nme-Euclidean} that the $A$-Egyptian elements of $K$ form a subring $R(A)$ of $K$, called the \emph{reciprocal complement} of $A$.    An integral domain $A$ is said to be \emph{Egyptian} if every element of $A$, or, equivalently, of its quotient field $K$, is $A$-Egyptian \cite{GLO-Egypt}.  Clearly, then, a domain $A$ is Egyptian if and only if its reciprocal complement is as large as it could be, namely $K$. Although some reciprocal complements are easy to understand (e.g. classically $R(\Z) = \Q$ (see \cite{VanVan-rbaseZ} and \cite{DuGr-FibE}), and the reciprocal complement of a Euclidean domain is a DVR or a field \cite[Main Theorem]{nme-Euclidean}), some are not (e.g. the reciprocal complement of $k[x,y]$, for $k$ a field, is objectively very strange \cite{nmeGuLo}). For more on Egyptian domains and reciprocal complements, see also \cite{nme-Edom, Gu-recomp, nme-Emult, nme-ratfuns}.

It is trivial to see that $R(A)$, for any integral domain $A$, is an $(A\setminus \{0\})$-factroid of its quotient field  $K$.  In fact, the connection between the $R(A)$ and  the $(A\setminus \{0\})$-factroids is deeper than that.  Note first that the  $T$-factroids of $M$ are closed under arbitrary intersections (see Proposition~\ref{pr:intersect}), so  one can define the \emph{$T$-factroid of $M$ generated by a set $S$}, denoted $[S]^T_M$, to be the smallest $T$-factroid of $M$ containing $S$.  One then has the following:

\begin{thm}[see Theorem~\ref{thm:recipG}]\label{thm:initialegypt2}
Let $A$ be an integral domain, $K$ its fraction field, $0 \neq \alpha \in K$, and $T = A \setminus \{0\}$.  Then the following are equivalent: \begin{enumerate}[(a)]
    \item $\alpha \in R(A)$.
    \item There exist $f,g \in A$ with $\alpha = f/g$ and $f \in [\{g\}]^T_A$.
    \item For any $f,g \in A$ with $\alpha = f/g$, there is some nonzero $h\in A$ with $hf \in [\{hg\}]^T_A$.
    \end{enumerate}
\end{thm}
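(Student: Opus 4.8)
The plan is to establish the cycle $(a)\Rightarrow(b)\Rightarrow(c)\Rightarrow(a)$, deriving the quick implications $(b)\Rightarrow(a)$ and $(c)\Rightarrow(a)$ from a single structural observation: for $g\neq 0$, the set $gR(A)\cap A$ is a $T$-factroid of $A$ containing $g$ (here $T=A\setminus\{0\}$). Indeed it is an additive subgroup (as $R(A)$ is a subring of $K$), it contains $g=g\cdot 1$ since $1\in R(A)$, and if $tx\in gR(A)\cap A$ with $t\in A\setminus\{0\}$ and $x\in A$, then writing $tx=g\beta$ with $\beta\in R(A)$ gives $x=g(\beta/t)$, where $\beta/t\in R(A)$ because $R(A)$ is an $(A\setminus\{0\})$-factroid of $K$ (as noted in the introduction) and $t(\beta/t)=\beta\in R(A)$; hence $x\in gR(A)\cap A$. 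Therefore $[\{g\}]^T_A\subseteq gR(A)\cap A$. Now $(b)\Rightarrow(a)$ is immediate, since $f\in[\{g\}]^T_A\subseteq gR(A)$ yields $\alpha=f/g\in R(A)$; and $(c)\Rightarrow(a)$ follows by fixing any representation $\alpha=f/g$, taking the promised $h\neq 0$ with $hf\in[\{hg\}]^T_A\subseteq hgR(A)$, writing $hf=hg\beta$ with $\beta\in R(A)$, and cancelling $h$ in the domain to get $\alpha=\beta\in R(A)$.

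For $(a)\Rightarrow(b)$ I would unwind the definition of $A$-Egyptian: write $\alpha=\sum_{i=1}^n 1/d_i$ with the $d_i$ distinct and nonzero, and set $g=\prod_{i=1}^n d_i$ and $f=\sum_{i=1}^n\prod_{j\neq i}d_j$, so that $f,g\in A$, $g\neq 0$, and $\alpha=f/g$. Since $[\{g\}]^T_A$ contains $g$, and for each $i$ we have $d_i\cdot\prod_{j\neq i}d_j=g$ with $d_i\neq 0$, the saturation (factroid) property gives $\prod_{j\neq i}d_j\in[\{g\}]^T_A$; summing over $i$ and using closure under addition yields $f\in[\{g\}]^T_A$, which is exactly $(b)$.

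For $(b)\Rightarrow(c)$ I would invoke functoriality of factroid generation under $A$-module maps, namely that the preimage of a $T$-factroid along an $A$-linear map is a $T$-factroid, and hence $\psi([S]^T_M)\subseteq[\psi(S)]^T_N$ for any $A$-linear $\psi\colon M\to N$ (this is a one-line check and presumably already among the basic properties of factroids). Given $f_0\in[\{g_0\}]^T_A$ with $\alpha=f_0/g_0$, so $g_0\neq 0$, let $f,g\in A$ be arbitrary with $\alpha=f/g$, so that $fg_0=gf_0$. Applying the above to multiplication by $g$, $\mu_g\colon A\to A$, gives $gf_0=\mu_g(f_0)\in\mu_g([\{g_0\}]^T_A)\subseteq[\{gg_0\}]^T_A$. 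Rewriting $gf_0=fg_0$ and $gg_0=g_0g$ and putting $h:=g_0\neq 0$, this says $hf\in[\{hg\}]^T_A$, which is $(c)$.

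The main obstacle is the asymmetry between $(b)$, where the representation $\alpha=f/g$ may be chosen, and $(c)$, where it is prescribed; the device that overcomes it is the functoriality of $[-]^T$ along the multiplication maps $\mu_g$, which transports one witnessing pair $(f_0,g_0)$ to every other representation at the cost of the clearing factor $h=g_0$. The rest is bookkeeping: in $(a)\Rightarrow(b)$ the key point is that saturation of the factroid exactly absorbs the ``division by $d_i$'' needed to pass from $g=\prod_i d_i$ to $f=\sum_i\prod_{j\neq i}d_j$, and the only structural input on the easy side is that $gR(A)\cap A$ is itself a $T$-factroid.
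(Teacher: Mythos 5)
Your proof is correct, and it takes a genuinely different, more elementary route than the paper's. The paper obtains Theorem~\ref{thm:initialegypt2} as a specialization of Theorem~\ref{thm:recipG} (with $M=A$, $S=\{1\}$, $W=A\setminus\{0\}$), whose proof runs through the heavier machinery of Sections~\ref{sec:generation}--\ref{sec:Egyptian}: the inductive filtration $F_i^W$ of Definition~\ref{cons:indfact}, the regularized construction $G^W_M$, Corollary~\ref{cor:hinside}, and the induction of Lemma~\ref{lem:inthelp}, in the cycle $(e)\Rightarrow(d)\Rightarrow(c)\Rightarrow(b)\Rightarrow(a)\Rightarrow(e)$. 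You sidestep all of that with the single structural observation that $gR(A)\cap A$ is a $T$-factroid of $A$ containing $g$ (hence contains $[\{g\}]^T_A$), which uses only that $R(A)$ is a subring of $K$, that $1\in R(A)$, and that $R(A)$ is a $T$-factroid of $K$ --- facts recorded in the introduction. That one observation absorbs both $(b)\Rightarrow(a)$ and $(c)\Rightarrow(a)$; your $(a)\Rightarrow(b)$ is the same common-denominator manipulation as the paper's $(a)\Rightarrow(e)$ but applied directly to saturation rather than to $F_1^W$; and your $(b)\Rightarrow(c)$ rests on the pushforward inclusion $\mu_g\bigl([S]^T_A\bigr)\subseteq[\mu_g(S)]^T_A$, which follows immediately from Proposition~\ref{pr:modulepullback} applied to the multiplication map $\mu_g$. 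The trade-off is scope: your argument is short and self-contained, but it leans on $R(A)$ already being known to be a subring of $K$ and a $T$-factroid of $K$, so it is tailored to the domain setting with $T=A\setminus\{0\}$; the paper's approach is heavier but proves the uniform Theorem~\ref{thm:recipG} for arbitrary commutative $A$, multiplicative $W$, module $M$, and subset $S$, which is reused repeatedly in Section~\ref{sec:Egyptian}. Both are valid.
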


Of course, Theorem \ref{thm:initialegypt2} begs to be generalized to the $T$-factroids of a general $A$-module $M$ for any arbitrary subset $T$ of $A$.  In Section~\ref{sec:Egyptian}, this is achieved in the case where $A$ is any commutative ring.

In Section \ref{sec:Euclidean},  we use factroids to analyze Euclidean domains.  Namely, we classify the factroids of Euclidean domains in terms of the values of their Euclidean functions, and show that one version of the classification holds if and only if the domain is of the form $k[x]$.

 A commutative ring is said to be \emph{unit-additive} if a sum of units  is either a unit or nilpotent (see \cite{nmeSh-unitadd}). 
 We define the following more general class of rings: a commutative ring $A$ is \emph{sublocalizable} if a sum of units is  either a unit or an element of its Jacobson radical $J(A)$. 
Such a ring always has a distinguished local subring consisting of its units along with $J(A)$, which we call its \emph{sublocalization}.  We show that the sublocalization of a sublocalizable ring is a  particularly nice factroid of $A$, namely, the subring $\Z \langle A^\times \rangle$ of $A$ generated by the group of units of $A$. In Section~\ref{sec:unitadditive}, we provide strong connections between $T$-factroids,  unit-additive rings, and sublocalizable  rings.   

Examples of $T$-factroids are  ubiquitous in  both commutative and noncommutative algebra.  Indeed, for any additive subgroup $F$ of $M$, it is clear that \[ W_M(F) := W(F) := \{a \in A \mid (F:_M a) \subseteq F\}\]
is a subset $T$ of $A$ for which $F$ is a $T$-factroid of $M$. In fact, 
by its definition 
it is the largest such subset, and one can  easily check that it is also  a multiplicative submonoid of $A$.  In many instances, one can express $W_M(F)$ in terms of known constructs, and vice versa.  For example, if $F$ is an $A$-submodule of $M$, then $W_M(F)$ is just the saturated multiplicative monoid consisting of all elements of $A$ that are regular on the $A$-module $M/F$ (see Proposition \ref{pr:modulefactroid}). Thus, for example $W_{{}_AA}(\{0\})$ is just the monoid of all left-regular elements of $A$, where ${}_AA$ denotes $A$ as a left $A$-module. From the theory of primary decomposition, then, it follows that, if $M$ is a finitely generated module over a commutative Noetherian ring $A$, then $W_M(0)$  
is equal to 
the
complement of the 
union of the associated primes of $M$  (see Proposition~\ref{pr:pdec}).
Moreover, if $A$ is an arbitrary commutative ring, then one has $W_A(\sqrt{I}) = A \setminus \bigcup \operatorname{Min}(I)$  for any ideal $I$ of $A$, where $\operatorname{Min}(I)$ is the set of all prime ideals of $A$ that are minimal over $I$  (see Proposition \ref{prop:Acircrad}).   Thus, $I$ is prime if and only if $W_A(I) = A\setminus I$.   In particular, $W(\sqrt{0})$ is the saturated multiplicative set $A^\circ$ of all elements of $A$ that do not lie in any minimal prime of $A$, a set that figures centrally in the definition of tight closure over a commutative Noetherian ring of prime characteristic \cite{HHmain}.  Also, it is easy to check that, again if $A$ is commutative, then any intersection $\bigcap  \q_i$ of primary ideals $\q_i$  of $A$ is an $(A \setminus \bigcup \sqrt{\q_i})$-factroid of $A$  (see Proposition \ref{prop:intprimeary}). 

Further instances of $T$-factroids abound.  For example, if $A = k[x]$ for some field $k$,  then the $(A\setminus \{0\})$-factroids of $A$ besides $\{0\}$ and $A$ are precisely the $k$-spans of $\{1,x,x^2,\ldots,x^m\}$ for all nonnegative integers $m$.  (See  Theorem  \ref{prop:normeuclidean} for a generalization of this fact to more general Euclidean domains.) However, if $A = k[x_1, \ldots, x_n]$ for $n\geq 2$, then there are more $(A \setminus \{0\})$-factroids than just the $k$-spans of $\{1,x_i,x_i^2,\ldots,x_i^m\}$ for $m \geq 0$ and $1 \leq i \leq n$. For instance, if $f\in A$ such that $f+c$ is an irreducible polynomial for all $c\in k$, then $\spn_k\{1,f\}$ is an $(A \setminus \{0\})$-factroid (see Example~\ref{ex:kxy}). The problem of determining the $(A\setminus \{0\})$-factroids of $A$ remains open, even when $n=2$. 

Our paper is structured as follows. Section~\ref{sec:firststuff} introduces most of the main definitions and basic results. Section~\ref{sec:dual} develops duality and interaction of factroid theory with module theory; at the end of the section, we include an extension of the theory to the context of abelian group endomorphisms.  In Section~\ref{sec:rings}, we provide the reader with examples of factroids of rings for various familiar classes and constructions of commutative rings.  In Section~\ref{sec:generation}, we explore what it means for a set to \emph{generate} a $W$-factroid of a module or a ring.  In Section~\ref{sec:regular}, we develop the theory of $W$-\emph{regular} $T$-factroids,  the necessary theoretical foundation for Section~\ref{sec:Egyptian}, where we make the connection with Egyptian domains and related topics. In Section~\ref{sec:Euclidean}, we show that the factroids of a Euclidean domain have a particularly elegant description in terms of Euclidean functions.  Section~\ref{sec:unitadditive} is where we discuss unit-additive and sublocalizable rings.  In Section~\ref{sec:maps}, we discuss how the objects and operations of factroid theory work when passing along ring and module homomorphisms.  Finally, in Section~\ref{sec:graded}, we show that in certain common graded settings, finiteness conditions apply that can make it easier to determine explicitly the membership in factroids generated by a given set.

We thus hope to have shown the use and appeal of $T$-factroids in commutative and noncommutative algebra.

\section{$T$-factroids of a module}
\label{sec:firststuff}

 In this section, we define and prove some basic facts about the {\it $T$-factroids of $M$}.  The no(ta)tions developed here will be crucial for all that follows.

\begin{defn}\label{def:firstdefs}
  Let $A$ be a ring, $I$ a (two-sided) ideal of $A$, and $M$ a (left) $A$-module. Let $T \subseteq A$ and $S,U \subseteq M$. Throughout the paper, we use the following notation and conventions: \begin{itemize}
        \item $Z(A) := $ the center of $A$.
        \item $A^\times :=$ the group of units of $A$.
        \item $\reg(A) := $ the multiplicative monoid of all left-regular elements, i.e.,  all left nonzerodivisors, of $A$.
         \item $\langle T \rangle$ denotes the multiplicative submonoid of $A$ generated by $T$.
        \item  $\Z\langle T \rangle$ denotes the subring of $A$ generated by $T$. \begin{itemize}
            \item Thus, $\Z\langle T \rangle =\Z\langle \langle T \rangle \rangle$ is the set of all sums of elements of $\langle T \rangle$.
        \end{itemize}
        \item $\widetilde{T} = \{a \in A: ba \in T \text{ for some } b \in A\}$ denotes the \emph{(left) saturation} of $T$.
        \item One says that $T$ is \emph{(left) saturated} if $\widetilde{T} = T$, that is, if $ba \in T$ implies $a \in T$ for any $a,b \in A$.
        \item $T^{\operatorname{sm}}$ denotes the smallest saturated multiplicative submonoid of $A$ containing $T$. \begin{itemize}
            \item $T^{\operatorname{sm}}$  exists because saturated sets and multiplicative submonoids are closed under arbitrary intersections.
            \item Note that if $A$ is commutative, then $T^{\operatorname{sm}} = \widetilde{\langle T \rangle}$.
        \end{itemize}
        \item If $T \subseteq A^\times$, then $T^{-1} := \{t^{-1} \mid t \in T\} \subseteq A^\times.$ 
        \item $(S :_M T) := \{x \in M \mid tx \in S \text{ for all } t\in T\}$.
        \begin{itemize}
            \item When $T = \{t\}$ is singleton, write $(S :_M t) := (S:_MT)$.
        \end{itemize}
        \item $(S :_A U) := \{a \in A \mid au \in S \text{ for all } u\in U\}$.
        \begin{itemize}
            \item When $U=\{u\}$ is a singleton, write $(S :_Au) := (S :_AU)$.
        \end{itemize}
        \item $\reg_M(A) := \{a \in A \mid (0:_M a) = 0\}$ denotes the set of all \emph{nonzerodisivors on $M$} (also known as \emph{$M$-regular elements}).
        \begin{itemize}
            \item Note that $\reg(A)  = \reg_{{}_AA}(A)$, where ${}_AA$ is the left $A$-module $A$.
        \end{itemize}
        \item When $A$ is commutative, $A^\circ := $ the complement of the union of the minimal (i.e., height zero) prime ideals of $A$. \begin{itemize}
            \item Note that $A^\circ$ is a saturated multiplicative submonoid of $A$.
            \item Note that $\reg(A) \subseteq A^\circ$, with equality if $A$ is reduced.
        \end{itemize}
        \item $J(A) := $ the Jacobson radical of $A$.
        \item $\sqrt{I} := $ the radical of $I$.
        \item The set $\N$ of all natural numbers includes $0$, by convention.
     \end{itemize}
\end{defn}

\begin{defn}\label{def:factroid}
    Let $T \subseteq A$. 

    We say that $S \subseteq M$ is \emph{$T$-saturated (in $M$)}  if $(S :_M t) \subseteq S$ for all $t\in T$.  In other words, $S$ is $T$-saturated if, for all $x \in M$ such that $tx \in S$ for some $t \in T$, one has $x \in S$.  In particular, a subset $T$ of $A$ is a saturated set in the sense of Definition~\ref{def:firstdefs} if and only if it is $A$-saturated in (the left module) $A$.
    
    We say that $F \subseteq M$ is a \emph{$T$-factroid of $M$} 
    if $F$ is a $T$-saturated additive subgroup of $M$.  In other words, $F$ is a \emph{$T$-factroid of $M$} if $F$ is an additive subgroup of $M$ such that, for all $x \in M$ such that $tx \in F$ for some $t \in T$, one has $x \in F$.   By a \emph{(left) $T$-factroid of $A$} we mean a $T$-factroid of the left $A$-module ${}_AA$.
\end{defn}

Colloquially, a $T$-factroid of an $A$-module is an additive subgroup of the module that absorbs colons from $T$.  

\begin{defn}
    Let $S \subseteq M$ and $T \subseteq A$.  The \emph{$T$-saturation of $S$} is the set
    $$\operatorname{Sat}^T_M(S) : = \bigcup_{w \in \langle T \rangle} (S:_M w) = \{x \in M \mid wx \in S \text{ for some } w \in \langle T \rangle\}.$$
    It is the smallest $T$-saturated subset of $M$ containing $S$.
\end{defn}

\begin{example}
    One has $\widetilde{T} = \operatorname{Sat}^A_A(T)$ for any subset $T$ of $A$.
\end{example}

\begin{rem}
If $U \subseteq T$ are subsets of $A$, then any $T$-factroid of $M$ is a $U$-factroid of $M$. 
 If $T \subseteq \{\pm 1\}$, then a $T$-factroid of $M$ is the same as an  additive subgroup of $M$.   Suppose that $-1 \in T$.  Then to check that $F \subseteq M$ is a $T$-factroid, in addition to showing that $(F:_M t) \subseteq F$ for all $t \in T$, one need only show that $F$ is nonempty and additively closed. This is because for any $x\in F$, we have $-x \in (F:_M -1)$. 
\end{rem}

\begin{example}\label{rem:basics}
It is clear that $M$ is a $T$-factroid of $M$.   However, the trivial $A$-submodule $\{0\}$ of $M$ is a $T$-factroid of $M$ if and only if every element of $T$ is a nonzerodivisor on $M$.  Moreover, if $0\in T$, or more generally if $T \cap \ann_A M  \neq \emptyset$, then $M$ is the only $T$-factroid of $M$. 
\end{example}

\begin{rem}
    Let $T \subseteq A$.  Suppose that $F$ is a $T$-factroid of $M$ and $B$ is a subring of $A$.  Then $F$ is a $(T \cap B)$-factroid of the $B$-module $M$ obtained by restriction of scalars.  Thus, if $T \subseteq B$, then $F$ is also a $T$-factroid of the $B$-module $M$.
\end{rem}

\begin{rem}\label{rem:submodfactroid}
    Let $T \subseteq A$.  A $T$-factroid of $M$ is equivalently an additive subgroup $F$ of $M$ such that $\ker(M \stackrel{t \cdot }{\to} M \to M/F) \subseteq \ker(M \to M/F)$ for all $t \in T$, where $M \to M/F$ is the canonical projection of abelian groups.
\end{rem}

\begin{prop}\label{pr:Tfact}
   Let $T \subseteq A$. A $T$-saturated subset of $M$ is equivalently a $\langle T \rangle$-saturated subset of $M$. Similarly, a $T$-factroid of $M$ is equivalently a $\langle T \cup \{\pm 1 \} \rangle$-factroid of $M$.   
\end{prop}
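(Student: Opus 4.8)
The plan is to establish the two equivalences in order, the first by a one-step induction and the second by reducing it to the first.

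\textbf{First claim.} The implication ``$\langle T \rangle$-saturated $\Rightarrow$ $T$-saturated'' is immediate from $T \subseteq \langle T \rangle$, by the same one-line monotonicity observation used above for factroids (if $U \subseteq T$ and $(S :_M t) \subseteq S$ for every $t \in T$, then a fortiori $(S :_M u) \subseteq S$ for every $u \in U$). For the converse, suppose $S$ is $T$-saturated and fix $w \in \langle T \rangle$. Write $w = t_1 t_2 \cdots t_n$ with each $t_i \in T$; the case $n = 0$ gives $w = 1$ and $(S :_M 1) = S$, so assume $n \geq 1$ and argue by induction on $n$. If $x \in M$ with $wx \in S$, then by left-module associativity $t_1\bigl(t_2 \cdots t_n x\bigr) \in S$, so $T$-saturation gives $t_2 \cdots t_n x \in S$, and the inductive hypothesis (applied to $t_2 \cdots t_n \in \langle T \rangle$) yields $x \in S$. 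Hence $(S :_M w) \subseteq S$ for all $w \in \langle T \rangle$, so $S$ is $\langle T \rangle$-saturated.

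\textbf{Second claim.} Again one direction is immediate: since $T \subseteq \langle T \cup \{\pm 1\}\rangle$, any $\langle T \cup \{\pm 1\}\rangle$-factroid of $M$ --- an additive subgroup of $M$ that is $\langle T \cup \{\pm 1\}\rangle$-saturated --- is in particular $T$-saturated, hence a $T$-factroid. Conversely, let $F$ be a $T$-factroid of $M$. Then $F$ is an additive subgroup of $M$, and I claim it is $(T \cup \{\pm 1\})$-saturated: the conditions at $1$ and at $-1$ amount to $(F :_M 1) = F \subseteq F$ and $(F :_M -1) = F \subseteq F$, both of which hold precisely because $F$ is an additive subgroup of $M$. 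Applying the first claim with $T$ replaced by $T \cup \{\pm 1\}$, we conclude that $F$ is $\langle T \cup \{\pm 1\}\rangle$-saturated, and being an additive subgroup it is therefore a $\langle T \cup \{\pm 1\}\rangle$-factroid of $M$.

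I do not anticipate any genuine obstacle; the only points requiring care are the empty-product and unit conventions (handled by the base case $n = 0$ and by $(S :_M 1) = S$) and the repeated use of the left-module identity $(ab)x = a(bx)$ to strip off one factor at a time. As an alternative to the explicit induction in the first claim, one could invoke the fact recorded in the definition of $\operatorname{Sat}^T_M$ that $\operatorname{Sat}^T_M(S) = \bigcup_{w \in \langle T \rangle}(S :_M w)$ is the \emph{smallest} $T$-saturated subset of $M$ containing $S$: since $\langle \langle T \rangle\rangle = \langle T \rangle$, the same union also equals $\operatorname{Sat}^{\langle T \rangle}_M(S)$, so both saturation conditions are equivalent to the single equation $S = \operatorname{Sat}^T_M(S)$ --- but verifying that this union really is the smallest $T$-saturated set is essentially the same induction.
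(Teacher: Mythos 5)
Your proof is correct and takes essentially the same approach as the paper's: strip factors of a word in $\langle T \rangle$ one at a time using $(ab)x = a(bx)$, and handle $\pm 1$ by noting that an additive subgroup satisfies $(F :_M 1) = F = -F = (F :_M -1)$. The only cosmetic difference is that the paper states the two-element version $(S :_M ab) = ((S :_M a) :_M b) \subseteq S$ and leaves the induction on word length implicit, whereas you spell the induction out explicitly.
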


\begin{proof}
    Let $S$ be any subset of $M$.  Clearly, for any $a,b \in A$,  if $(S:_M a) \subseteq S$ and $(S:_M b) \subseteq S$, then  $(S:_M ab) = ((S :_M a) :_M b)) \subseteq S$.  Moreover, one has $(S :_M 1) = S$,  and $(S :_M -1) = S$ if $S = -S$.
\end{proof}

 Thus, one may always assume without loss of generality that $T$ is multiplicatively closed, or even a submonoid of $A$ containing $-1$.

\begin{prop}\label{pr:intersect}
  Let $T \subseteq A$. The intersection of any collection of $T$-saturated subsets of $M$ is a $T$-saturated subset of $M$.    Likewise, the intersection of any collection of $T$-factroids of $M$ is a $T$-factroid of $M$. 
\end{prop}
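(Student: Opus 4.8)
The plan is to reduce both assertions to the single elementary fact that the colon operation $(-:_M t)$ commutes with arbitrary intersections. First I would fix a family $\{S_\lambda\}_{\lambda \in \Lambda}$ of $T$-saturated subsets of $M$ and put $S = \bigcap_{\lambda \in \Lambda} S_\lambda$. Unwinding Definition~\ref{def:factroid}, for a fixed $t \in T$ one has $x \in (S :_M t)$ if and only if $tx \in S_\lambda$ for every $\lambda$, that is, $(S :_M t) = \bigcap_{\lambda \in \Lambda} (S_\lambda :_M t)$. Since each $S_\lambda$ is $T$-saturated we have $(S_\lambda :_M t) \subseteq S_\lambda$, and intersecting over $\lambda$ yields $(S :_M t) \subseteq \bigcap_\lambda S_\lambda = S$. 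As $t \in T$ was arbitrary, $S$ is $T$-saturated.

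For the claim about $T$-factroids I would combine this with the standard fact that an arbitrary intersection of additive subgroups of $M$ is again an additive subgroup of $M$. A $T$-factroid of $M$ is, by definition, precisely a $T$-saturated additive subgroup of $M$, so the intersection of any family of $T$-factroids is simultaneously $T$-saturated (by the previous paragraph) and an additive subgroup, hence a $T$-factroid.

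I do not expect any real obstacle here: the argument is a two-line verification once the colon-intersection identity is recorded. The only point worth a remark is the degenerate case of the empty family, where the intersection of no subsets of $M$ is, by the usual convention, $M$ itself, which is both $T$-saturated and (by Example~\ref{rem:basics}) a $T$-factroid of $M$; I would dispatch this in a single sentence rather than belabor it.
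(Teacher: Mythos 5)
Your proof is correct and is essentially the same argument as the paper's, only phrased through the identity $(S:_M t)=\bigcap_\lambda(S_\lambda:_M t)$ rather than by directly chasing an element $x$ with $tx\in S$. The remark about the empty family is a reasonable extra touch, but no new idea is involved.
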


\begin{proof}
Let $\{S_\alpha \mid \alpha \in \Lambda\}$ be a collection of $T$-saturated subsets of $M$, and let $S = \bigcap_{\alpha \in \Lambda} S_\alpha$.  Let $t\in T$ and $x \in M$ with $tx\in S$.  Then one has $tx \in S_\alpha$, whence $x\in S_\alpha$, for all $\alpha \in \Lambda$, and therefore $x\in S$.  Moreover, if each $S_\alpha$ is an additive subgroup of $M$, then so is $S$.
\end{proof}

In light of Proposition \ref{pr:intersect}, we can make the following definition.

\begin{defn}
  Let $T \subseteq A$.  For any subset $S$ of $M$, the \emph{$T$-factroid of $M$ generated by $S$} is the smallest $T$-factroid $M$ containing $S$, denoted $[S]_M^T $.  In particular, \[
    [S]_M^T = \bigcap \{F \mid F \text{ is a }T\text{-factroid of }M\text{ and }S \subseteq F\}.
    \]
  When $M$ is understood, we abbreviate $[S]_M^T$ to $[S]^{T}$.  A $T$-factroid generated by a singleton is \emph{principal}. In this case, when $S = \{x\}$ for some $x\in M$, we write $[x]_M^T$ or $[x]^T$ in place of $[\{x\}]^T_M$.
\end{defn}

By Proposition \ref{pr:Tfact}, we have the following.

\begin{cor}
For all $S \subseteq M$ and all $T \subseteq A$,  $[S]_M^T = [S]_M^{\langle T \rangle} = [S]_M^{\langle T \cup \{\pm 1\} \rangle}.$
\end{cor}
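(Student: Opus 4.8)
The corollary asserts that for all $S \subseteq M$ and all $T \subseteq A$, one has $[S]_M^T = [S]_M^{\langle T \rangle} = [S]_M^{\langle T \cup \{\pm 1\}\rangle}$. The plan is to derive this directly from Proposition~\ref{pr:Tfact}, which characterizes $T$-factroids as precisely the $\langle T \cup \{\pm 1\}\rangle$-factroids (and, since $T$-saturated subsets coincide with $\langle T\rangle$-saturated subsets, one should note that an additive subgroup is $T$-saturated if and only if it is $\langle T\rangle$-saturated, hence a $T$-factroid is the same as a $\langle T\rangle$-factroid provided we only range over additive subgroups). The point is that the three families of subsets of $M$ over which the three intersections defining $[S]_M^T$, $[S]_M^{\langle T\rangle}$, and $[S]_M^{\langle T\cup\{\pm 1\}\rangle}$ are taken are in fact the \emph{same} family: each equals the family of additive subgroups of $M$ that are $T$-saturated.

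Concretely, I would argue as follows. Fix $S \subseteq M$ and $T \subseteq A$. By definition, $[S]_M^T$ is the intersection of all $T$-factroids of $M$ containing $S$. By Proposition~\ref{pr:Tfact}, a subset $F \subseteq M$ is a $T$-factroid of $M$ if and only if it is a $\langle T\rangle$-saturated additive subgroup, equivalently a $\langle T\cup\{\pm 1\}\rangle$-factroid of $M$. (Here one also uses the first sentence of Proposition~\ref{pr:Tfact}: a $T$-saturated subset is the same as a $\langle T\rangle$-saturated subset, so being a $T$-saturated additive subgroup is the same as being a $\langle T\rangle$-saturated additive subgroup, i.e.\ a $\langle T\rangle$-factroid.) Therefore the collection
\[
\{F \mid F \text{ is a } T\text{-factroid of } M \text{ and } S \subseteq F\}
\]
is literally equal, as a set of subsets of $M$, to
\[
\{F \mid F \text{ is a } \langle T\rangle\text{-factroid of } M \text{ and } S \subseteq F\}
\]
and to
\[
\{F \mid F \text{ is a } \langle T\cup\{\pm 1\}\rangle\text{-factroid of } M \text{ and } S \subseteq F\}.
\]
Taking intersections of these three identical collections yields $[S]_M^T = [S]_M^{\langle T\rangle} = [S]_M^{\langle T\cup\{\pm 1\}\rangle}$, as claimed.

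There is essentially no obstacle here; the content is entirely contained in Proposition~\ref{pr:Tfact}, and the corollary is a formal consequence of the observation that equal defining families produce equal generated objects. The only point requiring a moment's care is to make sure that the equivalence "$T$-factroid $=$ $\langle T\rangle$-factroid" is legitimately part of what Proposition~\ref{pr:Tfact} gives us: the proposition states the $\langle T\cup\{\pm 1\}\rangle$ version explicitly for factroids, but combining its statement about $T$-saturated versus $\langle T\rangle$-saturated subsets with the additive-subgroup requirement immediately yields the $\langle T\rangle$-factroid version as well. One could alternatively phrase the whole argument by observing that $[S]_M^T$ is defined to be the smallest $T$-factroid containing $S$, and since "being a $T$-factroid" and "being a $\langle T\rangle$-factroid" (resp.\ a $\langle T\cup\{\pm 1\}\rangle$-factroid) are the same property, the smallest object with that property is the same object under any of the three descriptions. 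I would write it in the short form above.
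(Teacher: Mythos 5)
Your proof is correct and takes exactly the approach the paper intends: the paper derives this corollary immediately from Proposition~\ref{pr:Tfact} with no further comment, and your write-up simply spells out the (entirely routine) observation that equal families of $T$-factroids yield equal intersections.
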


\begin{lemma}\label{lem:noncommempty}
Let $S\subseteq M$ and $T \subseteq A$.  One has \[
    [S]^T_M \supseteq \operatorname{Sat}^T_M(S) =  \bigcup_{w \in \langle T \rangle} (S :_M w) = \{x\in M \mid wx \in S \text{ for some } w\in \langle T\rangle \}.\] 
\end{lemma}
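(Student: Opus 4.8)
The plan is to observe that the two displayed equalities in the statement are purely a matter of unwinding definitions, so that the only substantive assertion is the single containment $[S]^T_M \supseteq \operatorname{Sat}^T_M(S)$.

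First I would record that $\operatorname{Sat}^T_M(S)$ was \emph{defined} to be $\bigcup_{w \in \langle T \rangle}(S :_M w)$, and that for a single element $w \in A$ the colon set $(S :_M w)$ is by definition $\{x \in M \mid wx \in S\}$; taking the union over $w \in \langle T\rangle$ therefore yields exactly $\{x \in M \mid wx \in S \text{ for some } w \in \langle T\rangle\}$, establishing both equalities with no further work.

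For the containment I would use that $S \subseteq [S]^T_M$ by the very definition of the generated factroid, together with the fact recorded in Proposition~\ref{pr:Tfact} that a $T$-factroid of $M$ is the same as a $\langle T \cup \{\pm 1\}\rangle$-factroid of $M$, hence in particular is $\langle T\rangle$-saturated. So, given $x \in \operatorname{Sat}^T_M(S)$, I choose $w \in \langle T\rangle$ with $wx \in S$; then $wx \in [S]^T_M$, and $\langle T\rangle$-saturation of $[S]^T_M$ forces $x \in [S]^T_M$. If one prefers not to invoke Proposition~\ref{pr:Tfact}, the same conclusion follows by writing $w = t_1 \cdots t_n$ with each $t_i \in T$ (the empty product covering the case $x \in S$) and peeling off the factors $t_n, t_{n-1}, \ldots, t_1$ one at a time, applying $T$-saturation of $[S]^T_M$ at each step.

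The only point requiring any care — and it is a genuinely minor one — is this passage from "absorbs colons from single elements of $T$" to "absorbs colons from arbitrary products $w \in \langle T\rangle$", which is exactly the iteration packaged in Proposition~\ref{pr:Tfact}; beyond that I anticipate no real obstacle.
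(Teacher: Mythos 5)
Your proposal is correct and takes essentially the same approach as the paper, whose entire proof is the one-line observation that any $T$-factroid of $M$ is $T$-saturated (hence $[S]^T_M$ is a $T$-saturated set containing $S$, so it contains the smallest such set $\operatorname{Sat}^T_M(S)$). Your explicit "peel off the factors of $w$ one at a time" remark is just the content of Proposition~\ref{pr:Tfact} unpacked, not a genuinely different route.
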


\begin{proof}
    This follows because any $T$-factroid of $M$ is $T$-saturated in $M$.
\end{proof}

\begin{prop}\label{prop:noncommempty}
Let $T \subseteq A$.  The $T$-factroid $[\emptyset]^T_M =[0]^T_M$ of $M$ is the smallest $T$-factroid of $M$.  
    Moreover, one has \[
    [\emptyset]^T_M \supseteq \operatorname{Sat}^T_M(\{0\}) =  \bigcup_{w \in \langle T \rangle} (0 :_M w) = \{x\in M \mid wx=0 \text{ for some } w\in \langle T\rangle \}.\] 
    Equality holds if $T \subseteq Z(A)$, or more generally if $st=ts$ for every $t,s \in T$.
\end{prop}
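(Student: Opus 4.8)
The plan is to treat the three assertions in order, the first two being purely formal and the third being where the commutativity hypothesis enters. For the first assertion, note that every $T$-factroid $F$ of $M$ is in particular an additive subgroup and hence contains $0$; consequently the conditions ``$F$ is a $T$-factroid containing $\emptyset$'' and ``$F$ is a $T$-factroid containing $\{0\}$'' coincide, so $[\emptyset]^T_M = [0]^T_M$ from the defining intersection formula for the generated factroid, and since $[\emptyset]^T_M$ is by construction contained in every $T$-factroid of $M$ (and is itself one, by Proposition~\ref{pr:intersect}), it is the smallest one. For the displayed chain, I would simply unwind the definition of $\operatorname{Sat}^T_M(\{0\})$: the condition $x \in (0 :_M w)$ is literally $wx = 0$, so $\operatorname{Sat}^T_M(\{0\}) = \bigcup_{w\in\langle T\rangle}(0 :_M w) = \{x \in M \mid wx = 0 \text{ for some } w \in \langle T\rangle\}$, and the containment $[\emptyset]^T_M = [0]^T_M \supseteq \operatorname{Sat}^T_M(\{0\})$ is then exactly Lemma~\ref{lem:noncommempty} with $S = \{0\}$.

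For the final claim, I would prove equality by showing that $\operatorname{Sat}^T_M(\{0\})$ is itself a $T$-factroid. Since $1 \in \langle T\rangle$, this set contains $0 = (0 :_M 1)$, so once it is known to be a $T$-factroid it contains $[0]^T_M = [\emptyset]^T_M$, which together with the reverse containment above forces equality. That $\operatorname{Sat}^T_M(\{0\})$ is $T$-saturated needs no hypothesis: if $t \in T$ and $w(tx) = 0$ for some $w \in \langle T\rangle$, then $(wt)x = 0$ with $wt \in \langle T\rangle$, so $x$ again lies in the set. It obviously contains $0$ and is closed under negation. The single point at which the hypothesis is used is closure under addition: given $x, y$ with $wx = 0$ and $w'y = 0$ for some $w, w' \in \langle T\rangle$, write $w = t_1 \cdots t_m$ and $w' = s_1 \cdots s_n$ with all $t_i, s_j \in T$; since any two elements of $T$ commute, commuting each $t_i$ past each $s_j$ yields $ww' = w'w$, and therefore $ww'x = w'(wx) = 0$ and $ww'y = w(w'y) = 0$, so $ww'(x+y) = ww'x + ww'y = 0$ with $ww' \in \langle T\rangle$; that is, $x + y \in \operatorname{Sat}^T_M(\{0\})$. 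Hence this set is an additive subgroup, and being also $T$-saturated it is a $T$-factroid, as required.

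I do not anticipate a genuine obstacle here: the whole argument is bookkeeping, and the only place the hypothesis $st = ts$ for $t,s \in T$ is invoked is the identity $ww' = w'w$ for products $w, w'$ of elements of $T$, a routine induction on the number of factors (with the empty product $w = 1$ handled trivially); in the case $T \subseteq Z(A)$ it is of course immediate. If one wanted, the same bound could be recorded as: whenever $\langle T\rangle$ is a commutative submonoid of $A$, $\operatorname{Sat}^T_M(\{0\})$ is a $T$-factroid and hence equals $[\emptyset]^T_M$.
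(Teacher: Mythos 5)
Your proof is correct and takes essentially the same approach as the paper: reduce to showing that $\operatorname{Sat}^T_M(\{0\})$ is additively closed, and use pairwise commutativity of elements of $T$ to deduce $ww' = w'w$ for $w, w' \in \langle T\rangle$. The only cosmetic difference is that the paper verifies $x - y$ lies in the set in a single step (and leaves the commutativity of $\langle T\rangle$ implicit), whereas you spell out the induction on factors and treat negation separately.
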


\begin{proof}
By Lemma~\ref{lem:noncommempty}, we need only prove the last statement of the proposition.  We may suppose without loss of generality that $W: = T$ is a  multiplicative submonoid of $A$.
 It suffices to show that the $T$-saturation $F$ of $\{0\}$ in $M$ is an additive subgroup of $M$.  To see this, let $x, y \in F$, so that $wx = 0$ and $vy = 0$ for some $w,v \in W$.  Then $vw(x-y) = vwx - wvy = 0$, since $vw = wv$.  Finally, since $vw \in W$, it follows that  $x-y \in F$.
\end{proof}

\begin{cor}
If the ring $A$ is commutative, then for any multiplicative set $W$,  $$[\emptyset]_M^W  = \ker(M \ra W^{-1}M),$$ where $M \ra W^{-1}M$ is the localization map.
\end{cor}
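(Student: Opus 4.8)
The plan is to combine Proposition~\ref{prop:noncommempty} with the standard description of the kernel of a localization map; no new ideas are needed. Since $A$ is commutative we certainly have $W \subseteq Z(A)$, so the equality case of Proposition~\ref{prop:noncommempty} applies and yields
$$[\emptyset]_M^W = \operatorname{Sat}^W_M(\{0\}) = \{x \in M \mid wx = 0 \text{ for some } w \in \langle W \rangle\}.$$
Because $W$ is multiplicatively closed, $\langle W \rangle$ differs from $W$ at most by the element $1$ (the empty product), and adjoining $1$ changes nothing in the displayed set since $1\cdot x = 0 \iff x = 0$. Hence $[\emptyset]_M^W = \{x \in M \mid wx = 0 \text{ for some } w \in W\}$.

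Next I would unwind the construction of $W^{-1}M$. Its elements are equivalence classes $m/w$ with $m \in M$ and $w \in W$, where $m/w = m'/w'$ precisely when $u(w'm - wm') = 0$ for some $u \in W$, and the localization map $M \ra W^{-1}M$ sends $x \mapsto x/1$. Specializing, $x/1 = 0/1$ in $W^{-1}M$ holds if and only if there exists $u \in W$ with $ux = 0$. Therefore
$$\ker(M \ra W^{-1}M) = \{x \in M \mid wx = 0 \text{ for some } w \in W\},$$
which is exactly the set computed in the previous paragraph. Comparing the two descriptions gives $[\emptyset]_M^W = \ker(M \ra W^{-1}M)$, as desired.

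There is essentially no obstacle here. The only points requiring (minor) care are: matching the multiplicative submonoid $\langle W \rangle$ that appears in Proposition~\ref{prop:noncommempty} with the index set $W$ of denominators used to form $W^{-1}M$, and citing the correct form of the relation defining $W^{-1}M$ so that ``$x$ maps to $0$'' is recognized as ``$x$ is annihilated by some element of $W$.'' Both are routine once the statement of Proposition~\ref{prop:noncommempty} is in hand.
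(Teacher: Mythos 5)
Your argument is correct and is exactly the intended one: the paper states this as a corollary of Proposition~\ref{prop:noncommempty} without further proof, relying on the reader to note that $W \subseteq Z(A)$ when $A$ is commutative (so the equality clause of the proposition applies) and to recall the standard description $\ker(M \to W^{-1}M) = \{x \in M : wx = 0 \text{ for some } w \in W\}$. Your extra remark reconciling $\langle W \rangle$ with $W$ is a minor but reasonable bit of bookkeeping and does not diverge from the paper's approach.
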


\begin{example}\label{ex:0notaddcl}
         If the elements of $T\subseteq A$ do not commute with one another, then the   union in Proposition \ref{prop:noncommempty} need not be additively closed.

     For instance, let $A = k\langle x,y\rangle$ (a ring freely generated over a field $k$ by two non-commuting indeterminates), $W = T = \langle x,y\rangle$, and $M = (A \oplus A)/(xA \oplus yA)$.  Then in $M$, we have that $x(1,0)= (\bar x, 0)= 0$ and $y(0,1) = (0,\bar y) = 0$, but $\ann_A(1,1) = 0$.  Thus, $(1,0), (0,1) \in \bigcup_{w\in W} (0 :_Mw)$, but their sum is not.
\end{example}

\begin{defn}\label{def:WofF}
Given a subset $S$ of $M$, we let \[ W_M(S) := W(S) := \{a \in A \mid (S:_M a) \subseteq S\}.\]
\end{defn}

\begin{prop}
    Let $S$ be any subset of $M$.  Then $W(S)$ is a multiplicative submonoid of $A$.  Moreover, $W(S)$ is the largest subset $T$ of $A$ for which $S$ is $T$-saturated in $M$; in other words, for any subset $T$ of $A$, $S$ is $T$-saturated in $M$ if and only if $T \subseteq W(S)$. \end{prop}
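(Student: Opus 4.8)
The plan is to verify the monoid axioms directly from the definition of $W(S)$, and then to observe that the ``largest subset'' characterization is essentially a tautological reformulation of Definition~\ref{def:factroid}.

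First I would check that $1 \in W(S)$: since $(S :_M 1) = S$, the containment $(S :_M 1) \subseteq S$ holds trivially, so $W(S)$ is nonempty and contains the identity of $A$. Next, for closure under multiplication, I would take $a, b \in W(S)$ and analyze $(S :_M ab)$. The key identity is $(S :_M ab) = \big((S :_M a) :_M b\big)$, which follows because for $x \in M$ one has $ab x \in S$ if and only if $bx \in (S :_M a)$. Since $a \in W(S)$ gives $(S :_M a) \subseteq S$, and since the operation $N \mapsto (N :_M b)$ is order-preserving on subsets of $M$, we obtain $\big((S :_M a) :_M b\big) \subseteq (S :_M b)$; and $b \in W(S)$ gives $(S :_M b) \subseteq S$. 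Chaining these containments yields $(S :_M ab) \subseteq S$, i.e. $ab \in W(S)$. This establishes that $W(S)$ is a multiplicative submonoid of $A$. (This computation is, of course, the same one used in the proof of Proposition~\ref{pr:Tfact}.)

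For the second assertion, I would simply unwind Definition~\ref{def:factroid}: by definition $S$ is $T$-saturated in $M$ exactly when $(S :_M t) \subseteq S$ for every $t \in T$, and by Definition~\ref{def:WofF} this says precisely that every $t \in T$ lies in $W(S)$, i.e. $T \subseteq W(S)$. Conversely, if $T \subseteq W(S)$ then $(S :_M t) \subseteq S$ for all $t \in T$, so $S$ is $T$-saturated. Taking $T = W(S)$ in the ``only if'' direction shows $S$ is $W(S)$-saturated, while the ``if'' direction shows that any $T$ for which $S$ is $T$-saturated is contained in $W(S)$; hence $W(S)$ is the largest such $T$.

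I do not anticipate a genuine obstacle here; the only point requiring a little care is keeping the order of multiplication straight in the colon identity $(S :_M ab) = \big((S :_M a) :_M b\big)$, since $A$ need not be commutative, but this is the standard associativity-of-colons manipulation.
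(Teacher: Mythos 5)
Your proof is correct and matches the paper's approach: the paper simply cites Proposition~\ref{pr:Tfact} and the definitions, and the computation underlying Proposition~\ref{pr:Tfact} is exactly your colon identity $(S :_M ab) = \bigl((S :_M a) :_M b\bigr)$ together with $(S :_M 1) = S$. You spell out the order-preserving step and the unwinding of the definitions more explicitly than the paper, but the argument is the same.
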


\begin{proof}
    This is clear from the definitions and from Proposition \ref{pr:Tfact}.
\end{proof}

\begin{cor}
    Let $F$ be any additive subgroup of $M$.  Then $W(F)$ is a multiplicative submonoid of $A$ containing $-1$.  Moreover, $W(F)$ is the largest subset $T$ of $A$ for which $F$ is a $T$-factroid of $M$; in other words, for any subset $T$ of $A$, $F$ is a $T$-factroid of $M$ if and only if $T \subseteq W(F)$. \end{cor}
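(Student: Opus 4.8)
The plan is to obtain everything as a direct consequence of the preceding proposition applied to the subset $S = F$, together with two elementary observations specific to additive subgroups.

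First I would establish that $-1 \in W(F)$. Since $F$ is an additive subgroup of $M$, one has $-F = F$, and hence $(F :_M -1) = \{x \in M \mid -x \in F\} = -F = F \subseteq F$, so $-1 \in W(F)$ by the definition of $W(F)$. Combined with the preceding proposition, which already asserts that $W(S)$ is a multiplicative submonoid of $A$ for \emph{every} subset $S$ of $M$, this shows that $W(F)$ is a multiplicative submonoid of $A$ containing $-1$.

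For the second assertion, the key point is that when $F$ is already an additive subgroup, ``being a $T$-factroid of $M$'' and ``being $T$-saturated in $M$'' coincide: by Definition~\ref{def:factroid} a $T$-factroid of $M$ is precisely a $T$-saturated additive subgroup of $M$, so for an additive subgroup $F$ the extra additive-closure requirement is automatic. Hence $F$ is a $T$-factroid of $M$ if and only if $F$ is $T$-saturated in $M$, which by the preceding proposition holds if and only if $T \subseteq W(F)$. This yields both that $F$ is a $T$-factroid of $M$ for every $T \subseteq W(F)$ and that $W(F)$ is the largest such $T$ (it is itself one, by taking $T = W(F)$).

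I do not anticipate any real obstacle here; the statement is a genuine corollary. The only things to be careful about are the bookkeeping identity $(F :_M -1) = -F$ and the invocation of Definition~\ref{def:factroid} to reduce factroid-hood to $T$-saturation for the already-additive $F$.
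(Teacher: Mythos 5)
Your proposal is correct and matches the argument the paper implicitly intends: the corollary is stated without proof immediately after the proposition about $W(S)$ for arbitrary $S\subseteq M$, and your two observations — that $(F:_M -1) = -F = F$ gives $-1 \in W(F)$, and that for an additive subgroup $F$ being a $T$-factroid coincides with being $T$-saturated — are exactly the deductions needed to pass from that proposition to the corollary.
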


\begin{rem}
An additive subgroup $F$ of $M$ is proper if and only if $0 \notin W(F)$, if and only if $W(F) \subsetneq A$.  Moreover, $W(F)$ is saturated if and only if, for all $T \subseteq A$,  $F$ is a $T$-factroid of $M$ if and only if $F$ is a $T^{\operatorname{sm}}$-factroid of $M$.

The fact that the intersection of any collection of $T$-factroids of $M$ is a $T$-factroid of $M$ for any subset $T$ of $A$ is equivalent to the inclusion $W_M(\bigcap_{\lambda \in \Lambda} F_\lambda) \supseteq \bigcap_{\lambda \in \Lambda} W_M(F_\lambda)$ for any collection $\{F_\lambda: \lambda \in \Lambda\}$ of additive subgroups of $M$.  Indeed, each $F_\lambda$ is a $T$-factroid of $M$ if and only if $T \subseteq W(F_\lambda)$ for all $\lambda$, if and only if $T \subseteq \bigcap_{\lambda \in \Lambda} W(F_\lambda)$; and $\bigcap_{\lambda \in \Lambda} F_\lambda$ is a $T$-factroid of $M$ if and only if $T \subseteq W (\bigcap_{\lambda \in \Lambda} F_\lambda)$.
\end{rem}

\begin{prop}\label{prop:W0}
   One has $W_M(\{0\}) = \reg_M(A)$. Equivalently, the saturated submonoid $\reg_M(A)$ of $A$ is the largest subset $T$ of $A$ such that $\{0\}$ is a $T$-factroid of $M$. 
\end{prop}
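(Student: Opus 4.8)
The plan is to unwind the two definitions and observe that the asserted equality is essentially immediate. By Definition~\ref{def:WofF}, $W_M(\{0\}) = \{a \in A \mid (\{0\} :_M a) \subseteq \{0\}\}$, and by Definition~\ref{def:firstdefs}, $(\{0\} :_M a) = \{x \in M \mid ax = 0\}$. The one point to notice is that $0 \in (\{0\} :_M a)$ for every $a \in A$, since $a \cdot 0 = 0$; hence the inclusion $(\{0\} :_M a) \subseteq \{0\}$ is automatically an equality, i.e.\ $(\{0\} :_M a) \subseteq \{0\}$ if and only if $(0 :_M a) = 0$. Comparing this with the definition $\reg_M(A) := \{a \in A \mid (0 :_M a) = 0\}$ gives $W_M(\{0\}) = \reg_M(A)$ directly.

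For the ``equivalently'' clause I would simply invoke the Corollary immediately following Definition~\ref{def:WofF}, applied to the additive subgroup $F = \{0\}$ of $M$: it states that $W_M(\{0\})$ is the largest subset $T$ of $A$ for which $\{0\}$ is a $T$-factroid of $M$, and substituting the equality just proved yields the reformulation. It remains only to justify the adjective ``saturated submonoid'': $1 \in \reg_M(A)$ and $\reg_M(A)$ is closed under multiplication by the standard cancellation argument, and it is (left) saturated because if $ba \in \reg_M(A)$ and $ax = 0$ for some $x \in M$, then $(ba)x = b(ax) = 0$ forces $x = 0$, so $a \in \reg_M(A)$.

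I do not anticipate any real obstacle here; the proposition is a definitional identity. The only thing to keep an eye on is the left/right orientation conventions in the noncommutative setting, but the colon module $(0 :_M a)$, the notion of $T$-saturation, and the factored product $(ba)x = b(ax)$ are all set up consistently in Definitions~\ref{def:firstdefs} and~\ref{def:factroid}, so the computations go through verbatim.
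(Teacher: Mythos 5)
Your proof is correct and matches the paper's approach, which simply notes the result is immediate from the definitions; you have merely spelled out the definitional unwinding and supplied the routine verification that $\reg_M(A)$ is a saturated submonoid, which the paper leaves implicit.
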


\begin{proof}
    This is immediate from the definitions.
\end{proof}

\begin{cor}\label{cor:W0}
     One has  $W_A((0)) = \reg(A)$.  Equivalently, the saturated submonoid $\reg(A)$ of $A$ is the largest subset $T$ of $A$ such that the zero ideal $(0)$ of $A$ is a $T$-factroid of $A$.
\end{cor}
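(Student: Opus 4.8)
The plan is to specialize Proposition~\ref{prop:W0} to the module $M = {}_AA$, after which nothing remains but a harmless bookkeeping identification. First I would recall that, by the conventions fixed in Definition~\ref{def:firstdefs}, the monoid $\reg(A)$ of left-regular elements of $A$ is \emph{by definition} $\reg_{{}_AA}(A)$, i.e.\ the set of $a \in A$ with $(0 :_{{}_AA} a) = 0$; and that the zero submodule of the left $A$-module ${}_AA$ is precisely the zero ideal $(0)$, so that $W_{{}_AA}(\{0\})$ in the sense of Definition~\ref{def:WofF} is exactly $W_A((0))$.

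With these identifications in place, I would apply Proposition~\ref{prop:W0} with $M = {}_AA$ to obtain
\[
W_A((0)) \;=\; W_{{}_AA}(\{0\}) \;=\; \reg_{{}_AA}(A) \;=\; \reg(A),
\]
which is the first assertion. For the ``equivalently'' clause, I would invoke the general fact (the corollary to the proposition characterizing $W(F)$, stated just before Proposition~\ref{prop:W0}) that for any additive subgroup $F$ of an $A$-module $M$, the set $W_M(F)$ is a multiplicative submonoid of $A$ and is the largest subset $T \subseteq A$ for which $F$ is a $T$-factroid of $M$. Taking $F = (0) \subseteq {}_AA$ and combining with the displayed equality shows that $\reg(A)$ is the largest such $T$. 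That $\reg(A)$ is saturated is immediate from its description (if $ba \in \reg(A)$ then a fortiori $a \in \reg(A)$), and was already recorded in Definition~\ref{def:firstdefs}.

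I do not expect any genuine obstacle: the entire mathematical content is carried by Proposition~\ref{prop:W0}, and the corollary is just its instantiation at the regular-representation module ${}_AA$. The only point requiring a moment's attention is purely notational, namely matching ${}_AA$'s zero submodule with the zero ideal and $\reg_{{}_AA}(A)$ with $\reg(A)$, both of which are built into the definitions and hence require no argument.
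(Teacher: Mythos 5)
Your proof is correct and is exactly the intended derivation: the paper leaves the corollary unproved precisely because it is the instantiation of Proposition~\ref{prop:W0} at $M = {}_AA$, together with the notational identity $\reg(A) = \reg_{{}_AA}(A)$ from Definition~\ref{def:firstdefs} and the preceding corollary characterizing $W(F)$ as the largest $T$ making $F$ a $T$-factroid. Nothing to add.
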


\section{The duality between factroids and submodules}\label{sec:dual}

If $A$ is an integral domain, so that $\reg(A) = A^\circ = A \setminus\{ 0\}$, then the notion of a $\reg(A)$-factroid of $M$ is in a sense dual to the notion of an $A$-submodule of $M$.  Indeed, it is clear that an additive subgroup $L$ of $M$ is an $A$-submodule of $M$ if and only if $L \subseteq (L :_M a)$ for every $a \in \reg(A)$, while the notion of a $\reg(A)$-factroid of $M$ has the given inclusion reversed.

We can make the above duality more explicit and general as follows:

\begin{defn}\label{def:Tsubmodule}
Let $T \subseteq A$.  We say that a \emph{$T$-submodule of $M$} is an additive subgroup $F$ of $M$ such that $TF \subseteq F$, or equivalently such that $F \subseteq (F :_M t)$ for all $t \in T$.
\end{defn}

In this section, we explore the duality between $T$-submodules and $T$-factroids.  The correspondence is especially strong when $T$ consists of units.  We also give special attention to the theory of regular elements and primary decomposition.  We conclude the section with an extension of factroid theory to the setting of endomorphisms of abelian groups.

\begin{rem} If $B$ is a subring of $A$, then a $B$-submodule of $M$ by restriction of scalars is equivalently a $B$-submodule of $M$ in the sense of Definition \ref{def:Tsubmodule}.
\end{rem}

\begin{defn}\label{def:AofF} For any additive subgroup $F$ of $M$, we let
$$A(F) :=  (F :_A F) =  \{ a \in F: aF \subseteq  F\}  = \{a \in A:  F \subseteq (F :_M a)\}.$$ 
\end{defn}

\begin{prop} Let $F$ be an additive subgroup of $M$.  Then the following statements hold: \begin{enumerate}
    \item $A(F)$ is the largest subset $T$ of $A$ for which $F$ is a $T$-submodule of $M$.
    \item For any subset $T$ of $A$, $F$  is a $T$-submodule of $M$ if and only if $T \subseteq A(F)$.
    \item $A(F)$ is a subring of $A$.  Indeed, it is the largest subring $B$ of $A$ for which $F$ is a $B$-submodule of $M$.
    \item $F$ is an $A$-submodule of $M$ if and only if $A(F) = A$.
    \item For any subset $T$ of $A$, $F$ is a $T$-submodule of $M$ if and only if $F$ is a $\Z\langle T \rangle$-submodule of $M$. 
\end{enumerate}
\end{prop}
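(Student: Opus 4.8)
The plan is to prove part (2) first by directly unwinding the definitions, then to deduce (1), (4), and (5) formally from (2) together with part (3), which I would establish separately by a short direct check.

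First I would prove (2). By Definition~\ref{def:Tsubmodule}, $F$ is a $T$-submodule of $M$ precisely when $tx \in F$ for every $t \in T$ and every $x \in F$, that is, when $tF \subseteq F$ for all $t \in T$, which by Definition~\ref{def:AofF} says exactly that every $t \in T$ lies in $A(F) = \{a \in A : aF \subseteq F\}$. That is literally the inclusion $T \subseteq A(F)$, giving (2). Part (1) is then immediate: applying (2) with $T = A(F)$ shows that $F$ is an $A(F)$-submodule of $M$, while (2) also shows that any subset $T \subseteq A$ for which $F$ is a $T$-submodule satisfies $T \subseteq A(F)$; hence $A(F)$ is the largest such subset.

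Next I would verify (3), that $A(F)$ is a subring of $A$. It contains $1$, since $1 \cdot F = F \subseteq F$, and it contains $0$, since $0 \cdot F = \{0\} \subseteq F$. If $a,b \in A(F)$ and $x \in F$, then $ax, bx \in F$, so $(a-b)x = ax - bx \in F$ because $F$ is an additive subgroup of $M$ — this is the only place the subgroup hypothesis on $F$ is genuinely needed — and $(ab)x = a(bx) \in aF \subseteq F$; thus $a-b$ and $ab$ lie in $A(F)$. For the ``largest subring'' clause, observe that by (1) and (2), $A(F)$ is a subring over which $F$ is a submodule, and any subring $B$ with $F$ a $B$-submodule satisfies $B \subseteq A(F)$ by (2), so $A(F)$ is the largest such subring.

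Finally, (4) follows by taking $T = A$ in (2): $F$ is an $A$-submodule of $M$ if and only if $A \subseteq A(F)$, and since $A(F) \subseteq A$ always, this holds exactly when $A(F) = A$. For (5), by (2) the condition ``$F$ is a $T$-submodule of $M$'' is equivalent to $T \subseteq A(F)$, and ``$F$ is a $\Z\langle T \rangle$-submodule of $M$'' is equivalent to $\Z\langle T \rangle \subseteq A(F)$; these two inclusions are equivalent because $A(F)$ is a subring of $A$ by (3) and $\Z\langle T \rangle$ is by definition the smallest subring of $A$ containing $T$, so a subring contains $T$ if and only if it contains $\Z\langle T \rangle$. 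I do not anticipate a real obstacle: the proposition is essentially bookkeeping, and the only point needing attention is the logical ordering — part (3) must be in hand before (5) can be concluded, and it is precisely the additive-subgroup hypothesis on $F$ (not mere subsethood) that makes $A(F)$ closed under subtraction and hence a genuine subring rather than just a multiplicative submonoid.
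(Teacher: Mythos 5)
Your proof is correct, and it is simply a careful unwinding of the definitions — which is exactly what the paper's one-line proof (``All statements follow immediate from the definitions'') implicitly asks the reader to do. The logical ordering you flag (proving (3) before (5), and that the additive-subgroup hypothesis on $F$ is what gives closure under subtraction) is the only real content, and you handle it correctly.
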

   
\begin{proof}
    All statements follow immediate from the definitions.
\end{proof}

\begin{rem}
    Let $\operatorname{Mod}_\Z(M)$ denote the (complete) poset of all additive subgroups of $M$.  Then $C_a : F \mapsto (F :_M a)$ is an operation on the set $\operatorname{Mod}_\Z(M)$, for any $a \in A$. Moreover, the association $a \mapsto C_a$ defines a map $A \to \operatorname{End}_{\mathbf{Set}}(\operatorname{Mod}_\Z(M))$ with $C_1 = \id$ and $C_b \circ C_a = C_{ab}$ for all $a,b \in A$.  For all $F \in \operatorname{Mod}_\Z(M)$, one has
    $W(F) = \{a \in A: C_a(F) \subseteq F\}$
    and
    $A(F)  = \{a \in A: F \subseteq C_a(F)\}$. 
    \end{rem}

\begin{prop}\label{prop:unitfactroid}
    Let $T$ be a subset of $A^\times$.  Then $\Z\langle T^{-1} \rangle = [1]^T_A$, and a $T$-factroid of $M$ is equivalently a $T^{-1}$-submodule of $M$, or equivalently a $\Z\langle T^{-1} \rangle$-submodule of $M$.  Conversely, $\Z\langle T \rangle = [1]^{T^{-1}}_A$, and a $T$-submodule of $M$ is equivalently a $\Z\langle T \rangle$-submodule of $M$, or equivalently a $T^{-1}$-factroid of $M$.
\end{prop}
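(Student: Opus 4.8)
The statement is a package of four equivalences, all of which come down to the elementary observation that multiplication by a unit $t$ and multiplication by its inverse $t^{-1}$ are mutually inverse bijections of $M$. The plan is to prove the first chain of equivalences in detail and then obtain the "converse" chain by applying it to the set $T^{-1}$ in place of $T$, using that $(T^{-1})^{-1} = T$ and $\langle T\rangle = \langle T^{-1}\rangle^{-1}$ generate the same subring, so $\Z\langle T^{-1}\rangle$ and $\Z\langle T\rangle$ swap roles. This reduces the work to the first sentence.

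For the first sentence, I would argue as follows. First, for an additive subgroup $F$ of $M$ and $t \in A^\times$, the condition $(F :_M t) \subseteq F$ is equivalent to $t^{-1}F \subseteq F$: indeed, if $x \in M$ then $tx \in F$ iff $x = t^{-1}(tx) \in t^{-1}F$, so $(F:_M t) = t^{-1}F$, and $F$ being $\{t\}$-saturated says exactly $t^{-1}F \subseteq F$. Ranging over all $t \in T$, this shows that $F$ is a $T$-factroid of $M$ iff $T^{-1}F \subseteq F$, i.e. iff $F$ is a $T^{-1}$-submodule of $M$ in the sense of Definition~\ref{def:Tsubmodule}. By part (5) of the Proposition following Definition~\ref{def:AofF}, being a $T^{-1}$-submodule is equivalent to being a $\Z\langle T^{-1}\rangle$-submodule, which gives the second equivalence.

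It remains to identify $[1]^T_A$ with $\Z\langle T^{-1}\rangle$. One inclusion is easy: $\Z\langle T^{-1}\rangle$ is a subring of $A$ containing $1$, and by the equivalence just established (applied with $M = {}_AA$) it is a $T$-factroid of $A$ since it is closed under multiplication by $T^{-1}$; hence $[1]^T_A \subseteq \Z\langle T^{-1}\rangle$. For the reverse inclusion, note that any $T$-factroid $F$ of $A$ containing $1$ is a $\Z\langle T^{-1}\rangle$-submodule of $A$ by the above, so $\Z\langle T^{-1}\rangle = \Z\langle T^{-1}\rangle \cdot 1 \subseteq F$; intersecting over all such $F$ gives $\Z\langle T^{-1}\rangle \subseteq [1]^T_A$. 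Thus $[1]^T_A = \Z\langle T^{-1}\rangle$.

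I do not anticipate a genuine obstacle here; the only point requiring a moment's care is keeping the "submodule" conventions straight — a $T^{-1}$-submodule is closed under multiplication by $T^{-1}$ (not $T$), and one must invoke part (5) of the Proposition after Definition~\ref{def:AofF} to pass between $T^{-1}$-submodules and $\Z\langle T^{-1}\rangle$-submodules rather than re-proving it. The "converse" half is then a formal substitution $T \rightsquigarrow T^{-1}$, and one should remark explicitly that $T^{-1} \subseteq A^\times$ and $(T^{-1})^{-1} = T$ so that the first half applies verbatim.
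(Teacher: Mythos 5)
Your proposal is correct and takes essentially the same approach as the paper: the whole proposition rests on the identity $(F :_M t) = t^{-1}F$ for $t \in A^\times$, which the paper simply states and you spell out in more detail (including the reduction of $[1]^T_A = \Z\langle T^{-1}\rangle$ to a two-sided inclusion and the formal substitution $T \rightsquigarrow T^{-1}$ for the converse). The extra detail is sound and fills in exactly what the paper leaves implicit.
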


\begin{proof}
    This follows from the fact that when $w\in A^\times$, $(F :_M w)= w^{-1}F$, and therefore $(F:_M w) \subseteq F$ if and only if $w^{-1}F \subseteq F$, for any subset $F$ of $M$.
\end{proof}

\begin{cor}
 Suppose that $W$ is a subgroup of $A^\times$.  Then a $W$-factroid of $M$ is equivalently a $W$-submodule of $M$, or equivalently a $B$-submodule of $M$, where $B = \Z\langle W \rangle = [1]^{W}_A$ is the subring of $A$ consisting of all finite sums of elements of $W$.
\end{cor}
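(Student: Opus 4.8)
The plan is to obtain the Corollary as a direct specialization of Proposition~\ref{prop:unitfactroid} to $T = W$, leaning on two elementary features of a subgroup $W$ of $A^\times$: it is closed under inversion, so that $W^{-1} = W$, and it is already a multiplicative submonoid of $A$, so that $\langle W \rangle = W$.

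First I would record what these give for the subring $B$. Since $\langle W \rangle = W$, the convention fixed in Definition~\ref{def:firstdefs} identifies $\Z\langle W \rangle = \Z\langle \langle W \rangle \rangle$ with the set of all finite sums of elements of $W$; equivalently, $\Z\langle W \rangle$ is the additive subgroup of $A$ generated by $W$, closure under multiplication being automatic because $wv \in W$ for all $w, v \in W$. Next, Proposition~\ref{prop:unitfactroid} applied with $T = W$ gives $\Z\langle W^{-1} \rangle = [1]^W_A$; rewriting with $W^{-1} = W$ yields $B := \Z\langle W \rangle = [1]^W_A$, which is the description of $B$ asserted in the statement.

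Finally, for the equivalences: Proposition~\ref{prop:unitfactroid} with $T = W$ also says that a $W$-factroid of $M$ is equivalently a $W^{-1}$-submodule of $M$, equivalently a $\Z\langle W^{-1} \rangle$-submodule of $M$. Substituting $W^{-1} = W$ and $\Z\langle W^{-1} \rangle = B$ turns this into the claim that a $W$-factroid of $M$ is equivalently a $W$-submodule of $M$, equivalently a $B$-submodule of $M$.

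There is essentially no obstacle here — the whole proof is a substitution into Proposition~\ref{prop:unitfactroid} — and the only point requiring any care is to invoke $W^{-1} = W$ (and $\langle W \rangle = W$) from the subgroup hypothesis. Should one prefer an argument not citing Proposition~\ref{prop:unitfactroid}, the single computation needed is that $(F :_M w) = w^{-1}F$ for a unit $w$ and any additive subgroup $F \subseteq M$: thus ``$(F :_M w) \subseteq F$ for all $w \in W$'' is equivalent to ``$w^{-1}F \subseteq F$ for all $w \in W$'', and hence, since $W = W^{-1}$, to $WF \subseteq F$; the reduction to $B$-submodules is then the earlier equivalence ``$T$-submodule of $M$ $=$ $\Z\langle T \rangle$-submodule of $M$'' together with $W \subseteq B$.
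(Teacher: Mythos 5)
Your proof is correct and is exactly the specialization the paper intends: the corollary follows directly from Proposition~\ref{prop:unitfactroid} by substituting $T = W$ and using that the subgroup hypothesis forces $W^{-1} = W$ (and $\langle W \rangle = W$, so $\Z\langle W\rangle$ is the set of finite sums of elements of $W$). Your closing alternative argument, reducing to $(F :_M w) = w^{-1}F$ for $w$ a unit, is the same computation the paper uses to prove Proposition~\ref{prop:unitfactroid} itself, so there is no genuine divergence in approach.
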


\begin{cor}\label{cor:duality}
    Suppose that $A$ is commutative, and let $V \subseteq W$ be multiplicative subsets of $A$.  Let $\frac{V}{1} = \{v/1: v \in V\}$ denote the image of the set $V$ in $W^{-1}A$, and let $\frac{1}{V} = \left (\frac{V}{1}\right)^{-1}$ denote the multiplicative set of reciprocals of the elements of $\frac{V}{1}$ in $W^{-1}A$.  Then a $\frac{V}{1}$-factroid of $W^{-1}M$ is equivalently a $\frac{1}{V}$-submodule of $W^{-1}M$, or equivalently a $\Z\langle \frac{1}{V} \rangle$-submodule of $W^{-1}M$.  Conversely,  a $\frac{V}{1}$-submodule of $W^{-1}M$ is equivalently a $\Z\langle \frac{V}{1} \rangle$-submodule of $W^{-1}M$, or equivalently a $\frac{1}{V}$-factroid of $W^{-1}M$.   Moreover, one has  $\Z\langle\frac{1}{V}\rangle  = [1]^{\frac{V}{1}}_{W^{-1}M}$ and $\Z\langle\frac{V}{1}\rangle  = [1]^{\frac{1}{V}}_{W^{-1}M}$.
\end{cor}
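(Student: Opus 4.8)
The plan is to reduce the entire statement to a direct application of Proposition~\ref{prop:unitfactroid}, with the ambient ring taken to be the localization $W^{-1}A$ rather than $A$ itself.

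First I would record the one elementary observation that makes this work, namely the place where the hypothesis $V \subseteq W$ is used: for each $v \in V$ the element $v/1 \in W^{-1}A$ is a unit, with inverse $1/v$. Hence $\frac{V}{1} \subseteq (W^{-1}A)^\times$, and the set of reciprocals $\left(\frac{V}{1}\right)^{-1} = \{(v/1)^{-1} : v \in V\}$ is exactly $\frac{1}{V} = \{1/v : v \in V\}$, consistent with the notation set up in the statement. I would also note in passing that $\frac{V}{1}$ and $\frac{1}{V}$ are multiplicative subsets of $W^{-1}A$ because $V$ is multiplicative, although this is not strictly needed for what follows.

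Then I would invoke Proposition~\ref{prop:unitfactroid} verbatim, taking the ring there to be $W^{-1}A$, the module there to be $W^{-1}M$, and the subset $T$ there to be $\frac{V}{1} \subseteq (W^{-1}A)^\times$. The first clause of that proposition yields $\Z\langle \frac{1}{V} \rangle = \Z\langle (\frac{V}{1})^{-1} \rangle = [1]^{\frac{V}{1}}_{W^{-1}A}$ together with the equivalence of ``$\frac{V}{1}$-factroid of $W^{-1}M$'', ``$\frac{1}{V}$-submodule of $W^{-1}M$'', and ``$\Z\langle \frac{1}{V} \rangle$-submodule of $W^{-1}M$''. Applying the converse clause of the proposition with the role of $T$ played by $\frac{1}{V}$ (so that $T^{-1} = \frac{V}{1}$) yields $\Z\langle \frac{V}{1} \rangle = [1]^{\frac{1}{V}}_{W^{-1}A}$ together with the equivalence of ``$\frac{V}{1}$-submodule'', ``$\Z\langle \frac{V}{1} \rangle$-submodule'', and ``$\frac{1}{V}$-factroid'' of $W^{-1}M$. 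Taken together, these are exactly the assertions of the corollary.

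Since the whole argument is just a specialization of an already-established proposition, there is no substantive obstacle; the only points requiring care are the bookkeeping of which of $\frac{V}{1}$, $\frac{1}{V}$ plays the role of $T$ versus $T^{-1}$ in the two halves of Proposition~\ref{prop:unitfactroid}, and the reading of the two displayed identities in the last sentence: since $1$ there denotes the identity element of the ring, the generated factroids $[1]^{\frac{V}{1}}$ and $[1]^{\frac{1}{V}}$ are most naturally understood as factroids of the ring $W^{-1}A$ (obtained from Proposition~\ref{prop:unitfactroid} applied to the module $W^{-1}A$), and it is in that ring that the equalities with $\Z\langle \frac{1}{V}\rangle$ and $\Z\langle \frac{V}{1}\rangle$ hold.
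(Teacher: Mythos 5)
Your proposal is correct and matches the paper's intended route: the corollary is stated without an explicit proof precisely because it is the direct specialization of Proposition~\ref{prop:unitfactroid} to the ring $W^{-1}A$ and module $W^{-1}M$ with $T = \frac{V}{1} \subseteq (W^{-1}A)^\times$, which is exactly what you do. Your remark that the subscripts in the two displayed equalities should be read as $W^{-1}A$ (so that $1$ is the ring identity) is a correct reading of what is, in effect, a slip in the statement.
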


\begin{prop}\label{prop:Wunits}
Let $F$ be any additive subgroup of $M$.
Then the units of $A$ in $W(F)$ are precisely the inverses of the units of $A$ in $A(F)$, that is,
$$W(F) \cap A^\times = (A(F) \cap A^\times)^{-1}.$$
  Thus, the units of $A$ in $W(F)$ are completely determined by the ring $A(F)$.
Moreover, the multiplicative monoid
$$W(F) \cap A(F) = \{a \in A \mid (F :_M a) = F\}$$
is the largest subset $T$ of $A$ for which $F$ is both a $T$-submodule and $T$-factroid of $M$.  
\end{prop}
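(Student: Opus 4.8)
The plan is to deduce all three assertions directly from the definitions of $W(F)$ (Definition~\ref{def:WofF}) and $A(F)$ (Definition~\ref{def:AofF}), together with the observation already recorded in the proof of Proposition~\ref{prop:unitfactroid} that for a unit $w \in A^\times$ one has $(F :_M w) = w^{-1}F$; indeed, $wx \in F$ forces $x = w^{-1}(wx) \in w^{-1}F$ by the left $A$-module axioms, and the reverse inclusion is clear.

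First I would establish $W(F) \cap A^\times = (A(F) \cap A^\times)^{-1}$. Fix $w \in A^\times$. By the identity just noted, $w \in W(F)$ means $(F :_M w) = w^{-1}F \subseteq F$, which by Definition~\ref{def:AofF} is exactly $w^{-1} \in A(F)$. Since $w^{-1}$ is automatically a unit, this shows $w \in W(F) \cap A^\times$ if and only if $w^{-1} \in A(F) \cap A^\times$, i.e. $w \in (A(F) \cap A^\times)^{-1}$, which is the claimed equality. The second assertion then follows at once: $A(F)$ determines the set $A(F) \cap A^\times$, hence its image $(A(F) \cap A^\times)^{-1}$ under $t \mapsto t^{-1}$, which we have just identified with $W(F) \cap A^\times$.

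For the third assertion, the displayed equality $W(F) \cap A(F) = \{a \in A \mid (F :_M a) = F\}$ is simply the conjunction of the two defining containments: $a \in W(F)$ says $(F :_M a) \subseteq F$ and $a \in A(F)$ says $F \subseteq (F :_M a)$. This set is a multiplicative submonoid of $A$ because $W(F)$ is one and $A(F)$, being a subring of $A$, is in particular closed under multiplication and contains $1$, so the intersection inherits both properties. Finally, for any $T \subseteq A$, the fact (established earlier) that $F$ is a $T$-factroid of $M$ precisely when $T \subseteq W(F)$, and a $T$-submodule of $M$ precisely when $T \subseteq A(F)$, shows that $F$ is simultaneously a $T$-submodule and a $T$-factroid of $M$ if and only if $T \subseteq W(F) \cap A(F)$; hence $W(F) \cap A(F)$ is the largest such $T$.

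Since every step is a direct unwinding of definitions, I do not anticipate a genuine obstacle. The only point requiring a moment's care is the treatment of units: the equality $(F :_M w) = w^{-1}F$ uses the left-module identity $w^{-1}(wx) = x$, and one should note that $t \mapsto t^{-1}$ is a bijection of $A^\times$ restricting to a bijection of $W(F) \cap A^\times$ onto $A(F) \cap A^\times$ (it is an anti-automorphism of the group $A^\times$, but only set-level bijectivity is needed here).
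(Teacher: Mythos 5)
Your proof is correct and follows exactly the paper's intended route: the paper's entire proof is the one-line remark that $(F :_M w) = w^{-1}F$ for units $w$, which is precisely the observation your argument is built around, with the remaining steps (translating between $W(F)$, $A(F)$, and the largest-$T$ characterization) being the same definition-unwinding you carry out. Nothing to add.
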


\begin{proof}
  This follows readily from the fact that $(F :_M w) = w^{-1} F$ for all units $w$ of $A$.
\end{proof}

We next look at when $A$-submodules are $T$-factroids.

\begin{prop}\label{pr:modulefactroid} Let $L$ be an $A$-submodule of $M$.  Then  $W_M(L) = \reg_{M/L}(A)$;  therefore $W_M(L)$ is saturated.   Moreover, for any subset $T$ of $A$, the following are equivalent: \begin{enumerate}[(a)]
    \item $L$ is a $T$-factroid of $M$.
    \item $T \subseteq \reg_{M/L}(A)$.
    \item $L$ is a $T^{\operatorname{sm}}$-factroid of $M$.
\end{enumerate}  Thus, if $A$ is commutative, then $W_M(L) = \ker(M/L \ra W^{-1}(M/L))$, where $W$ is the multiplicative set $\langle T \rangle$ (or $T^{\operatorname{sm}} =  \widetilde{\langle T \rangle}$) and  $M/L \ra W^{-1}(M/L)$ is the localization map.
\end{prop}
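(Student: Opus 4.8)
The plan is to let everything cascade from one bridge identity, namely $W_M(L)=\reg_{M/L}(A)$, which is a direct unwinding of the definitions. By definition $a\in W_M(L)$ means $(L :_M a)\subseteq L$, i.e.\ $ax\in L$ implies $x\in L$ for every $x\in M$; since $L$ is an $A$-submodule, $M/L$ is an $A$-module, and this last condition is exactly the assertion that $(0 :_{M/L} a)=0$, i.e.\ that $a$ is a nonzerodivisor on $M/L$. (Alternatively one may just apply Proposition~\ref{prop:W0} to the module $M/L$ and observe that $W_M(L)=W_{M/L}(\{0\})$ by the same unwinding.) Once this is in hand, the saturatedness of $W_M(L)$ is inherited from the standard one-line fact that, for any $A$-module $N$, the set $\reg_N(A)$ of nonzerodivisors on $N$ is a saturated multiplicative submonoid of $A$: if $ba\in\reg_N(A)$ and $a\bar x=0$ then $ba\bar x=0$, hence $\bar x=0$, and closure under products is equally immediate.

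For the equivalence of (a), (b), (c) I would invoke the corollary stated after Definition~\ref{def:WofF}, to the effect that an additive subgroup $F$ of $M$ is a $T$-factroid of $M$ if and only if $T\subseteq W_M(F)$. Applied with $F=L$ and combined with the bridge identity, this says (a) holds iff $T\subseteq\reg_{M/L}(A)$, which is (b), and it also says (c) holds iff $T^{\operatorname{sm}}\subseteq\reg_{M/L}(A)$. To close the loop between (b) and (c) I would use that $\reg_{M/L}(A)$ is a saturated multiplicative submonoid of $A$ (just established) together with the defining minimality of $T^{\operatorname{sm}}$ among saturated multiplicative submonoids containing $T$: thus $T\subseteq\reg_{M/L}(A)$ already forces $T^{\operatorname{sm}}\subseteq\reg_{M/L}(A)$, while the reverse implication is immediate from $T\subseteq T^{\operatorname{sm}}$.

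For the commutative case, $W:=\langle T\rangle$ is a multiplicative set, and replacing it by its saturation $\widetilde{\langle T\rangle}=T^{\operatorname{sm}}$ (Definition~\ref{def:firstdefs}) changes neither the localization $W^{-1}(M/L)$ nor the kernel of the localization map, so either may be used. I would then note that $F\mapsto F/L$ restricts to an inclusion-preserving bijection from the $T$-factroids of $M$ containing $L$ onto the $T$-factroids of $M/L$ (since $\overline{ax}\in F/L\iff ax\in F$ when $F\supseteq L$), so it carries the smallest one on each side to the other: $[L]^T_M/L=[\emptyset]^T_{M/L}=[\emptyset]^{\langle T\rangle}_{M/L}$, which by the corollary following Proposition~\ref{prop:noncommempty} equals $\ker(M/L\to W^{-1}(M/L))$. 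Consequently this kernel vanishes exactly when $L$ is a $T$-factroid of $M$, i.e.\ exactly when $W\subseteq W_M(L)=\reg_{M/L}(A)$ — the desired localization-theoretic form. I do not expect a genuine obstacle here; the proposition is essentially bookkeeping. The two places deserving care are that $L$ must be a \emph{submodule} for the bridge identity to make sense at all (it is what endows $M/L$ with an $A$-module structure, in contrast to the merely-additive-subgroup results earlier in the section), and that in the concluding identity the relevant objects naturally sit in different places — $W_M(L)$ in $A$, the kernel in $M/L$ — so the cleanest route is through the correspondence $F\leftrightarrow F/L$, reusing the saturation and ``largest $T$'' facts already proved rather than re-deriving them.
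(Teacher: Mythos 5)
Your proof is correct and takes essentially the same route as the paper, whose entire printed proof is the chain of equivalences $t \in W_M(L) \Leftrightarrow (L :_M t) \subseteq L \Leftrightarrow (0 :_{M/L} t) = 0 \Leftrightarrow t \in \reg_{M/L}(A)$ followed by the words ``the proposition follows''; you derive the same bridge identity and then write out the bookkeeping the paper leaves implicit (saturatedness of $\reg_{M/L}(A)$, the $T \subseteq W_M(F)$ criterion, the $T^{\operatorname{sm}}$ minimality, and the localization reformulation). One small thing worth flagging explicitly, which you quietly worked around rather than naming: the final displayed equality in the proposition as printed, $W_M(L) = \ker(M/L \to W^{-1}(M/L))$, cannot be literally correct since the left side is a subset of $A$ and the right side is a submodule of $M/L$; your reading of the intent, namely $[L]^T_M/L = [\emptyset]^{\langle T\rangle}_{M/L} = \ker(M/L \to W^{-1}(M/L))$ so that $L$ is a $T$-factroid of $M$ exactly when this kernel vanishes, is the sensible repair, but you should say so rather than silently substituting it.
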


\begin{proof}
We have the following equivalences for any 
$t \in A$: \begin{align*}
    t \in W_M(L) & \Leftrightarrow  (L :_M t) \subseteq L \\
    &\Leftrightarrow (0 :_{M/L} t) = 0\\
     & \Leftrightarrow  t \in \reg_{M/L}(A).
\end{align*}
The proposition follows.
\end{proof}

Since every additive subgroup $F$ of $M$ is an $A(F)$-submodule of $M$, so that $M/F$ is an $A(F)$-module, one can relativize Proposition \ref{pr:modulefactroid} to characterize the additive subgroups of $M$ that are both $T$-submodules of $M$ and $T$-factroids $F$ of $M$ in terms of the $A(F)$-module $M/F$, as follows.

\begin{cor}
    Let $T$ be a subset of $A$.  An additive subgroup $F$ of $M$ is both a $T$-submodule $F$ of $M$ and $T$-factroid  of $M$ if and only if $T \subseteq A(F)$ and the elements of $T$ are all nonzerodivisors on the $A(F)$-module $M/F$.
\end{cor}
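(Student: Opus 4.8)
The plan is to reduce everything to Proposition~\ref{pr:modulefactroid} applied over the ring $A(F)$ rather than over $A$. First I would observe that, as noted in the paragraph preceding the corollary, every additive subgroup $F$ of $M$ is automatically an $A(F)$-submodule of $M$, so $M/F$ is an honest $A(F)$-module and the phrase ``nonzerodivisors on the $A(F)$-module $M/F$'' makes sense. The key bookkeeping point is that the relevant defining conditions for ``$T$-factroid'' and ``$T$-submodule'' are insensitive to whether we regard $M$ as an $A$-module or, by restriction of scalars, as an $A(F)$-module, \emph{provided} the elements of $T$ actually lie in $A(F)$; this is exactly the content of the Remark following Definition~\ref{def:factroid} (restriction of scalars) together with the Remark following Definition~\ref{def:Tsubmodule}.

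Next I would handle the two implications separately. For the forward direction, suppose $F$ is both a $T$-submodule and a $T$-factroid of $M$. Being a $T$-submodule of $M$ means $T \subseteq A(F)$ by part~(2) of the Proposition characterizing $A(F)$. Now that $T \subseteq A(F)$, the statement ``$F$ is a $T$-factroid of the $A$-module $M$'' is equivalent to ``$F$ is a $T$-factroid of the $A(F)$-module $M$'' (restriction of scalars does not change the colon conditions $(F:_M t)\subseteq F$ for $t \in T \subseteq A(F)$). Since $F$ is an $A(F)$-submodule of $M$, Proposition~\ref{pr:modulefactroid}, applied with the base ring $A(F)$ in place of $A$, gives that $F$ is a $T$-factroid of the $A(F)$-module $M$ if and only if $T \subseteq \reg_{M/F}(A(F))$, i.e.\ every element of $T$ is a nonzerodivisor on the $A(F)$-module $M/F$. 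That is precisely the desired conclusion.

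For the converse, suppose $T \subseteq A(F)$ and the elements of $T$ are nonzerodivisors on the $A(F)$-module $M/F$. From $T \subseteq A(F)$ and part~(2) of the $A(F)$-characterization, $F$ is a $T$-submodule of $M$. For the factroid condition, run the same equivalence backwards: nonzerodivisors on $M/F$ over $A(F)$ means $T \subseteq \reg_{M/F}(A(F))$, so by Proposition~\ref{pr:modulefactroid} over $A(F)$, $F$ is a $T$-factroid of the $A(F)$-module $M$; and since $T \subseteq A(F)$, this is the same as $F$ being a $T$-factroid of the $A$-module $M$. Hence $F$ is both a $T$-submodule and a $T$-factroid of $M$, completing the proof.

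The only step requiring any care — and the one I would state explicitly rather than wave at — is the assertion that Proposition~\ref{pr:modulefactroid} may be applied verbatim with $A$ replaced by $A(F)$; this is legitimate precisely because $A(F)$ is a ring (it is the largest subring of $A$ making $F$ a submodule) and $F$ is an $A(F)$-submodule of $M$, so the hypotheses of that Proposition are met. Everything else is an unwinding of definitions and the restriction-of-scalars remarks, so I would present it tersely.
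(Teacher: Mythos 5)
Your proposal is correct and follows essentially the same route as the paper: the paragraph preceding the corollary explicitly frames it as a relativization of Proposition~\ref{pr:modulefactroid} to the ring $A(F)$, using that every additive subgroup $F$ is automatically an $A(F)$-submodule of $M$, and your argument just fills in the details of that relativization (including the correct observation that the colon conditions $(F:_M t)\subseteq F$ are insensitive to whether $t$ is viewed in $A$ or in $A(F)$).
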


Recall that a  proper submodule $L$ of a module $M$ over a commutative ring $A$ is \emph{primary} if for any $r\in A$, either $(L :_M r) = L$ or $r^n M \subseteq L$ for some $n\in \N$.

\begin{prop}\label{pr:submoduleprimary}
Suppose that $A$ is commutative. Let $L$ be proper $A$-submodule of $M$, and let $T \subseteq A$. If $L$ is a $T$-factroid of $M$, then $\langle T \rangle \cap \ann_A(M/L) = \emptyset$.  The converse holds if $L$ is a primary submodule of $M$.
\end{prop}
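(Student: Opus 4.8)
The plan is to prove both directions by unwinding the definitions and using Proposition~\ref{pr:modulefactroid}.

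First I would handle the forward direction. Suppose $L$ is a $T$-factroid of $M$. By Proposition~\ref{pr:modulefactroid}, the hypothesis ``$L$ is a $T$-factroid of $M$'' is equivalent to ``$L$ is a $\langle T\rangle$-factroid of $M$'' and to $T\subseteq \reg_{M/L}(A)$, hence $\langle T\rangle\subseteq \reg_{M/L}(A)$ as well (the latter is a multiplicative submonoid). So it suffices to show that $\reg_{M/L}(A)\cap\ann_A(M/L)=\emptyset$. But if $a\in\ann_A(M/L)$ then $a\cdot(M/L)=0$; since $L$ is a \emph{proper} submodule, $M/L\neq 0$, so $a$ kills a nonzero element, meaning $a\notin\reg_{M/L}(A)$. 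Thus $\langle T\rangle\cap\ann_A(M/L)\subseteq\reg_{M/L}(A)\cap\ann_A(M/L)=\emptyset$.

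Next the converse, assuming $L$ is primary. Suppose $\langle T\rangle\cap\ann_A(M/L)=\emptyset$. I want to show $T\subseteq\reg_{M/L}(A)$, i.e.\ no $t\in T$ is a zerodivisor on $M/L$. Take $t\in T$. Since $L$ is primary, by definition either $(L:_M t)=L$ or $t^n M\subseteq L$ for some $n\in\N$. In the first case $t\in\reg_{M/L}(A)$ directly (indeed $(0:_{M/L}t)=(L:_Mt)/L=0$). In the second case $t^n\in\ann_A(M/L)$; but $t^n\in\langle T\rangle$, contradicting $\langle T\rangle\cap\ann_A(M/L)=\emptyset$. So the second case cannot occur, and $t\in\reg_{M/L}(A)$. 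Hence $T\subseteq\reg_{M/L}(A)$, which by Proposition~\ref{pr:modulefactroid} gives that $L$ is a $T$-factroid of $M$.

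There is no real obstacle here; everything reduces to Proposition~\ref{pr:modulefactroid} together with the definition of a primary submodule, and the only subtlety is remembering to invoke properness of $L$ so that $M/L\neq 0$ (otherwise the annihilator contains $1$ and the statement would fail). The one genuine use of the primary hypothesis is the dichotomy it supplies for each $t$; without it, a zerodivisor $t$ on $M/L$ with no power annihilating $M/L$ would be a counterexample to the converse.
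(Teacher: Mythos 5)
Your proof is correct and takes essentially the same route as the paper: the forward direction is the observation that an annihilator element of $M/L\neq 0$ is a zerodivisor on $M/L$ (the paper does this by direct contrapositive rather than invoking Proposition~\ref{pr:modulefactroid}, but the content is identical), and the converse uses the same dichotomy from the definition of a primary submodule to rule out the $t^n M\subseteq L$ case.
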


\begin{proof}
Without loss of generality, we may assume that $W := T$ is multiplicatively closed.  Suppose there is some $r \in W \cap \ann_A(M/L)$.  Let $x\in M \setminus L$.  Then $rx \in L$ but $x \notin L$, so $L$ is not a $W$-factroid of $M$.

On the other hand, suppose $I \cap W = \emptyset$, where $I=\ann_A(M/L)$, and $L \subseteq M$ is primary.  Let $r \in W$ and $x\in M$ with $rx \in L$.  Since $L$
is primary in $M$, either $r \in \sqrt{I}$ or $x\in L$.  But if $r\in \sqrt{I}$, then there is some $n$ with $r^n \in I \cap W$, contrary to hypothesis. Hence, $x\in L$.  Thus, $L$ is a $W$-factroid of $M$.
\end{proof}

One can identify even more precisely what is happening when primary decompositions are present.

\begin{prop}\label{pr:pdec}
Let $A$ be a commutative Noetherian ring and $L \subseteq M$ finite $A$-modules.  Then $W(L) = A \setminus \bigcup \Ass(M/L)$.  More generally, if $L\subseteq M$ admits a primary decomposition, then $W(L)$ is the complement of the union of the primes belonging to $L$ in $M$, in the sense of \cite[Exercises 4.22-23]{AtMac-ICA}.
\end{prop}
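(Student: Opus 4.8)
The plan is to reduce both assertions to the classical description of the zerodivisors of a module, and then to obtain the general statement by running the module-theoretic form of the primary-decomposition argument. First I would invoke Proposition~\ref{pr:modulefactroid}: since $L$ is an $A$-submodule of $M$, it gives $W(L)=\reg_{M/L}(A)$. As $L\subseteq (L:_M a)$ always holds, $a\in W(L)$ is the same as $(L:_M a)=L$, i.e.\ the nonexistence of $x\in M\setminus L$ with $ax\in L$; so $A\setminus W(L)$ is exactly the set of zerodivisors on $M/L$. Thus the whole proposition reduces to identifying the zerodivisors on $M/L$ with the appropriate union of primes. The case $L=M$ is trivial (both sides equal $A$, and $\Ass(0)=\emptyset$), so I would assume $L$ proper from now on.

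For the first assertion, $A$ is Noetherian and $M/L$ is a finite $A$-module, so I would quote, or quickly reprove, the standard fact that the set of zerodivisors on a finite module $N$ over a Noetherian ring equals $\bigcup_{\p\in\Ass(N)}\p$: one inclusion is immediate from the description of an associated prime as an annihilator $\ann_A(x)$ of a nonzero element, and the other holds because, for $0\neq x\in N$, the ideal $\ann_A(x)$ lies in some maximal member of $\{\ann_A(y):0\neq y\in N\}$, which exists by the ascending chain condition and is automatically prime, hence associated. Applying this with $N=M/L$ and combining with the reduction above yields $W(L)=A\setminus\bigcup\Ass(M/L)$.

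For the general assertion I would fix a minimal (irredundant) primary decomposition $L=\bigcap_{i=1}^{n}Q_i$ in $M$, with each $Q_i$ being $\p_i$-primary, so that $\p_1,\dots,\p_n$ are the primes belonging to $L$ in $M$. For the containment $A\setminus\bigcup_i\p_i\subseteq W(L)$: if $a$ avoids all the $\p_i$ and $ax\in L$, then $ax\in Q_i$ with $a\notin\p_i=\sqrt{\ann_A(M/Q_i)}$, so primariness of $Q_i$ forces $x\in Q_i$ for every $i$, whence $x\in L$. For the reverse containment I would show that every $\p_j$ consists of zerodivisors on $M/L$: by irredundancy, $N_j:=\bigcap_{i\neq j}Q_i$ is not contained in $Q_j$, so I may pick $z\in N_j\setminus Q_j$, and then $z\notin L$ since $L=N_j\cap Q_j$. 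As $a\in\p_j=\sqrt{\ann_A(M/Q_j)}$, some power of $a$ carries $z$ into $Q_j$, so there is a least $m\geq 1$ with $a^m z\in Q_j$; setting $w:=a^{m-1}z$ gives $w\in N_j$, $w\notin Q_j$ (hence $w\notin L$), and $aw=a^m z\in N_j\cap Q_j=L$. Hence $a\notin W(L)$. Combining the two containments gives $W(L)=A\setminus\bigcup_{i=1}^{n}\p_i$, as claimed.

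I expect the main obstacle to be exactly this last step: manufacturing from the radical relation $a\in\sqrt{\ann_A(M/Q_j)}$ a single witness $w\in M\setminus L$ with $aw\in L$. This is where irredundancy of the decomposition is needed (to guarantee a usable $z\in N_j\setminus Q_j$), and where one must be careful about which power of $a$ to peel off; it is the module analogue of the computation in \cite[Exercises~4.22--4.23]{AtMac-ICA}. Once that device is in place, everything else is routine unwinding of the definitions of $W(L)$, of a $T$-factroid, and of a primary submodule, together with the reduction of the first paragraph.
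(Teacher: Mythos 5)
Your proposal is correct and follows the same route as the paper: both reduce via Proposition~\ref{pr:modulefactroid} to identifying the zerodivisors on $M/L$ with the union of associated (or belonging) primes, which the paper simply cites as standard while you supply the details.
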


\begin{proof}
In either situation, the zerodivisors on $M/L$ are precisely the elements of the given union. Then Proposition~\ref{pr:modulefactroid} finishes the proof. 
\end{proof}

\begin{rem} 
Proposition~\ref{pr:pdec} does not always hold when $M=A$ and $L=(0)$ does not admit a primary decomposition, even if the set $\Ass(M)$ is replaced by the set of strong Krull or weak Bourbaki primes of $M$. See \cite[Example 6.4]{nmeSh-sKflat}.
\end{rem}

\begin{rem}
Although the elementary aspects of this subject apply to noncommutative rings, most of the examples and applications we found were for commutative rings.  This leaves the subject ripe for further study over noncommutative rings.
\end{rem}

\begin{rem}\label{sec:endo}
Let $G$ be an abelian group, and let $E \subseteq \operatorname{End}(G) = \operatorname{End}_\Z(G)$.  Define an {\it $E$-factroid of $G$} to be a subgroup $F$ of $G$ such that $\varphi^{-1}(F) \subseteq F$ for all $\varphi \in E$.  Given a subgroup $F$ of $G$, let $E(F)$ denote the set of all $\varphi \in \operatorname{End}(G)$ such that $\varphi^{-1}(F) \subseteq F$.  Then $E(F)$ is a submonoid of the multiplicative monoid of the endomorphism ring $\operatorname{End}(G)$, and a subgroup $F$ of $G$ is an $E$-factroid of $G$ if and only if $E$ is a subset of $E(F)$.  Moreover, $E(F)$ is the set of all endomorphisms $\varphi$ of $G$ such that $\ker(G \stackrel{\varphi}{\to} G \to G/F) \subseteq \ker(G \to G/F)$, where $G \rightarrow G/F$ denotes the canonical projection.

Suppose, for instance, that $G = M$ is an $A$-module.  Then the canonical ring homomorphism $i: A \to \operatorname{End}_\Z(M)$ restricts to a monoid homomorphism $W(F) \to E(F)$, and $F$ is a $T$-factroid of $M$, where $T$ is a subset of $A$, if and only if $F$ is an $i(T)$-factroid of $M$, if and only if $i(T) \subseteq i(W(F))$.  In other words, one has $i(W(F)) = E(F)$.  Note also that $i(Z(A)) \subseteq \operatorname{End}_A(M)$, so, if $A$ is commutative, then any $T$-factroid of $M$ for $T \subseteq A$ is an $E$-factroid of $M$ for some $E \subseteq \End_A(M)$.  Thus in this case, one may avoid group endomorphisms of $M$ that are not $A$-linear.

The dual notion to that of an $E$-factroid of $G$ is the following: say that an \emph{$E$-submodule of $G$} is a subgroup $F$ of $G$ such that $\varphi(F) \subseteq F$ (or, equivalently, $F \subseteq \varphi^{-1}(F)$) for all $\varphi \in E$.  This extends the notion of a $T$-submodule of an $A$-module in the obvious way.  For any  subgroup $F$ of $G$, the set $S(F)$ of all $\varphi \in \operatorname{End}(G)$ such that $\varphi(F) \subseteq F$ is the largest subset (or subring) $E$ of $\operatorname{End}(G)$ such that $F$ is an $E$-submodule of $G$, and in fact $F$ is an $S(F)$-submodule of $G$ in the module-over-a-ring sense.    Moreover, $S(F)$ is equivalently the set of all $\varphi \in \operatorname{End}(G)$ such that $ \ker(G \to G/F) \subseteq \ker(G \stackrel{\varphi}{\to} G \to G/F)$, or, equivalently, such that $  \im(F \to G \stackrel{\varphi}{\to} G) \subseteq \im(F \to G)$, where $F \to G$ is the canonical inclusion.  Furthermore, one has $i(A(F)) = S(F)$ for any additive subgroup $F$ of an $A$-module $M$, where, again, $i$ is the canoncial ring homomomorphism $i: A \to \operatorname{End}_\Z(M)$.

The definitions and equivalences noted above situate the theory of $T$-factroids and $T$-submodules of an $A$-module developed in this paper in the broader context of the category of abelian groups, and, even more broadly, in the context of any abelian category.  A possible future direction is to undertake a study of the theory in these more general categorical contexts.
\end{rem}

\section{$W$-factroids of a ring}
\label{sec:rings}
Recall that by a \emph{$W$-factroid of $A$} we mean a $W$-factroid of the left  $A$-module ${}_AA$, that is, an additive subgroup $F$ of $A$ such that $a \in F$ whenever $w \in W$, $a \in A$, and $wa \in A$.   In this section, we investigate the $W$-factroids of $A$ for various examples and classes of rings $A$ and multiplicative subsets $W$.

\begin{example}\label{ex:integers}
We determine the $W$-factroids of $\Z$ for the various multiplicative subsets $W$ of $\Z$.

For an arbitrary nontrivial subgroup $n\Z$ of $\Z$, one has $$W(n\Z) = \{g \in \Z \mid (n\Z :_\Z g) \subseteq n\Z\} = \{g \in \Z: (g,n) = 1\},$$
    where also  $$W(0\Z) = \Z \setminus \{0 \}.$$
Thus, a  nontrivial subgroup  $n\Z$ of $\Z$ is a $W$-factroid of $\Z$ if and only if all of the elements of $W$ are relatively prime to $n$. 
In particular, if $W = \Z \setminus \{0\}$, then the only $W$-factroids of $\Z$ are $\{0\}$ and $\Z$.
Note that all of the multiplicative subsets $W(n\Z)$ of $\Z$ are saturated, as follows also from Proposition~\ref{pr:modulefactroid}.  Of course $A(n\Z) = \Z$ for all $n$, since there are no other subrings of $\Z$.
\end{example}

\begin{example}\label{ex:rationals}
We determine the $W$-factroids of $\Q$ for its various multiplicative subsets $W$.

    Let $A = M = \Q$.  One has $A(\Z) = \Z$, and therefore, by Proposition \ref{prop:Wunits}, $$W(\Z) = W(\Z) \cap \Q^\times = (A(\Z) \cap \Q^\times)^{-1} =  \{1/n: n \in \Z \setminus \{0 \}\}$$
    is largest submonoid $W$ of $\Q$ for which $\Z$ is a $W$-factroid of $\Q$.  Let $F$ be any proper subgroup of $\Q$.  Then $$A(F) = (F :_\Q F) = \Z[1/p: (1/p)F\subseteq F, \ p \text{ prime}]$$
    and therefore
   \begin{align*}
    W(F)& = (A(F) \cap \Q^\times)^{-1} \\ & = \langle p \text{ prime}: (1/p) F \subseteq F \rangle  \cdot \{1/n: n \in \Z\setminus \{0 \} \} \\
    & = \langle p \text{ prime}: (1/p) F \subseteq F \rangle \cdot W(\Z),
    \end{align*}
    where $\langle - \rangle$ denotes ``submonoid generated by,'' and where $\cdot$ denotes the monoid compositum of pointwise products.   In particular, if $P$ is any set of prime numbers and $F = \Z[1/p: p \in P]$, then $A(F) = F$ and $W(F) = \langle P \rangle  \cdot W(Z)$.  This, along with the characterization of the additive subgroups of $\Q$ \cite{BeZu-addQ}, effectively classifies all $W$-factroids of $\Q$.  
\end{example}

\begin{example}
    One can replace $\Z$ in Example \ref{ex:rationals} with any PID $A$ and $\Q$ with its quotient field $K$ and use the method employed in the example to characterize all of the  $A$-submodules of $K$ that are $W$-factroids of $K$ for the various multiplicative subsets $W$ of $K$.
\end{example}

\begin{defn}
    A \emph{factroid of $A$} is a $\reg(A)$-factroid of $A$, that is, it is an additive subgroup $F$ of $A$ such that $a \in F$ for any  $a \in A$ such that $ba \in F$ for some left nonzerodivisor $b$ of $A$.
\end{defn}

\begin{example}
        Since $W_A((0)) = \reg(A)$, a factroid of $A$ is equivalently a $W_A((0))$-factroid of $A$.  In particular, the ideals $(0)$ and $A$ are both factroids of $A$.  
\end{example}

\begin{example}\label{ex:ideals}
Suppose that $A$ is commutative.  By Proposition~\ref{pr:submoduleprimary}, any primary ideal of $A$ not intersecting $W$ is a $W$-factroid of $A$.
Thus, since any ideal that is maximal among the ideals not intersecting $W$ is prime, any such ideal is a $W$-factroid of $A$.

An immediate consequence of the above is that any primary ideal of $A$ containing only zerodivisors is a factroid of $A$. Since the minimal prime ideals of $A$ contain only zerodivisors, it follows that any minimal prime ideal of $A$ is a factroid of $A$.  In fact, since any minimal prime does not meet $A^\circ$, any minimal prime is an $A^\circ$-factroid of $A$.  More generally, the intersection of any collection of minimal primes of $A$ is a $A^\circ$-factroid of $A$.  It follows, then, that $\sqrt{0}$ is an $A^\circ$-factroid.
\end{example}

Example \ref{ex:ideals} generalizes as follows.

\begin{prop}\label{prop:intprimeary}
Suppose that $A$ is commutative. Any intersection $\bigcap  \q_i$ of primary ideals $\q_i$  of $A$ is an $(A \setminus \bigcup \sqrt{\q_i})$-factroid of $A$.  In particular, any intersection  $\bigcap  \p_i$ of prime ideals $\p_i$ of $A$ is an $(A\setminus \bigcup \p_i)$-factroid of $A$.
\end{prop}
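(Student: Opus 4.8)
The plan is to verify the factroid condition directly from the definition of a primary ideal, one element of $T$ at a time, so that it does not matter that $T := A \setminus \bigcup_i \sqrt{\q_i}$ need not be multiplicatively closed. Write $F := \bigcap_i \q_i$; this is an ideal of $A$, hence in particular an additive subgroup of $A$. By Definition~\ref{def:factroid} it then suffices to check that $(F :_A t) \subseteq F$ for every $t \in T$.

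So fix $t \in T$ and $a \in A$ with $ta \in F$, and fix an index $i$. Then $ta \in \q_i$, while $t \notin \sqrt{\q_i}$ by the choice of $t$. Since $\q_i$ is primary, for this $r = t$ we have either $(\q_i :_A t) = \q_i$ or $t \in \sqrt{\q_i}$; as the latter fails, the former holds, and $ta \in \q_i$ forces $a \in \q_i$. Since $i$ was arbitrary, $a \in \bigcap_i \q_i = F$, which is exactly what we needed. The ``in particular'' clause is then immediate: each prime $\p_i$ is a primary ideal with $\sqrt{\p_i} = \p_i$, so applying the first statement to the family $\{\p_i\}$ gives precisely that $\bigcap_i \p_i$ is an $(A \setminus \bigcup_i \p_i)$-factroid of $A$.

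Alternatively — and this is the same content repackaged through earlier results — one can note that each $\q_i$ is individually an $(A \setminus \sqrt{\q_i})$-factroid of $A$: this is Example~\ref{ex:ideals}, using that $A \setminus \sqrt{\q_i}$ is a multiplicative set not meeting $\q_i$ (as $\q_i \subseteq \sqrt{\q_i}$). Since $T \subseteq A \setminus \sqrt{\q_i}$ for every $i$, each $\q_i$ is a fortiori a $T$-factroid of $A$, and then Proposition~\ref{pr:intersect} (an intersection of $T$-factroids is a $T$-factroid) delivers the claim for $F = \bigcap_i \q_i$. There is no real obstacle here; the statement is essentially an unwinding of the meaning of ``primary.'' The only points worth a word of care are that $T$ is typically not multiplicatively closed, which is why I test the condition elementwise rather than invoke a statement phrased for multiplicative sets, and the degenerate empty family, where $F = A$ and $T = A$ and the conclusion is trivial since $A$ is always a factroid of itself (Example~\ref{rem:basics}).
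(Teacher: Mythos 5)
Your direct argument is exactly the paper's own proof: fix $t \in A \setminus \bigcup_i \sqrt{\q_i}$ and $a$ with $ta \in \bigcap_i \q_i$, and for each $i$ use that $\q_i$ is primary and $t \notin \sqrt{\q_i}$ to conclude $a \in \q_i$. The alternative route via Example~\ref{ex:ideals} and Proposition~\ref{pr:intersect} is a correct repackaging of the same content, but the main line of reasoning coincides with the paper's.
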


\begin{proof}
    Let $I = \bigcap  \q_i$. If $w \in A\setminus \bigcup \sqrt{\q_i}$ and $r \in A$ with $wr \in I$, then, for all $i$, one has $w \notin \sqrt{\q_i}$, whence $r \in \q_i$, so that $r\in I$.  
\end{proof}

The following result essentially follows from  Proposition~\ref{pr:modulefactroid} and  \cite[Lemmas 3.1 and 8.1]{Kis-minprimes}.

\begin{prop}[{cf.\ \cite[Lemmas 3.1 and 8.1]{Kis-minprimes}}]\label{prop:Acircrad}
    Suppose that $A$ is commutative, and let $I$ be an ideal of $A$.
    \begin{enumerate}
        \item  $I$ is proper if and only if $W(I) \subseteq A\setminus I$, with equality if and only if $I$ is prime. 
        \item A prime ideal $\p$ of $A$ contains $I$ if and only if 
        $$W(\p) \subseteq \{a \in A \mid (\sqrt{I}:_A a) \subseteq \p\}$$
        with equality if and only if $\p$ is minimal over $I$.
        \item One has
    $$W(\sqrt{I}) =  \{a \in A \mid (\sqrt{I} :_A a) = \sqrt{I} \} = A\setminus \bigcup \operatorname{Min}(I),$$
    where $\operatorname{Min}(I)$ is the set of all prime ideals of $A$ that are minimal over $I$.
    \item     One has $$W(\sqrt{{0}}) = \{a \in A \mid (\sqrt{0} :_A a) = \sqrt{0}\} = A^\circ.$$ Equivalently, $A^\circ$ is the largest subset $T$ of $A$ such that $\sqrt{0}$ is a $T$-factroid of $A$.  
    \end{enumerate}
\end{prop}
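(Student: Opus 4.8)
The plan is to prove the four parts in order, using Proposition~\ref{pr:modulefactroid} to turn statements about $W_A$ of an ideal into statements about nonzerodivisors on a quotient ring, and Proposition~\ref{prop:intprimeary} together with the localization description of minimal primes for the rest. For (1): $W_A(A) = A$, so $W(I) \subseteq A \setminus I$ already forces $I$ proper, and if $I$ is proper with $a \in I$ then $1 \in (I :_A a) \setminus I$ shows $a \notin W(I)$, so $W(I) \subseteq A \setminus I$ in general. By Proposition~\ref{pr:modulefactroid} (with $M = A$, $L = I$), $W_A(I) = \reg_{A/I}(A)$, which is all of $A \setminus I$ exactly when $A/I$ has no nonzero zerodivisors, i.e.\ is a domain, i.e.\ $I$ is prime. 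For (2): since $\p$ is prime, part (1) gives $W(\p) = A \setminus \p$, so the displayed containment says $(\sqrt I :_A a) \subseteq \p$ for all $a \notin \p$; taking $a = 1$ shows it forces $\sqrt I \subseteq \p$, hence $I \subseteq \p$, and conversely $I \subseteq \p$ together with primeness of $\p$ gives the containment. For the ``equality $\iff$ $\p$ minimal over $I$'' clause I would invoke (cf.\ \cite[Lemmas 3.1 and 8.1]{Kis-minprimes}) that $\p$ is minimal over $I$ exactly when for each $a \in \p$ there are $s \notin \p$, $n \geq 1$ with $sa^n \in I$: then $(sa)^n = s^{n-1}(sa^n) \in I$, so $sa \in \sqrt I$ with $s \notin \p$, witnessing $(\sqrt I :_A a) \not\subseteq \p$ for every $a \in \p$; combined with $I \subseteq \p$ this upgrades the containment to equality, and reading the equality off for each $a \in \p$ conversely recovers the criterion.

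For (3), the first equality is automatic because $\sqrt I \subseteq (\sqrt I :_A a)$ always holds (as $\sqrt I$ is an ideal), so $(\sqrt I :_A a) \subseteq \sqrt I$ is the same as $(\sqrt I :_A a) = \sqrt I$. For the second, Proposition~\ref{pr:modulefactroid} gives $W(\sqrt I) = \reg_{A/\sqrt I}(A)$; since $A/\sqrt I$ is reduced, $\reg(A/\sqrt I) = (A/\sqrt I)^\circ$, whose complement is the union of the minimal primes of $A/\sqrt I$, and pulling back along the projection $A \onto A/\sqrt I$, under which the minimal primes of $A/\sqrt I$ correspond bijectively to the elements of $\operatorname{Min}(I)$, identifies this set with $A \setminus \bigcup \operatorname{Min}(I)$. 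A direct argument also works: Proposition~\ref{prop:intprimeary} applied to $\sqrt I = \bigcap_{\p \in \operatorname{Min}(I)} \p$ gives $A \setminus \bigcup \operatorname{Min}(I) \subseteq W(\sqrt I)$, while if some $a \in W(\sqrt I)$ lay in a minimal prime $\p$ over $I$, then picking $s \notin \p$, $n \geq 1$ with $sa^n \in I \subseteq \sqrt I$ and descending through $sa^k = (sa^{k-1})a \in \sqrt I \Rightarrow sa^{k-1} \in (\sqrt I :_A a) = \sqrt I$ from $k = n$ down to $k = 0$ would force $s \in \sqrt I \subseteq \p$, a contradiction. Finally, (4) is the case $I = (0)$ of (3): $\operatorname{Min}((0))$ is the set of minimal primes of $A$, so $A \setminus \bigcup \operatorname{Min}((0)) = A^\circ$ by definition, and the ``largest $T$'' reformulation is just the general fact that $W(F)$ is the largest subset of $A$ for which $F$ is a $T$-factroid of $A$ (the corollary following Definition~\ref{def:WofF}).

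I expect the only genuine obstacle to be the ``equality $\iff$ $\p$ minimal over $I$'' clause in (2): every other step is an unwinding of definitions or a direct citation, whereas this one really uses the localization characterization of the primes minimal over an ideal, and some care is needed translating $(\sqrt I :_A a) \not\subseteq \p$ into the existence of $s \notin \p$, $n \geq 1$ with $sa^n \in I$ in both directions.
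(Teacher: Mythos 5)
Your proof is correct and follows essentially the same route as the paper's: Proposition~\ref{pr:modulefactroid} for part~(1), the citation to Kist's Lemmas 3.1 and 8.1 (applied to $A/I$) for part~(2), the reducedness of $A/\sqrt{I}$ together with Proposition~\ref{pr:modulefactroid} for part~(3) (this is exactly the paper's ``alternative'' derivation), and specialization to $I=(0)$ for part~(4). The extra material you supply — unpacking Kist's criterion as ``$\p\supseteq I$ is minimal over $I$ iff for each $a\in\p$ there exist $s\notin\p$ and $n\geq 1$ with $sa^n\in I$'' (note this requires $I\subseteq\p$ as a standing hypothesis, which your surrounding argument does guarantee), and a second, self-contained proof of part~(3) via Proposition~\ref{prop:intprimeary} plus a descent — is accurate and just fills in steps the paper leaves to the reader.
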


\begin{proof}
  By Proposition~\ref{pr:modulefactroid}, $W(I)$ is the set of all nonzerodivisors of $A$ mod $I$.  It follows that $W(I) \subseteq A\setminus I$ if and only if $I$ is proper. But $I$ is prime if and only if $I$ is proper and every element of $A\setminus I$ is a nonzerodivisor mod $I$,  which in turn holds if and only if $A \setminus I \subseteq W(I) \subseteq A \setminus I$.  This proves statement (1).  Statement (2)  is follows from \cite[Lemmas 3.1 and 8.1]{Kis-minprimes} applied to the ring $A/I$.
  Finally, from statements (2) and (1)  and the fact that $\sqrt{I} = \bigcap_{\p \in \operatorname{Min}(I)} \p$ it follows that
  $$W(\sqrt{I}) = \bigcap_{\p \in \operatorname{Min}(I) } W(\p) = \bigcap_{\p \in \operatorname{Min}(I) }(A \setminus \p) = A\setminus \bigcup \operatorname{Min}(I),$$
  thus proving statement (3), from which (4) immediately follows.  Alternatively, statement (3) follows by lifting the equality $\reg(A/\sqrt{I}) =   (A/\sqrt{I})^\circ$ to $A$ (which holds since $A/\sqrt{I}$ is a reduced ring)  and  then applying the identity $W_A(\sqrt{I}) = \reg_{A/\sqrt{I}}(A)$ from Proposition \ref{pr:modulefactroid}.
\end{proof}

\begin{prop}
A nontrivial commutative ring $A$ is an integral domain if and only if $A$ is the only nonzero ideal of $A$ that is also a factroid of $A$.
\end{prop}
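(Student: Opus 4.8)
The plan is to prove both implications directly, using the identity $\reg(A) = A \setminus \{0\}$ when $A$ is a domain, and, for the reverse direction, invoking the fact recorded in Example~\ref{ex:ideals} that every minimal prime of a commutative ring is a factroid of that ring.

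For the forward direction, suppose $A$ is an integral domain, so $\reg(A) = A\setminus\{0\}$, and let $I$ be a nonzero ideal of $A$ that happens to be a factroid of $A$. Choose $0 \neq b \in I$; then $b$ is a nonzerodivisor, i.e., $b \in \reg(A)$. For any $a \in A$ we have $ba \in I$ because $I$ is an ideal, and since $I$ is a $\reg(A)$-factroid, $ba \in I$ with $b \in \reg(A)$ forces $a \in I$. As $a$ was arbitrary, $I = A$. Hence $A$ is the only nonzero ideal of $A$ that is a factroid.

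For the converse I would argue by contraposition: assuming $A$ is not a domain, I must produce a nonzero proper ideal of $A$ that is a factroid. Since $A$ is nontrivial it has a maximal ideal, hence a prime ideal, and hence a minimal prime $\p$. If $\p = (0)$ then $(0)$ would be prime and $A$ would be a domain, contrary to hypothesis; so $\p \neq (0)$. Moreover $\p \neq A$ since prime ideals are proper, and by Example~\ref{ex:ideals}, $\p$ is a factroid of $A$. Thus $\p$ is a nonzero ideal distinct from $A$ that is a factroid of $A$, so $A$ is not the only such ideal, completing the contrapositive.

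There is no real obstacle here; the only point that needs a moment's care is the step in the converse that a minimal prime of a non-domain is necessarily nonzero, which is immediate from the fact that $(0)$ is prime precisely when $A$ is a domain. One could instead split into cases — $A$ non-reduced, where $\sqrt{0}$ is a nonzero proper factroid ideal by Proposition~\ref{prop:Acircrad}(4) (noting $\reg(A) \subseteq A^\circ$), versus $A$ reduced with at least two minimal primes — but the single minimal-prime argument covers both situations uniformly and is cleaner.
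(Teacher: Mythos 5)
Your proof is correct and follows essentially the same route as the paper's: the forward direction clears denominators using any nonzero element of the ideal as a regular element, and the converse uses the fact (Example~\ref{ex:ideals}) that a minimal prime is a factroid. The paper phrases the converse directly rather than by contraposition, but the substance is identical.
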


\begin{proof}
Suppose that $A$ is an integral domain.  Since the ring $A$ is nontrivial, one has $A \neq \{0\}$. Moreover, if $I$ is both a nonzero ideal and a factroid of $A$, then $r \in I$ for some nonzero (hence regular) $r$, which implies $ar \in I$, and therefore $a \in I$, for any $a \in A$, whence $I = A$.  Conversely, suppose that $A$ is the only nonzero ideal of $A$ that is also a factroid of $A$.  Since the ring $A$ is nontrivial, $A$ has some minimal prime ideal $P$.  Then $P$ is a proper ideal of $A$ that is also a factroid of $A$, and therefore $P = (0)$.  Thus $(0)$ is prime,  whence $A$ is an integral domain.
\end{proof}

\begin{prop}\label{prop:WcapM}
    One has  $[1]^{W \cap A^\times}_A = \Z\langle (W \cap A^\times)^{-1}\rangle$. Moreover, if $1 \in W$ and $F$ is a $W$-factroid of $A$, then the following are equivalent: \begin{enumerate}[(a)]
        \item $W \cap F \neq \emptyset$.
        \item $1 \in F$.
        \item $[1]^{W \cap A^\times}_A\subseteq F$.
        \item  $(W \cap A^\times)^{-1} \subseteq F$.
    \end{enumerate}
\end{prop}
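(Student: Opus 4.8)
The plan is to read off the opening identity directly from Proposition~\ref{prop:unitfactroid}, and then to establish the four-way equivalence by the short cycle (a)$\Rightarrow$(b)$\Rightarrow$(c)$\Rightarrow$(d)$\Rightarrow$(a), using only the definition of a generated factroid and the monotonicity of the $T$-factroid notion in $T$.

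First I would observe that $W \cap A^\times \subseteq A^\times$, so Proposition~\ref{prop:unitfactroid} applies verbatim with $T = W \cap A^\times$ and yields $[1]^{W \cap A^\times}_A = \Z\langle (W \cap A^\times)^{-1}\rangle$. This settles the first sentence, and it also records the containment $(W \cap A^\times)^{-1} \subseteq [1]^{W \cap A^\times}_A$, since a unital subring contains any generating set; this containment is exactly what links (c) and (d).

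For the equivalences, assume $1 \in W$ and that $F$ is a $W$-factroid of $A$. For (a)$\Rightarrow$(b): if $w \in W \cap F$, then $w = w\cdot 1 \in F$ with $w \in W$, so $1 \in F$ because $F$ is a $W$-factroid. For (b)$\Rightarrow$(c): since $W \cap A^\times \subseteq W$, the remark following Definition~\ref{def:factroid} shows that $F$ is also a $(W \cap A^\times)$-factroid of $A$; as $1 \in F$, the minimal $(W \cap A^\times)$-factroid containing $1$, namely $[1]^{W \cap A^\times}_A$, is contained in $F$. For (c)$\Rightarrow$(d): combine (c) with the containment $(W \cap A^\times)^{-1} \subseteq [1]^{W \cap A^\times}_A$ noted above. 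For (d)$\Rightarrow$(a): the hypothesis $1 \in W$ together with $1 \in A^\times$ and $1^{-1} = 1$ gives $1 \in (W \cap A^\times)^{-1} \subseteq F$, hence $1 \in W \cap F$, so $W \cap F \neq \emptyset$.

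I do not expect a genuine obstacle here: the statement is essentially bookkeeping on top of Proposition~\ref{prop:unitfactroid}. The only two points that need a moment's care are the reduction in (b)$\Rightarrow$(c) to a $(W \cap A^\times)$-factroid — one must recall that shrinking $T$ only enlarges the class of $T$-factroids, so that the generated $(W\cap A^\times)$-factroid is available inside $F$ — and the trivial but essential remark in (d)$\Rightarrow$(a) that $1 \in W$ forces $1$ into $(W \cap A^\times)^{-1}$.
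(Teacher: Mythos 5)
Your proof is correct and follows essentially the same route as the paper's: the first identity is read off Proposition~\ref{prop:unitfactroid}, (a)$\Leftrightarrow$(b) uses the $W$-factroid property applied to $w\cdot 1$, and (b)$\Leftrightarrow$(c) uses that $F$ is also a $(W\cap A^\times)$-factroid. The only cosmetic difference is that you arrange the four statements in a cycle and spell out the easy links to (d), which the paper leaves implicit.
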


\begin{proof}
By Proposition \ref{prop:unitfactroid}, 
$[1]^{W \cap A^\times}_A = \Z\langle(W \cap A^\times)^{-1}\rangle$.   Let $F$ be a $W$-factroid of $A$ containing an element $w$ of $W$.  Since $w1 \in F$, one has $1 \in F$.  Conversely, if $1 \in F$, then $F$ contains an element of $W$.  Finally, $1 \in F$ if and only if $[1]^{W \cap A^\times}_A\subseteq F$, since $F$ is a $(W \cap A^\times$)-factroid of $A$.  
\end{proof}

\begin{cor}\label{cor:WcapM}
    One has  $[1]^{A^\times}_A = \Z\langle A^\times \rangle$.  Moreover, the following are equivalent for a factroid $F$ of $A$: \begin{enumerate}[(a)]
        \item $F$ contains a left-regular element of $A$.
        \item $1\in F$.
        \item $A^\times \subseteq F$.
        \item $\Z\langle A^\times \rangle \subseteq F$.
        \item Any sum of finitely many units of $A$ lies in $F$.
    \end{enumerate}
\end{cor}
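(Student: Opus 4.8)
The plan is to deduce this corollary directly from Proposition~\ref{prop:WcapM} (and the first-part identity in Proposition~\ref{prop:unitfactroid}) by specializing the multiplicative set to $W = \reg(A)$. Before doing so I would record three elementary bookkeeping facts: every unit of $A$ is left-regular, so $A^\times \subseteq \reg(A)$ and hence $\reg(A) \cap A^\times = A^\times$; the set $(A^\times)^{-1}$ of inverses of units is again $A^\times$, so $\Z\langle (A^\times)^{-1}\rangle = \Z\langle A^\times\rangle$; and $1 \in \reg(A)$. Since, by definition, a factroid of $A$ is a $\reg(A)$-factroid of $A$, the two hypotheses ``$1 \in W$'' and ``$F$ is a $W$-factroid of $A$'' appearing in Proposition~\ref{prop:WcapM} are satisfied with $W = \reg(A)$.

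For the first assertion, I would apply the opening identity of Proposition~\ref{prop:WcapM} with this $W$: $[1]^{A^\times}_A = [1]^{\reg(A)\cap A^\times}_A = \Z\langle(\reg(A)\cap A^\times)^{-1}\rangle = \Z\langle(A^\times)^{-1}\rangle = \Z\langle A^\times\rangle$, where the second and last equalities use the bookkeeping facts above.

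For the equivalences I would feed $W = \reg(A)$ into the ``moreover'' part of Proposition~\ref{prop:WcapM}. Its four equivalent conditions translate, in order, to: $\reg(A)\cap F \neq \emptyset$, which is exactly condition (a) (``$F$ contains a left-regular element'', since left-regular means left nonzerodivisor); $1 \in F$, which is (b); $[1]^{\reg(A)\cap A^\times}_A = [1]^{A^\times}_A \subseteq F$, which by the first assertion is $\Z\langle A^\times\rangle \subseteq F$, i.e.\ condition (d); and $(\reg(A)\cap A^\times)^{-1} = A^\times \subseteq F$, i.e.\ condition (c). This yields (a) $\Leftrightarrow$ (b) $\Leftrightarrow$ (c) $\Leftrightarrow$ (d). It then remains to fold in (e): unwinding the definition, since $A^\times$ is a group it coincides with $\langle A^\times\rangle$, so $\Z\langle A^\times\rangle$ is precisely the set of finite sums of units of $A$; as $F$ is an additive subgroup, $A^\times \subseteq F$ forces every such finite sum into $F$, and conversely $A^\times \subseteq \Z\langle A^\times\rangle$ trivially, so (c), (d), and (e) all say the same thing.

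There is no genuinely hard step here: the corollary is a direct specialization of Proposition~\ref{prop:WcapM}. The only points requiring (routine) care are the identifications $\reg(A)\cap A^\times = A^\times$ and $(A^\times)^{-1} = A^\times$, and the observation that ``subring generated by $A^\times$'' equals ``set of finite sums of units'' because $-1$ and the inverse of every unit are themselves units, so no differences need to be introduced separately.
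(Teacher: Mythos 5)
Your proof is correct and follows the same route the paper intends: the corollary is a direct specialization of Proposition~\ref{prop:WcapM} to $W = \reg(A)$, using the observations $A^\times \subseteq \reg(A)$, $(A^\times)^{-1} = A^\times$, and $1 \in \reg(A)$, together with the identification of condition (e) with (c) and (d) via the fact that $\Z\langle A^\times\rangle$ is the set of finite sums of units (because $-1 \in A^\times$ absorbs signs). The paper gives no separate proof, leaving it as an immediate consequence, which is exactly what you reconstructed.
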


From the above, we obtain the following.  Note that the rings in question are called \emph{S-rings} in \cite{Ra-genu}.
\begin{thm}
Let $A$ be a ring for which every element is a sum of units (e.g., a local ring or any quotient or localization of $\Z$ at a multiplicative subset).  Then $A$ is the only factroid of $A$ that contains a left-regular element. Thus, if $A$ is moreover an integral domain, then the only factroids of $A$ are $\{0\}$ and $A$.
\end{thm}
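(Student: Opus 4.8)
The plan is to derive both assertions from Corollary~\ref{cor:WcapM}. First I would take an arbitrary factroid $F$ of $A$ that contains a left-regular element of $A$. By the equivalence of conditions (a) and (e) in Corollary~\ref{cor:WcapM}, every sum of finitely many units of $A$ then lies in $F$; equivalently, by (a)$\Leftrightarrow$(d), $F$ contains the subring $\Z\langle A^\times\rangle$. Since by hypothesis every element of $A$ is a sum of units, this forces $F = A$, which is the first assertion.

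To justify that the listed classes of rings do satisfy the hypothesis, I would note only the routine points: in a local ring with maximal ideal $\m$, any non-unit $a$ equals $(1+a) - 1$, a difference of two units, while units are trivially sums of units; and for $A = W^{-1}\Z$ or $A = \Z/n\Z$, each element is an integer multiple of $1$, hence a sum of copies of $\pm 1 \in A^\times$. For the final sentence, I would assume in addition that $A$ is an integral domain, so that $\reg(A) = A \setminus \{0\}$ and hence every nonzero element of $A$ is left-regular. If $F$ is a factroid of $A$ with $F \neq \{0\}$, choose $0 \neq a \in F$; then $F$ contains the left-regular element $a$, so $F = A$ by what was just proved. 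Together with the fact that $\{0\}$ and $A$ are themselves factroids of $A$ (the former because every element of $\reg(A)$ is by definition a nonzerodivisor on ${}_AA$; cf.\ Example~\ref{rem:basics} and Corollary~\ref{cor:W0}), this shows that $\{0\}$ and $A$ are exactly the factroids of $A$.

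I do not anticipate a genuine obstacle here: the substantive content has already been front-loaded into Corollary~\ref{cor:WcapM}, and the remaining steps are bookkeeping. The only point meriting a moment's attention is verifying that $\{0\}$ genuinely qualifies as a factroid in the integral-domain case, which is immediate from the definition of $\reg(A)$.
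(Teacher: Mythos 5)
Your proposal is correct and takes essentially the same route as the paper, which derives the theorem directly from Corollary~\ref{cor:WcapM}: if a factroid $F$ contains a left-regular element, then by (a)$\Leftrightarrow$(e) every finite sum of units lies in $F$, and the hypothesis forces $F = A$; the integral-domain case follows since then $\reg(A) = A \setminus \{0\}$. The paper states this without spelling out the steps, and your write-up supplies the same bookkeeping.
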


Our next result implies that, for any ring $A$ of algebraic numbers, the only factroids of $A$ are $\{0\}$ and $A$.

\begin{prop}\label{pr:fieldext}
     Let $D$ be an integral domain with quotient field $K$, let $L/K$ be any algebraic field extension, and let $A$ be any subring of $L$ containing $D$.  Then for any $\reg(A)$-factroid $F$ of any $A$-submodule $M$ of $L$ such that $F$ contains $D$, $F$ also contains $A$.  In particular, $A$ is the only factroid of $A$ containing $D$.
\end{prop}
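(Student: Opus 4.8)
The plan is to reduce the statement to the single claim that every $a\in A$ lies in $F$, and to prove that claim by using the algebraicity of $a$ over $K$ to manufacture a nonzero $t\in A$ with $ta\in D\subseteq F$; the $\reg(A)$-factroid (i.e.\ $\reg(A)$-saturation) property of $F$ then drags $a$ down into $F$.

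First I would dispose of two preliminaries. Since $A$ is a subring of the field $L$, it is an integral domain, so $\reg(A)=A\setminus\{0\}$ and a $\reg(A)$-factroid of $M$ is precisely an $(A\setminus\{0\})$-saturated additive subgroup of $M$. Second, because $D$ is unital we have $1\in D\subseteq F\subseteq M$, and as $M$ is an $A$-module this gives $A=A\cdot 1\subseteq M$; in particular each $a\in A$ is an element of $M$, so it is meaningful to ask whether $a\in F$.

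Now fix $a\in A$. If $a=0$, then $a\in F$ because $F$ is a subgroup, so assume $a\neq 0$. Since $L/K$ is algebraic, $a$ is a root of some nonzero polynomial over $K=\Frac(D)$, and clearing denominators produces a nonzero $p(X)=\sum_{i=0}^{n} d_i X^i\in D[X]$ with $p(a)=0$; I would choose $n=\deg p$ minimal, so that $n\geq 1$ and $d_n\neq 0$. Put $t:=\sum_{i=1}^{n} d_i a^{i-1}\in A$. Multiplying by $a$ and using $p(a)=0$ yields $ta=\sum_{i=1}^{n} d_i a^{i}=-d_0\in D\subseteq F$. The one point that needs care is that $t\neq 0$: if $t=0$, then $a$ would be a root of the nonzero polynomial $\sum_{i=1}^{n} d_i X^{i-1}\in D[X]$, which has degree $n-1<n$, contradicting the minimality of $n$. (Equivalently, one may take $p$ to be a denominator-cleared minimal polynomial of $a$ over $K$, so that $a$ is a root of no nonzero polynomial over $K$ of degree $<n$.) Now $t\in\reg(A)$, $a\in M$, and $ta\in F$, so the $\reg(A)$-factroid property of $F$ gives $a\in F$. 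Hence $A\subseteq F$.

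For the final assertion I would specialize to $M=A$: if $F$ is a factroid of $A$ containing $D$, then $A\subseteq F\subseteq A$, so $F=A$; that is, $A$ is the only factroid of $A$ containing $D$. The whole argument is short, and the only genuinely delicate step is the verification that the witness $t$ is nonzero, which is exactly where minimality of the chosen polynomial relation enters.
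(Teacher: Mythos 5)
Your proof is correct and takes essentially the same route as the paper: both start from an algebraic relation for $a$ over $K$, clear denominators to get a $D$-coefficient relation, and read off a nonzero $t\in A$ (your $t=\sum_{i\geq 1} d_i a^{i-1}$, the paper's $f(\alpha)$) with $ta\in D\subseteq F$, so that $\reg(A)$-saturation forces $a\in F$. The only cosmetic difference is in certifying $t\neq 0$: you invoke minimality of the degree of the relation, while the paper invokes irreducibility of the minimal polynomial to conclude $g(0)\neq 0$ and hence $f(\alpha)\neq 0$; these are equivalent observations.
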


\begin{proof}
Let $M$ be an $A$-submodule of $L$, let $F$ be any $\reg(A)$-factroid of $M$ such that $D \subseteq F$, and let $0\neq \alpha \in A$.  Since $\alpha$ is algebraic over $K$, we can let $0 \neq h\in K[x]$ be the unique monic minimal polynomial of $\alpha$.  Choose  any nonzero $c \in D$ such that $g := ch \in D[x]$.  Since $g$ is irreducible in $K[x]$ and $\alpha \neq 0$, the polynomial $x$ does not divide $g$, so $0 \neq g(0) \in D \subseteq F$. But $g = -xf + g(0)$ for some polynomial $f\in D[x]$, so evaluating at $\alpha$ we get $\alpha f(\alpha) = g(0) \in F$.  Since $g(0) \neq 0$, we have $f(\alpha) \in A\setminus \{0\} = \reg(A)$, so $\alpha \in F$.  Since $\alpha \in A \setminus \{0\}$ was arbitrary, we have $A \subseteq F$.
The final statement of the proposition follows from the special case $M=A$.
\end{proof}

\begin{cor}
 If $A$ is any subring of the algebraic closure of $\Q$, then the only factroids of $A$ are $\{0\}$ and $A$.
\end{cor}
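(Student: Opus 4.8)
The plan is to deduce this immediately from Proposition~\ref{pr:fieldext} by specializing the data $D \subseteq A \subseteq L$ appropriately. Let $A$ be a subring of $\overline{\Q}$. The first step is to locate a suitable base domain $D$: since $A$ is a nontrivial ring (it contains $1$ and $0$, and $1 \neq 0$ because $\overline{\Q}$ is a field), the image of the canonical map $\Z \to A$ is either $\Z$ itself (if $\chr A = 0$) or a quotient $\Z/n\Z$. But $A \subseteq \overline{\Q}$ forces $\chr A = 0$, so the prime ring of $A$ is $\Z$, and hence $D := \Z$ is a subring of $A$. Its fraction field is $K = \Q$, and $A$ sits inside $L := \overline{\Q}$, which is an algebraic field extension of $\Q$.

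Next I would check that the hypotheses of Proposition~\ref{pr:fieldext} are met with these choices: $D = \Z$ is an integral domain with quotient field $K = \Q$; $L/K = \overline{\Q}/\Q$ is an algebraic field extension; and $A$ is a subring of $L$ containing $D$. The proposition then applies, and its final clause states that $A$ is the only factroid of $A$ containing $D = \Z$. It remains to rule out any other factroid, i.e., to show that a factroid of $A$ not containing $\Z$ must be $\{0\}$. For this I would invoke Corollary~\ref{cor:WcapM}: a factroid $F$ of $A$ either contains a left-regular element of $A$ or it does not. Since $A$ is an integral domain, $\reg(A) = A \setminus \{0\}$, so "$F$ contains a left-regular element'' just means $F \neq \{0\}$. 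If $F \neq \{0\}$, then by Corollary~\ref{cor:WcapM} one has $1 \in F$, hence $\Z\langle A^\times\rangle \subseteq F$, and in particular $1 \in F$ gives $\Z = \Z\langle\{1\}\rangle \subseteq F$ since $F$ is an additive subgroup; then Proposition~\ref{pr:fieldext} yields $F = A$. Thus the only factroids are $\{0\}$ and $A$.

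There is no real obstacle here — the corollary is a direct corollary of Proposition~\ref{pr:fieldext}, and the only thing to verify carefully is that $\chr A = 0$ so that $\Z$ embeds in $A$, which is forced by $A \subseteq \overline{\Q}$. If one prefers not to lean on Corollary~\ref{cor:WcapM}, an alternative is purely self-contained: given a nonzero factroid $F$ of $A$, pick $0 \neq r \in F$; since $r \in \reg(A)$ and $r \cdot 1 \in F$, saturation gives $1 \in F$, hence $n = 1 + \cdots + 1 \in F$ for all $n \in \Z$ (using that $F$ is an additive subgroup), so $\Z \subseteq F$, and Proposition~\ref{pr:fieldext} finishes it. Either way the corollary follows in a few lines.
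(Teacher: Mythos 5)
Your proof is correct and takes essentially the same route as the paper: the paper's proof is precisely your ``self-contained alternative'' (pick $0 \neq u \in F$, use $u \cdot 1 = u \in F$ and saturation to get $1 \in F$, hence $\Z \subseteq F$, then invoke Proposition~\ref{pr:fieldext}). The only differences are that you explicitly verify $\chr A = 0$ (which the paper leaves implicit) and you offer Corollary~\ref{cor:WcapM} as an alternative way to reach $\Z \subseteq F$; both are fine but not substantively different.
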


\begin{proof}
 If $F$ is any nonzero factroid of $A$, then there is some $0\neq u\in F$, so that $u \cdot 1 = u \in F$, whence $1\in F$, so $\Z \subseteq F$ since $\Z$ is the subgroup of $A$ generated by $1$.  The result then follows from Proposition~\ref{pr:fieldext}.
\end{proof}

The proof of the following proposition is postponed until Section \ref{sec:Euclidean}, as it is an immediate corollary of Theorem \ref{prop:normeuclidean},  which generalizes it to the Euclidean domains.

\begin{prop}\label{ex:polysimple}
Let $A=k[x]$, where $k$ is a field.  The  factroids of $A$ are precisely the sets \[
C_n := \{f\in A \mid \deg f \leq n\}
\]
for all $n \in \N \cup \{\pm \infty\}$.  Moreover, $A$ is not principal, but each $C_n$ for $n < \infty$ is principal, generated by any element of $A$ of degree $n$.
\end{prop}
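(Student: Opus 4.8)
The plan is to show that the sets $C_n = \{f \in k[x] \mid \deg f \le n\}$ (with the convention $\deg 0 = -\infty$, so $C_{-\infty} = \{0\}$ and $C_{\infty} = A$) are exactly the factroids of $A = k[x]$. That $C_{-\infty} = \{0\}$ and $C_\infty = A$ are factroids is already known (Example~\ref{rem:basics}, since $k[x]$ is a domain, so $\reg(A) = A \setminus \{0\}$ and every nonzero element is a nonzerodivisor). For a finite $n \ge 0$, I would check directly that $C_n$ is a $\reg(A)$-factroid: it is a $k$-subspace, hence an additive subgroup, and if $b \in A \setminus \{0\}$ and $bf \in C_n$, then $\deg b + \deg f = \deg(bf) \le n$, so $\deg f \le n - \deg b \le n$, giving $f \in C_n$. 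So every $C_n$ is a factroid.

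The substantive direction is that there are no others. Let $F$ be a factroid of $A$ with $F \notin \{\{0\}, A\}$; I want to produce $n \in \N$ with $F = C_n$. First, by Corollary~\ref{cor:WcapM}, since $F \ne A$, $F$ contains no unit of $A$, i.e.\ $F \cap (k \setminus \{0\}) = \emptyset$, and in fact $F$ contains no nonzero constant (equivalently $1 \notin F$); combined with $F$ being a $k$-subspace, this forces $F \cap C_0 = \{0\}$. Now set $n := \sup\{\deg f \mid f \in F\} \in \{-\infty\} \cup \N \cup \{\infty\}$. Since $F \ne \{0\}$ we have $n \ge 0$ if $n$ is not $-\infty$; I claim $n < \infty$ and $F = C_n$. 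The key trick is the Euclidean/degree argument: pick $f \in F$ with $\deg f = n$ (assuming for the moment the sup is attained and finite) — then for \emph{any} $g \in A$ with $\deg g \le n$, we must show $g \in F$. Here is where I use the factroid property cleverly: the idea is that $g \cdot f$ and $f \cdot f$ both... hmm, rather, the cleanest route is: for $g$ with $\deg g \le n = \deg f$, consider that $f$ and $g$ can be combined so that $f$ divides an element of $F$ of controlled degree. Actually the natural move is: since $F$ is a $k$-subspace containing $f$ of degree exactly $n$, and $F$ absorbs division by nonzero polynomials, I want to show $x^i \in F$ for all $i \le n$. Given $f = \sum_{i=0}^n c_i x^i \in F$ with $c_n \ne 0$: multiply by nothing, but note $x f \notin F$ necessarily; instead use that $(x^{n+1} - \text{lower}) \cdot 1$... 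Let me restructure: the robust argument is that for any monic $g$ of degree $n$, $f - c_n g$ has degree $< n$, so it suffices to show $C_{n-1} \subseteq F$ and that one monic degree-$n$ element lies in $F$ (then all of $C_n$ does, by subspace closure). For the monic degree-$n$ element: $c_n^{-1} f \in F$ works. For $C_{n-1} \subseteq F$: given $h$ with $\deg h \le n - 1$, pick any $b \in A$ with $\deg b = 1$ and $\deg(bh) \le n$... this still needs $bh \in F$. The actual mechanism must be: take $h \in C_{n-1}$; then $\deg(h) < \deg(f) = n$, and I want some nonzero $b$ with $bh \in F$. Take $b$ such that $bh$ is a nonzero $k$-multiple of $f$ plus something already known to be in $F$ — but degrees won't match unless $\deg b = n - \deg h$. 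If $\deg b = n - \deg h$ then $\deg(bh) = n$, so $bh = (\text{leading coeff ratio}) f + r$ with $\deg r < n$; this reduces membership of $h$ to membership of lower-degree elements, suggesting an induction on $\deg h$ downward from $n$. That is the main obstacle: making the induction bottom out, which it does because at degree $0$ we need $F \cap C_0 = \{0\}$... but wait, that would say $C_0 \not\subseteq F$!

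So I must be more careful: the correct statement to induct on is that $F$ contains every polynomial of degree \emph{exactly} $m$ that appears, and the right invariant is that $F = C_n$ where $n$ is the maximal degree, \emph{and} this is only consistent because a $k$-subspace of $A$ closed under division by nonzero polynomials which contains \emph{some} element of degree $n$ must contain $x^n, x^{n-1}, \dots$ — here is the clean version: if $f \in F$ has degree $n$, write $f = c_n x^n + \dots$; for each $j < n$, the polynomial $x^{n-j} \cdot x^j = x^n$ shows nothing new, but consider instead: $F$ contains $f$, hence (dividing by the nonzero constant $c_n$) a monic $\tilde f$ of degree $n$; then for any $a \in A$ with $a \tilde f \in F$ we get $a \in F$ — in particular taking $a$ ranging over all monic polynomials such that $a \tilde f$ lands back in $F$... the issue is I only know one element of $F$. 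The resolution, which I expect to be the real content and the hard part, is to first establish $n < \infty$ (if $n = \infty$, pick $f, g \in F$ of coprime-ish... actually pick $f \in F$ nonzero of minimal degree $d$; then for any $h \in F$, division gives $h = qf + r$, $\deg r < d$, and if $r \ne 0$ then... $r = h - qf$, but $qf$ need not be in $F$! $F$ is not an ideal.). This shows the naive division argument fails, confirming that the heart of the proof is a genuinely clever use of the factroid axiom, most likely: the nonzero element of $F$ of minimal degree $d$ must have $d = 0$ is false, so instead one shows $F \cap C_{d-1} = \{0\}$ forces, via $bf$ arguments with $\deg b$ chosen to hit lower degrees, that actually $d$ can be taken to be... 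I would at this point invoke Theorem~\ref{prop:normeuclidean} (as the paper explicitly says the proof is postponed and this proposition is a corollary of it), applying the general Euclidean-domain classification with the degree function as the Euclidean function; the principality of $C_n$ for finite $n$ then follows since any degree-$n$ element $f$ has $[f]^{\reg(A)}_A \supseteq C_n$ (shown above, reversed) and $\subseteq C_n$ (as $[f]$ is a factroid containing $f$, hence one of the $C_m$ with $m \ge n$, and minimality plus the classification pins it to $C_n$), while $A$ itself is not principal because no single element generates it as a factroid — any $[f]$ with $f \ne 0$ equals $C_{\deg f} \ne A$, and $[0] = \{0\} \ne A$.
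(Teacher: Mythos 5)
Your verification that each $C_n$ is a factroid is correct, and your final fallback---invoking Theorem~\ref{prop:normeuclidean}---is exactly what the paper does: the proof of Proposition~\ref{ex:polysimple} is explicitly postponed and then read off from the Euclidean-domain classification with $\sigma = \deg$ non-archimedean. The principality claims you draw from that classification also match the paper.

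However, your exploratory direct argument contains a genuine error that you notice mid-stream but never resolve. You cite Corollary~\ref{cor:WcapM} to conclude that $F \neq A$ forces $F$ to contain no unit, hence $F \cap C_0 = \{0\}$. This is backwards. In $k[x]$ every nonzero element is left-regular, so any factroid $F \supsetneq \{0\}$ contains a regular element; Corollary~\ref{cor:WcapM} then forces $1 \in F$ and hence $k = C_0 \subseteq F$. That is also consistent with the conclusion of the proposition, since each $C_n$ with $n \geq 0$ contains all constants. You flag the contradiction yourself near the end but leave it unrepaired.

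More importantly, you never locate the Euclidean-division trick that makes a self-contained proof work, and this is the one idea the paper's proof of Theorem~\ref{prop:normeuclidean}(1) actually turns on. You try dividing an $h \in F$ by a minimal-degree $f \in F$ and correctly observe that the resulting $qf$ need not lie in $F$. The right move is to divide in the \emph{opposite} order, with an induction on $n$: suppose $F$ contains some $f$ with $\deg f \geq n$, and let $g$ be any nonzero polynomial with $\deg g \leq n$. Write $f = qg + r$ with $\deg r < \deg g \leq n$. Since $\deg r < n \leq \deg f$, the inductive hypothesis (with bound $\deg r$ in place of $n$, and $F$ still contains $f$ of degree $\geq \deg r$) gives $r \in F$, so $qg = f - r \in F$. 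Because $\deg f > \deg r$ we have $f \neq r$, hence $q \neq 0$, and the factroid axiom then gives $g \in F$. That single inversion---dividing the known large-degree element of $F$ by the target $g$, rather than dividing a target by an element of $F$---is what your proposal is missing.
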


\begin{example}\label{ex:polymore}
    In light of Examples~\ref{ex:integers} and \ref{ex:rationals}, it is natural to ask whether we can restrict our attention to saturated multiplicative sets. That is, is it always true that the $W$-factroids of $A$ and $\widetilde W$-factroids of $A$ coincide, where $\widetilde W$ is the saturation of $W$?  The answer is no.  For a counterexample, let $A = k[x]$, let $W$ be the set of nonzero polynomials of even degree, and let 
    $F = \spn_k\{1,x^2\}$.  Note that $\widetilde W = A^\circ$, since the square of any polynomial has even degree.  Hence by Proposition~\ref{ex:polysimple}, $F$ is not a $\widetilde W$-factroid of $A$.  However, we claim that $F$ is a $W$-factroid  of $A$.

    To see this, let $w\in W$ and $f\in A$ such that $wf\in F$.  Write $w = ax^2 + b$, with $a,b \in k$ and not both zero.  If $a=0$, then $w=b \in k^\times$ and since $F$ is a $k$-vector space and $bf \in F$, we have $f = b^{-1}bf \in F$.  Thus, we may assume $a\neq 0$, so that $\deg w = 2$.  But since $wf\in F$, we have $2+\deg f= \deg w + \deg f = \deg(wf) \leq 2$.  Thus, $\deg f=0$ so $f\in k \subseteq F$.
    
    Note that $W(F)$ cannot be saturated, since, if it were, then since $W \subseteq W(F)$ we would have $\widetilde W \subseteq W(F)$ and then $F$ would be a $\widetilde W$-factroid of $A$.  So this example shows that $W(F)$ need not be saturated.
    
    In fact, $W(F)$ is precisely the set of all nonzero polynomials in   $k[x]$ of degree not equal to 1.  To see the containment $\supseteq$, observe that any nonzero polynomial of degree $0$ or $2$ is in $F \subseteq W(F)$ while, if $\deg f \geq 3$ and $fg \in F$, then $g=0 \in F$ (since degrees are additive on nonzero polynomials), so that $f\in W(F)$.  For the opposite inclusion, let $f$ be a polynomial of degree $1$.  Write $f = ax+b$, where $0\neq a \in k$ and $b\in k$.  Then $(ax-b)f = a^2 x^2 - b^2 \in F$, but $ax-b \notin F$, so $ax+b \notin W(F)$.

    Note also that $A(F) = (F :_A F) = k$, since multiplying any positive-degree polynomial by $x^2 \in A$ would force it out of $F$.
\end{example}

\begin{example}\label{ex:kxy}
    Let $A = k[x_1 ,\ldots, x_n]$, where $k$ is a field and $n\geq 2$.  Let $f$ be an element of $A$ such $f +c$ is irreducible for all $c \in k$ (e.g., $f=x_1^2 + x_2^3$).  Then the principal factroid of $A$ generated by $f$ is the $k$-span of $\{1,f\}$. 

    In order to see this, it suffices to show that $U := \langle 1,f \rangle$ is a factroid of $A$, since it is obvious that any factroid containing $f$ must contain $U$ .  So let $g\in A^\circ$ and $h \in (U :_A g)$. Then $gh \in U$.  That is, there exist $a,b \in k$ with $gh = a +bf$.  If $b=0$, then $h \in k \subseteq U$, since any factor of a constant must be a constant.  If $b \neq 0$, then $\frac gb \cdot h= f + \frac{a}{b}$, which is irreducible by assumption, so either $h \in k$ (in which case $h\in U$) or $\frac gb \in k$.  In the latter case, it follows that $g\in k^\times$, so that $h = \frac bg \cdot f + \frac ag \in U$. Thus, $(U :_A g) \subseteq U$, so that since $g\in A^\circ$ was arbitrary, $U$ is a factroid of $A$.  Therefore, $U = [f]_A$.
\end{example}

Next, we determine the factroids of a direct product $A \times B$ of integral domains $A$ and $B$ in the case where $A \times B$ has more than one unit.

\begin{lemma}\label{lem:factproductgeneral}
Let $A$ and $B$ be rings, let $V \subseteq A$ and $W \subseteq B$ be multiplicative submonoids, let $M$ be an $A$-module and $N$ a $B$-module.  Let $F$ be a $V$-factroid of $M$ and $G$ a $W$-factroid of $N$.  Then $F \times G$ is a $(V \times W)$-factroid of $M \times N$.
\end{lemma}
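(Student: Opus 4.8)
The plan is to verify directly that $F \times G$ is an additive subgroup of $M \times N$ and that it absorbs colons from $V \times W$. The additive subgroup part is immediate: $F$ is a subgroup of $M$ and $G$ is a subgroup of $N$, so $F \times G$ is a subgroup of $M \times N$ under componentwise addition (it is nonempty since $(0,0) \in F \times G$, and closed under subtraction componentwise). So the real content is the saturation condition.

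For the saturation condition, let $(t,s) \in V \times W$ and let $(x,y) \in M \times N$ with $(t,s)(x,y) \in F \times G$. By definition of the module structure on $M \times N$ over $A \times B$, we have $(t,s)(x,y) = (tx, sy)$, so the hypothesis says $tx \in F$ and $sy \in G$. Since $F$ is a $V$-factroid of $M$ and $t \in V$, the condition $tx \in F$ forces $x \in F$; similarly, since $G$ is a $W$-factroid of $N$ and $s \in W$, the condition $sy \in G$ forces $y \in G$. Hence $(x,y) \in F \times G$, which is exactly what we needed. Combined with $F \times G$ being an additive subgroup, this shows $F \times G$ is a $(V \times W)$-factroid of $M \times N$.

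There is essentially no obstacle here: the statement is a routine unwinding of the definitions, using only that the module action and the group operations on a product are componentwise. The one small point worth being explicit about is that $V \times W$ is indeed a multiplicative submonoid of $A \times B$ (so that talking about a $(V \times W)$-factroid makes sense in the intended way), but since a $T$-factroid is defined for an arbitrary subset $T$, even this is not strictly necessary for the statement to hold; it is enough that $V \times W$ is a subset of $A \times B$ and that the componentwise argument above goes through. I would therefore present the proof in two short steps — subgroup, then colon absorption — with the componentwise computation $(t,s)(x,y) = (tx,sy)$ made explicit, and no further machinery invoked.
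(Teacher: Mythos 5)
Your proof is correct and follows essentially the same componentwise unwinding of the definitions that the paper uses; the only difference is that you explicitly verify the additive subgroup condition, which the paper leaves implicit. No further comment is needed.
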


\begin{proof}
Let $(v,w) \in V \times W$ and $(m,n) \in M \times N$ with $(v,w)(m,n) = (vm,wn) \in F \times G$.  Then $vm \in F$, so $m\in F$ since $F$ is a $V$-factroid of $M$, and $wn \in G$, so $n \in G$ since $G$ is a $W$-factroid of $N$.  Thus, $(m,n) \in F \times G$.
\end{proof}

\begin{thm}\label{thm:factproduct}
Let $A$ and $B$ be integral domains.  Assume that the ring $A \times B$ has more than one unit.  Then the factroids of $A \times B$ are precisely those subgroups of the form $F \times G$, where $F$ is a factroid of $A$ and $G$ a factroid of $B$.
\end{thm}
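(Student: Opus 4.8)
The plan is to prove the two inclusions separately, invoking the hypothesis on units only for the harder one. Throughout write $R:=A\times B$.

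\emph{Setup and the easy inclusion.} First I would record that, because $A$ and $B$ are domains, $\reg(R)=\reg(A)\times\reg(B)=(A\setminus\{0\})\times(B\setminus\{0\})$: a pair with a zero entry annihilates $(1,0)$ or $(0,1)$, and if $a,b\neq 0$ then $(a,b)(c,d)=(ac,bd)=(0,0)$ forces $c=d=0$. Hence if $F$ is a factroid of $A$ and $G$ a factroid of $B$ --- equivalently a $\reg(A)$-factroid of ${}_AA$ and a $\reg(B)$-factroid of ${}_BB$ --- then Lemma~\ref{lem:factproductgeneral}, applied with $V=\reg(A)$ and $W=\reg(B)$, shows that $F\times G$ is a $(\reg(A)\times\reg(B))$-factroid of $R$, i.e.\ a factroid of $R$. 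This direction does not use the unit hypothesis.

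\emph{The splitting step.} Conversely, let $H$ be a factroid of $R$. The crux is to show that $H$ splits along the product: $(x,y)\in H\implies(x,0)\in H$ and $(0,y)\in H$. To this end I would first note that $H$ is closed under multiplication by every unit $w$ of $R$: since $w\in\reg(R)$, the $\reg(R)$-factroid property gives $w^{-1}H=(H:_R w)\subseteq H$, and applying this to the unit $w^{-1}$ yields $wH\subseteq H$ (this is the reasoning behind Proposition~\ref{prop:unitfactroid}). Now $R^\times=A^\times\times B^\times$ has more than one element, so after possibly interchanging the roles of $A$ and $B$ we may choose $u\in A^\times$ with $u\neq 1$, whence $u-1\neq 0$ in the domain $A$. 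Given $(x,y)\in H$, closure under the unit $(u,1)$ gives $(ux,y)\in H$, and subtracting $(x,y)$ gives $((u-1)x,0)\in H$; since $u-1\neq 0$, the pair $(u-1,1)$ lies in $\reg(R)$ and $(u-1,1)(x,0)=((u-1)x,0)\in H$, so the $\reg(R)$-factroid property forces $(x,0)\in H$. Then $(0,y)=(x,y)-(x,0)\in H$ as well.

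\emph{Finishing.} With the splitting established, I would set $F:=\{x\in A\mid(x,0)\in H\}$ and $G:=\{y\in B\mid(0,y)\in H\}$, which are additive subgroups of $A$ and $B$. The splitting gives $H\subseteq F\times G$, while $F\times G\subseteq H$ because $(x,y)=(x,0)+(0,y)$; hence $H=F\times G$. Finally $F$ is a factroid of $A$: if $c\in\reg(A)$ and $cx\in F$, then $(cx,0)\in H$ and $(c,1)\in\reg(R)$ with $(c,1)(x,0)=(cx,0)\in H$, so $(x,0)\in H$, i.e.\ $x\in F$; symmetrically $G$ is a factroid of $B$. I expect the only genuine obstacle to be isolating where ``more than one unit'' is used: the point is not that a nonidentity unit acts on $H$, but that the \emph{difference} of two units (concretely $(u-1,0)=(u,1)-(1,1)$, built from the closure of $H$ under units) preserves $H$ and has a nonzerodivisor entry in one coordinate precisely because that coordinate ring is a domain --- without which one cannot promote $((u-1)x,0)\in H$ to $(x,0)\in H$. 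Everything else is bookkeeping with the definitions of $\reg$ and of a factroid.
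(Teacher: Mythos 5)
Your proposal is correct and takes essentially the same approach as the paper: both proofs hinge on multiplying an element of $H$ by the unit $(u,1)$ (with $u\neq 1$ a unit of $A$), subtracting, and using that $(u-1,1)$ is a nonzerodivisor to conclude $(x,0)\in H$. The only cosmetic difference is that you establish the splitting first and then define $F,G$ as the slices $\{x:(x,0)\in H\}$, whereas the paper defines $F,G$ as projections and proves they are factroids before establishing the splitting; the two orderings are logically equivalent.
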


\begin{proof}
First, recall that $\reg(A \times B) = \reg(A) \times \reg(B) = A^\circ \times B^\circ$.
If $F$ is a factroid of $A$ and $G$ of $B$, it then follows from Lemma~\ref{lem:factproductgeneral} that $F \times G$ is a factroid of $A \times B$.

Conversely, let $H$ be a factroid of $A \times B$.  Let $\pi_1: A \times B \ra A$ and $\pi_2: A\times B \ra B$ be the projections. Set $F := \pi_1(H)$ and $G := \pi_2(H)$.  We first show that $F$ and $G$ are factroids of their ambient rings.  To see this, let $x\in A$ and $a\in A^\circ$ with $ax \in F$.  Then there is some $y\in G$ with $(ax,y) \in H$.  That is, $(a,1)(x,y) \in H$, so since $(a,1) \in (A\times B)^\circ$ and $H$ is a factroid, we have $(x,y) \in H$, whence $x\in F$.  Thus $F$ is a factroid of $A$, and by symmetry $G$ is a factroid of $B$.

Obviously $H \subseteq F \times G$. Now let $f\in F$.  Then there is some $y\in G$ with $(f,y) \in H$.  Since $(A \times B)^\times = A^\times \times B^\times$ is not a singleton, either $A^\times$ or $B^\times$ is not a singleton.  Without loss of generality assume the former, and let $1\neq \alpha \in A^\times$. Then $(\alpha^{-1},1)(\alpha f, y) = (f,y) \in H$, so since $(\alpha^{-1},1) \in \reg(A \times B)$, we have $(\alpha f, y) \in H$.  Subtracting, we have $(1-\alpha,1)(f,0) = ((1-\alpha)f, 0) = (f-\alpha f, y-y) = (f,y) - (\alpha f, y) \in H$, so since $(1-\alpha, 1) \in A^\circ \times B^\circ = (A \times B)^\circ$, we have $(f,0) \in H$.  We have shown that $F \times 0 \subseteq H$.

Now let $g \in G$.  Then there is some $x\in F$ with $(x,g) \in H$. But since $(x,0) \in H$, it follows that $(0,g) = (x,g) - (x,0) \in H$. 

Finally, take any $(f,g) \in F \times G$.  Since $(f,0), (0,g) \in H$, we have $(f,g) = (f,0) + (0,g) \in H$.  Thus, $H=F \times G$.
\end{proof}

\begin{example}
    The condition on the unit groups in Theorem~\ref{thm:factproduct} is necessary.  For instance, let $A = B = \mathbb F_2$.  Then $H = \mathbb F_2 \cdot (1,1) = \{(0,0), (1,1)\}$ is a factroid of $\mathbb F_2 \times \mathbb F_2$ but is not the product of a pair of factroids.
\end{example}

\begin{rem}
An approach such as that in Lemma~\ref{lem:factproductgeneral} and Theorem~\ref{thm:factproduct} will not characterize the $W$-factroids of a product of rings  for arbitrary $W$.  This is because not all multiplicative sets in $A \times B$ are Cartesian products.
\end{rem}

\begin{problem}
    Characterize the $W$-factroids of the rings $\Z \times \Z$ (resp., $\Q \times \Q$) for its various multiplicative subsets $W$.
\end{problem}

\begin{problem}
    Characterize the factroids of the domain $k[x,y]$, where $k$ is a field.
\end{problem}

\begin{problem}
    Let $K$ be a number field.  Characterize the $W$-factroids of $K$ (resp., $\mathcal{O}_K$) for its various multiplicative subsets $W$.
\end{problem}

\begin{problem}
 Characterize the $W$-factroids of the division ring $K = \Q(i,j)$ of all quaternions for its various multiplicative subsets $W$.
\end{problem}

\section{Generating $W$-factroids of a module}\label{sec:generation}

In this section, we describe various ways to produce $W$-factroids of a module.

The set of all $W$-factroids of $M$ is a partially ordered set under the relation $\subseteq$.  By Proposition~\ref{pr:intersect}, the poset of all $W$-factroids of $M$ has arbitrary infima and is therefore a complete poset.  The supremum of a set  of $W$-factroids is equal to the $W$-factroid generated by their union.  By our next proposition, sups of directed subsets are just unions.

 \begin{lemma}\label{lem:dirlim}
     Let $T \subseteq A$.  An arbitrary union of $T$-saturated subsets of $M$ is a $T$-saturated subset of $M$.  More generally, the $T$-saturation of the union of an arbitrary collection of $T$-subsets of $M$ is the union of their $T$-saturations in $M$.
 \end{lemma}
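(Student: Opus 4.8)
The plan is to isolate the one mechanism that makes everything work: colon against a fixed ring element distributes over unions of subsets of $M$. So I would first record that for any $w \in A$ and any family $\{S_\alpha\}_{\alpha\in\Lambda}$ of subsets of $M$,
\[
\Bigl(\bigcup_{\alpha\in\Lambda} S_\alpha :_M w\Bigr) \;=\; \bigcup_{\alpha\in\Lambda} (S_\alpha :_M w),
\]
which is immediate from the definition of the colon, since $wx \in \bigcup_\alpha S_\alpha$ holds if and only if $wx \in S_\alpha$ for some $\alpha$. Note that no additivity is being invoked here, so unlike the situation in Example~\ref{ex:0notaddcl} nothing subtle can go wrong.

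Granting this, I would prove the ``more general'' statement first. By definition $\operatorname{Sat}^T_M(S) = \bigcup_{w\in\langle T\rangle}(S:_M w)$, so applying the distributivity identity termwise and then interchanging the two independent unions gives
\[
\operatorname{Sat}^T_M\Bigl(\bigcup_{\alpha} S_\alpha\Bigr) \;=\; \bigcup_{w\in\langle T\rangle}\,\bigcup_{\alpha}(S_\alpha :_M w) \;=\; \bigcup_{\alpha}\,\bigcup_{w\in\langle T\rangle}(S_\alpha :_M w) \;=\; \bigcup_{\alpha}\operatorname{Sat}^T_M(S_\alpha),
\]
which is exactly the asserted equality.

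The first statement then follows at once: a subset of $M$ is $T$-saturated precisely when it coincides with its own $T$-saturation (this is built into the defining property of $\operatorname{Sat}^T_M$ as the smallest $T$-saturated set containing the given one), so if each $S_\alpha$ satisfies $\operatorname{Sat}^T_M(S_\alpha)=S_\alpha$, the displayed equality yields $\operatorname{Sat}^T_M(\bigcup_\alpha S_\alpha)=\bigcup_\alpha S_\alpha$, i.e.\ $\bigcup_\alpha S_\alpha$ is $T$-saturated. Alternatively one argues directly: if $tx\in\bigcup_\alpha S_\alpha$ with $t\in T$, pick $\alpha$ with $tx\in S_\alpha$; then $x\in(S_\alpha:_M t)\subseteq S_\alpha\subseteq\bigcup_\alpha S_\alpha$. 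I do not anticipate any genuine obstacle; the only point deserving a moment's care is the legitimacy of swapping the union over $\langle T\rangle$ with the union over $\Lambda$, which is harmless precisely because the two index sets are unrelated.
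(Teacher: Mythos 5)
Your proof is correct and follows essentially the same route as the paper's: establish that $(\cdot\,:_M w)$ commutes with arbitrary unions, then unfold $\operatorname{Sat}^T_M$ as a union over $\langle T\rangle$ and swap the two independent unions. The only cosmetic difference is that you spell out why the first assertion follows from the second, a step the paper leaves implicit.
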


\begin{proof}
First, we claim that $(\bigcup_{\alpha \in \Lambda} S_\alpha :_M t) = \bigcup_{\alpha \in \Lambda} (S_\alpha :_M t)$ for any $t \in A$ and any collection $\{S_\alpha \mid \alpha \in \Lambda\}$ of subsets $S_\alpha$ of $M$. To prove this identity, note that the containment $\supseteq$ is clear.  Let $x \in (S:_M t)$, where $S = \bigcup_{\alpha \in \Lambda} S_\alpha$.  Then $tx  \in S$, whence $tx \in S_\beta$ for some $\beta \in \Lambda$, so that $x \in (S_\beta :_M t) \subseteq \bigcup_{\alpha \in \Lambda} (S_\alpha :_M t)$.  This proves the identity.  Finally, one has
\begin{align*}
    \operatorname{Sat}^T_M(S) &  = \bigcup_{w \in \langle T \rangle} (S:_M w) \\ & = \bigcup_{w \in \langle T \rangle}  \bigcup_{\alpha \in \Lambda} (S_\alpha :_M w) \\ & =  \bigcup_{\alpha \in \Lambda}  \bigcup_{w \in \langle T \rangle}  (S_\alpha :_M w) \\ & = \bigcup_{\alpha \in \Lambda}  \operatorname{Sat}^T_M(S_\alpha).
    \end{align*}
This completes the proof.
\end{proof}

\begin{prop}\label{pr:dirlim}
 Let $T\subseteq A$.  Any directed union of $T$-factroids of $M$ is a $T$-factroid of $M$.  More precisely, let $(\Lambda, \leq)$ be a directed set, and let $\{F_\alpha \mid \alpha \in \Lambda\}$ be a a collection of $T$-factroids of $M$ such that $F_\alpha \subseteq F_\beta$ whenever $\alpha \leq \beta$.  Then the union $\bigcup_{\alpha \in \Lambda} F_\alpha$ of the $F_\alpha$ is a $T$-factroid of $M$ and is therefore the smallest $T$-factroid of $M$ containing $F_\alpha$ for all $\alpha$.
\end{prop}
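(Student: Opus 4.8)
The plan is to reduce the statement to two separately easy facts: that $F := \bigcup_{\alpha \in \Lambda} F_\alpha$ is $T$-saturated, and that $F$ is an additive subgroup of $M$. Since a $T$-factroid is by Definition~\ref{def:factroid} precisely a $T$-saturated additive subgroup, establishing both will give that $F$ is a $T$-factroid of $M$.

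For the $T$-saturation, I would simply invoke Lemma~\ref{lem:dirlim}: each $F_\alpha$ is in particular $T$-saturated in $M$, so an arbitrary (in particular, directed) union of them is $T$-saturated. Note this part does not use directedness at all.

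For the additive subgroup property, this is where directedness is essential, since a general union of subgroups need not be closed under subtraction. First, $F$ is nonempty (indeed $0 \in F_\alpha \subseteq F$ for any $\alpha$, using that $\Lambda$ is nonempty, as it is directed). Next, given $x, y \in F$, choose $\alpha, \beta \in \Lambda$ with $x \in F_\alpha$ and $y \in F_\beta$; by directedness there is $\gamma \in \Lambda$ with $\alpha \leq \gamma$ and $\beta \leq \gamma$, so $x, y \in F_\gamma$ by the monotonicity hypothesis $F_\alpha \subseteq F_\gamma$, $F_\beta \subseteq F_\gamma$. Since $F_\gamma$ is an additive subgroup, $x - y \in F_\gamma \subseteq F$. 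Hence $F$ is an additive subgroup of $M$.

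Combining the two observations, $F$ is a $T$-saturated additive subgroup, i.e., a $T$-factroid of $M$. It contains every $F_\alpha$ by construction, and conversely any $T$-factroid (indeed any subset) of $M$ containing all the $F_\alpha$ must contain their union $F$; therefore $F$ is the smallest $T$-factroid of $M$ containing $F_\alpha$ for all $\alpha$. There is no real obstacle here; the only point requiring care is recognizing that directedness is exactly what rescues the failure of unions of subgroups to be subgroups.
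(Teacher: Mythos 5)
Your proof is correct and follows the same strategy as the paper's: it decomposes the claim into $T$-saturation (handled by Lemma~\ref{lem:dirlim}) and the additive-subgroup property (handled by the standard directed-union argument, which is exactly the ``corresponding fact for $\Z$-submodules'' the paper cites). You have simply expanded the details that the paper leaves implicit.
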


\begin{proof}
This follows from the lemma and the fact corresponding fact for $\Z$-submodules (i.e., for additive subgroups) of $M$.
\end{proof}

\begin{example}\label{ex:nosum}
The sum of two principal factroids of a commutative ring need not be a factroid.  Let $A = k[x,y]$, where $k$  is a field.  Let $F = \spn_k\{1,x\}$ and $G = \spn_k\{1,y^2-x\}$.  Then since $x+c$ and $y^2-x+c$ are irreducible for all $c \in k$, both $F$ and $G$ are principal factroids of $A$, by Example \ref{ex:kxy}.  Since $y$ is a factor of $y^2 = x+(y^2-x)$, any factroid of $A$ that contains $F$ and $G$ must contain $y$.  But $F+G = \spn_k\{1,x, y^2\}$ does not contain $y$.
\end{example}

Hereafter, per Proposition \ref{pr:Tfact}, we focus our attention on $W$-factroids of $M$, where $W$ is a multiplicative subset of $A$.

One has two simple operations of ``residuation,'' as described in Lemmas \ref{lem:factcolonmod} and \ref{lem:factcolonring} below.

\begin{lemma}\label{lem:factcolonmod}
For any $W$-factroid $F$ of $M$ and any subset $T \subseteq Z(A)$, $(F :_M T)$ is a $W$-factroid of $M$.
\end{lemma}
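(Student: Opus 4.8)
The plan is to verify directly the two defining properties of a $W$-factroid for the subset $(F :_M T)$: that it is an additive subgroup of $M$, and that it is $W$-saturated in $M$. Recall $(F :_M T) = \{x \in M \mid tx \in F \text{ for all } t \in T\}$.

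First I would check the subgroup property, which uses nothing about centrality. It contains $0$ since $t \cdot 0 = 0 \in F$ for every $t \in T$. If $x, y \in (F :_M T)$, then for each $t \in T$ both $tx$ and $ty$ lie in $F$, and since $F$ is an additive subgroup of $M$ we get $t(x - y) = tx - ty \in F$; as $t$ was arbitrary, $x - y \in (F :_M T)$. Hence $(F :_M T)$ is an additive subgroup of $M$.

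Next, the crucial step is $W$-saturation. Take $w \in W$ and $x \in M$ with $wx \in (F :_M T)$; I must show $x \in (F :_M T)$. Fix $t \in T$. By hypothesis $t(wx) \in F$, and because $t \in Z(A)$ it commutes with $w$, so $t(wx) = (tw)x = (wt)x = w(tx)$. Thus $w(tx) \in F$, and since $F$ is a $W$-factroid of $M$ and $w \in W$, we conclude $tx \in F$. As $t \in T$ was arbitrary, $x \in (F :_M T)$, which completes the verification.

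The only point requiring any care — and hence the ``main obstacle,'' such as it is — is the commutation identity $t(wx) = w(tx)$ in the saturation argument, which is precisely where the hypothesis $T \subseteq Z(A)$ is used; without it one cannot transfer the $W$-factroid property of $F$ across the colon, and the statement need fail. Everything else is a routine unwinding of the definitions of $(F :_M T)$ and of a $T$-factroid.
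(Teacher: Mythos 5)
Your proof is correct and matches the paper's argument essentially step for step: verify the subgroup property (which needs no centrality), then use the commutation $t(wx) = w(tx)$ afforded by $t \in Z(A)$ to transfer the $W$-saturation of $F$ to $(F :_M T)$. No differences worth noting.
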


\begin{proof}
To see that $(F:_M T)$ is a subgroup of $M$, first note it contains $0$. Next let $x,y \in (F:_MT)$.  Then for any $t\in T$, we have $tx, ty \in F$.   Since $F$ is a subgroup of $M$, $t(x-y) = tx-ty \in F$, so since $t\in T$ was arbitrary, $x-y \in (F :_MT)$.

Now let $x \in M$ and $w\in W$ such that $wx \in (F:_MT)$.  Then for any $t\in T$, we have $wtx = twx \in F$.  Since $w \in W$ and $F$ is a $W$-factroid, $tx \in F$.  Since $t \in T$ was arbitrary, $x \in (F:_MT)$.
\end{proof}

\begin{rem}
 Lemma~\ref{lem:factcolonmod} says, equivalently, that $W((F:_M T))  \supseteq W(F)$ for any multiplicative set $W$, any $W$-factroid of $M$, and any subset $T \subseteq Z(A)$. In fact the containment can be proper.  Indeed, let $A=M=\Z$, $F = 6\Z$, and $T = \{2\}$, so that $(F :_M T) = 3\Z$.  Then $2 \in W((F :_M T)) \setminus W(F)$.
\end{rem}

\begin{lemma}\label{lem:factcolonring}
    For any $W$-factroid $F$ of $M$ and any subset $S \subseteq M$, $(F :_AS)$ 
    is a $W$-factroid of $A$.
\end{lemma}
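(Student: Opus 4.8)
The plan is to mimic almost verbatim the proof of Lemma~\ref{lem:factcolonmod}, since $(F:_A S)$ is defined by the same kind of ``colon'' condition, only with a ring element doing the multiplying instead of a central element. Recall that $(F :_A S) = \{a \in A \mid as \in F \text{ for all } s \in S\}$. First I would check that this set is an additive subgroup of $A$. It contains $0$ because $0\cdot s = 0 \in F$ for every $s \in S$. If $a, b \in (F :_A S)$, then for each $s \in S$ we have $as, bs \in F$, and since $F$ is an additive subgroup of $M$, $(a-b)s = as - bs \in F$; as $s \in S$ was arbitrary, $a - b \in (F :_A S)$.

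Next I would verify the saturation condition defining a $W$-factroid of the left module ${}_AA$: if $w \in W$, $b \in A$, and $wb \in (F :_A S)$, then $b \in (F :_A S)$. Fix $s \in S$. By assumption $(wb)s \in F$, and by associativity of the $A$-action on $M$ this equals $w(bs)$. Thus $w(bs) \in F$ with $w \in W$, and since $F$ is a $W$-factroid of $M$ we conclude $bs \in F$. Since $s \in S$ was arbitrary, $b \in (F :_A S)$, as desired. Combined with the previous paragraph, this shows $(F :_A S)$ is a $W$-factroid of $A$.

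I do not expect any real obstacle here; the argument is a two-line computation. The one point worth flagging — and the only place where this differs conceptually from Lemma~\ref{lem:factcolonmod} — is that no commutativity hypothesis on the multiplying elements is needed. In Lemma~\ref{lem:factcolonmod} one restricts to $T \subseteq Z(A)$ precisely in order to rewrite $wtx = twx$; here the elements being multiplied together are $w \in W \subseteq A$ and $b \in A$ with $s \in S \subseteq M$, and the identity $(wb)s = w(bs)$ is simply the module axiom, so the result holds for an arbitrary (possibly noncommutative) ring $A$ and arbitrary subset $S \subseteq M$.
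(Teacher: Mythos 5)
Your proof is correct and is essentially identical to the paper's, verifying the additive subgroup property and then the saturation condition directly from the definitions. Your closing remark correctly identifies why, unlike Lemma~\ref{lem:factcolonmod}, no centrality hypothesis is needed: the step $(wb)s = w(bs)$ is just the module axiom.
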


\begin{proof}
For the subgroup property, first note that it contains $0$. Next let $a,b \in (F :_A S)$.  Then for any $x\in S$, we have $ax,bx \in F$, so that since $F$ is a group, $(a-b)x = ax-bx \in F$.  Since $x\in S$ was arbitrary, $a-b \in (F:_AS)$

Now let $a\in A$ and $w\in W$ such that $wa \in (F:_AS)$.  Then for any $x\in S$, we have $wax \in F$.  Thus, $ax \in (F :_M w) \subseteq F$ since $F$ is a $W$-factroid.  Since $x\in S$ was arbitrary, $a\in (F :_AS)$.  Therefore $(F:_A S)$ is a $W$-factroid of $A$.
\end{proof}

In the next few results, we show that generation of $W$-factroids interacts well with multiplication of subsets of $A$ and subsets of $M$, especially when the subset of $A$ is in the center of $A$.  This can be recast (though we shall not do so here) in the language of \emph{modules over a quantale}, a notion important in lattice theory (see e.g. \cite{Ros-quanbang}).

\begin{prop}\label{prop:multsubsets}
Let $S \subseteq M$ and $T \subseteq A$.  One has $$[T]^W_A S \subseteq [TS]^W_M.$$
Moreover, if $T \subseteq Z(A)$, then 
$$[T]^W_A [S]^W_M \subseteq [TS]^W_M.$$
\end{prop}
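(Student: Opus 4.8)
The plan is to prove the two inclusions $[T]^W_A S \subseteq [TS]^W_M$ and $[T]^W_A [S]^W_M \subseteq [TS]^W_M$ in stages, leveraging the ``smallest $W$-factroid'' characterization together with the residuation lemmas already established.

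First I would prove the inclusion $[T]^W_A S \subseteq [TS]^W_M$. The key observation is that $(\,[TS]^W_M :_A S\,)$ is, by Lemma~\ref{lem:factcolonring}, a $W$-factroid of $A$. Moreover it contains $T$: indeed, for $t \in T$ and any $s \in S$ we have $ts \in TS \subseteq [TS]^W_M$, so $t \in ([TS]^W_M :_A S)$. Since $[T]^W_A$ is the \emph{smallest} $W$-factroid of $A$ containing $T$, we conclude $[T]^W_A \subseteq ([TS]^W_M :_A S)$, which unwinds exactly to $[T]^W_A S \subseteq [TS]^W_M$.

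Next, assuming $T \subseteq Z(A)$, I would upgrade this to $[T]^W_A [S]^W_M \subseteq [TS]^W_M$. The natural move is to show that for each fixed $a \in [T]^W_A$, the set $(\,[TS]^W_M :_M a\,) \cap [aS \text{ is swallowed}]$ — more precisely, I would look at the $W$-factroid $N_a := ([TS]^W_M :_M a)$, which is a $W$-factroid of $M$ by Lemma~\ref{lem:factcolonmod} (here one uses $a \in Z(A)$, since $[T]^W_A \subseteq \Z\langle T \rangle \subseteq Z(A)$ when $T \subseteq Z(A)$). By the first part, $aS \subseteq [T]^W_A S \subseteq [TS]^W_M$, so $S \subseteq N_a$; since $N_a$ is a $W$-factroid of $M$ containing $S$, minimality gives $[S]^W_M \subseteq N_a$, i.e. $a[S]^W_M \subseteq [TS]^W_M$. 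As $a \in [T]^W_A$ was arbitrary, $[T]^W_A [S]^W_M \subseteq [TS]^W_M$.

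The step requiring the most care is verifying that $[T]^W_A$ actually lands in $Z(A)$ when $T \subseteq Z(A)$, so that Lemma~\ref{lem:factcolonmod} applies to $N_a$; this follows because $W$-factroids are closed under $\langle -\rangle$-saturation and additive sums, so $[T]^W_A \subseteq \operatorname{Sat}^W_A(\Z\langle T\rangle)$, and one checks that the $W$-saturation of a central additive submonoid stays central — but in the commutative case that this proposition is chiefly applied to, $Z(A) = A$ and the subtlety evaporates. The only other thing to double-check is that the residuation lemmas are being invoked with the correct variance: Lemma~\ref{lem:factcolonring} produces a $W$-factroid of $A$ from a $W$-factroid of $M$ by colon-into-$A$, and Lemma~\ref{lem:factcolonmod} produces a $W$-factroid of $M$ from a $W$-factroid of $M$ by colon-by-a-central-element, and both are used exactly once each in the shape needed.
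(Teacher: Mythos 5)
Your first inclusion is exactly the paper's argument: both you and the paper observe that $([TS]^W_M :_A S)$ is a $W$-factroid of $A$ (Lemma~\ref{lem:factcolonring}) containing $T$, whence it contains $[T]^W_A$. That part is correct.

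The second part has a genuine gap. You fix $a \in [T]^W_A$ and want to apply Lemma~\ref{lem:factcolonmod} to $N_a := ([TS]^W_M :_M a)$; but that lemma requires $a \in Z(A)$, and the hypothesis only gives $T \subseteq Z(A)$, not $[T]^W_A \subseteq Z(A)$. You flag this yourself, but the repair you sketch — ``the $W$-saturation of a central additive submonoid stays central'' — is false in general: if $w a = c$ with $w \in W$ non-central and $c \in Z(A)$, there is no reason for $a$ to be central, so already $\operatorname{Sat}^W_A(Z(A))$ can escape $Z(A)$, let alone the full transfinite closure $[T]^W_A$. (Also, the containment $[T]^W_A \subseteq \operatorname{Sat}^W_A(\Z\langle T\rangle)$ you invoke goes the wrong way: the saturation need not be additively closed, so $[T]^W_A$ can be strictly larger.) Retreating to the commutative case doesn't help, since the proposition is stated — and later used, e.g.\ in Example~\ref{ex:noncentral} — for general $A$.

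The paper's proof sidesteps this entirely by coloning only by $T$ itself, where centrality is known: since $S \subseteq ([TS]^W_M :_M T)$ and the latter is a $W$-factroid of $M$ by Lemma~\ref{lem:factcolonmod} (because $T \subseteq Z(A)$), one gets $[S]^W_M \subseteq ([TS]^W_M :_M T)$, i.e.\ $T[S]^W_M \subseteq [TS]^W_M$. Then the already-proved first inclusion, applied with $[S]^W_M$ in place of $S$ (which needs no centrality), gives
\[
[T]^W_A\,[S]^W_M \subseteq [\,T[S]^W_M\,]^W_M \subseteq [\,[TS]^W_M\,]^W_M = [TS]^W_M.
\]
You should adopt this bootstrapping step: colon by $T$, not by elements of $[T]^W_A$, and then feed the output back through the first part.
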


\begin{proof}
    Clearly one has $T \subseteq ([TS]^W_M:_A S)$, where the colon over $A$ is a $W$-factroid of $A$ by Lemma \ref{lem:factcolonring}. It follows that $[T]^W_A \subseteq ([TS]^W_M:_A S)$, which is equivalent to the first statement.  Suppose that $T \subseteq Z(A)$.  Since $S \subseteq ([TS]^W_M:_M T)$ and the colon over $M$ is a $W$-factroid of $M$ by Lemma \ref{lem:factcolonmod}, one has $[S]^W_M \subseteq ([TS]^W_M:_A T)$, and therefore $T [S]^W_M \subseteq [TS]^W_M$.  It follows, then, that 
    $$[T]^W_A [S]^W_M \subseteq  [T[S]^W_M ]^W_M \subseteq  [[TS]^M_M]^W_M = [TS]^W_M.$$
    This proves the second statement.
\end{proof}

\begin{cor}\label{cor:WAWMmul}
Let $x \in M$ and $T \subseteq A$.  Then $[T]^W_A x \subseteq [Tx]^W_M$.
\end{cor}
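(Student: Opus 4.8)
The plan is to derive this as an immediate special case of Proposition~\ref{prop:multsubsets}. Indeed, Corollary~\ref{cor:WAWMmul} is exactly the first statement of Proposition~\ref{prop:multsubsets} applied to the singleton $S = \{x\}$: one has $[T]^W_A S \subseteq [TS]^W_M$, and with $S = \{x\}$ this reads $[T]^W_A x \subseteq [Tx]^W_M$ (using the convention, implicit in the paper's notation, that $[T]^W_A x$ means the pointwise product $\{ a x \mid a \in [T]^W_A\}$ and that $Tx = \{tx \mid t \in T\}$). So the only thing to check is that the notational conventions line up, and then the proof is a one-line invocation.

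First I would state that the result is an instance of Proposition~\ref{prop:multsubsets} with $S$ replaced by the singleton set $\{x\}$. Then I would unwind the notation: $[TS]^W_M$ with $S = \{x\}$ is $[Tx]^W_M$, and $[T]^W_A S$ is $[T]^W_A x$. This gives the containment directly with no further work. If one prefers a self-contained argument that does not cite the proposition, the alternative is to reprove the relevant half inline: observe that $T \subseteq ([Tx]^W_M :_A x)$ since $tx \in Tx \subseteq [Tx]^W_M$ for every $t \in T$; note that $([Tx]^W_M :_A x)$ is a $W$-factroid of $A$ by Lemma~\ref{lem:factcolonring} (taking the subset of $M$ there to be $\{x\}$); hence the $W$-factroid of $A$ generated by $T$ satisfies $[T]^W_A \subseteq ([Tx]^W_M :_A x)$, which is precisely the assertion $[T]^W_A x \subseteq [Tx]^W_M$.

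There is essentially no obstacle here — the statement is a corollary in the literal sense, so the "hard part" is purely cosmetic: making sure the reader sees that $x$ is being treated as the singleton $\{x\}$ and that the pointwise-product notation $[T]^W_A x$ is consistent with the set-product notation $[T]^W_A S$ used in the proposition. I would therefore keep the proof to a single sentence: \emph{This is the special case $S = \{x\}$ of the first containment in Proposition~\ref{prop:multsubsets}.} That is all that is needed, and it is both correct and appropriately brief for a corollary of this kind.
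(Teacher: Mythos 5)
Your proof is correct and is precisely what the paper intends: the corollary has no written proof in the paper because it is exactly the special case $S=\{x\}$ of the first containment in Proposition~\ref{prop:multsubsets}, which is what you cite. Your optional self-contained argument also matches the paper's proof of that proposition (the inclusion $T \subseteq ([Tx]^W_M:_A x)$ together with Lemma~\ref{lem:factcolonring}), so there is nothing to add.
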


\begin{cor}\label{cor:newa}
Let $S \subseteq M$ and $a \in Z(A)$.  Then $$a[S]^W_M \subseteq [a]^W_A[S]^W_M \subseteq [aS]^W_M.$$ Equalities hold if $a$ is a unit of $A$.
\end{cor}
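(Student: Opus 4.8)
The plan is to get the two displayed inclusions for free from Proposition~\ref{prop:multsubsets}, and then to promote them to equalities in the unit case by a short saturation check. For the inclusions, I would apply Proposition~\ref{prop:multsubsets} with $T=\{a\}$; since $a\in Z(A)$ the hypothesis $T\subseteq Z(A)$ holds, so $[a]^W_A[S]^W_M=[\{a\}]^W_A[S]^W_M\subseteq[\{a\}S]^W_M=[aS]^W_M$, which is the right-hand inclusion. The left-hand inclusion $a[S]^W_M\subseteq[a]^W_A[S]^W_M$ is immediate because $a\in\{a\}\subseteq[a]^W_A$, so every element $a\sigma$ with $\sigma\in[S]^W_M$ already lies in $[a]^W_A[S]^W_M$. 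Thus the first display requires no real argument beyond quoting Proposition~\ref{prop:multsubsets}.

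For the equalities when $a\in A^\times$, the key auxiliary fact I would isolate is: if $a\in Z(A)\cap A^\times$ and $F$ is a $W$-factroid of $M$, then $aF$ is again a $W$-factroid of $M$. It is visibly an additive subgroup, being the image of the subgroup $F$ under the additive bijection $x\mapsto ax$. For the saturation property, suppose $w\in W$ and $x\in M$ satisfy $wx\in aF$, say $wx=af$ with $f\in F$; then, using that $a$ (hence $a^{-1}$) is central, $w(a^{-1}x)=a^{-1}(wx)=f\in F$, so since $F$ is a $W$-factroid and $w\in W$ we get $a^{-1}x\in F$, i.e.\ $x\in aF$.

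Applying this auxiliary fact with $F=[S]^W_M$, the set $a[S]^W_M$ is a $W$-factroid of $M$ containing $aS$ (since $S\subseteq[S]^W_M$), so by minimality of the generated factroid $[aS]^W_M\subseteq a[S]^W_M$; together with the chain $a[S]^W_M\subseteq[a]^W_A[S]^W_M\subseteq[aS]^W_M$ from the first part, all three sets coincide. I expect no genuine obstacle here; the only point that needs care is the use of the centrality of $a$ to move $a^{-1}$ past $w\in W$ in the saturation check — without centrality, $a^{-1}w$ need not lie in $W$ and the argument would not go through.
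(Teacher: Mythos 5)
Your proof is correct. For the two inclusions you do exactly what the paper does (quote Proposition~\ref{prop:multsubsets} with $T=\{a\}$, and note $a\in[a]^W_A$ for the left inclusion). For the equalities in the unit case, however, you take a genuinely different route. You prove from scratch that multiplication by a central unit sends $W$-factroids to $W$-factroids, conclude that $a[S]^W_M$ is a $W$-factroid containing $aS$, and invoke minimality of the generated factroid to get $[aS]^W_M\subseteq a[S]^W_M$. The paper instead reuses the already-proved inclusions: it applies the chain to $a^{-1}\in Z(A)$ and the set $aS\subseteq M$, obtaining $a^{-1}[aS]^W_M\subseteq[a^{-1}aS]^W_M=[S]^W_M$, then multiplies through by $a$ to get the same reverse containment in one line. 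The paper's argument is more economical and exploits the symmetry of the statement under $a\leftrightarrow a^{-1}$; your argument is longer but isolates a clean auxiliary fact (central units of $A$ act on the lattice of $W$-factroids by $F\mapsto aF$), which is really the special case $T=\{a^{-1}\}$ of Lemma~\ref{lem:factcolonmod} since $(F:_M a^{-1})=aF$ — so you have, in effect, reproved a fragment of that lemma rather than reusing the first part of the corollary. Both are valid; the centrality check you flag at the end is indeed the load-bearing step and you handle it correctly.
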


\begin{proof} 
The first claim is clear, and the second follows by applying the first  to $aS \subseteq M$ and $a^{-1} \in Z(A)$.
\end{proof}

\begin{example}\label{ex:noncentral}
The condition that $a\in Z(A)$ is necessary in Corollary~\ref{cor:newa}.  To see this, consider the ring $A = k\langle x,y \rangle$ of polynomials in noncommuting variables $x,y$ over a field $k$.  Set $S := k[x]y$, and $W := \{1,x,x^2,x^3, \ldots\} =$ the multiplicative submonoid of $A$ generated by $x$.  If $F$ is a $W$-factroid containing $S$, then it must contain $k[x] \oplus k[x]y$.  But the latter set is already a $W$-factroid of $A$.  Hence, $[S]^W_A = k[x] \oplus k[x]y$.  On the other hand, $yS = yk[x]y$ is already a $W$-factroid of $A$, since it is an additive subgroup such that no element of it has a left $W$-factor other than $1$.  Thus, $[yS]^W_A = yS= yk[x]y$.  Hence, $yx \in y[S]^W_A \setminus [yS]^W_A$.
\end{example}

\begin{cor}
 Let $V$ be a multiplicative submonoid of $Z(A)$.  Then $[V]^W_A$ is a $\Z\langle V \rangle$-subalgebra of $A$ and is the smallest subring of $A$ containing $V$ that is a $W$-factroid of $A$.  Let $F$ be any $W$-factroid of $M$.  Then the following conditions are equivalent.
 \begin{enumerate}[(a)]
        \item $VF \subseteq F$.
     \item $F$ is a $V$-submodule of $M$.
     \item $F$ is a $\Z\langle V \rangle$-submodule of $M$.
       \item $F$ is $[V]^W_A$-submodule of $M$.
     \item $V \subseteq A(F)$.
     \item $[V]^W_A \subseteq A(F)$.
     \item $\Z\langle V \rangle \subseteq A(F)$.
     \item $A(F)$ is a $[V]^W_A$-subalgebra of $A$.  
     \item  $A(F)$ is a $\Z\langle V \rangle$-subalgebra of $A$.  
 \end{enumerate}
\end{cor}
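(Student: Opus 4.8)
The plan is to prove the first assertion directly from Proposition~\ref{prop:multsubsets} (and its corollary), and then to verify the chain of nine equivalences by establishing a short cycle of implications, exploiting the fact that $V \subseteq Z(A)$ and that $A(F)$ is already known to be a subring of $A$.

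First I would handle the statement that $[V]^W_A$ is a $\Z\langle V \rangle$-subalgebra of $A$ and is the smallest subring of $A$ containing $V$ that is a $W$-factroid of $A$. Since $V$ is a multiplicative submonoid of $Z(A)$, Proposition~\ref{prop:multsubsets} applied with $S = T = V$ gives $[V]^W_A [V]^W_A \subseteq [V^2]^W_A = [V]^W_A$, so $[V]^W_A$ is closed under multiplication; it already contains $1 = 1 \cdot 1 \in V$ (as $V$ is a submonoid) and is an additive subgroup by definition of a factroid, hence a subring. It contains $V$, and since $V \subseteq Z(A)$ it is in fact a $\Z\langle V\rangle$-algebra: any product $v x$ with $v \in V$, $x \in [V]^W_A$ lies in $[V]^W_A$ by the same containment, and these products span the $\Z\langle V \rangle$-module structure. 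Minimality is immediate from the definition of $[V]^W_A$ as the smallest $W$-factroid containing $V$: any subring of $A$ that is a $W$-factroid and contains $V$ must contain $[V]^W_A$.

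Next, for the equivalences, I would observe that (a) and (b) are literally the same statement (by Definition~\ref{def:Tsubmodule}), and (b)$\Leftrightarrow$(e) and (c)$\Leftrightarrow$(g) are instances of the characterization $A(F)$ = largest $T$ for which $F$ is a $T$-submodule (the Proposition following Definition~\ref{def:AofF}), with (b)$\Leftrightarrow$(c) being part~(5) of that same Proposition (passing to $\Z\langle V \rangle$). So (a), (b), (c), (e), (g) are mutually equivalent without using $F$'s factroid structure. To bring in (d) and (f): (f)$\Rightarrow$(e) is trivial since $V \subseteq [V]^W_A$, and (d)$\Leftrightarrow$(f) again by the $A(F)$-characterization. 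The key new input is (e)$\Rightarrow$(f), i.e.\ $V \subseteq A(F) \Rightarrow [V]^W_A \subseteq A(F)$; this is where the factroid hypothesis on $F$ is used. Here I would invoke Lemma~\ref{lem:factcolonmod}: since $F$ is a $W$-factroid of $M$ and $V \subseteq Z(A)$... actually more directly, $A(F) = (F :_A F)$ and by Lemma~\ref{lem:factcolonring} $(F :_A F)$ is a $W$-factroid of $A$; since it contains $V$ and is a $W$-factroid, it contains $[V]^W_A$ by minimality. This gives (e)$\Rightarrow$(f), closing the loop (a)$\Leftrightarrow\cdots\Leftrightarrow$(g). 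Finally (h) and (i): since $A(F)$ is a subring of $A$, "$A(F)$ is a $[V]^W_A$-subalgebra of $A$" is equivalent to "$[V]^W_A \subseteq A(F)$" (i.e.\ (f)), and similarly (i)$\Leftrightarrow$(g), using that $A(F)$ being a subring makes the subalgebra condition over $\Z\langle V\rangle$ (resp.\ over the ring $[V]^W_A$) the same as containment of the coefficient ring.

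The only genuinely nontrivial step is (e)$\Rightarrow$(f); everything else is unwinding definitions and citing the already-established facts about $A(F)$, $[V]^W_A$, and Lemmas~\ref{lem:factcolonmod}--\ref{lem:factcolonring}. The main thing to be careful about is making sure the centrality hypothesis $V \subseteq Z(A)$ is actually available wherever Proposition~\ref{prop:multsubsets}'s second (centrality-requiring) clause or Lemma~\ref{lem:factcolonmod} is invoked — which it is, by assumption on $V$.
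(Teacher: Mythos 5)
Your proof is correct and follows the paper's approach: the paper likewise uses Proposition~\ref{prop:multsubsets} with $S=T=V$ to get $[V]^W_A[V]^W_A \subseteq [VV]^W_A \subseteq [V]^W_A$ and $V[V]^W_A \subseteq [V]^W_A$, concluding that $[V]^W_A$ is a subring and a $\Z\langle V\rangle$-subalgebra, and then declares the nine equivalences ``easy to check.'' You have in fact supplied the missing details, with the right key observation for (e)$\Rightarrow$(f): $A(F)=(F:_A F)$ is a $W$-factroid of $A$ by Lemma~\ref{lem:factcolonring}, so containing $V$ forces it to contain $[V]^W_A$ by minimality.
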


\begin{proof}
    By Proposition \ref{prop:multsubsets}, one has $1 \in [V]^W_A$ and $[V]^W_A [V]^W_A \subseteq [VV]^W_A \subseteq [V]^W_A$, and therefore $[V]^W_A$ is a subring of $A$.  Moreover, since $V[V]^W_A \subseteq [V]^W_A$, the ring $[V]^W_A$ is also a $V$-submodule, hence a $\Z\langle V \rangle$-subalgebra, of $A$.  The equivalence of the nine conditions is easy to check.
\end{proof}

\begin{cor}\label{cor:bracket1}
      The smallest $W$-factroid $[1]^W_A$ of $A$ containing $1$ is the smallest subring of $A$ that is a $W$-factroid of $A$.  Let $F$ be any $W$-factroid of $M$.  Then $F$ is a $[1]^W_A$-submodule of $M$, $[1]^W_A$ is a subring of $A(F)$, and $A(F)$ is a $[1]^W_A$-subalgebra of $A$. 
\end{cor}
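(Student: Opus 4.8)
The plan is to derive this corollary as the special case $V = \{1\}$ of the preceding corollary. First I would observe that $\{1\}$ is a multiplicative submonoid of $Z(A)$, so the preceding corollary applies with this choice of $V$, and that $[\{1\}]^W_A = [1]^W_A$ by the definition of a principal $W$-factroid. That corollary then asserts directly that $[1]^W_A$ is a subring of $A$ and is the smallest subring of $A$ containing $1$ that is a $W$-factroid of $A$. To complete the first assertion I would note that, under the paper's standing convention that all rings and subrings are unital with a common identity, every subring of $A$ automatically contains $1$, so ``smallest subring of $A$ containing $1$'' coincides with ``smallest subring of $A$''.

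For the remaining three assertions I would fix an arbitrary $W$-factroid $F$ of $M$ and invoke the list of nine equivalent conditions in the preceding corollary, again with $V = \{1\}$. Condition (a) there becomes $\{1\}F \subseteq F$, which is automatic since $1\cdot F = F$; hence all nine conditions hold for $F$ with no further hypothesis. Reading off condition (d) gives that $F$ is a $[1]^W_A$-submodule of $M$; condition (f) gives $[1]^W_A \subseteq A(F)$, and since $[1]^W_A$ is a subring of $A$ contained in the subring $A(F)$ of $A$ (by the earlier proposition identifying $A(F)$ as a subring), this says that $[1]^W_A$ is a subring of $A(F)$; and condition (h) gives that $A(F)$ is a $[1]^W_A$-subalgebra of $A$. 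That exhausts the statement.

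I do not expect any real obstacle: the whole argument is a specialization of an already-established result. The only points warranting a sentence of care are the reconciliation of ``smallest subring containing $1$'' with ``smallest subring'' (immediate from the unitality conventions fixed in the introduction) and the observation that condition (a) of the earlier corollary is vacuous when $V = \{1\}$, so that being a $W$-factroid is all that is required of $F$.
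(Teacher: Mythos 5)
Your proposal is correct and is essentially the route the paper intends: the corollary is stated without proof immediately after the nine-condition corollary precisely because it is the specialization $V=\{1\}$, and your handling of the two small points (subrings containing $1$ under the unitality convention, and condition (a) being vacuous for $V=\{1\}$) is exactly right.
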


Next, we provide a more explicit construction of the $T$-factroid $[S]^T_M$ of $M$ generated by a subset $S$ of $M$.

\begin{defn}
\label{cons:indfact}
Let $T \subseteq A$ and $S \subseteq M$.  We inductively define additive subgroups $F_i^T(S)$ of $M$ for all positive integers $i$, as follows. First, let $F_1^T(S)$ denote the additive subgroup of $M$ generated by the $T$-saturation $\operatorname{Sat}^T_M(S)$ of $S$ in $M$.   For $i\geq 2$, we inductively define $F_i^T(S) := F_1^T
(F_{i-1}^T(S))$.  In other words, $F_i^T(S)$ is the $i$-fold application of the operation of $F_1^T$ to $S$.  Since $S \subseteq F_1^T(S)$, one has $S \subseteq F_1^T(S) \subseteq F_2^T(S) \subseteq \cdots$.  Moreover, it is clear that $S$ is a $T$-factroid of $M$ if and only if $F_1^T(S) \subseteq S$, which is in turn equivalent to the inclusions  $F_i^T(S) \subseteq S$ holding for all $i \geq 1$.
\end{defn}

\begin{prop}\label{pr:factroidfrombelow}
Let $T \subseteq A$ and $S \subseteq M$. Then  $[S]_M^T = \bigcup_{i\geq 1} F_i^T(S)$, where $F_i^T(S)$ is as in Definition~\ref{cons:indfact}.
\end{prop}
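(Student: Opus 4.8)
The plan is to show the two inclusions $[S]_M^T \supseteq \bigcup_{i \geq 1} F_i^T(S)$ and $[S]_M^T \subseteq \bigcup_{i \geq 1} F_i^T(S)$ separately. For the first inclusion, I would argue by induction on $i$ that $F_i^T(S) \subseteq [S]_M^T$ for all $i \geq 1$. The base case $i=1$ holds because $[S]_M^T$ is a $T$-factroid containing $S$, hence (being $T$-saturated) contains $\operatorname{Sat}_M^T(S)$, and (being an additive subgroup) contains the subgroup generated by that saturation, which is precisely $F_1^T(S)$. For the inductive step, assuming $F_{i-1}^T(S) \subseteq [S]_M^T$, we have $F_i^T(S) = F_1^T(F_{i-1}^T(S)) \subseteq F_1^T([S]_M^T) = [S]_M^T$, where the middle inclusion uses that $F_1^T$ is order-preserving (it sends a larger set to a larger subgroup generated by a larger saturation) and the final equality uses that $[S]_M^T$, being a $T$-factroid, satisfies $F_1^T([S]_M^T) \subseteq [S]_M^T$ as noted in Definition~\ref{cons:indfact} (and the reverse inclusion is automatic). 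Taking the union over $i$ gives $\bigcup_{i \geq 1} F_i^T(S) \subseteq [S]_M^T$.

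For the reverse inclusion, set $F := \bigcup_{i \geq 1} F_i^T(S)$. Since $[S]_M^T$ is the smallest $T$-factroid of $M$ containing $S$, and $S \subseteq F_1^T(S) \subseteq F$, it suffices to verify that $F$ is itself a $T$-factroid of $M$, i.e., that $F$ is an additive subgroup of $M$ that is $T$-saturated. That $F$ is an additive subgroup follows because the $F_i^T(S)$ form an ascending chain of additive subgroups, so their union is an additive subgroup. For $T$-saturation, let $t \in T$ and $x \in M$ with $tx \in F$. Then $tx \in F_i^T(S)$ for some $i$, so $x \in (F_i^T(S) :_M t) \subseteq \operatorname{Sat}_M^T(F_i^T(S)) \subseteq F_1^T(F_i^T(S)) = F_{i+1}^T(S) \subseteq F$, as desired.

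I do not anticipate a serious obstacle here; the argument is essentially a standard "build from below" construction. The one point requiring a little care is the monotonicity claim $F_1^T(X) \subseteq F_1^T(Y)$ for $X \subseteq Y$, used in the first inclusion: this is immediate from the fact that $\operatorname{Sat}_M^T$ is order-preserving (directly from its union description $\bigcup_{w \in \langle T \rangle}(X :_M w)$) together with the fact that the subgroup-generated-by operation is order-preserving. A second small point is justifying $F_1^T([S]_M^T) \subseteq [S]_M^T$: since $[S]_M^T$ is $T$-saturated we get $\operatorname{Sat}_M^T([S]_M^T) = [S]_M^T$, and since it is already an additive subgroup, the subgroup it generates is itself. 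Both of these can be dispatched in a sentence each, so the whole proof is short.
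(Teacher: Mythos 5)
Your proof is correct and takes essentially the same approach as the paper: the first inclusion via monotonicity of $F_1^T$ applied iteratively to a $T$-factroid $G$ containing $S$, and the second via verifying that the nested union $F := \bigcup_i F_i^T(S)$ is an additive subgroup (a nested union of subgroups) and $T$-saturated (any element $x$ with $tx \in F_i^T(S)$ lands in $F_{i+1}^T(S)$). The paper also offers an alternative phrasing of the $T$-saturation step via its earlier directed-union lemma, but the explicit argument you give is exactly its second version of that step.
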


\begin{proof}
Set $F(S) := \bigcup_{i\geq 1} F_i^T(S)$.  We need to show that $F(S)$ is a $T$-factroid of $M$, and that $F(S)$ is contained in any $T$-factroid that contains $S$.

For the latter, let $G$ be a $T$-factroid of $M$ that contains $S$.  Then $F_1^T(S) \subseteq F_1^T(G) \subseteq G$, so that $F_i^T(S) \subseteq G$ for all $i$, whence $F(S) \subseteq G$. 

For the former, first note that $F(S)$ is a directed union of additive subgroups (or $\{1\}$-factroids) of $M$ and is therefore an additive subgroup of $M$.  Moreover, by Proposition \ref{pr:dirlim}, $\operatorname{Sat}^T_M(F(S)) = \bigcup_{i\geq 1} \operatorname{Sat}^T_M(F_i^T(S)) \subseteq \bigcup_{i\geq 1} F_{i+1}^T(S) = F(S)$, so that $F(S)$ is $T$-saturated. Therefore, $F(S)$ is a $T$-factroid of $M$.

Here is an alternative and more explicit proof.  Let $W : = \langle T \rangle$, and let $x,y \in F(S)$.  Then since the union is nested, there is some $i$ with $x,y \in F_i^T(S)$.  Thus, $x-y \in F_i^T(S) \subseteq F(S)$. Next, let $z \in M$ and $w \in W$ such that $wz \in F(S)$.  Then there is some $i$ with $wz \in F_i^T(S)$, so that $z \in F_1^T(F_i^T(S)) = F_{i+1}^T(S) \subseteq F(S)$, completing the proof that $F(S)$ is a $W$-factroid.  Hence $[S]_M^T \subseteq F(S)$.
\end{proof}

The corollary below provides an alternative proof of Corollary \ref{cor:newa}.

\begin{cor}\label{cor:hinside}
Let $T \subseteq A$ and $S \subseteq M$. Let $F_i^T(-)$ be as in Definition~\ref{cons:indfact}. \begin{enumerate}
    \item For any $i\geq 0$, one has $F_i^T(T) S \subseteq F_i^T(TS)$.  Hence, $[T]^T_A S \subseteq [TS]^W_M$.
    \item Suppose $T \subseteq Z(A)$.  Then for any $i\geq 0$, $T F_i^T(S) \subseteq F_i^T(TS)$.  Hence $T[S]^T_M \subseteq [TS]^T_M$.
\end{enumerate}
\end{cor}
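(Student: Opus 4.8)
The plan is to prove the two stage-wise inclusions by induction on $i$, using the explicit ``from below'' construction of Definition~\ref{cons:indfact}, and then to deduce the two ``Hence'' assertions by taking unions over $i$ via Proposition~\ref{pr:factroidfrombelow}. Write $W := \langle T \rangle$ throughout, and read $F_0^T(-)$ as the identity operator (the $0$-fold application of $F_1^T$), so that the case $i=0$ of each claim is the triviality $TS \subseteq TS$; the content is entirely in passing from stage $i$ to stage $i+1$.

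For part (1): by definition $F_{i+1}^T(T) = F_1^T(F_i^T(T))$ is the additive subgroup of $A$ generated by $\operatorname{Sat}^T_A(F_i^T(T)) = \bigcup_{w \in W}(F_i^T(T):_A w)$. Since the target $F_{i+1}^T(TS)$ is an additive subgroup of $M$, it suffices to check that $ax \in F_{i+1}^T(TS)$ for every $x \in S$ and every generator $a$ of $F_{i+1}^T(T)$ --- that is, every $a \in A$ with $wa \in F_i^T(T)$ for some $w \in W$ --- after which the general element of $F_{i+1}^T(T)$, being a finite $\mathbb{Z}$-combination of such $a$'s, is handled by additivity. For such $a$ and $x$, the module axiom $w(ax) = (wa)x$ shows $w(ax) \in F_i^T(T)\,S$, which lies in $F_i^T(TS)$ by the induction hypothesis; hence $ax \in \operatorname{Sat}^T_M(F_i^T(TS)) \subseteq F_1^T(F_i^T(TS)) = F_{i+1}^T(TS)$, as needed.

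Part (2) is the same argument with $S$ in place of $T$, the point being that the one spot where part (1) invoked the module axiom is exactly where the hypothesis $T \subseteq Z(A)$ enters. Now $F_{i+1}^T(S)$ is generated as an additive group by $\operatorname{Sat}^T_M(F_i^T(S)) = \bigcup_{w \in W}(F_i^T(S):_M w)$, so it is enough to show $tx \in F_{i+1}^T(TS)$ for each $t \in T$ and each generator $x$ (each $x \in M$ with $wx \in F_i^T(S)$ for some $w \in W$). Since $w$ is a product of elements of $T \subseteq Z(A)$, it commutes with $t$, so $w(tx) = (wt)x = (tw)x = t(wx) \in T\,F_i^T(S) \subseteq F_i^T(TS)$ by the induction hypothesis, whence $tx \in \operatorname{Sat}^T_M(F_i^T(TS)) \subseteq F_{i+1}^T(TS)$; extending over $x$ by additivity completes the step.

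To finish, Proposition~\ref{pr:factroidfrombelow} gives $[T]^T_A = \bigcup_{i \geq 1} F_i^T(T)$ (applied in the module ${}_AA$), $[S]^T_M = \bigcup_{i \geq 1} F_i^T(S)$, and $[TS]^T_M = \bigcup_{i \geq 1} F_i^T(TS)$; the last equals $[TS]^W_M$ since a $T$-factroid is the same thing as a $\langle T \rangle$-factroid (Proposition~\ref{pr:Tfact}). Taking unions in the stage-wise inclusions then yields $[T]^T_A S = \bigcup_i F_i^T(T)\,S \subseteq \bigcup_i F_i^T(TS) = [TS]^W_M$ and, when $T \subseteq Z(A)$, $T[S]^T_M = \bigcup_i T\,F_i^T(S) \subseteq \bigcup_i F_i^T(TS) = [TS]^T_M$. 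I do not foresee a real obstacle; the only point needing care is that $F_{i+1}^T(-)$ is the subgroup \emph{generated by} the $T$-saturation of $F_i^T(-)$, not that saturation itself, so each inductive step must be verified first on generators and then spread to all elements by additivity --- and that the centrality hypothesis is used only in part (2), precisely to move $w$ past $t$.
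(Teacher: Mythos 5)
Your proof is correct and follows essentially the same route as the paper's: the stage-wise inclusions are established by induction using the ``from below'' construction of Definition~\ref{cons:indfact} (checking on additive generators of $F_{i+1}^T(\cdot)$, which is exactly where the module axiom enters in (1) and where $T \subseteq Z(A)$ is needed in (2)), and then the ``Hence'' statements follow by taking unions via Proposition~\ref{pr:factroidfrombelow}. The only cosmetic difference is that the paper proves the $i=1$ step as a standalone lemma and then appeals to it inductively with monotonicity of $F_1^W$, whereas you run a single explicit induction; these are the same idea.
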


\begin{proof}
In both cases, the second statement follows from the first, by Proposition~\ref{pr:factroidfrombelow}.  So it suffices to prove the first statement of each of (1), (2).

(1): Let $r \in F_1^W(T)$ and $s\in S$.  Then there exist $a_j \in A$, $w_j \in W$ with $r = \sum_{j=1}^n a_j$ and $\pm w_j a_j \in T$.  But then $rs = \sum_{j=1}^n a_j s$, and $\pm w_j (a_j s) \in TS$.  Hence, $rs \in F_1^W(TS)$, and hence $F_1^W(T)S \subseteq F_1^W(TS)$.  Replacing $T$ by $F_i^W(T)$ inductively, one obtains the first statement.

(2): Since $T \subseteq Z(A)$, it is clear that, for any $b\in A$ and $t\in T$, we have $t(S :_M b) \subseteq (TS :_M b)$.  So let $z\in F_1^W(S)$.  Then there exist $z_j \in M$ and $w_j \in W$ with $z =\sum z_j$ and $z_j \in \pm(S:_M w_j)$.  Hence, for any $t\in T$, we have $tz_j \in \pm (TS :_M w_j)$, and $tz = \sum_j tz_j$, so $tz \in F_1^W(TS)$.  Replacing $S$ by $F_i^W(S)$ inductively, one obtains the first statement. 
\end{proof}

\begin{example}
    There are many examples where $[S]^W_M \supsetneq F_1^W(S)$. For instance, let $A = M = k[x]$, $W = A\setminus \{0\}$, and $S = \{f\}$, where $k$ is a field and $f$ is a monic irreducible polynomial in $k[x]$.  The monic factors of $f$ are $1$ and $f$, so that $F_1^W(f) = \spn_k\{1,f\}$.  But by Proposition~\ref{ex:polysimple} one has $[f]^W = \spn_k\{1,x,x^2, \ldots, x^n\}$, where $n = \deg f$.  Thus, one has $F_1^W(f) = [f]^W_A$ if and only if $\deg f \leq 1$.
\end{example}

\section{$T$-regular $W$-factroids of a module}\label{sec:regular}

In this section, we consider another general construction of factroids, which generally gives a larger set than $[-]_M^W$.  Here, $T$ will be an arbitrary subset of $A$, and $W$ a multiplicative subset of $A$.

\begin{defn}\label{defn:regular}
Let $T$ be a subset of $A$.  A $W$-factroid $F$ of $M$ is \emph{$T$-regular} if $([hF]_M^W :_M h) = F$ for all $h\in T$, or, equivalently, if $x\in F$ for any $x\in M$ such that 
$hx \in [hF]_M^W$ for some $h\in T$.
\end{defn}

\begin{rem}\label{rem:Tregbasics}
 If $T' \subseteq T$, then every $T$-regular $W$-factroid is $T'$-regular.  If $T \subseteq Z(A)^\times = A^\times \cap Z(A)$, 
then any $W$-factroid is $T$-regular, 
by Corollary~\ref{cor:newa}.

Observe also any $W$-factroid $F$ of $M$ is $(W \cap A(F))$-regular.  Indeed, let $h \in W \cap A(F)$ and $x \in M$ with $hx \in [hF]^W_M$.  Since $h \in W$ and $[hF]^W_M$ is a $W$-factroid, one has $x \in [hF]^W_M$.  Then, since $h \in A(F) = 
(F :_A F)$, one has $hF \subseteq F$ and therefore $x \in [F]^W_M = F$.  

Since an additive subgroup $F$ of $M$ is a $W$-submodule of $M$ if and only if $W \cap A(F) = W$, a $W$-factroid of $M$ that is also a $W$-submodule of $M$ is automatically $W$-regular.
\end{rem}

\begin{rem}\label{rem:TofF}
Let $F$ be a $W$-factroid of $M$.  Let $T^W_M(F)$ be the set of all $h \in A$ such that $([hF]_M^W :_M h) = F$.  Then $F$ is $T$-regular if and only if $T \subseteq T^W_M(F)$; that is, $T^W_M(F)$ is the largest subset $T$ of $A$ such that the $W$-factroid $F$ is $T$-regular.  Note that $$ T^W_M(F) = \bigcup_{a \in Z(A)}(T^W_M(F):_A a) \supseteq Z(A)^\times.$$  In other words, $T^W_A(F)$ is $Z(A)$-saturated; equivalently,  $F$ is $T$-regular if and only if $F$ is $\operatorname{Sat}^{Z(A)}_M(T)$-regular.  Moreover, by the previous remark one has $$(W \cap A^\times)^{-1} \subseteq W \cap A(F) \subseteq T_M^W(F).$$  In other words, every $W$-factroid is $(W \cap A(F))$-regular, and therefore also $(W\cap A^\times)^{-1}$-regular.
\end{rem}

\begin{problem}
    Must one have $A^\times \subseteq T^W_M(F)$?  Equivalently, must any $W$-factroid be $A^\times$-regular?
\end{problem}

\begin{problem}
    Must $T^W_M(F)$ be multiplicatively closed?  Equivalently, is a $W$-factroid $T$-regular if and only if it is $\langle T \rangle$-regular, where  $\langle T \rangle$ is the submonoid of $A$ generated by $T$?  
\end{problem}

\begin{lemma}\label{lem:capregfactroids}
Let $T \subseteq A$.  Any intersection of $T$-regular $W$-factroids of an $A$-module $M$ is $T$-regular.
\end{lemma}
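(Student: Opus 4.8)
The plan is to reduce everything to two facts established earlier: that the poset of $W$-factroids is closed under arbitrary intersection (Proposition~\ref{pr:intersect}), and that the operation $S \mapsto [S]^W_M$ is monotone, being by definition the smallest $W$-factroid containing its argument. Write $\{F_\alpha \mid \alpha \in \Lambda\}$ for the given collection of $T$-regular $W$-factroids of $M$, and set $F := \bigcap_{\alpha \in \Lambda} F_\alpha$. By Proposition~\ref{pr:intersect}, $F$ is a $W$-factroid of $M$, so the only thing left to check is the equation $([hF]^W_M :_M h) = F$ for every $h \in T$.

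First I would dispose of the easy inclusion, which needs no hypothesis: since $hF \subseteq [hF]^W_M$, we have $F \subseteq ([hF]^W_M :_M h)$ for every $h \in A$. So all the content is in the reverse inclusion.

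For that, fix $h \in T$ and $x \in M$ with $hx \in [hF]^W_M$. For each $\alpha \in \Lambda$, the inclusion $F \subseteq F_\alpha$ gives $hF \subseteq hF_\alpha$, hence $[hF]^W_M \subseteq [hF_\alpha]^W_M$ by monotonicity of $[-]^W_M$; thus $hx \in [hF_\alpha]^W_M$. Since $F_\alpha$ is $T$-regular and $h \in T$, Definition~\ref{defn:regular} yields $x \in F_\alpha$. As $\alpha$ was arbitrary, $x \in \bigcap_{\alpha \in \Lambda} F_\alpha = F$, proving $([hF]^W_M :_M h) \subseteq F$ and hence the claimed equality.

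I do not anticipate a genuine obstacle: the argument is formal once one records that intersections of $W$-factroids are $W$-factroids and that $[-]^W_M$ is order-preserving on subsets of $M$. The only mild subtlety worth stating explicitly is that the hypothesis is used one index at a time — each $F_\alpha$ being $T$-regular forces $x \in F_\alpha$ separately — and then the conclusion for the intersection follows simply because membership in $F$ means membership in every $F_\alpha$.
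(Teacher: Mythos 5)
Your proof is correct and follows essentially the same path as the paper's: fix $h \in T$ and $x$ with $hx \in [hF]^W_M$, use monotonicity of $[-]^W_M$ to push $hx$ into each $[hF_\alpha]^W_M$, invoke $T$-regularity of $F_\alpha$ to conclude $x \in F_\alpha$, and intersect. You are slightly more explicit than the paper in noting that $F$ is itself a $W$-factroid (via Proposition~\ref{pr:intersect}) and in recording the trivial inclusion $F \subseteq ([hF]^W_M :_M h)$, but these are just spelled-out versions of what the paper leaves implicit.
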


\begin{proof}
Let $\{F_\alpha\}_{\alpha \in \Lambda}$ be a set of $T$-regular $W$-factroids in $M$, and let $F = \bigcap_\alpha F_\alpha$.  Let $h\in T$ and $x \in M$ such that $hx \in [hF]_M^W$.  
Since $[-]^W_M$ is order-preserving and $hF \subseteq hF_\alpha$ for all $\alpha$, it follows that $hx \in [hF_\alpha]^W_M$, so that $x\in ([hF_\alpha]^W_M :_M h) = F_\alpha$
Thus, $x \in \bigcap_\alpha F_\alpha = F$.
\end{proof}

Consequently, we can make the following definition.

\begin{defn}\label{def:Treggen}
  Let $T \subseteq A$.  For any subset $S$ of $M$, the \emph{$T$-regular $W$-factroid of $M$ generated by $S$} is the smallest $T$-regular $W$-factroid \[
    [S]_M^{W,T} := \bigcap \{F \mid F \text{ is a $T$-regular $W$-factroid of $M$ and $S \subseteq F$}\}.
    \]
    of $M$ containing $S$.  When $M$ is understood, we abbreviate $[S]_M^{W,T}$ to $[S]^{W,T}$.   When $S = \{x\}$ for some $x\in M$, we write $[x]_M^{W,T}$ or $[x]^{W,T}$ in place of $[\{x\}]^{W,T}_M$.
\end{defn}

\begin{rem}
      Let $S \subseteq M$.  If $T \subseteq A^\times$, then $[S]^{W,T}_M = [S]^W_M$. Moreover, if $T' \subseteq T \subseteq A$, then $[S]^{W,T'}_M \subseteq [S]^{W,T}_M$.  Consequently, one has $[S]^{W,T}_M \supseteq [S]^W_M$ for all $T \subseteq A$.
      \end{rem}

\begin{rem}
      If $A$ is commutative, then $[\emptyset]^{W,W}_M = [\emptyset]_M^W = \ker(M \rightarrow W^{-1}M)$ is the smallest ($W$-regular) $W$-factroid of $M$.
\end{rem}

In Definition~\ref{cons:indfact} and Proposition~\ref{pr:factroidfrombelow}, we showed a way to construct $[S]^W_M$ from \emph{below} -- i.e., as a union of certain subsets of $M$ rather than an intersection.  The following construction of $G^{W,T}_M(S)$ is an attempt at doing a similar thing for $T$-regular $W$-factroids.  However, as we shall see, this is only fully successful when $T$ is a multiplicative submonoid of the center of $A$.  Thus, in the commutative case the story is satisfying, but in the noncommutative case, questions remain.

\begin{defn}
Let $T \subseteq A$ and $S \subseteq M$.  Set 
\[G_M^{W,T}(S) = \bigcup_{h \in T} ([hS]_M^W :_M h)   = \{x \in M: hx \in [hS]^W_M \text{ for some } h \in T\}.
\]
     When $T = W$, we write $G^W_M(S)$ for $G^{W,T}_M(S)$.  When $M$ is understood, we write $G^{W,T}(S)$ for $G_M^{W,T}(S)$ and  $G^W(S)$ for $G^W_M(S)$.  We also set the obvious conventions when $S$ is a singleton.
\end{defn}

\begin{lemma}\label{lem:Gsubset}
Let $T \subseteq A$ and $S \subseteq M$.  Then $G_M^{W,T}(S) \subseteq [S]_M^{W,T}$. 
\end{lemma}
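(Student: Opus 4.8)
The plan is to unwind both sides of the inclusion directly from the definitions; no clever construction is needed. Write $F := [S]_M^{W,T}$, which by Definition~\ref{def:Treggen} is a $T$-regular $W$-factroid of $M$ containing $S$. I would start by taking an arbitrary $x \in G_M^{W,T}(S)$, so that by definition there is some $h \in T$ with $hx \in [hS]_M^W$, and the goal is to conclude $x \in F$.

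The key step is to push $hx$ into $[hF]_M^W$. Since $S \subseteq F$, we get $hS \subseteq hF$, and since the operator $[-]_M^W$ is order-preserving (it is an intersection of all $W$-factroids containing the given set, so enlarging the set can only enlarge the generated $W$-factroid — exactly the monotonicity already invoked in the proof of Lemma~\ref{lem:capregfactroids}), it follows that $[hS]_M^W \subseteq [hF]_M^W$. Hence $hx \in [hF]_M^W$.

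Now I would invoke $T$-regularity of $F$: because $h \in T$ and $F$ is a $T$-regular $W$-factroid, Definition~\ref{defn:regular} gives $([hF]_M^W :_M h) = F$. Since $hx \in [hF]_M^W$ means $x \in ([hF]_M^W :_M h)$, we conclude $x \in F = [S]_M^{W,T}$, as desired. As $x$ was arbitrary, $G_M^{W,T}(S) \subseteq [S]_M^{W,T}$.

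There is essentially no obstacle here: the only thing to be mildly careful about is that monotonicity of $[-]_M^W$ is used, but this is immediate and already in play earlier in the section. (One could also phrase the whole argument by noting that $G_M^{W,T}(S) \subseteq G_M^{W,T}(F) = F$, where the last equality is the defining property of $T$-regularity applied with $S$ replaced by $F$ together with $F \subseteq G_M^{W,T}(F)$; but the direct one-element chase above is cleanest.)
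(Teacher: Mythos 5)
Your proof is correct and is essentially the same argument as the paper's: in both, one takes $x \in G_M^{W,T}(S)$, uses monotonicity of $[-]_M^W$ to get $hx \in [hS]_M^W \subseteq [hG]_M^W$ for a $T$-regular $W$-factroid $G \supseteq S$, and then applies $T$-regularity to conclude $x \in G$. The only cosmetic difference is that the paper runs the chase with an \emph{arbitrary} $T$-regular $W$-factroid $G$ containing $S$ and then intersects, whereas you specialize immediately to $G = [S]_M^{W,T}$, implicitly relying on Lemma~\ref{lem:capregfactroids} to know that this intersection is itself a $T$-regular $W$-factroid.
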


\begin{proof}
Let $G$ be any $T$-regular $W$-factroid containing $S$. Let $x\in G_M^{W,T}(S)$; then for some $h\in T$, $hx \in [hS]_M^W \subseteq [hG]_M^W$, so that $x \in ([hG]^W_M :_M h) = G$ since $G$ is $T$-regular.  The lemma follows.
\end{proof}

\begin{lemma}\label{lem:Gmulinside}
    Let $T \subseteq A$, $S \subseteq M$, and $c \in Z(A)$.  Then $cG^{W,T}(S) \subseteq G^{W,T}(cS)$.  Equality holds if $c$ is a unit of $A$.
\end{lemma}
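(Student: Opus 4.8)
The plan is to run essentially the same argument as in the proof of Corollary~\ref{cor:newa}, with the operator $[-]^W_M$ replaced throughout by the explicit union $G^{W,T}(-)$. First I would unwind the definition: an element of $cG^{W,T}(S)$ has the form $cx$ with $x\in M$ and $hx\in [hS]^W_M$ for some $h\in T$. Using that $c$ is central, I rewrite $h(cx)=c(hx)$ and apply the inclusion $c[hS]^W_M\subseteq [c(hS)]^W_M$ from Corollary~\ref{cor:newa} (taking the subset of $M$ there to be $hS$). Since $c$ commutes with $h$, the set $c(hS)$ coincides with $h(cS)$, so $h(cx)\in [h(cS)]^W_M$, which says exactly that $cx\in ([h(cS)]^W_M:_M h)\subseteq G^{W,T}(cS)$. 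This establishes the first inclusion.

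For the equality statement when $c\in A^\times$, I would note that $c^{-1}$ is again a central element (the inverse of a central unit is central), and apply the inclusion just proved with $c$ replaced by $c^{-1}$ and $S$ replaced by $cS$: this gives $c^{-1}G^{W,T}(cS)\subseteq G^{W,T}(c^{-1}cS)=G^{W,T}(S)$. Multiplying both sides by the unit $c$ yields $G^{W,T}(cS)\subseteq cG^{W,T}(S)$, and combining with the first inclusion gives the desired equality.

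I do not expect any real obstacle here; the only point that requires a moment's care is the bookkeeping identity $c(hS)=h(cS)$, which is what makes the single index $h$ usable simultaneously on both sides and is precisely where centrality of $c$ enters. Without centrality one could only say something about $[(ch)S]^W_M$, which need not be of the form $[h'(cS)]^W_M$ for any $h'\in T$, so the argument genuinely uses the hypothesis $c\in Z(A)$.
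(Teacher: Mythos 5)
Your proof is correct and follows essentially the same route as the paper: take $x\in G^{W,T}(S)$ with $hx\in [hS]^W_M$, use centrality to rewrite $h(cx)=c(hx)$, apply Corollary~\ref{cor:newa} to land in $[hcS]^W_M$, and then bootstrap the equality case by running the inclusion with $c^{-1}$ and $cS$. The only cosmetic difference is that you spell out that $c^{-1}$ is central, which the paper leaves implicit.
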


\begin{proof}
Let $x \in G^{W,T}(S)$.  Then for some $h\in T$, we have $hx \in [hS]_M^W$.  Then $hcx = chx  \in c  [hS]_M^W \subseteq [chS]_M^W = [hcS]_M^W$ by Corollary~\ref{cor:newa}.  Thus, $cx \in ([hcS]_M^W :_M h) \subseteq G^{W,T}(cS)$.  Applying the given inclusion to $c^{-1} \in Z(A)$ and $cS \subseteq M$, one sees that the reverse inclusion holds if $c$ is a unit of $A$.
\end{proof}

\begin{prop}\label{pr:G}
Let $W$ be a multiplicative subset of $A$, let $V$ be a multiplicative submonoid of $Z(A)$, and let $S$ be a subset of $M$. Then $G^{W,V}_M(S)$ is the smallest $V$-regular $W$-factroid of $M$ containing $S$.   Equivalently, $[S]^{W,V}_M = G^{W,V}_M(S)$.
\end{prop}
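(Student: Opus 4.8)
The plan is to prove directly that $G := G^{W,V}_M(S)$ is a $V$-regular $W$-factroid of $M$ containing $S$; together with Lemma~\ref{lem:Gsubset}, which already gives $G \subseteq [S]^{W,V}_M$, this forces $G = [S]^{W,V}_M$ and at the same time exhibits $G$ as the \emph{smallest} $V$-regular $W$-factroid containing $S$.

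First I would observe that the $h=1$ term of the defining union is $([1\cdot S]^W_M :_M 1) = [S]^W_M$ (using $1 \in V$ since $V$ is a submonoid), so $S \subseteq [S]^W_M \subseteq G$; in particular $0 \in G$. Next I would check that $G$ is a $W$-factroid. For the subgroup property, given $x,y \in G$ pick $h,h' \in V$ with $hx \in [hS]^W_M$ and $h'y \in [h'S]^W_M$; since $h,h'$ lie in $Z(A)$, Corollary~\ref{cor:newa} gives $hh'x = h'(hx) \in h'[hS]^W_M \subseteq [hh'S]^W_M$ and likewise $hh'y \in [hh'S]^W_M$, so $hh'(x-y) \in [hh'S]^W_M$ and hence $x-y \in ([hh'S]^W_M :_M hh') \subseteq G$ because $hh' \in V$. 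For $W$-saturation, suppose $w \in W$ and $wx \in G$; choose $h \in V$ with $h(wx) \in [hS]^W_M$, rewrite $h(wx) = w(hx)$ using centrality of $h$, and conclude $hx \in [hS]^W_M$ since $[hS]^W_M$ is a $W$-factroid, so $x \in ([hS]^W_M :_M h) \subseteq G$. Note that this argument used nothing special about $S$, so it also shows that $G^{W,V}_M(gS)$ is a $W$-factroid for each $g \in V$; I will use this below.

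The crux — and the step I expect to require the most care — is $V$-regularity, i.e.\ the identity $([gG]^W_M :_M g) = G$ for every $g \in V$. The inclusion $G \subseteq ([gG]^W_M :_M g)$ is immediate since $gG \subseteq [gG]^W_M$. For the reverse inclusion I would argue as follows: by Lemma~\ref{lem:Gmulinside}, $gG = g\,G^{W,V}_M(S) \subseteq G^{W,V}_M(gS)$, and since $G^{W,V}_M(gS)$ is a $W$-factroid (by the previous paragraph), it follows that $[gG]^W_M \subseteq G^{W,V}_M(gS)$. Hence if $gx \in [gG]^W_M$, then $gx \in G^{W,V}_M(gS)$, so $h(gx) \in [h(gS)]^W_M$ for some $h \in V$; using centrality of $g$ and $h$ we have $h(gS) = (hg)S$ and $h(gx) = (hg)x$ with $hg \in V$, whence $x \in ([(hg)S]^W_M :_M hg) \subseteq G$, as required.

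Finally I would assemble the pieces: $G$ is a $V$-regular $W$-factroid of $M$ containing $S$, so by the definition of $[S]^{W,V}_M$ as the smallest such we get $[S]^{W,V}_M \subseteq G$; combined with Lemma~\ref{lem:Gsubset} this yields $[S]^{W,V}_M = G = G^{W,V}_M(S)$, and $G^{W,V}_M(S)$ is then automatically the smallest $V$-regular $W$-factroid of $M$ containing $S$. The hypothesis $V \subseteq Z(A)$ is genuinely used at every commutation step above and in the appeals to Corollary~\ref{cor:newa} and Lemma~\ref{lem:Gmulinside}, which is precisely why the noncommutative case is left open.
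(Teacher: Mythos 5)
Your proof is correct and follows essentially the same route as the paper's: establish that $G^{W,V}_M(S)$ is a $W$-factroid via Corollary~\ref{cor:newa}, establish $V$-regularity via Lemma~\ref{lem:Gmulinside} together with the observation that $G^{W,V}_M(gS)$ is itself a $W$-factroid, and then invoke Lemma~\ref{lem:Gsubset} to conclude. The only cosmetic differences are that you make explicit the containment $S \subseteq G$ (via the $h=1$ term) and the easy inclusion $G \subseteq ([gG]^W_M :_M g)$, both of which the paper leaves implicit.
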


\begin{proof}
First we prove $G=G^{W,V}(S)$ is a $W$-factroid of $M$.  

Let $z,z' \in G$.  Then there exist $h,h' \in V$ with $hz \in [hS]_M^W$ and $h'z' \in [h'S]_M^W$.  It then follows from Corollary~\ref{cor:newa} that $hh'z, hh'z' \in [hh'S]_M^W$.  Thus, $hh'(z-z') = hh'z - hh'z' \in [hh'S]_M^W$, where $h h' \in V$, whence $z-z' \in G$.  Since $G$ is nonempty, it follows that it is an additive subgroup of $M$.

Now let $x \in M$ and $r\in W$ such that $rx \in G$.  Then there is some  $h\in V$ with $rhx = hrx \in [hS]_M^W$.  Thus, $hx \in ([hS]_M^W :_M r)  \subseteq [hS]_M^W$ (since $[hS]_M^W$ is a $W$-factroid), whence $x \in G$.  Thus, $G$ is a $W$-factroid of $M$.

Next, to see that $G$ is $V$-regular, let $h\in V$ and $z\in ([hG]_M^W :_M h)$.  Then $hz \in [hG]_M^W$.  But $hG=hG^{W,V}(S) \subseteq G^{W,V}(hS)$ by Lemma~\ref{lem:Gmulinside}, so since $[-]_M^W$ is order-preserving, we have $hz \in [G^{W,V}(hS)]_M^W = G^{W,V}(hS)$, with the last equality because $G^{W,V}(hS)$ is a $W$-factroid.  Thus for some $h'\in V$, we have $h'hz \in [h'hS]_M^W$, whence $z \in ([h'hS]_M^W :_M h'h) \subseteq G$ since $h'h\in V$.

Finally, since $G$ is a $V$-regular $W$-factroid of $M$ containing $S$, and since  $[S]^{W,V}_M \supseteq G$ by Lemma \ref{lem:Gsubset},  equality must hold.
\end{proof}

\begin{example}\label{ex:Gnotaddcl}
Equality need not hold in Proposition~\ref{pr:G} when $V \nsubseteq Z(A)$.  Indeed, $G^{W,V}_M(S)$ need not even be additively closed.  For an example of this, let $A$ and $M$ be as in Example~\ref{ex:0notaddcl}, let $W = \{1\}$ (so that $[-]^W_M$ is just additive subgroup generation), let $S = \{0\}$, and let $V = \langle x,y \rangle$.  Then $(1,0) \in (0 :_M x) = ([0]^W_M :_M x)$, and $(0,1) \in (0 :_M y) = ([0]^W_M :_M y)$, so both are in $G^{W,V}_M(0)$, but for any $g\in V$, $(\bar g, \bar g) = g \cdot (1,1) \notin [g0]^W_M = 0$, so $(1,1)= (1,0) + (0,1) \notin G^{W,V}_M(0)$.
\end{example}

\begin{problem}
     Find a formula for $[S]^{W,T}_M$ analogous to the formula $[S]_M^{W} = \bigcup_{n = 1}^\infty F_i^W(S)$ that works for arbitrary subsets $T$ of $A$.
\end{problem}

\begin{example}
All of the factroids $C_n$ of $A = k[x]$, as classified in Proposition~\ref{ex:polysimple}, are $\reg(A)$-regular, since the degree function is additive.  Equivalently, one has $[S]_A^{\reg(A)} = G_A^{\reg(A)}(S)$ for all subsets $S$ of $A$.
\end{example}

\begin{rem}\label{rem:Gnottriv}
It is reasonable to ask: is it always true that $G^W_A(S) = [S]_A^W$?  The answer is no, even when $A$ is a domain, $W = \reg(A)$, and $S$ is a singleton.  See Example \ref{ex:Gnotbracket} for a specific example.
\end{rem}

\section{$W$-Egyptian fractions}\label{sec:Egyptian}

In this section, we provide some applications of these factroid constructions to $W$-Egyptian fractions and $W$-reciprocal complements, defined as follows.

\begin{defn}\label{def:recip}
Let $A$ be a commutative ring and $W$ a multiplicative set.  Then the set of all sums $\{\frac 1{w_1} + \cdots + \frac 1{w_n} \in W^{-1}A \mid n \in \N,\ w_i \in W \}$ is the nonunital (i.e., not necessarily unital) subring $R_W(A)$ of $A$ generated by the subset $W^{-1}  = \{\frac 1w \mid w \in W\}$ of $W^{-1}A$, which we call the $W$-\emph{reciprocal complement} of $A$.  If $1 \in W$, then $R_W(A)$ is a unital subring of $A$.

An element of $W^{-1}A$ is {\it $W$-Egyptian} in the sense of \cite{nme-Emult} if it lies in $R_W(A)$.  An element $a$ of $A$ is {\it $W$-Egyptian} if $a/1$ is $W$-Egyptian.  A ring $A$ is said to be {\it $W$-Egyptian}  if every element of $A$ (or, equivalently, of $W^{-1}A$) is $W$-Egyptian.  This condition holds if and only if $R_W(A) = W^{-1}A$, which in turn is equivalent to the condition that the image of the localization map $A \ra W^{-1}A$ is contained in $R_W(A)$. 

More generally, let $M$ be an $A$-module and $S \subseteq M$ a subset.  we let  $R_W^S(M)$ be all sums in $W^{-1}M$ of elements of the form $\frac sw$, where $s\in S$ and $w\in W$.  Note that if $S'$ is the image of $S$ in the localization map $M \ra W^{-1}M$, then $R_W^S(M)$ is the $R_W(A)$-submodule of $W^{-1}M$ generated by $S'$.
\end{defn}

\begin{rem}
Let $A$, $M$, $S$, $W$ be as above.  As per the notation in Corollary~\ref{cor:duality}, let $\frac S1 := \{s/1 \in W^{-1}M \mid s\in S\}$, i.e., the image of the set $S$ in the localization map $M \ra W^{-1}M$.  Then $R^S_W(M) = R_W(A)\frac S1$.  That is, it is the $R_W(A)$-submodule of $W^{-1}M$ generated by $\frac S1$.

Note also that if $M=A$ and $S = \{1\}$, then $R_W^S(M) = R_W(A)$.
\end{rem}

\begin{example}
Let $A$ be an integral domain with fraction field $K$. The subring $R(A) := R_{\reg(A)}(A)$ of $K$ is the \emph{reciprocal complement of $A$} (in the sense of \cite[Definition 2.1]{nme-Euclidean}), an element of  $K$ is said to \emph{$A$-Egyptian} (or \emph{Egyptian} if the context is clear) if it is $\reg(A)$-Egyptian, and the domain $A$ is said to be \emph{Egyptian} (in the sense of \cite{GLO-Egypt}) if $A$ is $\reg(A)$-Egyptian. 
\end{example}

\begin{lemma}\label{lem:inthelp}
 Let $A$ be a commutative ring, $W$ a multiplicative submonoid of $A$, $M$ an $A$-module, and $S \subseteq M$. Let $x\in M$ and $b\in W$ with $x \in [bS]^W_M$.  Then $x/b \in R_W^S(M)$.
\end{lemma}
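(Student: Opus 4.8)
The plan is to avoid unwinding the generators of $[bS]^W_M$ directly and instead exhibit a single $W$-factroid that visibly contains $bS$ and is contained in the preimage of $R_W^S(M)$; the universal property of $[bS]^W_M$ then finishes the job. Concretely, with $b \in W$ fixed, I would set
\[
F_b := \{\, x \in M : x/b \in R_W^S(M) \,\},
\]
the preimage of $R_W^S(M)$ under the additive map $\vartheta_b \colon M \to W^{-1}M$, $x \mapsto x/b$ (which is the localization map followed by multiplication by $1/b$). It suffices to show that $F_b$ is a $W$-factroid of $M$ with $bS \subseteq F_b$, since then $[bS]^W_M \subseteq F_b$, which is exactly the assertion about the given $x$.

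The verification breaks into three routine points. (i) $bS \subseteq F_b$: for $s \in S$ we have $(bs)/b = s/1 \in R_W^S(M)$, using $1 \in W$. (ii) $F_b$ is an additive subgroup of $M$: it is the preimage under the group homomorphism $\vartheta_b$ of the subgroup $R_W^S(M)$ of $W^{-1}M$. (iii) $F_b$ is $W$-saturated: if $w \in W$ and $wx \in F_b$, then $(wx)/b \in R_W^S(M)$; since $R_W^S(M)$ is an $R_W(A)$-submodule of $W^{-1}M$ and $1/w \in R_W(A)$, multiplying by $1/w$ and using the identity $(1/w)\cdot(wx/b) = x/b$ in $W^{-1}M$ gives $x/b \in R_W^S(M)$, i.e. $x \in F_b$. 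Point (iii) in fact only uses closure of $R_W^S(M)$ under the operators $\xi \mapsto \xi/w$ for $w \in W$, which is immediate from the description of $R_W^S(M)$ as the set of all finite sums of terms $s/w$ with $s \in S$, $w \in W$.

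I do not anticipate a genuine obstacle: the one step that must be handled with a little care is the cancellation $(1/w)\cdot(wx/b) = x/b$ in $W^{-1}M$, which is valid precisely because $w \in W$, and everything else is formal. If one preferred to stay entirely within $M$ and make no appeal to the $R_W(A)$-module structure, an alternative would be to induct on the stages $F_i^W(bS)$ from Proposition~\ref{pr:factroidfrombelow}, carrying along at the $i$-th stage an explicit expression for $x/b$ as a finite sum of terms $s_j/w_j$; this also works but involves bookkeeping that the argument above sidesteps.
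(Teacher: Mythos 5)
Your proof is correct, and it takes a genuinely different route from the paper's. The paper proves the lemma ``from below,'' by induction on the stages $F_i^W(bS)$ of Proposition~\ref{pr:factroidfrombelow}: at stage $0$ one checks $bs/b = s/1 \in R_W^S(M)$, and the inductive step decomposes $x = \sum \pm x_j$ with $c_j x_j \in F_{i-1}^W(bS)$ and then distributes $1/c_j$ over a sum of unit fractions. Your proof goes ``from above'': you exhibit the single $W$-factroid $F_b = \vartheta_b^{-1}\bigl(R_W^S(M)\bigr)$ containing $bS$ and invoke minimality of $[bS]^W_M$. All three verifications you list are sound (the only point needing a second look is the cancellation $(1/w)\cdot(wx/b) = x/b$ in $W^{-1}M$, which is the standard localization identity and does hold), and the fact that $R_W^S(M)$ is an $R_W(A)$-submodule of $W^{-1}M$ is stated explicitly in the remark following Definition~\ref{def:recip}. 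The tradeoff: your argument is cleaner and yields the stronger conclusion $[bS]^W_M \subseteq F_b$ as a one-line consequence of the universal property, with no bookkeeping; the paper's inductive argument is more explicit and entirely elementary, staying close to the ``generation from below'' machinery of Section~\ref{sec:generation}, which is probably why the authors chose it. Your alternative observation at the end --- that one could instead induct on $F_i^W(bS)$ --- is in fact precisely the paper's proof.
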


\begin{proof}
We show inductively on $i$ that if $x \in F_i^W(bS)$, then $x/b \in R_W^S(M)$.  We may assume $x\neq 0$.

Case $i=0$: Then $x=bs$ for some $s\in S$, so $x/b = s/1 \in R_W^S(M)$. 

Case $i>0$, given true for smaller $i$: Then there are nonzero $x_j\in M$, $c_j \in W$ such that  $x = \sum_{j=1}^n \pm x_j$ and $c_j x_j \in F_{i-1}^W(bS)$.  By inductive hypothesis, we have $\frac{c_j x_j}b \in R_W^S(M)$.  Write $
\frac {c_j x_j}b = \sum_{h=1}^{t_j} \frac {s_{hj}}{u_{hj}},
$
where $t_j \in \N$, $s_{hj} \in S$, and $u_{hj} \in W$.  Then \[
\frac xb = \sum_{j=1}^n \pm \frac{x_j}b = \sum_{j=1}^n \frac {\pm 1}{c_j} \frac{c_j x_j}b = \sum_{j=1}^n \frac {\pm 1}{c_j} \sum_{h=1}^{t_j} \frac {s_{hj}}{u_{hj}} = \sum_{j=1}^n \sum_{h=1}^{t_j} \frac {\pm s_{hj}}{c_j u_{hj}} \in R_W^S(M).\qedhere \]
\end{proof}

We then have the following result characterizing elements of $R_W^S(M)$.

\begin{thm}\label{thm:recipG}
Let $A$ be a commutative ring,  let $W$ be a multiplicative submonoid of $A$, let $M$ be an $A$-module, let $S \subseteq M$ a subset, and let $\alpha \in W^{-1}M$.  The following are equivalent: 
\begin{enumerate}[(a)]
    \item $\alpha \in R_W^S(M)$.
    \item There exist $x\in M$ and $b \in W$ such that $\alpha=x/b$ and $x \in [bS]^W_M$.
   \item There exist $x\in M$ and $b \in W$ such that $\alpha=x/b$ and $x \in G^W_M(bS)$.
    \item For any $x\in M$ and $b\in W$ such that $\alpha=x/b$, we have $x\in G^W_M(bS)$.
    \item For any $x\in M$ and $b\in W$ such that $\alpha=x/b$, there is some $h\in W$ with $hx \in F_1^W(hbS)$.
\end{enumerate}
 Consequently, in the special case where $W$ consists of $M$-regular elements of $A$, so that $M$ can be identified with its image in $L :=W^{-1}M$, we have \[
R_W^S(M) = \bigcup_{b\in W} \left([bS]^W_M :_L b\right).
\]
\end{thm}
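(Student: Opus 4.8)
The plan is to prove the five conditions equivalent via the cycle $(a) \Rightarrow (e) \Rightarrow (b) \Rightarrow (c) \Rightarrow (d) \Rightarrow (a)$, and then to read off the displayed formula from the equivalence $(a) \Leftrightarrow (b)$. Two of the links are essentially formal. For $(b) \Rightarrow (c)$ one simply notes that $[bS]^W_M = ([bS]^W_M :_M 1) \subseteq G^W_M(bS)$ by the definition of $G^W_M$ (take the index $h = 1 \in W$), so the same $x,b$ witness $(c)$. For $(d) \Rightarrow (a)$ one chooses any representation $\alpha = x/b$ with $x \in M$, $b \in W$ — such a representation exists since $\alpha \in W^{-1}M$ — applies $(d)$ to obtain $h \in W$ with $hx \in [hbS]^W_M = [(hb)S]^W_M$, and then invokes Lemma~\ref{lem:inthelp} (with $hx$ and $hb$ in the roles of $x$ and $b$) to conclude $\alpha = (hx)/(hb) \in R_W^S(M)$. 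The technical device used repeatedly in the remaining links is the combination of two facts: $\alpha = x/b = x'/b'$ in $W^{-1}M$ precisely when $u(b'x - bx') = 0$ for some $u \in W$, and $a[S]^W_M \subseteq [aS]^W_M$ for every $a \in A$ by Corollary~\ref{cor:newa}. Together these let one move membership in a bracket across an arbitrary change of representative, at the price of multiplying through by a suitable element of $W$.

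For $(a) \Rightarrow (e)$, I would write $\alpha = \sum_{i=1}^n s_i/w_i$ with $s_i \in S$, $w_i \in W$, put $c = w_1 \cdots w_n$ and $\widehat w_i = \prod_{j \neq i} w_j$, so that $\alpha = \bigl(\sum_i \widehat w_i s_i\bigr)/c$. Given an arbitrary $x \in M$, $b \in W$ with $\alpha = x/b$, the equality $x/b = \bigl(\sum_i \widehat w_i s_i\bigr)/c$ furnishes $u \in W$ with $ucx = \sum_i ub\,\widehat w_i s_i$ in $M$. For each $i$ one has $w_i\cdot(ub\,\widehat w_i s_i) = ubc\,s_i = (ucb)s_i \in ucbS$, so $ub\,\widehat w_i s_i \in \operatorname{Sat}^W_M(ucbS)$; since $ucx$ is a sum of such elements, $ucx \in F_1^W(ucbS)$. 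Setting $h = uc \in W$ gives $hx \in F_1^W(hbS)$, which is $(e)$.

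The two remaining links use the device above. For $(e) \Rightarrow (b)$, pick any $m \in M$, $w \in W$ with $\alpha = m/w$; then $(e)$ gives $h \in W$ with $hm \in F_1^W(hwS) \subseteq [(hw)S]^W_M$ (using $F_1^W(-) \subseteq [-]^W_M$ from Proposition~\ref{pr:factroidfrombelow}), and $\alpha = (hm)/(hw)$ with $hw \in W$ is the representation required for $(b)$. For $(c) \Rightarrow (d)$, take $x_0, b_0$ with $\alpha = x_0/b_0$ and $h_0 \in W$ with $h_0 x_0 \in [h_0 b_0 S]^W_M$; given an arbitrary $x \in M$, $b \in W$ with $\alpha = x/b$, choose $u \in W$ with $ub_0 x = ub x_0$, and compute $ub\cdot(h_0 x_0) = (ub_0 h_0)x$ and $ub\cdot(h_0 x_0) \in ub[h_0 b_0 S]^W_M \subseteq [(ub_0 h_0)(bS)]^W_M$ via Corollary~\ref{cor:newa}; thus with $h := ub_0 h_0 \in W$ one gets $hx \in [hbS]^W_M$, i.e.\ $x \in G^W_M(bS)$, and since $x,b$ were arbitrary this is $(d)$. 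For the final displayed identity, when every element of $W$ is $M$-regular the localization map $M \to L = W^{-1}M$ is injective and multiplication by each $b \in W$ is a bijection of $L$; hence $([bS]^W_M :_L b) = \{\,x/b \in L : x \in [bS]^W_M\,\}$, and $(a) \Leftrightarrow (b)$ says exactly that $R_W^S(M)$ is the union of these sets over $b \in W$.

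The step I expect to be the main obstacle is handling the two "for all representatives" conditions $(d)$ and $(e)$: the substance there is not a single computation but the uniformity assertion that one fixed element of $W$ can be produced that works no matter how $\alpha$ is written as a fraction, and the bookkeeping needed to verify that all the auxiliary multipliers introduced along the way indeed lie in $W$.
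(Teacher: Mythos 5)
Your proof is correct, but you chose a different cyclic order than the paper, and the choice has a real cost. The paper proves $(e) \Rightarrow (d) \Rightarrow (c) \Rightarrow (b) \Rightarrow (a) \Rightarrow (e)$: the first two arrows are inclusions, $(c) \Rightarrow (b)$ is the one-line observation that if $hx \in [hbS]^W_M$ then the representation $\alpha = (hx)/(hb)$ witnesses $(b)$, and the only substantive steps are $(b) \Rightarrow (a)$ (Lemma~\ref{lem:inthelp}) and $(a) \Rightarrow (e)$. Your cycle $(a) \Rightarrow (e) \Rightarrow (b) \Rightarrow (c) \Rightarrow (d) \Rightarrow (a)$ uses the same two lemmas at the same places, but it forces the passage from an existential claim to a universal one to happen at $(c) \Rightarrow (d)$ rather than being absorbed into $(a) \Rightarrow (e)$. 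That passage is where you pay: your $(c) \Rightarrow (d)$ requires choosing a common denominator $u \in W$ and invoking Corollary~\ref{cor:newa} to push the multiplier through the bracket, a genuine argument that the paper simply does not need to make. You also inline the content of Corollary~\ref{cor:hinside} inside your $(a) \Rightarrow (e)$ (observing directly that each summand $ub\,\widehat w_i s_i$ lands in $\operatorname{Sat}^W_M(ucbS)$ because $w_i$ multiplies it into $(ucb)S$), which is a legitimate simplification; the paper's version factors that observation out as a reusable lemma. On balance the paper's arrangement is more economical, your version is self-contained and correct, and the closing deduction of the displayed formula from $(a) \Leftrightarrow (b)$ is the same in both.

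One small expositional caution: in your $(c) \Rightarrow (d)$ step you should make explicit that $ub\,h_0 b_0 = u b_0 h_0 b$ by commutativity, so that $[ub\,h_0 b_0\,S]^W_M$ really is $[hbS]^W_M$ with $h = u b_0 h_0$; the computation is right but the identification of $h$ is the crux and deserves the one line.
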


\begin{proof}
(e) $\Rightarrow$ (d): This holds because $F_1^W(hbS) \subseteq [hbS]^W_M$.

(d) $\Rightarrow$ (c): Clear.

(c) $\Rightarrow$ (b): Since $x\in G^{W}_M(bS)$, there is some  $h\in W$ with $hx \in [hbS]^W_M$, but $hx/hb=x/b=\alpha$, so (b) follows.

(b) $\Rightarrow$ (a): This follows from Lemma~\ref{lem:inthelp}.

(a) $\Rightarrow$ (e): Let $x\in M$ and $b\in W$ such that $\alpha=x/b$.  There exist $r_1, \ldots, r_m \in W$ and $s_1, \ldots, s_m \in S$ such that \[
\frac xb = \sum_{i=1}^m \frac {s_i}{r_i} = \frac{\sum_{i=1}^m \widehat{r_i}s_i}r,
\]
where $\widehat{r_i} = \prod_{j\neq i} r_j$ and $r = \prod_{i=1}^m r_i = r_j\widehat{r_j}$ for all $j$.  Then there is some $w\in W$ such that $wrx = wb(\sum_{i=1}^m \widehat{r_i}s_i) = \sum_{i=1}^m (w\widehat{r_i})bs_i$.  Since $w \widehat{r_i}r_i =wr$ for all $i$, we have $w\widehat{r_i} \in F_1^W(wr)$, so that since $bs_i \in bS$ for each $i$, we have $wrx \in F_1^W(wr)bS \subseteq F_1^W(wrbS)$ by Corollary~\ref{cor:hinside}. 
\end{proof}

Applied to the case $M=R$, $S=\{1\}$, we obtain the following:
\begin{cor}\label{cor:recipGring}
Let $A$ be a commutative ring,  let $W$ be a multiplicative submonoid of $A$, and let $\alpha \in W^{-1}A$.  The following are equivalent: 
\begin{enumerate}[(a)]
    \item $\alpha \in R_W(A)$.
    \item There exist $a\in A$ and $b \in W$ such that $\alpha=a/b$ and $a \in [b]^W$.
   \item There exist $a\in A$ and $b \in W$ such that $\alpha=a/b$ and $a \in G^W(b)$.
    \item For any $a\in A$ and $b\in W$ such that $\alpha=a/b$, we have $a\in G^{W}(b)$.
    \item For any $a\in A$ and $b\in W$ such that $\alpha=a/b$, there is some $h\in W$ with $ha \in F_1^W(hb)$.
\end{enumerate}
 Consequently, in the special case where $W$ consists of regular elements of $A$ (e.g. if $A$ is a domain and $0\notin W$), so that $A$ can be identified with its image in $Q = W^{-1}A$, we have \[
R_W(A) = \bigcup_{b\in W} \left([b]^W :_Q b\right).
\]
\end{cor}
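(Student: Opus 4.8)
The plan is to derive Corollary~\ref{cor:recipGring} as the special case $M = {}_AA$, $S = \{1\}$ of Theorem~\ref{thm:recipG}; the work is almost entirely a matter of unwinding notation. First I would record the dictionary between the two settings: with $M$ the left $A$-module ${}_AA$ one has $W^{-1}M = W^{-1}A$ as $A$-modules and $\reg_{{}_AA}(A) = \reg(A)$, and, by the remark following Definition~\ref{def:recip} (where it is noted that $R^S_W(M) = R_W(A)$ when $M = A$ and $S = \{1\}$), the object $R^S_W(M)$ coincides with $R_W(A)$. Moreover, for every $b \in W$ we have $bS = \{b \cdot 1\} = \{b\}$, so that
\[
[bS]^W_M = [b]^W, \qquad G^W_M(bS) = G^W(b), \qquad F_1^W(hbS) = F_1^W(hb) \quad (h \in W).
\]

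With this dictionary in hand, each of the conditions (a)--(e) of the corollary is the verbatim image of the corresponding condition (a)--(e) of Theorem~\ref{thm:recipG} under the substitution $M = {}_AA$, $S = \{1\}$ (renaming the module element $x$ of the theorem as the ring element $a$), so their equivalence is immediate from that theorem. Likewise, when $W \subseteq \reg(A) = \reg_{{}_AA}(A)$ the hypothesis of the closing clause of Theorem~\ref{thm:recipG} is satisfied for $M = {}_AA$, so $A$ is identified with its image in $L := W^{-1}A = Q$ and the identity $R^S_W(M) = \bigcup_{b\in W}\left([bS]^W_M :_L b\right)$ becomes
\[
R_W(A) = \bigcup_{b\in W}\left([b]^W :_Q b\right).
\]
The parenthetical special case --- $A$ a domain with $0 \notin W$ --- is covered because in a domain every nonzero element is regular.

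I do not expect a genuine obstacle here: the entire content is bookkeeping, translating the module-over-$A$ language of Section~\ref{sec:Egyptian} into the ring language of Definition~\ref{def:recip}. The one step that deserves an explicit sentence is the verification that $R^{\{1\}}_W({}_AA) = R_W(A)$, i.e.\ that the $R_W(A)$-submodule of $W^{-1}A$ generated by $1/1$ is all of $R_W(A)$; this holds because $R_W(A)$ already contains every $1/w$ with $w \in W$ and is closed under addition, so it coincides with the set of all finite sums $\frac{1}{w_1} + \cdots + \frac{1}{w_n}$ with $w_i \in W$.
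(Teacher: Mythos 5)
Your proposal is correct and is precisely the paper's own proof: the authors introduce the corollary with the phrase ``Applied to the case $M=R$, $S=\{1\}$, we obtain the following'' (where $R$ is a typo for $A$), i.e.\ they too specialize Theorem~\ref{thm:recipG} to $M = {}_AA$ and $S=\{1\}$, using the observation in the remark following Definition~\ref{def:recip} that $R^{\{1\}}_W(A) = R_W(A)$. Your careful spelling out of the dictionary ($bS=\{b\}$, $[bS]^W_M=[b]^W$, etc.) is exactly the bookkeeping the paper leaves implicit.
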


 Next, we obtain a description of $G^W_M(S)$ in terms of modules of reciprocals:
\begin{cor}
    Let $A$ be a commutative ring, $W \subseteq A$ a multiplicative submonoid, $M$ an $A$-module and $S \subseteq M$ a subset.  Then $G^W_M(S) = \bigcup_{h\in W} ([hS]^W_M :_A h)$ is the set of elements $x\in M$ such that $x/1 \in R_W^S(M)$.
    In other words, if $\ell: M \ra W^{-1}M$ is the localization map, then \[
    G^W_M(S) = \ell^{-1}(R^S_W(M)).
    \]
\end{cor}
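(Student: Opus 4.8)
The plan is to read the corollary directly off Theorem~\ref{thm:recipG} by specializing to the trivial representative $\alpha = x/1$ (recall $1 \in W$ since $W$ is a multiplicative submonoid). First I would observe that the first displayed equality, $G^W_M(S) = \bigcup_{h \in W}([hS]^W_M :_M h)$, is nothing more than the definition of $G^{W,T}_M(S)$ in the case $T = W$; unwinding it, $x \in G^W_M(S)$ precisely when $hx \in [hS]^W_M$ for some $h \in W$. Here the colon is over $M$, and $1\cdot S = S$, so $G^W_M(1\cdot S) = G^W_M(S)$; no argument beyond bookkeeping is needed for this part.

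For the substantive equivalence, fix $x \in M$ and set $\alpha := \ell(x) = x/1 \in W^{-1}M$. For the inclusion $\ell^{-1}(R^S_W(M)) \subseteq G^W_M(S)$, I would apply the implication (a)$\Rightarrow$(d) of Theorem~\ref{thm:recipG} to $\alpha$: from $\alpha \in R^S_W(M)$ and the representative $\alpha = x/1$ with $b = 1 \in W$, one gets $x \in G^W_M(1\cdot S) = G^W_M(S)$. For the reverse inclusion $G^W_M(S) \subseteq \ell^{-1}(R^S_W(M))$, suppose $x \in G^W_M(S)$, so that $hx \in [hS]^W_M$ for some $h \in W$; then Lemma~\ref{lem:inthelp}, applied to the element $hx$ with denominator $b = h$, gives $(hx)/h \in R^S_W(M)$, and since $(hx)/h = x/1 = \ell(x)$ in $W^{-1}M$, we conclude $\ell(x) \in R^S_W(M)$. (Equivalently, this direction is the implication (c)$\Rightarrow$(a) of Theorem~\ref{thm:recipG} with the representative $\alpha = x/1$.) Combining the two inclusions with the definitional identity of the first paragraph yields $G^W_M(S) = \{x \in M : \ell(x) \in R^S_W(M)\} = \ell^{-1}(R^S_W(M))$.

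I do not expect a genuine obstacle here: the corollary is essentially a repackaging of Theorem~\ref{thm:recipG} in the case $b=1$, with all the real work — Lemma~\ref{lem:inthelp}, Corollary~\ref{cor:hinside}, and the clearing-of-denominators computation — already carried out in the proof of that theorem. The only points requiring care are notational: keeping the fixed element $x$ of the corollary distinct from the quantified variable of the same name in the statement of Theorem~\ref{thm:recipG}, checking that $(hx)/h$ and $x/1$ name the same element of $W^{-1}M$ (which holds because $W$ annihilates the difference, even when $h$ is a zerodivisor), and remembering that $1 \in W$ is exactly what legitimizes the representative $\alpha = x/1$.
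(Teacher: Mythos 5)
Your proof is correct and is exactly the intended argument: the corollary is a direct specialization of Theorem~\ref{thm:recipG} (combined with Lemma~\ref{lem:inthelp}) to the canonical representative $\alpha = x/1$, using $1 \in W$, and the paper supplies no separate proof precisely because it is meant to be read off this way. Your careful remarks on the two small notational points — that the colon in the displayed formula should be $(\,\cdot\,:_M h)$ as in the definition of $G^{W,T}_M(S)$, and that $(hx)/h = x/1$ in $W^{-1}M$ even when $h$ is a zerodivisor — are both accurate and worth noting.
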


\begin{cor}
Let $A$ be a commutative ring, let $W$ be a multiplicative submonoid of $A$, and let $b \in W$.  Then $G^W(b) = \bigcup_{h \in W} ([hb]^W :_A h)$ is the set of the numerators in $A$ of the elements of $R_W(A)$ that can be written with denominator $b$.  In other words, the smallest $W$-regular $W$-factroid of $A$ containing $b$ is given by $$G^W(b)  = \{a \in A \mid a/b \in R_W(A)\}.$$   Equivalently, for any $a/b \in W^{-1}A$, where $a \in A$ and $b \in W$, one has $a/b \in R_W(A)$ if and only if $a \in G^W(b)$.
\end{cor}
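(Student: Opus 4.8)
The plan is to obtain this statement by specializing Corollary~\ref{cor:recipGring} and Proposition~\ref{pr:G} to $M = A$, $S = \{b\}$, with the denominator held fixed at $b$.

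First I would note that the displayed formula $G^W(b) = \bigcup_{h \in W}\left([hb]^W :_A h\right)$ is literally the definition of $G^{W,W}_A(\{b\})$. Since $A$ is commutative, $W$ is a multiplicative submonoid of $Z(A)$, so Proposition~\ref{pr:G} applies with $V = W$ and gives $G^W(b) = G^{W,W}_A(\{b\}) = [b]^{W,W}_A$, the smallest $W$-regular $W$-factroid of $A$ containing $b$. This settles the clause asserting that $G^W(b)$ is that smallest object, so it remains only to identify $G^W(b)$ with $\{a \in A \mid a/b \in R_W(A)\}$.

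For that identification I would invoke the equivalences (a)~$\Leftrightarrow$~(c)~$\Leftrightarrow$~(d) of Corollary~\ref{cor:recipGring}, which are tailored to exactly this purpose. Fix $a \in A$ and put $\alpha := a/b \in W^{-1}A$. If $a \in G^W(b)$, then condition (c) of that corollary holds for $\alpha$ (witnessed by the representation $\alpha = a/b$ together with $a \in G^W(b)$), so $a/b = \alpha \in R_W(A)$. Conversely, if $a/b \in R_W(A)$, then condition (d) holds for $\alpha = a/b$, and applying it to this very representation yields $a \in G^W(b)$. Hence $a \in G^W(b)$ if and only if $a/b \in R_W(A)$, which is the asserted equality; the phrasing ``numerators of the elements of $R_W(A)$ that can be written with denominator $b$'' is just a verbal restatement of the right-hand side.

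I do not expect any genuine obstacle: the content is entirely contained in the earlier results, and the only point that needs care is the matching of quantifiers. In Corollary~\ref{cor:recipGring} the denominator is allowed to range over all of $W$, whereas here it is pinned to $b$; this causes no trouble because the equivalence (a)~$\Leftrightarrow$~(d) is phrased for \emph{every} representation $\alpha = a/b'$, hence in particular for the chosen one. Alternatively, one could route the argument through the preceding corollary by observing that $R_W^{\{b\}}(A) = \frac{b}{1}\,R_W(A)$ and that $\frac{b}{1}$ is a unit of $W^{-1}A$, so that $a/1 \in R_W^{\{b\}}(A)$ if and only if $a/b \in R_W(A)$; I would record this as a one-line alternative.
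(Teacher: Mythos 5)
Your proposal is correct and follows the same route the paper intends: the corollary is an immediate specialization of Corollary~\ref{cor:recipGring} (specifically the chain (c)~$\Rightarrow$~(a)~$\Rightarrow$~(d)) together with Proposition~\ref{pr:G} applied with $V=W\subseteq Z(A)$ to identify $G^W(b)$ as $[b]^{W,W}_A$. Your alternative route, via the preceding corollary $G^W_M(S)=\ell^{-1}(R^S_W(M))$ combined with the observation that $b/1$ is a unit of $W^{-1}A$, is equally valid and is in fact the cleanest bookkeeping-free path.
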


\begin{cor}
Let $A$ be a commutative ring, and let $W$ be a multiplicative submonoid of $A$.   Then the set of all $W$-Egyptian elements of $A$ is the smallest $W$-regular $W$-factroid $G^{W}(1) = \bigcup_{b\in W} \left([b]^W :_A b\right)$ of $A$ containing $1$.  That is, \[
G^W(1) = A \cap R_W(A).
\]
Consequently, the following are equivalent: \begin{enumerate}[(a)]
    \item  $A$ is $W$-Egyptian.
    \item $G^{W}(1) = A$.
    \item $A$ is the only $W$-regular $W$-factroid of $A$ containing $1$.
\end{enumerate}
\end{cor}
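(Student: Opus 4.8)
The plan is to specialize the machinery already built for $G^{W,T}_M$ to the case $M = A$, $T = W$, $S = \{1\}$, and then to quote the characterization of reciprocal complements from Corollary~\ref{cor:recipGring}. \emph{Step 1.} Since $A$ is commutative we have $Z(A) = A$, so $W$ is a multiplicative submonoid of $Z(A)$; hence Proposition~\ref{pr:G} applies with $V = W$ and shows that $G^W(1) = G^{W,W}_A(\{1\}) = [1]^{W,W}_A$ is the smallest $W$-regular $W$-factroid of $A$ containing $1$. The displayed formula $G^W(1) = \bigcup_{b \in W}\bigl([b]^W :_A b\bigr)$ is then just the definition of $G^{W,T}_M(S)$ read off with $T = W$, $M = A$, $S = \{1\}$, using $[b\cdot 1]^W_A = [b]^W_A$ and $(\,\cdot :_A b) = (\,\cdot :_M b)$ when $M = A$.

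\emph{Step 2.} I would then identify this set with the $W$-Egyptian elements of $A$. By Corollary~\ref{cor:recipGring} (conditions (a), (c), (d)), for $a \in A$ the fraction $a/1$ lies in $R_W(A)$ if and only if $a \in G^W(1)$: one direction applies condition (d) to the representation $a/1 = a/1$ (for which $bS = \{1\}$), and the other applies condition (c) with numerator $a$ and denominator $1$. By Definition~\ref{def:recip}, "$a/1 \in R_W(A)$" is precisely "$a$ is $W$-Egyptian", so the set of $W$-Egyptian elements of $A$ equals $G^W(1)$. Writing this set as "$A \cap R_W(A)$" is the same statement under the convention, inherited from the corollary $G^W_M(S) = \ell^{-1}(R^S_W(M))$, that $A$ is identified with its image under the localization map $\ell\colon A \to W^{-1}A$ (so that, when $\ell$ is not injective, "$A \cap R_W(A)$" is to be read as $\ell^{-1}(R_W(A))$); I would flag this convention explicitly in the write-up.

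\emph{Step 3.} Finally I would dispatch the list of equivalences. For (a) $\Leftrightarrow$ (b): by definition $A$ is $W$-Egyptian precisely when every element of $A$ is $W$-Egyptian, i.e.\ when the set of $W$-Egyptian elements is all of $A$, i.e.\ (by Step 2) when $G^W(1) = A$. For (b) $\Leftrightarrow$ (c): by Step 1 every $W$-regular $W$-factroid of $A$ containing $1$ contains $G^W(1)$, so if $G^W(1) = A$ then $A$ is the only one; conversely $G^W(1)$ is itself a $W$-regular $W$-factroid of $A$ containing $1$, so if $A$ is the only such factroid then $G^W(1) = A$.

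There is essentially no obstacle here: all the real content sits in Proposition~\ref{pr:G} and in Corollary~\ref{cor:recipGring}, and the proof is a short chain of specializations and unwindings of definitions. The only point requiring care is the bookkeeping around the (possibly non-injective) localization map when interpreting the symbol "$A \cap R_W(A)$", which I would make explicit rather than leave implicit.
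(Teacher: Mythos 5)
Your proposal is correct and follows essentially the same route as the paper: this corollary is a direct specialization of Proposition~\ref{pr:G} (to identify $G^W(1)$ as the smallest $W$-regular $W$-factroid containing $1$) together with Corollary~\ref{cor:recipGring} (to identify it with the $W$-Egyptian elements), and the three listed conditions then unwind immediately. Your remark about interpreting $A \cap R_W(A)$ as $\ell^{-1}(R_W(A))$ when the localization map $\ell\colon A \to W^{-1}A$ is not injective is the right convention and matches how the paper phrases the immediately preceding corollary ($G^W_M(S) = \ell^{-1}(R^S_W(M))$), so flagging it costs nothing and removes an ambiguity.
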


Let $\frac{W}{1}$ denote the image of $W$ in $W^{-1}A$.
Since $R_W(A)$ is the set of all $\frac{W}{1}$-Egyptian elements of $W^{-1}A$, and since every $\frac{W}{1}$-factroid of $W^{-1}A$ is $\frac{W}{1}$-regular (as the elements of $\frac{W}{1}$ are all units of $W^{-1}A$), the corollary above yields the following.

\begin{cor}\label{cor:Wrecip1}
Let $A$ be a commutative ring, let $W$ be a multiplicative submonoid of $A$, let $Q = W^{-1}A$, let $\frac{W}{1}$ denote the image of $W$ in $Q$, and let $\frac{1}{W} = \left(\frac{W}{1}\right)^{-1}$ denote the monoid of reciprocals in $Q$ of the elements of $\frac{W}{1}$.  One has 
$$R_W(A) = \Z\langle \tfrac{1}{W} \rangle =  [1]_Q^{\frac{W}{1}} = G_Q^{\frac{W}{1}}(1).$$  Equivalently, $R_W(A)$ is the principal  $\frac{W}{1}$-factroid of $Q$ generated by $1$ (which is $\frac{W}{1}$-regular).  Consequently, the following are equivalent: \begin{enumerate}[(a)]
\item $A$ is $W$-Egyptian.
\item  $Q$ is the only $\frac{W}{1}$-factroid of $Q$ containing $1$.
\item $Q = [1]_Q^{\frac{W}{1}}$.
\end{enumerate}
\end{cor}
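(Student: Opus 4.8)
The plan is to deduce Corollary~\ref{cor:Wrecip1} from the preceding corollary (the one establishing $G^W(1) = A \cap R_W(A)$, together with its list of equivalent conditions) by applying that corollary not to $A$ itself but to the commutative ring $Q = W^{-1}A$ equipped with its multiplicative submonoid $\frac{W}{1}$. The gain in passing to $Q$ is that every element of $\frac{W}{1}$ is a unit of $Q$, and so the various constructions appearing in the preceding corollary collapse: localizing $Q$ at $\frac{W}{1}$ does nothing, and every $\frac{W}{1}$-factroid of $Q$ is automatically $\frac{W}{1}$-regular.

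Concretely, I would first observe that the localization map $Q \to (\frac{W}{1})^{-1}Q$ is an isomorphism, identify $(\frac{W}{1})^{-1}Q$ with $Q$, and note that under this identification $(\frac{W}{1})^{-1} = \frac{1}{W}$ is exactly the subset $W^{-1} = \{1/w : w \in W\}$ of $Q = W^{-1}A$. Hence $R_{\frac{W}{1}}(Q)$, which by Definition~\ref{def:recip} is the subring of $Q$ generated by $(\frac{W}{1})^{-1}$ (unital, since $1 \in W$), is literally $R_W(A)$, and it is contained in $Q$, so $Q \cap R_{\frac{W}{1}}(Q) = R_W(A)$. Applying the preceding corollary to $Q$ and $\frac{W}{1}$ then gives $G^{\frac{W}{1}}_Q(1) = Q \cap R_{\frac{W}{1}}(Q) = R_W(A)$, exhibited as the smallest $\frac{W}{1}$-regular $\frac{W}{1}$-factroid of $Q$ containing $1$.

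Next, because $\frac{W}{1} \subseteq Q^\times = Z(Q)^\times$, Remark~\ref{rem:Tregbasics} tells us every $\frac{W}{1}$-factroid of $Q$ is $\frac{W}{1}$-regular, so $G^{\frac{W}{1}}_Q(1) = [1]^{\frac{W}{1}}_Q$; and Proposition~\ref{prop:unitfactroid}, applied to $Q$ with the set of units $\frac{W}{1}$, gives $[1]^{\frac{W}{1}}_Q = \Z\langle(\frac{W}{1})^{-1}\rangle = \Z\langle\frac{1}{W}\rangle$. Chaining these equalities yields $R_W(A) = \Z\langle\frac{1}{W}\rangle = [1]^{\frac{W}{1}}_Q = G^{\frac{W}{1}}_Q(1)$, and in particular $R_W(A)$ is the principal $\frac{W}{1}$-factroid of $Q$ generated by $1$ (which is $\frac{W}{1}$-regular). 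For the three equivalent conditions I would use that, by Definition~\ref{def:recip}, $A$ is $W$-Egyptian iff $R_W(A) = W^{-1}A = Q$; by the equality $R_W(A) = [1]^{\frac{W}{1}}_Q$ this is precisely (c), and since $[1]^{\frac{W}{1}}_Q$ is the smallest $\frac{W}{1}$-factroid of $Q$ containing $1$ while $Q$ is always such a factroid, (c) is in turn equivalent to (b).

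I do not expect a genuine obstacle: the statement is a formal transfer of the preceding corollary along $A \to W^{-1}A$. The only point that wants care is the bookkeeping around the iterated localization $(\frac{W}{1})^{-1}(W^{-1}A)$ — checking that it may be identified with $W^{-1}A$ and that under this identification $R_{\frac{W}{1}}(Q)$ is the same subset of $W^{-1}A$ as $R_W(A)$ (here one uses that $W$ is a submonoid, so that $1 \in W$ and products of the reciprocals $1/w$ again have the form $1/w$), rather than something living in a strictly larger ring.
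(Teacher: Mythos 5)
Your proposal is correct and follows essentially the same route the paper takes: the text immediately preceding the corollary explains that one applies the previous corollary (the one asserting $G^W(1)=A\cap R_W(A)$) to the ring $Q=W^{-1}A$ with the multiplicative monoid $\frac{W}{1}$, using that $R_W(A)$ is the set of $\frac{W}{1}$-Egyptian elements of $Q$ and that every $\frac{W}{1}$-factroid of $Q$ is automatically $\frac{W}{1}$-regular because $\frac{W}{1}\subseteq Q^\times$. Your write-up simply makes the bookkeeping explicit (identifying $(\frac{W}{1})^{-1}Q$ with $Q$, invoking Proposition~\ref{prop:unitfactroid} for $[1]^{\frac{W}{1}}_Q=\Z\langle\frac{1}{W}\rangle$, and Remark~\ref{rem:Tregbasics} for regularity), which matches the paper's intended argument and also the alternative route it mentions via Corollary~\ref{cor:duality}.
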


\begin{cor}\label{cor:Wrecip2}
Let $A$ be an integral domain with quotient field $K$, so that $\reg(A) = A^\circ = A \setminus \{0\}$.  Then the following are equivalent: \begin{enumerate}[(a)]
    \item $A$ is Egyptian.
    \item $A$ is the only $\reg(A)$-regular $\reg(A)$-factroid of $A$ containing $1$.
    \item $K$ is the only $\reg(A)$-factroid of $K$ containing $1$.
\end{enumerate}
More generally, the Egyptian elements of $A$ (resp., $K$) are precisely the elements of the smallest $\reg(A)$-regular $\reg(A)$-factroid $G^{\reg(A)}_A(1)$  (resp., the smallest $\reg(A)$-factroid
$[1]^{\reg(A)}_K$) of $A$ (resp., $K$) containing $1$.
\end{cor}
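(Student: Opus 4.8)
The plan is to read off Corollary~\ref{cor:Wrecip2} as the specialization, to the multiplicative submonoid $W = \reg(A) = A^\circ = A \setminus \{0\}$, of Corollary~\ref{cor:Wrecip1} together with the corollary immediately preceding it. That $W$ is a multiplicative submonoid of $A$ uses only that $A$ is a domain, so both of those corollaries apply.

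First I would fix the dictionary between the two setups. With $W = \reg(A)$ one has $W^{-1}A = K$ and $R_W(A) = R(A)$, so that a \emph{$W$-Egyptian} element of $A$ or of $K$ is exactly an \emph{Egyptian} element in the sense of \cite{GLO-Egypt}, and "$A$ is $W$-Egyptian" is exactly "$A$ is Egyptian". The localization map $A \to W^{-1}A$ is here just the inclusion $A \hookrightarrow K$, so the image $\frac{W}{1}$ of $W$ in $K$ is $A \setminus \{0\}$ itself, sitting inside $K$; since $A$ is a domain, every element of $\frac{W}{1}$ is a unit of $K$. This has two consequences. By Remark~\ref{rem:Tregbasics} (equivalently Corollary~\ref{cor:newa}) every $\frac{W}{1}$-factroid of $K$ is automatically $\frac{W}{1}$-regular, so on $K$ there is no distinction between $\frac{W}{1}$-factroids and $\frac{W}{1}$-regular $\frac{W}{1}$-factroids. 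And, because multiplication by $a \in A$ on the $A$-module $K$ is the same as multiplication by $a/1 \in K$, a $\frac{W}{1}$-factroid of $K$ is the same thing as a $\reg(A)$-factroid of the $A$-module $K$; in particular $[1]^{\frac{W}{1}}_K = [1]^{\reg(A)}_K$ as subsets of $K$.

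Next I would invoke the corollary immediately preceding Corollary~\ref{cor:Wrecip1} with this $W$: it says that the $W$-Egyptian elements of $A$ are precisely the elements of $G^W(1) = \bigcup_{b \in W}([b]^W :_A b)$, the smallest $W$-regular $W$-factroid of $A$ containing $1$, and that $A$ is $W$-Egyptian iff $G^W(1) = A$ iff $A$ is the only $W$-regular $W$-factroid of $A$ containing $1$. Substituting $W = \reg(A)$ and "$W$-Egyptian" $=$ "Egyptian" gives the equivalence (a)$\Leftrightarrow$(b) and the assertion that $G^{\reg(A)}_A(1)$ is the set of Egyptian elements of $A$. Then I would invoke Corollary~\ref{cor:Wrecip1} itself with the same $W$ and $Q = W^{-1}A = K$: it gives $R(A) = R_W(A) = [1]^{\frac{W}{1}}_Q = G^{\frac{W}{1}}_Q(1)$, and $A$ is Egyptian iff $K = [1]^{\frac{W}{1}}_K$ iff $K$ is the only $\frac{W}{1}$-factroid of $K$ containing $1$. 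Rewriting $[1]^{\frac{W}{1}}_K$ as $[1]^{\reg(A)}_K$ via the first step yields (a)$\Leftrightarrow$(c) and the assertion that the Egyptian elements of $K$ are precisely the elements of $[1]^{\reg(A)}_K$. Combining the two specializations gives the full chain (a)$\Leftrightarrow$(b)$\Leftrightarrow$(c) and the "more generally" clause.

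No step here is a serious obstacle; the only thing requiring genuine (if routine) attention is the bookkeeping in the first step — verifying that the image of $\reg(A) = A \setminus \{0\}$ in $K$ is again $\reg(A)$, that its elements are units of $K$ (so that "$\frac{W}{1}$-factroid of $K$" coincides with "$\reg(A)$-factroid of $K$" and the $\frac{W}{1}$-regularity appearing in Corollary~\ref{cor:Wrecip1} carries no content here), and that restriction of scalars along $A \hookrightarrow K$ leaves the colon operations defining factroids unchanged, so that the symbols $[1]^{\frac{W}{1}}_K$ and $[1]^{\reg(A)}_K$ name the same subset of $K$.
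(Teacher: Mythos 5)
Your proposal is correct and is exactly the route the paper intends: Corollary~\ref{cor:Wrecip2} is stated without proof as the specialization to $W=\reg(A)=A\setminus\{0\}$ of the unnamed corollary immediately preceding Corollary~\ref{cor:Wrecip1} (giving (a)$\Leftrightarrow$(b) and the statement about Egyptian elements of $A$) together with Corollary~\ref{cor:Wrecip1} itself (giving (a)$\Leftrightarrow$(c) and the statement about Egyptian elements of $K$), and the remark following it only offers Corollary~\ref{cor:duality} and Remark~\ref{rem:Tregbasics} as an alternative. Your bookkeeping — identifying $\frac{W}{1}$ with $\reg(A)$ under the inclusion $A\hookrightarrow K$, noting these are units of $K$ so $\frac{W}{1}$-regularity is automatic, and observing that $\frac{W}{1}$-factroids of $K$ as a $K$-module coincide with $\reg(A)$-factroids of $K$ as an $A$-module — is precisely the content implicitly invoked.
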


\begin{rem}
    Corollaries \ref{cor:Wrecip1} and \ref{cor:Wrecip2} above also follow from Corollary \ref{cor:duality} and Remark \ref{rem:Tregbasics}.
\end{rem}

\begin{example}[Guerrieri]\label{ex:Gnotbracket}
Let $A = \F_2[x,y]$, $W = \reg(A)$, and $f = (x+y^2)(y+x^2)$.  Then $xy \in G^W(f) \setminus [f]_A^W$.

To see this, write $F = [f]_A^W$; we will use that $F$ is a $W$-factroid. Write $g=x+y^2$ and $h=y+x^2$, so that $f=gh$.  Then $g,h \in F$ since $f\in F$, so $(x+y)(1+x+y) = x+y+(x+y)^2 = x+y+x^2+y^2 = f+g \in F$.  Thus, $x+y \in F$, so $y(1+y) = y+y^2 = (x+y)+(x+y^2) = (x+y)+g \in F$; thus, $y\in F$.  Similarly, $x(1+x) = (x+y)+h \in F$, so that $x\in F$.  Then $x+g=y^2 \in F$ and $y+h = x^2 \in F$.  For a set $S$ let $\langle S\rangle$ be the $\F_2$-vector space spanned by $S$.  We have $\langle 1,x,y,x^2,y^2, f\rangle \subseteq F$; to show equality, it suffices to show that $\langle 1,x,y,x^2,y^2, f\rangle$ is itself a $W$-factroid.  For this, note that $\langle 1,x,y,x^2,y^2\rangle$ has 32 elements; one can see by exhaustive search that for any element $u$ of $\langle 1,x,y,x^2,y^2\rangle$, either $u+f$ is irreducible, or $u+f$ is a product of elements of $\langle 1,x,y,x^2,y^2\rangle$.

On the other hand, we have \[
\frac{1+xy}f = \frac 1{xy} + \frac 1{x(y+x^2)} + \frac 1{y(x+y^2)} \in R_W(A).
\]
Hence, by Theorem~\ref{thm:recipG}, $1+xy \in G^W(f)$, so that since $1\in G^W(f)$, we have $xy \in G^W(f)$.
\end{example}

\section{Euclidean domains}\label{sec:Euclidean}

In this section, we study the factroids of any Euclidean domain.  

First we review some preliminaries. A function $\sigma: A \ra \N \cup\{-\infty\}$ on an integral domain $A$ is a \emph{Euclidean function on $A$} \cite[Definition 3.8]{Hun-algbook} if, for all $a,b \in A$ with $b \neq 0$, we have the following: \begin{itemize}
    \item $\sigma(a) = -\infty$ if and only if $a = 0$, 
    \item There exist $q,r \in A$ such that $a=bq+r$ and $\sigma(r)<\sigma(b)$, and
        \item $\sigma(a) \leq \sigma(ab)$.
    \end{itemize}
    If $\sigma$ is a Euclidean function on $A$ and $a \in A$, we say that $\sigma(a)$ is the {\it $\sigma$-degree of $a$}.  Also (see, e.g., \cite[Proposition 2.6]{nme-Euclidean}), for any nonzero $a,b \in A$ one has $\sigma(ab) = \sigma(b)$ if and only if $a$ is a unit, and therefore $\sigma(1)=\sigma(u) < \sigma(a)$ for all units $u$ and all nonzero nonunits $a$.  
    
 A function on $A$ that satisfies all but  the third condition is also, by many authors (e.g. \cite[\S 8.1]{DumFo-algbook}), said to be a ``Euclidean function'' on $A$.   Due to the conflict in terminology (which is allowed to exist because both definitions give rise to the same class of domains \cite{Mo-Euclidean}) we will give a different name to Euclidean functions in this more general sense; we will call such a function a {\it generalized Euclidean function}.  Among the generalized Euclidean functions, those that also satisfy the third condition are said to be \emph{submultiplicative}.  For any generalized Euclidean function $\sigma$, the corresponding function $\rho(a) = \min\{\sigma(ba): b \in A \setminus \{0\}\}$ is a submultiplicative generalized Euclidean function on $A$, i.e. a Euclidean function in the sense of our definition of Eudlidean functions. 

 An integral domain $A$ is said to be \emph{Euclidean} if there is some  generalized Euclidean, or, equivalently, some Euclidean function, on $A$.

\begin{defn}
We say that a Euclidean function $\sigma$ on $A$ is \emph{non-archimedean}  if for all $a,b \in A$, $\sigma(a+b) \leq \max\{\sigma(a), \sigma(b)\}$.
\end{defn}

\begin{example}
The degree function $\deg$ is a non-archimedean Euclidean function on the Euclidean domain $k[x]$, for any field $k$.  However, the Euclidean domain $\Z$ does not admit a non-archimedean Euclidean function. 
\end{example}

Our first result, in a limited sense, characterizes the factroids of any Euclidean domain.  However, it does so more explicitly for those Euclidean domains that admit a non-archimedean Euclidean function.

\begin{thm}\label{prop:normeuclidean}
Let $A$ be a Euclidean domain with a generalized Euclidean function $\sigma: A \to \N \cup \{-\infty\}$. For all $n \in \N \cup \{\pm \infty\}$, let \[
C_n := \{a \in A: \sigma(a) \leq n\}.
\]
\begin{enumerate}
    \item Let $F\neq A$ be a factroid of $A$. Then there is some $n \in \sigma(A)$ such that for any $x\in A$ with $\sigma(x)=n$, we have $F=C_n = [x]_A$. 
    \item $\sigma$ is a Euclidean function on $A$ if and only if all of the sets $C_n$ are $\reg(A)$-saturated.
  \item $\sigma$ is non-archimedean if and only if the sets $C_n$, for $n \in \sigma(A) \cup \{\infty\}$, are all additive subgroups of $A$.
   \end{enumerate}
    Hence, $\sigma$ is \begin{itemize}
        \item a Euclidean function if and only if the factroids of $A$ are precisely the sets $C_n$ that are additive subgroups of $A$, and
        \item a non-archimedean Euclidean function if and only if the factroids of $A$ are all the sets $C_n$.
    \end{itemize}
\end{thm}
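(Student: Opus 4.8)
The plan is to prove items (1)--(3) and then read off the two ``Hence'' equivalences from them; the substance lies in (1), while (2) and (3) are direct translations. Write $\reg(A)=A\setminus\{0\}$ throughout. The key lemma for (1) is: \emph{if $F$ is a factroid of $A$ and $a\in F$ with $a\neq 0$, then $C_{\sigma(a)}\subseteq F$.} I would prove, by strong induction on $m\in\N$, the statement that every factroid $F$ containing a nonzero element of $\sigma$-degree $\geq m$ satisfies $C_m\subseteq F$. For the base $m=0$: if $F$ contains a nonzero $a$ and $b\neq 0$ has $\sigma(b)=0$, dividing $a$ by $b$ forces the remainder to have negative $\sigma$-degree, hence to be $0$, so $a=bq$ with $q\in\reg(A)$, and $\reg(A)$-saturation of $F$ gives $b\in F$; thus $C_0\subseteq F$. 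For the step ($m\geq 1$): given $F\ni a$ with $\sigma(a)\geq m$ and $b\neq 0$ with $\sigma(b)\leq m$, write $a=bq+r$ with $\sigma(r)<\sigma(b)$; since $\sigma(a)\geq m\geq\sigma(b)>\sigma(r)$ we get $q\neq 0$; if $r=0$ then $b\mid a$ gives $b\in F$ as before; if $r\neq 0$ then $0\leq\sigma(r)<m$, so by the inductive hypothesis (applied to $F$, which contains $a$ of degree $\geq m>\sigma(r)$) $r\in C_{\sigma(r)}\subseteq F$, whence $bq=a-r\in F$, and $\reg(A)$-saturation yields $b\in F$. Given the lemma, let $F\neq A$ be a factroid and put $n=\sup\{\sigma(y):y\in F\}$; then $F\subseteq C_n$ trivially, while $C_n\subseteq F$ by the lemma (the case $F=\{0\}=C_{-\infty}$ being immediate), so $F=C_n$; since $A=C_\infty$ and $F\neq A$ the supremum $n$ is finite, hence attained, so $n\in\sigma(A)$; and for any $x$ with $\sigma(x)=n$ one has $x\in C_n=F$, so $[x]_A\subseteq F$, while the lemma applied to $[x]_A\ni x$ gives $C_n\subseteq[x]_A$, so $[x]_A=C_n=F$.

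For (2): ``$C_n$ is $\reg(A)$-saturated for every $n$'' unwinds, via $n=\sigma(bx)$, to ``$\sigma(x)\leq\sigma(bx)$ for all $x$ and all $b\neq0$,'' which is precisely the submultiplicativity condition distinguishing a Euclidean function from a generalized one (the cases $n=\pm\infty$ being trivial since $C_\infty=A$, $C_{-\infty}=\{0\}$, and a domain has no zero-divisors). For (3): if $\sigma$ is non-archimedean then $a,b\in C_n$ forces $\sigma(a+b)\leq\max\{\sigma(a),\sigma(b)\}\leq n$, and together with $0\in C_n$ and $\sigma(-a)=\sigma(a)$ (as $-1\in A^\times$) this makes $C_n$ an additive subgroup; conversely, given $a,b$, take $n=\max\{\sigma(a),\sigma(b)\}$ so that $a,b\in C_n$, whence $a+b\in C_n$, i.e.\ $\sigma(a+b)\leq n$.

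Combining the definition of a factroid with (1) shows that, for any generalized Euclidean $\sigma$, the factroids of $A$ are exactly those $C_n$ ($n\in\N\cup\{\pm\infty\}$) that are \emph{both} additive subgroups \emph{and} $\reg(A)$-saturated. Hence if $\sigma$ is a Euclidean function, (2) makes the saturation clause automatic, so the factroids are precisely the $C_n$ that are additive subgroups; and if $\sigma$ is moreover non-archimedean, (3) makes the subgroup clause automatic too, so the factroids are all the $C_n$. The converses go by contraposition: if $\sigma$ is not non-archimedean Euclidean, then by (2)/(3) some $C_n$ (with $n\in\sigma(A)\cup\{\infty\}$) fails either $\reg(A)$-saturation or additive closure, hence is not a factroid, so the factroids are not all the $C_n$; and if $\sigma$ is not a Euclidean function, then by (2) some $C_n$ is not $\reg(A)$-saturated, and taking $n$ minimal with this property (so all lower $C_k$ are saturated) one shows, using the minimality, that this $C_n$ is nonetheless an additive subgroup, exhibiting an additive-subgroup $C_n$ that is not a factroid and so contradicting the putative description.

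The main obstacle is the lemma in (1): the induction must simultaneously use that a factroid is an additive subgroup (to cancel the remainder $r$) and that it is $\reg(A)$-saturated (to cancel the quotient $q$), with the division algorithm applied to $a$ by $b$ in exactly that order, and with the inductive hypothesis phrased broadly enough ($\sigma$-degree $\geq m$, not $=m$) to be applicable to $r$. The remaining delicate point is the last step above---verifying that the minimal non-$\reg(A)$-saturated $C_n$ is an additive subgroup; the edge cases $n\in\{\pm\infty\}$, and the reduction of $\sigma(-a)=\sigma(a)$ to $-1\in A^\times$, are routine.
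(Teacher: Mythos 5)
Your proofs of items (1), (2), and (3) are correct and follow the same route as the paper: the same induction on the degree bound for (1), the same unwinding of saturation versus submultiplicativity for (2), and the same elementary equivalence for (3). The paper itself stops after (3) and merely asserts the two ``Hence'' bullet points, so your attempt to derive them is additional work not present in the paper. Your derivation of both forward implications and of the converse of the second bullet is fine.

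The genuine gap is in your final contraposition step for the converse of the first bullet. You assert that if one takes $n$ minimal with $C_n$ not $\reg(A)$-saturated, then $C_n$ is an additive subgroup. This is false, and indeed that converse of the first bullet is false as stated. Take $A=\Z$ and define $\sigma(0)=-\infty$, $\sigma(1)=0$, $\sigma(-1)=1$, and $\sigma(a)=|a|$ for $|a|\geq 2$. This is a generalized Euclidean function (in the division step, $b=\pm 1$ forces $r=0$, and for $|b|\geq 2$ ordinary division gives $|r|<|b|$ and hence $\sigma(r)<\sigma(b)$), but it is not a Euclidean function since $\sigma\bigl((-1)(-1)\bigr)=0<1=\sigma(-1)$. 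Here the minimal non-saturated level is $n=0$, with $C_0=\{0,1\}$: it fails saturation (because $(-1)(-1)=1\in C_0$ but $-1\notin C_0$) yet is also not an additive subgroup (because $1+1=2\notin C_0$), so the minimality argument collapses. Worse, in this example the only $C_n$ that are additive subgroups are $\{0\}$ and $\Z$, which by Example~\ref{ex:integers} are exactly the factroids of $\Z$; so the right-hand side of the first bullet holds while $\sigma$ is not a Euclidean function. Thus the first ``if and only if'' in the theorem's concluding clause is only a one-way implication (the direction you proved correctly from (1) and (2)); there is no argument that could fill the gap in your final paragraph, because the statement you are trying to prove there is not true. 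The second bullet's biconditional, by contrast, does follow from (1)--(3) as you argued.
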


\begin{proof}
First, note that for any $n \in \N \cup \{\pm \infty\}$, we have $C_n = C_s$, where $s = \min\{t \in \sigma(A) \cup \{\pm \infty\} \mid t \geq n\}$.
 
   To prove (1), it suffices to show that $C_{n} \subseteq F$ for any factroid $F$ of $A$ containing some element of $A$ of  $\sigma$-degree at least $n$, where $n \in \N \cup \{-\infty\}$.   We prove this by induction on $n$, the case $n=-\infty$ being trivial.   Let $n\geq 0$, and suppose that for all $\ell<n$ with $\ell \in \N \cup \{-\infty\}$, we have $C_\ell \subseteq F$ whenever $F$ contains some element of $\sigma$-degree at least $\ell$.  Let $f\in F$ of $\sigma$-degree at least $n$.  Let $0 \neq g\in F$ with $\sigma(g) \leq n$. Then there exist $q,r \in A$ with $f=qg+r$ and $\sigma(r)<\sigma(g)$.  Since $f\neq r$ (as they have different $\sigma$-degrees), we have $q \neq 0$. Since $\sigma(r)<\sigma(g)=n \leq \sigma(f)$ and $f\in F$, it follows by inductive hypothesis that $C_{\sigma(r)} \subseteq F$, whence $r\in F$. 
 Thus, $qg = f-r \in F$, so since $q\neq 0$ and $F$ is a factroid, we have $g\in F$. 

To prove (2), suppose that $\sigma$ is a Euclidean function, and let $n \in \sigma(A) \cup \{\infty\}$.  Let $a \in A$ and $b \in \reg(A)$ with $ba \in C_n$.  Then $\sigma(a) \leq \sigma (ba) \leq n$, so that $a \in C_n$.  Thus the sets $C_n$ are all $\reg(A)$-saturated.  Conversely, suppose that all of the sets $C_n$ are $\reg(A)$-saturated.  Let $a,b \in A$ with $b \neq 0$.  Let $n = \sigma(ab)$.  Since $C_n$ is $\reg(A)$-saturated and $ab \in C_n$, one has $a \in C_n$, and therefore $\sigma(a) \leq n = \sigma(ab)$.  Therefore $\sigma$ is a Euclidean function on $A$.  

To prove (3), suppose that $\sigma$ is non-archimedean. For any $n$, let $a,b \in C_n$.  Then $\sigma(a\pm b) \leq \max\{\sigma(a), \sigma(b)\} \leq n$, so $a \pm b \in C_n$. Thus, $C_n$ is an additive subgroup of $A$.  Conversely,  suppose that each $C_n$ is an additive subgroup of $A$.  Let $a,b \in A$, and suppose without loss of generality that $\sigma(a) \leq n: = \sigma(b)$.  Then $a,b \in C_n$, whence $a+b \in C_n$, so that $\sigma(a+b) \leq n = \max(\sigma(a),\sigma(b))$.
\end{proof}

Proposition \ref{ex:polysimple}, whose proof we postponed, is an immediate corollary of Theorem \ref{prop:normeuclidean} above. The following proposition is a partial converse of Proposition~\ref{ex:polysimple}.  We define $2^\sigma$ to be the function $a \mapsto 2^{\sigma(a)}$.

\begin{prop}\label{prop:non-arch2}
    For any Euclidean function $\sigma$ on an integral domain $A$, the following conditions are equivalent:
    \begin{enumerate}[(a)]
    \item $\sigma$ is non-archimedean and satifies the condition $\sigma(ab) = \sigma(a)+\sigma(b)$ for all $a,b \in A$.
    \item The function $2^\sigma: A \to \N$ is a non-archimedean norm on the domain $A$.
    \item The function $2^\sigma$ is a norm on the domain  $A$, and  the prime subring of $A$ is contained in $\{a \in A: \sigma(a) \leq \sigma(1)\} = A^\times \cup \{0\}$.
    \item The function $2^\sigma$ is a norm on the domain $A$, and $A$ contains a field.
        \item   The factroids of $A$ are the sets
 \[
 C_n := \{a \in A: \sigma(a) \leq n\}
 \] for all $n \in \sigma(A) \cup \{\infty\}$ (which are distinct for all $n \in \sigma(A)$), and the  level sets $$D_n := \{a \in A: \sigma(a) = n\}$$ 
 satisfy $$D_m D_n \subseteq D_{m+n}$$
 for all  $m,n \in \sigma(A)$.
    \end{enumerate}
    Moreover, a domain $A$ admits a Euclidean function satisfying the equivalent conditions above if and only if $A$ is a field or $A \cong k[x]$ for some field $k$.
 \end{prop}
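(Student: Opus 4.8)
The plan is to prove the two implications of the ``moreover'' statement separately. The ``if'' direction only requires exhibiting, in each case, a Euclidean function satisfying condition~(a). On a field, the function sending every nonzero element to $0$ and $0$ to $-\infty$ is a Euclidean function (division is exact, with remainder $0$), it is non-archimedean, and $\sigma(ab)=\sigma(a)+\sigma(b)$ holds with the convention $n+(-\infty)=-\infty$; hence all the equivalent conditions hold. On $A\cong k[x]$, the degree function $\deg$ is, as already noted in the excerpt, a non-archimedean Euclidean function, and $\deg(fg)=\deg f+\deg g$, so again condition~(a) holds.

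For the ``only if'' direction, suppose $A$ is a domain carrying a Euclidean function $\sigma$ satisfying the equivalent conditions; I will use condition~(a), that $\sigma$ is non-archimedean and $\sigma(ab)=\sigma(a)+\sigma(b)$. The first step is to isolate a canonical subfield. Additivity gives $\sigma(1)=2\sigma(1)$, so $\sigma(1)=0$; since (as recalled among the preliminaries) $\sigma(1)=\sigma(u)<\sigma(a)$ for every unit $u$ and every nonzero nonunit $a$, the set $C_0=\{a\in A:\sigma(a)\le 0\}$ is exactly $A^\times\cup\{0\}$. Non-archimedean-ness makes $C_0$ closed under addition and negation (directly, or by Theorem~\ref{prop:normeuclidean}(3), or by condition~(e)), and it is obviously closed under multiplication and under inverses of its nonzero elements; hence $k:=A^\times\cup\{0\}$ is a subfield of $A$. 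If $A=k$, then $A$ is a field and we are done, so assume there is a nonzero nonunit.

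Next, set $d:=\min\{\sigma(a):a\in A,\ a\ne 0,\ a\notin A^\times\}\ge 1$ and fix $x$ with $\sigma(x)=d$. The key computation is that whenever $a\in A\setminus k$ and $a=qx+r$ is a Euclidean division of $a$ by $x$ (so $\sigma(r)<d$), one has $q\ne 0$ and $\sigma(a)=\sigma(q)+d$: indeed $q=0$ would force $\sigma(a)=\sigma(r)<d\le\sigma(a)$, a contradiction, and since $\sigma(qx)=\sigma(q)+d\ge d>\sigma(r)$, applying the non-archimedean inequality to $a=qx+r$ and separately to $qx=a+(-r)$ yields $\sigma(a)\le\sigma(qx)\le\max\{\sigma(a),\sigma(r)\}=\sigma(a)$, hence $\sigma(a)=\sigma(qx)=\sigma(q)+d$. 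With this established, I would induct on $n\in\N$ to prove the combined assertion: every $a\in A$ with $\sigma(a)\le n$ lies in the subring $k[x]$ and satisfies $\sigma(a)\in d\,\N\cup\{-\infty\}$. The base case $n=0$ is immediate since such $a$ lie in $k$. For the step, a given $a$ with $\sigma(a)=n\ge d$ is divided as $a=qx+r$ with $\sigma(r)<d$; the inductive hypothesis applied to $r$ (degree $<d\le n$) forces $\sigma(r)\in\{0,-\infty\}$, i.e.\ $r\in k$, and applied to $q$ (degree $n-d<n$ by the key computation) gives $q\in k[x]$ with $\sigma(q)\in d\,\N$, whence $a=qx+r\in k[x]$ and $\sigma(a)=\sigma(q)+d\in d\,\N$. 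Thus $A=k[x]$; and $x$ must be transcendental over $k$, since otherwise $k[x]$ would be an algebraic field extension of $k$, making $x$ a unit, contradicting $\sigma(x)=d\ge 1$. Therefore $A=k[x]\cong k[X]$, a polynomial ring over the field $k$.

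The main obstacle is keeping the induction in the ``only if'' direction non-circular: the three facts ``remainders of division by $x$ lie in $k$'', ``$\sigma$ takes only values in $d\,\N\cup\{-\infty\}$'', and ``$A=k[x]$'' depend on one another, so they must be bundled into a single induction on the $\sigma$-degree, with the identity $\sigma(qx+r)=\sigma(q)+d$ — which itself needs the careful two-directional use of the non-archimedean inequality described above — proved first and then invoked at each inductive step.
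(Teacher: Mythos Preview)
Your argument for the ``moreover'' clause is correct, but the route differs from the paper's. The paper disposes of the ``only if'' direction in one line by observing that $\delta=2^\sigma$ is a Euclidean function in the sense of Jacobson's \emph{Basic Algebra~I}, \S2.15, and invoking Exercise~6 there, which is exactly the statement that such a domain is a field or isomorphic to $k[x]$. You instead give a direct, self-contained proof: extract the subfield $k=A^\times\cup\{0\}$, choose $x$ of minimal positive $\sigma$-degree~$d$, and climb by induction on $\sigma$-degree using Euclidean division by $x$. (One small expository gap: for $1\le n<d$ you should note that minimality of $d$ forces any $a$ with $\sigma(a)\le n$ to already satisfy $\sigma(a)\le 0$, so the inductive step is vacuous there; but this is implicit in your setup.)

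What your approach buys is independence from the external reference. It is also worth noting that your argument is essentially the paper's proof of the very next result, Proposition~\ref{prop:kx}, which establishes the \emph{stronger} fact that non-archimedeanness alone---without the additivity $\sigma(ab)=\sigma(a)+\sigma(b)$---already forces $A=k$ or $A\cong k[x]$. In that proof the inductive step uses only the strict inequality $\sigma(q)<\sigma(qx)$ coming from submultiplicativity, rather than your exact identity $\sigma(q)=\sigma(a)-d$; so your appeal to additivity in the ``key computation'' and the bundled conclusion $\sigma(a)\in d\,\N$ are conveniences, not necessities. Dropping them would let your argument prove Proposition~\ref{prop:kx} directly.
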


 \begin{proof} 
     The equivalence of  conditions (a)--(c) follows from standard facts about norms on integral domains, and the equivalence of (c) and (d) follows from the fact that $\{a \in A\mid \sigma(a) \leq \sigma(1)\} = A^\times \cup \{0\}$.   The equivalence of conditions (a) and (e) follows from Theorem \ref{prop:normeuclidean}.
    Finally, suppose that $\sigma$ satisfies all of the given conditions.  Then the function $\delta = 2^\sigma$ is a Euclidean function on $A$ in the sense of \cite[Section 2.15]{Jac-BA1e2}, and therefore,  by Exercise  6 of that section, $A$ is a field or $A \cong k[x]$ for some field $k$.  The converse is clear.
 \end{proof}

In fact, the following stronger result is possible and is due to user527492 on MathOverflow.  See \url{https://mathoverflow.net/questions/485986/non-archimedean-euclidean-domains/486062#486062}. 
\begin{prop}[user527492 on MathOverflow]\label{prop:kx}
 An integral domain $A$ admits a non-archimedean Euclidean function on $A$ if and only if $A = k$ or $A \cong k[x]$, where $k = A^\times \cup \{0\}$ is a field.
\end{prop}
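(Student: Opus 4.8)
The plan is to prove the two implications separately, with essentially all the content in the forward direction. The converse is quick: if $A = k$ is a field, the function sending $0$ to $-\infty$ and every nonzero element to $0$ is a non-archimedean Euclidean function, and if $A \cong k[x]$ for a field $k$, then $\deg$ (with $\deg 0 = -\infty$) is a non-archimedean Euclidean function and $k[x]^\times \cup \{0\} = k$, so the displayed description of $k$ holds.

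So suppose $\sigma$ is a non-archimedean Euclidean function on $A$. First I would record the standard ultrametric sharpening: since $-1 \in A^\times$ gives $\sigma(-c) = \sigma(c)$, applying non-archimedeanness to $(a+b)+(-a)$ shows that $\sigma(a) \neq \sigma(b)$ forces $\sigma(a+b) = \max\{\sigma(a),\sigma(b)\}$. Next, using the fact recalled after the definition of a Euclidean function — that $\sigma(ab) = \sigma(b)$ iff $a \in A^\times$ for nonzero $a,b$, and hence $\sigma(1) = \sigma(u) < \sigma(c)$ for every unit $u$ and nonzero nonunit $c$ — I would set $k := \{a \in A \mid \sigma(a) \le \sigma(1)\} = A^\times \cup \{0\}$ and check that $k$ is a subfield of $A$: it is clearly closed under negation and multiplication, it is closed under addition because $\sigma(a+b) \le \max\{\sigma(a),\sigma(b)\} \le \sigma(1)$ returns $a+b$ to $A^\times \cup \{0\}$, and the inverse of a nonzero element of $k$ is again a unit, hence in $k$.

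If $A = k$ we are done, so assume $A \neq k$ and fix a nonzero nonunit $x \in A$ with $\sigma(x)$ minimal (possible since the $\sigma$-values of nonzero nonunits form a nonempty subset of $\N$). The heart of the argument is to show, by strong induction on $\sigma(a) \in \{-\infty\}\cup\N$, that every $a \in A$ lies in the $k$-subalgebra generated by $x$; the case $a \in k$ is immediate, and if $\sigma(a) > \sigma(1)$ one divides to get $a = xq + r$ with $\sigma(r) < \sigma(x)$, so $r \in k$ (minimality of $\sigma(x)$) and $q \neq 0$ (else $a = r \in k$), and then $\sigma(xq) > \sigma(q)$ (nonunit times nonzero) together with $\sigma(r) < \sigma(x) \le \sigma(xq)$ and the ultrametric sharpening give $\sigma(a) = \sigma(xq) > \sigma(q)$, so the inductive hypothesis applies to $q$. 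Thus $A$ is generated as a $k$-algebra by $x$. Finally, $x$ is transcendental over $k$: since $x$ is a nonunit, $\sigma(x^{n+1}) = \sigma(x\cdot x^n) > \sigma(x^n)$, so the powers of $x$ have strictly increasing $\sigma$-values, and in any relation $\sum_{i=0}^n c_i x^i = 0$ with $c_i \in k$ the nonzero terms have pairwise distinct $\sigma$-values $\sigma(c_i x^i) = \sigma(x^i)$, forcing $\sigma(\sum_i c_i x^i)$ to equal the largest of them — impossible unless all $c_i = 0$. Hence $A \cong k[x]$ as required.

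I expect the main obstacle to be the induction just described: it closes only because multiplying by a nonunit strictly raises the $\sigma$-degree (giving $\sigma(q) < \sigma(xq) = \sigma(a)$) and because the remainder $r$ lands in $k$, which is exactly why $x$ must be chosen of minimal $\sigma$-degree among nonzero nonunits. A subtler point, also needing care, is that $k = A^\times \cup \{0\}$ is a subring at all; this genuinely uses non-archimedeanness — e.g.\ for $A = \Z$, which admits no non-archimedean Euclidean function, $A^\times \cup \{0\}$ is not closed under addition.
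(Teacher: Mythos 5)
Your proof is correct and follows essentially the same strategy as the paper's: both fix a nonzero nonunit $t$ of minimal $\sigma$-degree and show by division with remainder that $t$ generates $A$ as a $k$-algebra --- your strong induction on $\sigma$-degree is the paper's minimal-counterexample argument in disguise, with the same key inequality $\sigma(q) < \sigma(qt) = \sigma(a)$ letting the induction close. The one place you diverge is the transcendence/injectivity step: you observe that the powers of $t$ have strictly increasing $\sigma$-degrees, so the ultrametric forbids any nontrivial $k$-linear relation; the paper instead notes that a nonzero polynomial relation $\sum a_i t^i = 0$, after dividing out the lowest power, would make $t$ divide the unit $a_k$, forcing $t$ to be a unit. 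Both are short and correct --- the paper's is a divisibility argument that never invokes $\sigma$ again, while yours stays within the $\sigma$-degree framework. You also spell out the ultrametric sharpening ($\sigma(a)\neq\sigma(b)$ forces equality in the triangle inequality) and the verification that $k = A^\times\cup\{0\}$ is a subfield, both of which the paper leaves implicit; these are genuine clarifications rather than new content.
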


\begin{proof} Suppose that $A \neq k$, so that $A$ has an nonzero nonunit $t$ of minimal $\sigma$-degree.
Let $\phi \colon k[x] \to A$ be the unique $k$-algebra map sending $x \mapsto t$. We first show that $\phi$ is surjective. Note that $k$ is in the image of $\phi$ by construction, so it suffices to show that $A \setminus k$ is also in the image of $\phi$. Suppose, for the sake of contradiction, that the image of $\phi$ does not contain $A\setminus k$. Then there exists an element $y \in A \setminus k$ not in the image of $\phi$ of minimal $\sigma$-degree. Since $\sigma$ is Euclidean, there exist $q, r \in A$ such that $y = qt + r$, where $\sigma(r) < \sigma(t)$.  This implies that $r \in k$. If $q = 0$, then $y = r \in k$, which is a contradiction. Therefore, since $\sigma$ is a non-archimedean Euclidean function, 
$$
\sigma(q) < \sigma(qt) = \sigma(y-r) \leq \max(\sigma(y), \sigma(-r)) = \sigma(y).
$$
By minimality, $q = \phi(f)$ for some $f \in k[x]$, which implies $y = \phi(fx+r)$, which is a contradiction.

We now show that $\phi$ is injective. Suppose, for the sake of contradiction, that $\phi(f) = 0$, where $f = a_nx^n + \cdots + a_1x +a_0  \neq 0$.  Let $k$ be least such that $a_k \neq 0$.  Then $-a_k = \phi(-a_k) = (\phi(a_n)t^{n-k-1} + \cdots + \phi(a_{k+1}))t$, implying that $t$ is a unit, which is a contradiction. Hence $\phi$ is also injective, and therefore an isomorphism.
\end{proof}

\begin{problem}
    Does there exist a Euclidean domain that admits a non-archimedean generalized Euclidean function  and yet is neither a field nor isomorphic to $k[x]$ for some field $k$?
\end{problem}

\section{Unit-additive rings and sublocalizable rings}\label{sec:unitadditive}

 In this section, in which all rings are assumed to be commutative, we show that the recently discovered notion of \emph{unit-additivity} is intimately connected to the subring $[1]^{A^\circ}_A$ of a ring $A$.  We also introduce a common generalization of the local rings an the unit-additive rings that we call the \emph{sublocalizable rings}, and we show, analogously, their intimate connection to the subring $[1]^{A^\times}_A = \Z \langle A^\times \rangle$ of a  ring $A$ generated by the units of $A$.

\begin{defn}[{\cite{nmeSh-unitadd}}]\label{def:uadd}
A  ring $A$    is \emph{unit-additive}  if for any units $u,v \in A^\times$, $u+v$ is either a unit or nilpotent.
\end{defn}

\begin{example}
In \cite{nmeSh-unitadd}, the second named author and Jay Shapiro provided numerous examples to show that unit-additivity is a very natural condition.  For instance: \begin{itemize}
\item If $A \subseteq B$ is an integral extension of  rings and $B$ is unit-additive, then so is $A$ \cite[Proposition 2.8]{nmeSh-unitadd}.
\item Any Boolean ring is unit-additive \cite[Example 2.14]{nmeSh-unitadd}.
\item Unit-additivity characterizes a kind of simplicity in large classes of Euclidean domains.  Let $A$ be an Euclidean domain that is not a field and is  finitely generated as a $k$-algebra, where $k$ is a maximal subfield of $A$.  If $k=\F_2$ or $k$ is algebraically closed, then $A$ is unit-additive  if and only if $A \cong k[x]$ \cite[Theorem 2.15]{nmeSh-unitadd}.
\item Unit-additivity can capture properties of monoids.  Indeed, let $A$ be a nonzero  ring of characteristic zero, and let $M$ be a cancellative commutative  monoid.  Then the monoid algebra $A[M]$ is unit-additive if and only if $A$ is unit-additive and $M$ is torsion-free with no nontrivial invertible elements (see \cite[Theorem 3.6]{nmeSh-unitadd}).  
\item Unit-additivity is related to the fundamental theorem of algebra.  In fact, let $X$ be an irreducible variety over an algebraically closed field $k$, and let $A$ be its coordinate ring.  Then $A$ is unit-additive if and only if every polynomial mapping $X \ra \A^1_k$ has a root (see \cite[Remark 4.4]{nmeSh-unitadd}).
\end{itemize}
\end{example}

\begin{example} Let $A$ be a Euclidean domain  with a Euclidean function $\sigma$.   Then $C_{\sigma(1)} = A^\times \cup \{0\} = A^\times \cup \sqrt{0}$ is a multiplicative submonoid of $A$. Moreover,  by Theorem~\ref{prop:normeuclidean}(3) and \cite[Proposition 2.1]{nmeSh-unitadd}, the following are equivalent: \begin{enumerate}[(a)]
    \item $A$ is unit-additive.
    \item $C_{\sigma(1)}$ is additively closed.
    \item $C_{\sigma(1)}$ is a factroid of $A$.
    \item $C_{\sigma(1)}$ is a subring of $A$.
    \item  $C_{\sigma(1)}$ is a subfield of $A$.
\end{enumerate}
\end{example}

For any  ring $A$, let $J(A)$ denote the Jacobson radical of $A$.

\begin{lemma}\label{lem:jacobson}
Let $A$ be a  ring.  Then $A^\times \cup J(A) \subseteq \Z \langle A^\times \rangle = [1]^{A^\times}_A$.  Consequently, the units of $A$ are the same as those of its subring $R := [1]^{A^\times}_A$, and $J(A) \cap R \subseteq J (R)$.
\end{lemma}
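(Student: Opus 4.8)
The plan is to verify the three assertions of the lemma in sequence, using throughout the identification $R:=\Z\langle A^\times\rangle=[1]^{A^\times}_A$ already supplied by Corollary~\ref{cor:WcapM}, together with the standing hypothesis of this section that $A$, and hence every subring of $A$, is commutative. First I would dispatch the containment $A^\times\cup J(A)\subseteq R$. That $A^\times\subseteq R$ is immediate, since $R$ is by definition the subring of $A$ generated by $A^\times$. For $J(A)\subseteq R$, I would invoke the standard fact that $1+j\in A^\times$ whenever $j\in J(A)$ (the defining property of the Jacobson radical applied with the multiplier $-1$); then the identity $j=(1+j)-1$ exhibits $j$ as a difference of two units, so $j\in R$.

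Next, for the identification of unit groups, I would observe that the inclusion $R^\times\subseteq A^\times$ is automatic, because an inverse computed inside $R$ is a fortiori an inverse inside $A$. For the reverse inclusion, given $u\in A^\times$, both $u$ and $u^{-1}$ lie in $A^\times\subseteq R$ by the previous step, so $u$ is already invertible in $R$; hence $A^\times\subseteq R^\times$, and equality follows.

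Finally, to prove $J(A)\cap R\subseteq J(R)$, I would take $j\in J(A)\cap R$ and an arbitrary $s\in R$. Since $R\subseteq A$ and $J(A)$ is an ideal of $A$, we have $sj\in J(A)$, whence $1-sj\in A^\times=R^\times$ by the step just completed; moreover $1-sj\in R$, as $R$ is a subring containing $s$, $j$, and $1$. Since $s$ ranged over all of $R$, this says precisely that $j\in J(R)$, using the criterion that $j\in J(R)$ iff $1-sj\in R^\times$ for all $s\in R$.

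I do not anticipate a genuine obstacle here; the argument is elementary. The only points requiring a little care are keeping straight the directions of the two unit-group inclusions, and noticing that it is the ideal property of $J(A)$ — not any explicit description of $R$ — that makes the last step go through, since it is this that guarantees $sj\in J(A)$ for every $s\in A$. It is also worth flagging that the description of $J(R)$ by the condition ``$1-sr\in R^\times$ for all $s\in R$'' relies on commutativity, consistent with the blanket assumption of this section.
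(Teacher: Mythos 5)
Your proof is correct and follows essentially the same route as the paper's: both show $J(A)\subseteq R$ via $a=(1+a)-1$, and both establish $J(A)\cap R\subseteq J(R)$ through the Jacobson-radical criterion once $A^\times=R^\times$ is known. The only cosmetic difference is that you interpose the observation $sj\in J(A)$ (using the ideal property) before invoking the unit criterion, whereas the paper applies the criterion $1+ra\in A^\times$ for all $r\in A$ directly; these are the same fact phrased two ways.
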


\begin{proof}
The fact that $\Z \langle A^\times \rangle =[1]^{A^\times}_A$ follows from Corollary \ref{cor:WcapM}.  Let $a\in J(A)$.  Then $1+a$ is a unit, so since $A^\times \subseteq R$, it follows that $1+a \in R$.  Since also $1\in R$, we have $a=(1+a)-1 \in R$.  Now, let $a \in J(A) \cap R$.  Then $1+ra \in A^\times = R^\times$ for all $r \in A$, hence for all $r \in R$, whence $a \in J(R)$.
\end{proof}

\begin{prop}\label{pr:uaddbracket1}
    Let $A$ be a  ring.  Then $A^\times \cup \sqrt{(0)} \subseteq [1]^{A^\circ}_A$; equivalently, the units and nilpotent elements of $A$ are the same as those of its subring $[1]^{A^\circ}_A$.  Moreover, the inclusion is an equality if and only if $A$ is unit-additive,  if and only if $[1]^{A^\circ}_A$ is unit-additive,  if and only if the inclusions $$A^\times \cup \sqrt{(0)} \subseteq  A^\times \cup J(A)  \subseteq[1]^{A^\times}_A  \subseteq [1]^{\reg(A)}
    _A \subseteq [1]^{A^\circ}_A.$$  are equalities.
\end{prop}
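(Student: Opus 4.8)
The plan is to prove the single inclusion $A^\times \cup \sqrt{(0)} \subseteq [1]^{A^\circ}_A$ first, read off the ``equivalently'' clause, dispose of the displayed chain by monotonicity, and then obtain the block of equivalences by recognizing $A^\times \cup \sqrt{(0)}$ as the natural candidate for $[1]^{A^\circ}_A$. Set $R := [1]^{A^\circ}_A$. By Corollary~\ref{cor:bracket1}, $R$ is a subring of $A$, and since $\reg(A) \subseteq A^\circ$ it is in particular a $\reg(A)$-factroid of $A$ containing $1$; hence Corollary~\ref{cor:WcapM} gives $A^\times \subseteq R$. If $a \in \sqrt{(0)}$ then $1+a \in A^\times \subseteq R$, so $a = (1+a) - 1 \in R$ because $R$ is a subring. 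Thus $A^\times \cup \sqrt{(0)} \subseteq R$. A unit of $R$ has its inverse in $R \subseteq A$, hence is a unit of $A$; combined with $A^\times \subseteq R$ this gives $R^\times = A^\times$, and in the same way the nilpotents of $R$ are exactly $\sqrt{(0)}$. This is the ``equivalently'' reformulation, and it records the fact---used repeatedly below---that $R$ and $A$ have the same units and the same nilpotents.

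The displayed chain requires nothing new: $\sqrt{(0)} \subseteq J(A)$ gives the first inclusion; Lemma~\ref{lem:jacobson} gives $A^\times \cup J(A) \subseteq [1]^{A^\times}_A = \Z\langle A^\times \rangle$; and from $A^\times \subseteq \reg(A) \subseteq A^\circ$, together with the fact that a $T$-factroid is automatically a $U$-factroid whenever $U \subseteq T$, one gets $[1]^{A^\times}_A \subseteq [1]^{\reg(A)}_A \subseteq [1]^{A^\circ}_A$. Since $A^\times \cup \sqrt{(0)}$ and $[1]^{A^\circ}_A$ sit at the two ends of this chain, the assertion that the outermost inclusion is an equality is the same as the assertion that every inclusion in the chain is an equality.

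The heart of the proof is the equivalence with unit-additivity. First I would check that $A^\times \cup \sqrt{(0)}$ is always an $A^\circ$-saturated subset of $A$: if $w \in A^\circ$ and $wa \in A^\times \cup \sqrt{(0)}$, then either $wa$ is a unit, in which case $a$ is a unit, or $(wa)^N = 0$ for some $N$, in which case for every minimal prime $\p$ of $A$ we have $w^N a^N \in \p$ with $w \notin \p$, forcing $a \in \p$; as $\p$ ranges over all minimal primes this gives $a \in \sqrt{(0)}$, using the description of $A^\circ$ from Proposition~\ref{prop:Acircrad}. Moreover $A^\times \cup \sqrt{(0)}$ always contains $0$ and $\pm 1$, is closed under negation, and is closed under multiplication, so it is an additive subgroup of $A$---equivalently, an $A^\circ$-factroid of $A$---if and only if it is closed under addition; and the identity $u + a = u(1 + u^{-1}a)$ for $u$ a unit and $a$ nilpotent, together with the fact that a sum of nilpotents is nilpotent, shows by an easy three-case check that closure under addition of $A^\times \cup \sqrt{(0)}$ is exactly unit-additivity of $A$. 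Since $1 \in A^\times \cup \sqrt{(0)} \subseteq [1]^{A^\circ}_A$ and $[1]^{A^\circ}_A$ is by definition the smallest $A^\circ$-factroid of $A$ containing $1$, it follows that $A$ is unit-additive if and only if $A^\times \cup \sqrt{(0)}$ is an $A^\circ$-factroid of $A$, if and only if $A^\times \cup \sqrt{(0)} = [1]^{A^\circ}_A$, which by the previous paragraph is in turn equivalent to all the inclusions in the chain being equalities.

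It remains to fold in the clause that $[1]^{A^\circ}_A$ is unit-additive. Here I would use that $R = [1]^{A^\circ}_A$ has exactly the units $A^\times$ and exactly the nilpotents $\sqrt{(0)}$ of $A$, and that $A^\times \subseteq R$: for $u, v \in A^\times$ the element $u+v$ is a unit-or-nilpotent of $R$ if and only if it is a unit-or-nilpotent of $A$, and the units of $R$ to which the definition of unit-additivity applies are precisely the elements of $A^\times$; hence $A$ is unit-additive if and only if $R$ is. Combining the three equivalences just proved gives the proposition. I expect the only step that is not routine bookkeeping to be the $A^\circ$-saturation of $A^\times \cup \sqrt{(0)}$---specifically the nilpotent case---and that is exactly where the identification of $A^\circ$ with the complement of the union of the minimal primes is needed.
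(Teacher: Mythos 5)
Your proof is correct, and the central computation agrees with the paper's: both arguments hinge on verifying that $B := A^\times \cup \sqrt{(0)}$ is an $A^\circ$-saturated subset of $A$ (via the minimal-primes description of $A^\circ$), together with the fact that unit-additivity of $A$ is equivalent to $B$ being a subring. The paper outsources the latter fact to \cite[Proposition~2.1]{nmeSh-unitadd}; you instead prove it directly by the three-case check using $u+a = u(1+u^{-1}a)$, which makes the argument self-contained at no real extra cost. You also make explicit two things the paper's proof leaves implicit: that the $A^\circ$-saturation of $B$ holds unconditionally (the paper only invokes it inside the ``suppose $A$ unit-additive'' branch, where it is not actually conditional), and the reason why ``$[1]^{A^\circ}_A$ is unit-additive'' is equivalent to the other conditions, namely that $R := [1]^{A^\circ}_A$ shares its unit group and nilradical with $A$. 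So: same approach, same key lemma (the saturation of $B$), slightly more elementary and more complete in the bookkeeping.
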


\begin{proof}
The given inclusions hold because of  Lemma~\ref{lem:jacobson}.   Therefore, if equalities hold, then $A^\times \cup \sqrt{(0)}$ is a subring of $A$, whence by   \cite[Proposition 2.1]{nmeSh-unitadd}  $A$ is unit-additive.  Suppose, conversely, that $A$ is unit-additive.  Then $B = A^\times \cup \sqrt{(0)}$ is a subring of $A$ by \cite[Proposition 2.1]{nmeSh-unitadd}, so we only need to show that $B$ is an $A^\circ$-saturated subset of $A$.  So let $a\in A^\circ$ and $x\in A$ with $ax \in B$. Then either $ax\in A^\times$ or $ax \in \sqrt{(0)}$.  If $ax\in A^\times$, then $x$ is a unit, so $x\in A^\times \subseteq B$. Finally, suppose $ax\in \sqrt{(0)}$.  Then for any minimal prime $\p$ of $A$, we have $ax\in \p$, but $a\notin \p$, so $x\in \p$.  Since $\p$ was arbitrary, $x\in \sqrt{(0)} \subseteq B$.
\end{proof}

\begin{example} Although, for any integral domain $D$, its subring $R := [1]^{\reg(D)}_D$ contains $[1]^{D^\times}_D = \Z \langle D^\times \rangle$, equality need not hold.  For instance, let $D = \Z[x, 2/x] \subseteq \Q(x)$. Since the only units of $D$ are $1$ and $-1$, the subring  of $D$ generated by $D^\times$ is just $\Z$.  But since $x (2/x) = 2 \in R$, one has $x, 2/x \in R$, and therefore $R = D$.
\end{example}

\begin{construction}[{\cite[Definition 6.4]{nmeSh-unitadd}}]\label{cons:satadd}
Let $A$ be an integral domain.  For each $i\geq 0$, define multiplicatively closed sets $V_i$ and $W_i$ inductively as follows: $W_0 = \{1\}$, $V_0 = A^\times$.  For each $i\geq 1$, we let $W_i$ be the set of nonzero sums of elements of $V_{i-1}$, which by \cite[Lemma 6.3]{nmeSh-unitadd} is a multiplicatively closed set.  We then let $V_i$ be the $\reg(A)$-saturation of $W_i$. Then set $U := \bigcup_i W_i = \bigcup_i V_i$.  By \cite[Proposition 6.2 and Theorem 8.1]{nmeSh-unitadd}, $U$ is a saturated multiplicatively closed set that is closed under nonzero sums.  By \cite[Proposition 8.4]{nmeSh-unitadd}, $U^{-1}A$ is the unique smallest overring of $A$ that is unit-additive, called the \emph{unit-additive closure} of $A$ and written $A^{\operatorname{ua}}$ (see \cite[Definition 8.2]{nmeSh-unitadd}). 
\end{construction}

\begin{thm}
Let $A$ be an integral domain, with fraction field $K$.  Let $A^{\operatorname{ua}}$ and $U$ be as in Construction~\ref{cons:satadd}.  Then $[1]^{\reg(A)}_A = [1]^{A^\circ}_A  = \Z\langle U \rangle = U \cup \{0\}$, and therefore $A^{\operatorname{ua}} = ([1]^{\reg(A)}_A \setminus \{0\})^{-1}A$.  Moreover, one has $[1]^{\reg(A)}_A=A$ if and only if $K$ is the only unit-additive overring of $A$.
\end{thm}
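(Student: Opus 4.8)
Since $A$ is a domain, $\reg(A) = A^\circ = A \setminus \{0\}$, so the equality $[1]^{\reg(A)}_A = [1]^{A^\circ}_A$ is automatic, and it suffices to establish the chain $[1]^{\reg(A)}_A = \Z\langle U \rangle = U \cup \{0\}$. Granting this, the formula $A^{\operatorname{ua}} = ([1]^{\reg(A)}_A \setminus \{0\})^{-1}A$ drops out at once: Construction~\ref{cons:satadd} gives $A^{\operatorname{ua}} = U^{-1}A$, and $0 \notin U$ (each $W_i$ consists of \emph{nonzero} sums, and the $\reg(A)$-saturation of a set of nonzerodivisors in a domain cannot produce $0$), so $[1]^{\reg(A)}_A \setminus \{0\} = U$. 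Thus the last assertion is the only part requiring genuine work beyond the chain of equalities.

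For $[1]^{\reg(A)}_A = U \cup \{0\}$ I would prove two inclusions. For $U \cup \{0\} \subseteq [1]^{\reg(A)}_A$: by Corollary~\ref{cor:WcapM} the factroid $F := [1]^{\reg(A)}_A$ contains $A^\times = V_0$, and by definition $F$ is an additive subgroup of $A$ that is $\reg(A)$-saturated. I would then induct on $i$: since $F$ is closed under finite sums it contains $W_i$, the set of nonzero sums of elements of $V_{i-1} \subseteq F$; and since $F$ is $\reg(A)$-saturated it contains $V_i$, the $\reg(A)$-saturation of $W_i$. Hence $U = \bigcup_i V_i \subseteq F$, and $0 \in F$ too. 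For the reverse inclusion I would check directly that $U \cup \{0\}$ is a $\reg(A)$-factroid of $A$ containing $1$: it is multiplicatively closed, it is closed under subtraction because $-1 \in A^\times \subseteq U$ and $U$ is closed under nonzero sums (so $u - v = u + (-v) \in U \cup \{0\}$), and it is $\reg(A)$-saturated because $U$ is saturated and $A$ is a domain; being a $\reg(A)$-factroid containing $1$, it contains $[1]^{\reg(A)}_A$. The same three properties also show $U \cup \{0\}$ is the smallest subring of $A$ containing $U$, i.e.\ $\Z\langle U \rangle = U \cup \{0\}$.

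For the equivalence $[1]^{\reg(A)}_A = A$ if and only if $K$ is the only unit-additive overring of $A$, I would use that $A^{\operatorname{ua}} = U^{-1}A$ is, by Construction~\ref{cons:satadd}, the \emph{smallest} unit-additive overring of $A$, and that $K$ itself is unit-additive (a sum of units in a field is $0$, the unique nilpotent, or again a unit). If $[1]^{\reg(A)}_A = A$, then $U = A \setminus \{0\}$, so $A^{\operatorname{ua}} = U^{-1}A = K$; every unit-additive overring of $A$ then contains $A^{\operatorname{ua}} = K$, hence equals $K$. Conversely, if $K$ is the only unit-additive overring, then $A^{\operatorname{ua}} = K$ (it is one such overring), i.e.\ $U^{-1}A = K$; then for any nonzero $a \in A$ we may write $1/a = b/u$ with $b \in A$ and $u \in U$, so $u = ab$, and since $U$ is saturated this forces $a \in U$. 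Hence $U = A \setminus \{0\}$ and $[1]^{\reg(A)}_A = U \cup \{0\} = A$.

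The main obstacle here is not conceptual but bookkeeping: keeping the two closure operations — additive-subgroup generation and $\reg(A)$-saturation — correctly interleaved while threading $[1]^{\reg(A)}_A$ through the recursive construction of $U$, and, in the ``only if'' half of the last statement, remembering that it is precisely the \emph{saturatedness} of $U$ that allows the passage from $u = ab \in U$ to $a \in U$. Once one notes that $-1$ is always a unit, so that closure under nonzero sums already yields closure under subtraction, the remaining verifications are routine.
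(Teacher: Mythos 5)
Your proof is correct and follows essentially the same route as the paper: the key inductive interleaving $V_{i-1}\to W_i\to V_i$ to show $U\cup\{0\}\subseteq[1]^{\reg(A)}_A$, the direct verification that $U\cup\{0\}$ is a $\reg(A)$-saturated subring (hence contains $[1]^{\reg(A)}_A$), and the same use of $\reg(A)$-saturatedness of $U$ together with $A^{\operatorname{ua}}=U^{-1}A$ for the final equivalence. The only differences are cosmetic — you cite Corollary~\ref{cor:WcapM} and the ``$U$ closed under nonzero sums'' fact where the paper reaches directly for Corollary~\ref{cor:bracket1} and the definition of $W_{i+1}$, and you fill in the minimality argument for the ``if'' direction slightly more explicitly.
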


\begin{proof}
    First we verify that $\Z\langle U \rangle = U \cup \{0\}$.  It is obvious that ``$\supseteq$'' holds.  For the opposite containment, let $0\neq x \in \Z\langle U \rangle$.  Then $x= \sum_{j=1}^n u_j$ for some $u_j \in U$.  But then, using the notation of Construction~\ref{cons:satadd}, there is some $i$ such that $u_1, \ldots, u_n \in V_i$.  Hence, $x \in W_{i+1} \cup \{0\} \subseteq U \cup \{0\}$.

    To show that $[1]^{\reg(A)}_A \subseteq U \cup \{0\}$, by Corollary~\ref{cor:bracket1} it suffices to show that the ring $U \cup \{0\}$ is a $\reg(A)$-saturated subring of $A$.  For this, let $a,x\in A$ with $ax \in U \cup \{0\}$ and $a\in \reg(A)$.  If $ax=0$, then since $A$ is a domain and $a\neq 0$, we have $x=0$.  Otherwise, $ax\in U$, so there is some $i$ with $ax \in W_i$.  But then by definition $x\in V_i \subseteq U$.  In either case $x\in U\cup \{0\}$, so this subring of $A$ is $\reg(A)$-saturated.

    For the opposite containment, we show by induction on $i$ that $V_i \subseteq [1]^{\reg(A)}_A$ for all $i\geq 0$, the case $i=0$ being trivial.  So let $i\geq 0$ and assume $V_i \subseteq [1]^{\reg(A)}_A$. Then any sum of elements of $V_i$ is in $[1]^{\reg(A)}_A$, so that $W_{i+1} \subseteq [1]^{\reg(A)}_A$.  Now let $x\in V_{i+1}$.  Then there is some $0\neq a\in A$ with $ax \in W_{i+1} \subseteq [1]^{\reg(A)}_A$.  Then since this is a $\reg(A)$-factroid, it follows that $x\in [1]^{\reg(A)}_A$, completing the induction.  Thus, $U \cup \{0\} = \left(\bigcup_{i\geq 0} V_i\right) \cup \{0\} \subseteq [1]^{\reg(A)}_A$.

    For the final statement, suppose $[1]^{\reg(A)}_A=A$.  Then every nonzero element of $A$ is in $U$, so $K = (A \setminus \{0\})^{-1}A = U^{-1}A = A^{\operatorname{ua}}$.  Thus, $K$ is the only overring of $A$ that is unit-additive.  Conversely suppose $K$ is the only unit-additive overring of $A$ that is unit-additive, so that $U^{-1}A = A^{\operatorname{ua}}=K$.  Let $0\neq a\in A$.  Then $\frac 1a \in K = U^{-1}A$, so there exist $b\in A$ and $u\in U$ with $\frac 1a = \frac bu$ as elements of $K$.  Thus, $ba = u \in U$.  In particular, $ba\neq 0$ since $0\notin U$, so $b\neq 0$.  Since $U$ is $\reg(A)$-saturated and $ba \in U$, we have $a\in U$.  Thus, $A = U \cup \{0\} = [1]^{\reg(A)}_A$.
\end{proof}

The definition of unit-additive rings inspires the following generalization. 

\begin{defn}
     A   ring $A$ is \emph{sublocalizable} if   it is nontrivial and $A^\times + A^\times \subseteq  A^\times \cup J(A)$.
\end{defn}

The following proposition generalizes \cite[Proposition 2.1]{nmeSh-unitadd}   and gives several equivalent conditions for a ring to be sublocalizable.  In particular, the term is motivated by the equivalent condition (i).

\begin{prop}[{cf.\ \cite[Proposition 2.1]{nmeSh-unitadd}}]\label{nmeSh-unitadd}
    Let $A$ be a nontrivial  ring.  The following conditions are equivalent: 
    \begin{enumerate}[(a)]
        \item $A$ is sublocalizable.
          \item $1 + A^\times \subseteq A^\times \cup J(A)$.
          \item $A^\times \cup J(A)$ is a subring of $A$.
     \item There exists a proper ideal $I$ of $A$ such that $A^\times \cup I$ is a subring of $A$.
        \item $J(A)$ is the unique proper ideal $I$ of $A$ such that $A^\times \cup I$ is a subring of $A$.
                \item $B\setminus A^\times$ is an ideal of $A$ for some subring $B$ of $A$ containing $A^\times$ (in which case $B = A^\times \cup J(A)$ and $B\setminus A^\times = J(A)$ are unique).
                \item $A^\times \cup J(A)$ is a local subring of $A$ (with maximal ideal $J(A)$ and having the same units as $A$).
        \item $A$ has a (unique) local subring with maximal ideal $J(A)$ and having the same units as $A$.
        \item $A$ has a (unique) local subring whose maximal ideal is also an ideal of $A$ and whose units are the same as those of $A$.
        \item $A$ has a (unique) local subring $B$ with $B^\times = A^\times$ and $J(B) = J(A)$.
    \item $A/J(A)$ is sublocalizable.
        \item $A/J(A)$ is unit-additive.
    \end{enumerate}
\end{prop}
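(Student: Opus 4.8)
The plan is to anchor every condition at~(c) and run a handful of short implication chains, all powered by one observation: for a \emph{proper} ideal $I$ of the nontrivial ring $A$, the set $A^\times\cup I$ is automatically closed under multiplication and under negation and contains $1$, so it is a subring of $A$ precisely when it is closed under addition, and in that case one necessarily has $I=J(A)$. Indeed, for $a\in I$ the element $1-a$ lies in $A^\times\cup I$ but not in $I$ (otherwise $1=(1-a)+a\in I$), so $1-a\in A^\times$; as $ar\in I$ for every $r\in A$, this gives $1-ar\in A^\times$ for all $r$, i.e.\ $a\in J(A)$. Conversely, for $a\in J(A)$ both $1$ and $-(1-a)$ are units, so $a=1-(1-a)\in A^\times\cup I$, and $a$ is not a unit, hence $a\in I$. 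I will call this the \emph{key observation}.

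First I would establish the core equivalence among~(a),~(b),~(c). For units $u,v$ one has $u+v=u(1+u^{-1}v)$ with $u^{-1}v\in A^\times$; since $J(A)$ is an ideal this yields (b)$\Rightarrow$(a) at once, and (a)$\Rightarrow$(b) is trivial. Given~(a), the set $A^\times\cup J(A)$ is closed under addition (sums of two units by~(a); a unit $u$ plus $a\in J(A)$ equals the unit $u(1+u^{-1}a)$; sums inside $J(A)$ stay in $J(A)$), so by the key observation it is a subring, which is~(c); and (c)$\Rightarrow$(a) because a subring is additively closed and contains $A^\times$. Then I would fold in~(d)--(f): (c)$\Rightarrow$(d) is trivial ($I=J(A)$); (d)$\Rightarrow$(a) because the subring $A^\times\cup I$ contains $A^\times$ and $I\subseteq J(A)$ by the key observation, so $u+v\in A^\times\cup I\subseteq A^\times\cup J(A)$; (c)$\Rightarrow$(e) since the key observation forces every admissible $I$ to equal $J(A)$, and (e)$\Rightarrow$(d) is trivial; for~(f), $B=A^\times\cup J(A)$ has $B\setminus A^\times=J(A)$, an ideal of $A$, giving (c)$\Rightarrow$(f), while conversely if $B\supseteq A^\times$ is a subring with $B\setminus A^\times$ an ideal then $I:=B\setminus A^\times$ is a proper ideal and $A^\times\cup I=B$ is a subring, so~(d) holds; the uniqueness clauses in~(f) come from~(e).

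Next I would fold in the ``local subring'' conditions~(g)--(j). Assuming~(c), put $R=A^\times\cup J(A)$; a quick check gives $R^\times=A^\times$ (a unit of $A$ has inverse in $A^\times\subseteq R$, and a unit of $R$ is a unit of $A$), whence $R\setminus R^\times=J(A)$ is an ideal of $R$, so $R$ is local with maximal ideal $J(A)$, i.e.\ (g); and (g)$\Rightarrow$(h)$\Rightarrow$(i) are successive weakenings. For (i)$\Rightarrow$(j): a local subring $B$ with $B^\times=A^\times$ and $\mathfrak m:=J(B)$ an ideal of $A$ satisfies $B=B^\times\cup\mathfrak m=A^\times\cup\mathfrak m$, a subring; since $\mathfrak m$ is a proper ideal of $A$, the key observation gives $\mathfrak m=J(A)$, i.e.\ $J(B)=J(A)$. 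For (j)$\Rightarrow$(c): a local subring $B$ with $B^\times=A^\times$ and $J(B)=J(A)$ satisfies $B=B^\times\cup J(B)=A^\times\cup J(A)$, so that union is a subring. The parenthetical ``(unique)'' assertions are then automatic, since any admissible local subring equals its own units-plus-radical, which has been identified with $A^\times\cup J(A)$.

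Finally I would handle (a)$\Leftrightarrow$(k)$\Leftrightarrow$(l) by reduction modulo $J(A)$. Write $\bar A=A/J(A)$ and $\pi\colon A\to\bar A$. Since $1+J(A)\subseteq A^\times$, an element of $A$ is a unit iff its image in $\bar A$ is a unit, so $\bar A^\times=\pi(A^\times)$ and, for units $u,v$ of $A$, $\overline{u+v}\in\bar A^\times\cup\{0\}$ iff $u+v\in A^\times\cup J(A)$; as $J(\bar A)=0$, this says $A$ is sublocalizable iff $\bar A^\times+\bar A^\times\subseteq\bar A^\times\cup J(\bar A)$, i.e.\ iff $\bar A$ is sublocalizable, which is~(k). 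And (k)$\Leftrightarrow$(l) because $J(\bar A)=0$ forces $\bar A$ to be reduced, so in $\bar A$ ``nilpotent'' means ``zero'', making the sublocalizability condition $\bar A^\times+\bar A^\times\subseteq\bar A^\times\cup\{0\}$ identical to the unit-additivity condition for $\bar A$. The step needing genuine care is the $I=J(A)$ clause of the key observation, since it is what collapses~(d)--(j) onto~(c); a secondary subtlety is that $A/J(A)$ is reduced, which is exactly what makes the formally weaker~(l) equivalent to~(k).
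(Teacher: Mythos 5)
Your proof is correct and follows essentially the same route as the paper's: you prove (a)$\Leftrightarrow$(b) by factoring $u+v=u(1+u^{-1}v)$, you prove (a)$\Rightarrow$(c) by checking additive closure of $A^\times\cup J(A)$, and the engine for collapsing (d)--(j) onto (c) is the same lemma the paper isolates, namely that if $A^\times\cup I$ is a subring for a proper ideal $I$ then necessarily $I=J(A)$ (you prove $I\subseteq J(A)$ from $1-a\in A^\times$ for $a\in I$, and $J(A)\subseteq I$ from $a=1-(1-a)$, which is the paper's argument rephrased). Your treatment of (k) and (l) via passage to $\bar A=A/J(A)$, using that units lift along $A\to\bar A$ and that $\bar A$ is reduced because $J(\bar A)=0$, also matches the paper's closing step. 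The only real difference is expository: the paper compresses the chain (d)--(j) into ``it follows,'' whereas you spell out each implication (including the slightly fussy (f)$\Rightarrow$(d) and (i)$\Rightarrow$(j)), which is a helpful elaboration but not a different method.
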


\begin{proof}
Clearly (a) implies (b).  Suppose that  (b) holds. Then
$$A^\times +A^\times = A^\times(1+A^\times) \subseteq A^\times( A^\times  \cup J(A) ) = A^\times  \cup J(A),$$ whence (a) holds.   Therefore (a) and (b) are equivalent.

Note that, for any ideal $I$ of $A$, one has $u+I \subseteq A^\times$ for some unit $u$ of $A$  if and only if $A^\times +I =  A^\times$, if and only if $I \subseteq J(A)$.  Suppose that (a) holds.  Then one has
\begin{align*}
    (A^\times \cup J(A))+(A^\times \cup J(A))  
& = (A^\times + A^\times) \cup (A^\times +J(A)) \cup (J(A)+J(A)) \\ & \subseteq 
(A^\times \cup J(A)) \cup A^\times \cup J(A) \\
& = A^\times \cup J(A)
\end{align*} and
$$(A^\times \cup J(A))(A^\times \cup J(A)) = A^\times A^\times \cup A^\times J(A) \cup J(A)J(A) = A^\times \cup J(A).$$
It follows that conditions (a) and (c) are equivalent.

Suppose that $A^\times \cup I$ is a subring of $I$ for some proper ideal $I$ of $A$. We show that $I = J(A)$.  Note first that $1 + I \subseteq A^\times \cup I$
and $1 + I$ is disjoint from $I$ since $I$ is proper.  Therefore $1 + I \subseteq A^\times$, whence $I \subseteq J(A)$.  To prove the reverse containment, note that
$- 1 +J(A) \subseteq A^\times$ and therefore
$J(A) \subseteq 1 + A^\times \subseteq A^\times \cup I$,
whence $J(A) \subseteq I$ since $J(A)$ is disjoint from $A^\times$.  It follows, then, that $I = J(A)$.  All told, this shows that conditions (a)--(j) are equivalent.

Finally, one has $J(A/J(A)) = (0)$ and therefore $\operatorname{nilrad}(A/J(A)) = (0)$, while also $(A/J(A))^\times = \{a + J(A): a \in A^\times \} $ and $(A/\operatorname{nilrad}(A))^\times = \{a+ \operatorname{nilrad}(A): a \in A^\times\}$, from which  it follows that (j)--(l) are equivalent.
\end{proof}

If a  ring $A$ is sublocalizable, then we say that the \emph{sublocalization of $A$} is the unique local subring of $A$, namely  $A^\times \cup J(A)$, whose maximal ideal is also an ideal of $A$ and whose units are the same as those of $A$.
Since any local ring is generated by its group of units, if $A$ is sublocalizable, then its sublocalization $B$ is equal to $\Z\langle B^\times \rangle = \Z \langle A^\times \rangle = [1]^{A^\times}_A$.  More generally, a ring  $A$ having a local subring containing the units of $A$ has a unique such subring, namely $\Z \langle A^\times \rangle$.  Thus, we have the following.

\begin{cor}\label{cor:uniquesublocalization}
Let $A$ be a sublocalizable ring, with sublocalization $B$. Then $B$ is the unique local subring of $A$ that contains $A^\times$.  Moreover, $B$ is the smallest subring $\Z \langle A^\times \rangle = [1]^{A^\times}_A$ of $A$ containing $A^\times$.
\end{cor}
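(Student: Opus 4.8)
The plan is to derive the corollary from Proposition~\ref{nmeSh-unitadd} together with the elementary fact that every local ring is generated, as a ring, by its group of units. First I would record that, since $A$ is sublocalizable, condition~(h) (equivalently~(g)) of Proposition~\ref{nmeSh-unitadd} already tells us that $B = A^\times \cup J(A)$ is a local subring of $A$ whose group of units equals $A^\times$; in particular $A^\times \subseteq B$, so $B$ is \emph{at least one} local subring of $A$ containing $A^\times$. The content of the corollary is then that it is the \emph{only} one, and that it coincides with $\Z\langle A^\times\rangle = [1]^{A^\times}_A$.

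The key step would be the following general claim: any subring $C$ of $A$ with $A^\times \subseteq C$ that is itself local must equal $\Z\langle A^\times\rangle$. Its proof has two ingredients. (i) If $C$ is local with maximal ideal $\mathfrak{m}_C$, then for each $x \in \mathfrak{m}_C$ the element $1 - x$ lies outside $\mathfrak{m}_C$ and is hence a unit of $C$, so $x = 1 - (1-x)$ is a difference of two units of $C$; since units of $C$ trivially lie in $\Z\langle C^\times \rangle$ as well, we get $C = \Z\langle C^\times \rangle$. (ii) $C^\times = A^\times$: the inclusion $A^\times \subseteq C$ forces $A^\times \subseteq C^\times$, while conversely any $u \in C^\times$ is invertible already in the overring $A$, so $u \in A^\times$. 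Combining (i) and (ii) yields $C = \Z\langle C^\times \rangle = \Z\langle A^\times \rangle$, and $\Z\langle A^\times \rangle = [1]^{A^\times}_A$ by Corollary~\ref{cor:WcapM}.

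Finally I would assemble the pieces: the claim shows that \emph{every} local subring of $A$ containing $A^\times$ equals the one fixed subring $\Z\langle A^\times \rangle = [1]^{A^\times}_A$, which simultaneously gives the uniqueness statement and, applied with $C = B$, the identification $B = \Z\langle A^\times \rangle = [1]^{A^\times}_A$. (As a consistency check, one inclusion $\Z\langle A^\times\rangle \subseteq B$ is immediate because $B$ is a subring containing $A^\times$, and the reverse is Lemma~\ref{lem:jacobson}.) I do not expect a genuine obstacle here, since this is essentially a consolidation of the remarks made just before the statement; the one point that deserves care is the verification in (ii) that $C^\times = A^\times$, as this is precisely where the hypothesis that $C$ contains \emph{all} units of $A$, not merely some of them, is used.
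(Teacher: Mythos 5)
Your proof is correct and follows essentially the same route the paper sketches in the sentences immediately preceding the corollary: the key fact that any local ring is generated as a ring by its units (your step (i)), combined with the observation that a subring $C\supseteq A^\times$ has $C^\times = A^\times$ (your step (ii)), so every local subring containing $A^\times$ collapses to $\Z\langle A^\times\rangle = [1]^{A^\times}_A$. You have merely unpacked the paper's argument in more explicit detail, including the standard ``$x = 1-(1-x)$'' trick and the unit-group identification that the paper leaves implicit.
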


\begin{example}\label{ex:weaksub}
A  ring  $A$ having a (unique) local subring sharing the units of $A$ need not be sublocalizable.   For instance, let $B$ be any reduced local ring 
other than a field, and let $A = B[x]$.  By \cite[Exercises 2 and 4 of Chapter 1]{AtMac-ICA},  $A^\times = B^\times$ and $J(A)=(0)$.  But $A^\times \cup J(A) = B^\times \cup (0)$ is not additively closed, since for any nonzero nonunit $b$ of $B$, we have $-1, 1+b \in B^\times = A^\times$, so $b \in (A^\times + A^\times) \setminus (A^\times \cup J(A))$.  Thus, $A$ is  not sublocalizable.
\end{example}

\begin{cor}\label{cor:jac}
Let $I$ be proper ideal of a   ring $A$.  The following conditions are equivalent:
    \begin{enumerate}[(a)]
    \item $A$ is sublocalizable and $I = J(A)$.
    \item $A$ is sublocalizable, with sublocalization $A^\times \cup I$.
      \item $A^\times \cup I$ is a subring of $A$.
     \item $A^\times \cup I$ is a local subring of $A$ (with maximal ideal $I$ and having the same units as $A$).
     \item $A^\times + A^\times \subseteq A^\times \cup I$ and $I \subseteq J(A)$.
    \item  $A/I$ is unit-additive and reduced, and $I \subseteq J(A)$.
   \end{enumerate}
\end{cor}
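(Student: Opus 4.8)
The plan is to establish the cycle (a)$\,\Rightarrow\,$(d)$\,\Rightarrow\,$(c)$\,\Rightarrow\,$(a) and then hang the remaining conditions (b), (e), (f) off of it, with Proposition~\ref{nmeSh-unitadd} and \cite[Proposition 2.1]{nmeSh-unitadd} as the main inputs. First, for (a)$\,\Leftrightarrow\,$(b): by definition the sublocalization of a sublocalizable ring $A$ is precisely $A^\times \cup J(A)$, and both $I$ and $J(A)$ are disjoint from $A^\times$ (the former since $I$ is proper), so the assertion that the sublocalization equals $A^\times \cup I$ is literally the assertion that $I = J(A)$. For (a)$\,\Rightarrow\,$(d): if $A$ is sublocalizable and $I = J(A)$, then $A^\times \cup I = A^\times \cup J(A)$ is the sublocalization, which by Proposition~\ref{nmeSh-unitadd}(g) is a local subring of $A$ with maximal ideal $J(A) = I$ and the same units as $A$. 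The step (d)$\,\Rightarrow\,$(c) is immediate, and (c)$\,\Rightarrow\,$(a) is exactly the argument already run inside the proof of Proposition~\ref{nmeSh-unitadd}: if $A^\times \cup I$ is a subring and $I$ is proper then $I = J(A)$, and then $A^\times \cup J(A)$ being a subring makes $A$ sublocalizable by condition (c)$\,\Rightarrow\,$(a) of that proposition.

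Next, (a)$\,\Rightarrow\,$(e) is immediate from the definition of sublocalizable together with $I = J(A)$. For (e)$\,\Rightarrow\,$(c), set $B = A^\times \cup I$; it contains $0$ and $1$ and is closed under negation, and the products $A^\times A^\times = A^\times$, $A^\times I \subseteq I$, $I\cdot I \subseteq I$ give multiplicative closure. For additive closure, decompose $B + B = (A^\times + A^\times) \cup (A^\times + I) \cup (I + I)$: the first piece lies in $B$ by hypothesis, the last lies in $I$, and $A^\times + I \subseteq A^\times$ because $I \subseteq J(A)$ forces $1 + u^{-1}i \in A^\times$, hence $u + i = u(1 + u^{-1}i) \in A^\times$, for all $u \in A^\times$ and $i \in I$ (using commutativity). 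Thus $B$ is a subring.

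The remaining and most substantive step is (c)$\,\Leftrightarrow\,$(f). Both conditions entail $I \subseteq J(A)$ — for (c) via $I = J(A)$ — so I will work under that hypothesis with the quotient map $\pi\colon A \to A/I$. The first observation is that $I \subseteq J(A)$ forces $\pi^{-1}\bigl((A/I)^\times\bigr) = A^\times$: if $\pi(a)$ is a unit then $ab - 1 \in I \subseteq J(A)$ for a suitable $b$, so $ab \in A^\times$ and hence $a \in A^\times$, while the reverse inclusion is automatic. Consequently $A^\times \cup I = \pi^{-1}\bigl((A/I)^\times \cup \{0\}\bigr)$, and since $\pi$ is a surjection of rings, $A^\times \cup I$ is a subring of $A$ if and only if $(A/I)^\times \cup \{0\}$ is a subring of $A/I$. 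Finally, $(A/I)^\times \cup \{0\}$ is a subring of $A/I$ if and only if $A/I$ is both unit-additive and reduced: if it is unit-additive and reduced then $(A/I)^\times \cup \{0\} = (A/I)^\times \cup \sqrt{0}$ is a subring by \cite[Proposition 2.1]{nmeSh-unitadd}; conversely, if $(A/I)^\times \cup \{0\}$ is a subring, then any nonzero nilpotent $n$ of the nontrivial ring $A/I$ would give $n = (1 + n) - 1 \in (A/I)^\times \cup \{0\}$, contradicting that $n$ is a nonzero non-unit, so $A/I$ is reduced and \cite[Proposition 2.1]{nmeSh-unitadd} then yields unit-additivity. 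I expect this last equivalence to be the only delicate point — the content is that the subring condition secretly forces $A/I$ to be reduced, so that ``reduced'' in (f) is a consequence rather than an extra hypothesis. Everything else is bookkeeping, the one thing to watch being that the appeal to Proposition~\ref{nmeSh-unitadd} (to extract $I = J(A)$ from (c)) is what lets the quotient argument run under the standing hypothesis $I \subseteq J(A)$.
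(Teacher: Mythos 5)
Your proof is correct. The paper itself gives no proof for this corollary, treating it as an immediate consequence of Proposition~\ref{nmeSh-unitadd} (and \cite[Proposition 2.1]{nmeSh-unitadd}), and your argument supplies exactly the expected derivation: the cycle (a)$\Rightarrow$(d)$\Rightarrow$(c)$\Rightarrow$(a) hinges on the step in the proof of Proposition~\ref{nmeSh-unitadd} that extracts $I = J(A)$ from the subring condition, and the quotient argument for (c)$\Leftrightarrow$(f) is the right way to handle the one item not literally among the twelve conditions of that proposition. One small caution on phrasing: when you say that ``reduced'' in (f) is a consequence rather than an extra hypothesis, this is true in the direction (c)$\Rightarrow$(f) (since (c) forces $I = J(A)$, and $A/J(A)$ is semiprimitive hence reduced), but it is a genuinely needed hypothesis for (f)$\Rightarrow$(c); for instance $A = k[\epsilon]/(\epsilon^2)$ with $I = 0$ satisfies ``$A/I$ unit-additive and $I \subseteq J(A)$'' yet $A^\times \cup I$ is not a subring, precisely because $A/I$ fails to be reduced.
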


\begin{cor}
    A ring $A$ is unit-additive if and only if $A$ is sublocalizable and $J(A) = \sqrt{(0)}$.
\end{cor}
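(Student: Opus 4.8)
The plan is to prove both implications directly from the definitions, using nothing more than the standard containment $\sqrt{(0)} \subseteq J(A)$. (Alternatively, the equivalence can be extracted from Corollary~\ref{cor:jac} applied with $I = \sqrt{(0)}$, which yields that ``$A$ is sublocalizable and $J(A) = \sqrt{(0)}$'' is equivalent to ``$A/\sqrt{(0)}$ is unit-additive''; one would then have to check separately that $A$ is unit-additive if and only if $A/\sqrt{(0)}$ is, using that units lift along $A \to A/\sqrt{(0)}$ and that $A/\sqrt{(0)}$ is reduced. The direct route below is shorter, so that is what I would write.)

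First I would note that we may assume $A$ is nontrivial — both conditions presuppose this — so that $J(A)$ is a proper ideal and in particular contains no units of $A$. For the forward implication, suppose $A$ is unit-additive. Then for any $u, v \in A^\times$ the sum $u + v$ is a unit or nilpotent, so $A^\times + A^\times \subseteq A^\times \cup \sqrt{(0)} \subseteq A^\times \cup J(A)$, which is exactly the defining condition for sublocalizability. To obtain $J(A) = \sqrt{(0)}$, only the inclusion $J(A) \subseteq \sqrt{(0)}$ needs proof: given $a \in J(A)$, both $1 + a$ and $-1$ lie in $A^\times$, so $a = (1+a) + (-1)$ is a unit or nilpotent by unit-additivity; since $a \in J(A)$ it is not a unit, hence $a \in \sqrt{(0)}$.

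For the reverse implication, suppose $A$ is sublocalizable with $J(A) = \sqrt{(0)}$. Then for any $u, v \in A^\times$ we have $u + v \in A^\times + A^\times \subseteq A^\times \cup J(A) = A^\times \cup \sqrt{(0)}$, i.e.\ $u + v$ is a unit or nilpotent, so $A$ is unit-additive. I do not expect a genuine obstacle here: the whole argument is a few lines in each direction, and the only points requiring the slightest care are invoking nontriviality (so that elements of $J(A)$ are non-units) and observing that, under the radical hypothesis, the ``target set'' $A^\times \cup J(A)$ in the definition of sublocalizable coincides with the set $A^\times \cup \sqrt{(0)}$ implicit in the definition of unit-additive.
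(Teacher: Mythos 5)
Your proof is correct, and it is essentially the argument the paper has in mind (the corollary is left without an explicit proof body). The shortest route from the paper's machinery is slightly different from the one you sketched as an alternative: apply Corollary~\ref{cor:jac} with $I = \sqrt{(0)}$ and use the equivalence (a)$\Leftrightarrow$(c) there, which gives ``$A$ sublocalizable and $J(A)=\sqrt{(0)}$'' iff ``$A^\times \cup \sqrt{(0)}$ is a subring of $A$''; the latter is equivalent to unit-additivity by \cite[Proposition~2.1]{nmeSh-unitadd}, which the paper invokes repeatedly, so no lifting of units modulo nilpotents is actually needed. Your direct argument simply unrolls those two citations: the containment $\sqrt{(0)}\subseteq J(A)$ gives one implication, and the trick $a=(1+a)+(-1)$ for $a\in J(A)$ (which is precisely the $J(A)\subseteq I$ step hidden in the paper's proof of Proposition~\ref{nmeSh-unitadd}) gives $J(A)\subseteq\sqrt{(0)}$. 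One small caveat, which you correctly flagged: the definition of unit-additive in the paper does not formally exclude the zero ring, while sublocalizability does, so the ``if and only if'' tacitly assumes $A$ nontrivial — this is the paper's implicit convention in this section rather than a gap in your argument.
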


\begin{rem}
    A ring $A$ such that $J(A) = \sqrt{(0)}$ is said to be an \emph{NJ ring} \cite{JRW-NJ}.
\end{rem}

\begin{rem}
A ring $A$ is a local ring if and only if $A \setminus A^\times$ is an ideal of $A$, in which case $A \setminus A^\times = J(A)$ is the unique maximal ideal of $A$.  Thus, a local ring is equivalently a sublocalizable ring $A$ with sublocalization $A$, or equivalently a  ring $A$ such that $A \setminus A^\times = J(A)$.
Therefore, the sublocalizable rings can be thought of as a common generalization of  the local rings and the unit-additive rings.

A  ring  is $A$ both is local and unit-additive if and only if  $A \setminus A^\times = \sqrt{(0)}$, if and only if $A$ has a unique prime ideal, if and only if $A$ is local and zero dimensional.  Consequently, a   ring $A$ is unit-additive if and only if it has a unique zero dimensional local subring whose (unique) prime ideal is an ideal of $A$ and whose units are the same as those of $A$, if and only if it is sublocalizable with zero dimensional sublocalization.  Thus, one could say that the sublocalizable rings are to the local rings as the unit-additive rings are to the zero dimensional local rings.

     A  ring $A$ is \emph{semiprimitive} if $J(A) = (0)$.  Since the nilradical is always contained in the Jabobson radical, every semiprimitive ring is reduced.  A  ring $A$ is semiprimitive and  sublocalizable if and only if $A^\times + A^\times \subseteq A^\times \cup \{0\}$, if and only if $A$ is reduced and unit-additive, if and only if $A$ is sublocalizable with sublocalization  a field, if and only if $A$ has a subfield whose units are the same as those of $A$.  Thus, among the semiprimitive rings, the sublocalizable and the unit-additive rings coincide.  
\end{rem}

Proposition \ref{pr:uaddbracket1} generalizes as follows.

\begin{cor}\label{lem:Lunitadgen}
Let $A$ be a  ring and $I$ a proper ideal of $A$, and let $T$ be any subset of $A$ with $A^\times \subseteq T \subseteq W(I)$.   Then the subring $[1]^T_A$ of $A$   contains $I$ as an ideal and has the same units as $A$.  In particular, $A^\times \cup I \subseteq [1]^T_A$. Moreover, the following conditions are equivalent:
\begin{enumerate}[(a)]
    \item $A$ is sublocalizable and $I = J(A)$.
    \item The inclusion $A^\times \cup I  \subseteq [1]^T_A$ is an equality.
    \item The ring $[1]^T_A$ is sublocalizable  and $I = J([1]^T_A)$.
    \item The ring  $[1]^T_A$ is local, with maximal ideal $I$.
    \item $[1]^T_A \setminus I = A^\times$.
        \item $[1]^T_A \setminus I = ([1]^T_A)^\times$.
    \item $A$ has a (unique) local subring (namely $[1]^{T}_A$) having the same units as $A$ and  maximal ideal $I$.
        \item $A$ is sublocalizable,  with sublocalization $[1]^{T}_A$, which in turn has maximal ideal $I$.
\end{enumerate}
If the equivalent conditions  above hold, then $I = \sqrt[A]{I} = J(A)$,  $[1]^{A^\times}_A = [1]^{T}_A  = [1]^{W(J(A))}_A$, and $W_A([1]^{A^\times}_A) = W_A(J(A)) = W(I)$ is the set of all elements of $A$ that do not lie in any prime ideal of $A$ that is minimal over $J(A)$.
\end{cor}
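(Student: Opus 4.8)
The plan is to obtain everything from Proposition~\ref{nmeSh-unitadd} (applied both to $A$ and to the subring $[1]^T_A$), from Lemma~\ref{lem:jacobson}, and from Corollary~\ref{cor:WcapM}, the only genuinely new ingredient being a single saturation check. First I would dispose of the opening assertions. By Corollary~\ref{cor:WcapM}, $[1]^{A^\times}_A=\Z\langle A^\times\rangle$, and since $A^\times\subseteq T$ every $T$-factroid of $A$ is in particular an $A^\times$-factroid, so $[1]^{A^\times}_A\subseteq[1]^T_A$ and hence $A^\times\subseteq[1]^T_A$. Lemma~\ref{lem:jacobson} gives $A^\times\cup J(A)\subseteq\Z\langle A^\times\rangle=[1]^{A^\times}_A\subseteq[1]^T_A$, and since $I\subseteq J(A)$ this yields $I\subseteq[1]^T_A$; as $[1]^T_A$ is a subring of $A$ and $I$ an ideal of $A$, $I$ is then an ideal of $[1]^T_A$, and $A^\times\cup I\subseteq[1]^T_A$. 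Finally $A^\times=(A^\times)^{-1}\subseteq[1]^T_A$ gives $A^\times\subseteq([1]^T_A)^\times$, while any unit of $[1]^T_A$ has its inverse in $[1]^T_A\subseteq A$, so $([1]^T_A)^\times\subseteq A^\times$; thus $[1]^T_A$ has the same units as $A$.

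For the equivalences I would first note that (b), (d), (e), (f), (g) are interchangeable by elementary bookkeeping. Since $I$ is proper, $A^\times\cap I=\varnothing$, so using $I\subseteq[1]^T_A$ the equality $[1]^T_A=A^\times\cup I$ is equivalent to $[1]^T_A\setminus I=A^\times$, i.e.\ (b)$\Leftrightarrow$(e); and (e)$\Leftrightarrow$(f) because $([1]^T_A)^\times=A^\times$; and (f), given that $I$ is an ideal of $[1]^T_A$, says exactly that $[1]^T_A$ is local with maximal ideal $I$, which is (d); and (d)$\Leftrightarrow$(g) since a local subring of $A$ containing $A^\times$ is unique and equals $\Z\langle A^\times\rangle$ (the discussion before Corollary~\ref{cor:uniquesublocalization}). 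Likewise (a) and (h) repackage the same data once one knows $[1]^T_A=A^\times\cup J(A)$ in that case. The crux is therefore (a)$\Leftrightarrow$(b). If (b) holds, then $A^\times\cup I=[1]^T_A$ is a subring of $A$ with $I$ a proper ideal, so the equivalence of conditions (a), (d), (e) of Proposition~\ref{nmeSh-unitadd} forces $A$ to be sublocalizable with $I=J(A)$. Conversely, suppose $A$ is sublocalizable and $I=J(A)$. Then $A^\times\cup J(A)$ is a subring of $A$ by Proposition~\ref{nmeSh-unitadd}(c), hence an additive subgroup containing $1$, and I would check that it is $W(J(A))$-saturated in $A$: if $a\in W(J(A))=\reg_{A/J(A)}(A)$ and $ab\in A^\times\cup J(A)$ for some $b\in A$, then either $ab\in A^\times$, whence $b\in A^\times$, or $ab\in J(A)$, whence $b\in J(A)$ since $a$ is a nonzerodivisor modulo $J(A)$. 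Therefore $A^\times\cup J(A)$ is a $W(I)$-factroid of $A$, a fortiori a $T$-factroid of $A$ containing $1$, so $[1]^T_A\subseteq A^\times\cup J(A)$; combined with $A^\times\cup I\subseteq[1]^T_A$ from the first paragraph, this is (b). Then (c) follows by applying Proposition~\ref{nmeSh-unitadd} to the ring $[1]^T_A=A^\times\cup I$ with its proper ideal $I$.

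Assuming the equivalent conditions, the closing identities are read off as follows. From $I=J(A)$ and $\sqrt{J(A)}=\bigcap_{\mathfrak p\supseteq J(A)}\mathfrak p\subseteq\bigcap_{\mathfrak m\text{ maximal}}\mathfrak m=J(A)$ we get $I=\sqrt[A]{I}=J(A)$. By (b) we have $[1]^T_A=A^\times\cup J(A)$, which lies between $[1]^{A^\times}_A$ and $[1]^T_A$ and is a $W(J(A))$-factroid by the saturation check above, so $[1]^{A^\times}_A=[1]^T_A=[1]^{W(J(A))}_A$. The same colon computation then gives $W_A(A^\times\cup J(A))=W_A(J(A))$, and $W_A(J(A))=\reg_{A/J(A)}(A)$ by Proposition~\ref{pr:modulefactroid}, which by Proposition~\ref{prop:Acircrad} (applied to the reduced ring $A/J(A)$, using $\sqrt{J(A)}=J(A)$) equals $A\setminus\bigcup\operatorname{Min}(J(A))$. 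I expect the main obstacle to be the single saturation check — that $A^\times\cup J(A)$ is $W(J(A))$-saturated in $A$, equivalently a $W(I)$-factroid of $A$, whenever $A$ is sublocalizable with $I=J(A)$ — since this is precisely what produces the nontrivial inclusion $[1]^T_A\subseteq A^\times\cup J(A)$ in (a)$\Rightarrow$(b), after which the entire circle of equivalences closes up formally; a secondary point requiring a little care is the colon identity $W_A(A^\times\cup J(A))=W_A(J(A))$ used in the final display.
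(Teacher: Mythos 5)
Your opening paragraph contains a genuine gap. You write ``and since $I\subseteq J(A)$ this yields $I\subseteq[1]^T_A$,'' but $I\subseteq J(A)$ is not among the hypotheses and does not follow from them: the only assumption on $I$ is that it is a proper ideal with $A^\times\subseteq T\subseteq W(I)$, which constrains neither $I$ nor $J(A)$. For instance, with $A=k[x]$ over a field $k$, $T=A^\times=k^\times$, and $I=(x)$, one has $W(I)=A\setminus(x)\supseteq A^\times$, so the hypotheses are met, yet $J(A)=0$, so $I\not\subseteq J(A)$. (In that example $[1]^T_A=\Z\langle k^\times\rangle=k$, which indeed does not contain $I$.) Everything downstream in your opening paragraph — that $I$ is an ideal of $[1]^T_A$ and that $A^\times\cup I\subseteq[1]^T_A$ — depends on this unproved containment. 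Once you are inside the block of equivalent conditions (a)--(h), where $I=J(A)$ is explicitly assumed, the appeal to Lemma~\ref{lem:jacobson} is perfectly fine; the problem is specifically with presenting $I\subseteq J(A)$ as an unconditional fact.

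The paper's own proof of the opening assertion does not route through $J(A)$ at all. It instead applies Proposition~\ref{prop:multsubsets} (with $S=I$, singleton $\{1\}$, and factroid set $T$) to obtain $I\subseteq[1]^T_A\,I\subseteq[I]^T_A=I$, where the last equality uses that $I$ is already a $T$-factroid of $A$ since $T\subseteq W(I)$; this yields $[1]^T_A\,I=I$. That is a genuinely different mechanism from your Jacobson-radical argument, and you should be aware of it, since it is the intended route to the ``ideal'' statement in the corollary. The remainder of your argument — the explicit $W(J(A))$-saturation check for $A^\times\cup J(A)$ in proving (a)$\Rightarrow$(b), the reduction of the other equivalences to Proposition~\ref{nmeSh-unitadd}, and the closing colon computation for $W_A(A^\times\cup J(A))=W_A(J(A))$ — lines up with what the paper does and is sound given the equivalent conditions.
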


\begin{proof}
Since $A^\times \subseteq T$, one has $A^\times \subseteq  [1]^{A^\times}_A \subseteq [1]^T_A$, and therefore $[1]^T_A$ has the same units as $A$. Moreover,
one has $I \subseteq [1]^T_A I \subseteq [I]^T_A = I$ since $I$ is a $T$-factroid of $A$, and therefore $I$ is an ideal of the ring $[1]^T_A$.  

Suppose that (a) holds, whence, by Corollary~\ref{cor:jac}, $B := A^\times \cup I$ is a subring of $A$.   Moreover, both $A^\times$ and $I$ are $T$-saturated subsets of $A$, and therefore so is $B$.  It follows, then, that $B$ is a $T$-factroid of $A$ containing $1$ and contained in $[1]^T_A$, whence (b) holds.  Conversely, if (b) holds, then (a) holds, again by Corollary \ref{cor:jac}.  Moreover, the  remaining equivalences follow, along with the equalities $[1]^{A^\times}_A = [1]^{T}_A  = [1]^{W(J(A))}_A$ and $I= \sqrt{I} = J(A)$, by Proposition~\ref{nmeSh-unitadd}.  Finally, the equality $W_A(B) = W_A(I)$, where $B =[1]^{A^\times}_A$, readily follows from the facts that $B = B^\times \cup I$ is a disjoint union, $B^\times = A^\times$, and $ax \in A^\times$ if and only if $a,x \in  A^\times$, for any $a,x \in A^\times$.
\end{proof}

 \begin{example}
The condition that the rings $\Z \langle A^\times \rangle =   [1]^{A^\times}_A \subseteq   [1]^{\reg(A)}_A$ and $[1]^{W(J(A))}_A$ are all equal and local does not guarantee  that $A$ is sublocalizable, even when $A$ is an integral domain. To see this, let $B$ be any local domain and  $A = B[x]$, as in Example~\ref{ex:weaksub}.   We have $A^\times=B^\times$ and $J(A) = (0)$, so that $W(J(A)) = \reg(A)$  and $A^\times = B^\times$, and therefore $B = \Z \langle B^\times \rangle = \Z \langle A^\times \rangle$. Moreover, note that $B$ is $\reg(A)$-saturated.  This follows by consideration of degrees in the variable $x$, since if $f,g \in A$ with $0\neq f$ and $fg \in B$, then $0 = \deg(fg) = \deg(f) + \deg(g)$, whence $\deg(g)=0$ so that $g\in B$.  Thus, we have $[1]^{W(J(A))}_A = [1]^{\reg(A)}_A = B$.  But by Example~\ref{ex:weaksub}, $A$ is not sublocalizable.
\end{example}

The following corollary generalizes  to sublocalizable rings known results on how the local and unit-additive properties behave with respect to integral extensions (see \cite[Lemma 2 of Section 9]{Mats} and \cite[Proposition 2.8]{nmeSh-unitadd}).

\begin{cor}[{cf.\ \cite[Proposition 2.8]{nmeSh-unitadd}}]
     Let $A \subseteq B$ be an integral extension of rings.
     Suppose that $B$ is sublocalizable.  Then $A$ is sublocalizable, with unique local subring $[1]^{A^\times}_A =  [1]^{B^\times}_B \cap A$ that has maximal ideal $J(A) =  J(B) \cap A$ and units $A^\times = B^\times \cap A$.
\end{cor}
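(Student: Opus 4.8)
The plan is to reduce the claim to two standard facts about integral ring extensions and then invoke the definition of sublocalizability together with the identification of the sublocalization as $[1]^{A^\times}_A$.

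First I would record the two facts. The fact $A^\times = B^\times \cap A$ is classical: if $a \in A$ is invertible in $B$, then $a^{-1} \in B$ satisfies a monic relation $(a^{-1})^n + c_{n-1}(a^{-1})^{n-1} + \cdots + c_0 = 0$ over $A$, and multiplying by $a^{n-1}$ shows $a^{-1} = -(c_{n-1} + c_{n-2}a + \cdots + c_0 a^{n-1}) \in A$. For the fact $J(A) = J(B) \cap A$ I would use Lying Over together with the observation that, for an integral extension, a prime $\q$ of $B$ is maximal if and only if $\q \cap A$ is maximal in $A$ (because $A/(\q \cap A) \hookrightarrow B/\q$ is an integral extension of domains, and such an extension has one ring a field exactly when the other is). It follows that the maximal ideals of $A$ are precisely the contractions $\n \cap A$ of the maximal ideals $\n$ of $B$, and hence
\[
J(A) = \bigcap_{\n \in \Max B} (\n \cap A) = \Bigl( \bigcap_{\n \in \Max B} \n \Bigr) \cap A = J(B) \cap A .
\]

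Next I would check that $A$ is sublocalizable directly from the definition: given $u, v \in A^\times$, we have $u, v \in B^\times$, so $u + v \in B^\times \cup J(B)$ since $B$ is sublocalizable, and since $u + v \in A$ the two facts give $u + v \in (B^\times \cap A) \cup (J(B) \cap A) = A^\times \cup J(A)$; also $1 = 1_B \neq 0$, so $A$ is nontrivial. Hence $A$ is sublocalizable with sublocalization the subring $A^\times \cup J(A)$, which by Corollary~\ref{cor:uniquesublocalization} coincides with $\Z\langle A^\times \rangle = [1]^{A^\times}_A$; this subring has maximal ideal $J(A)$ and unit group $A^\times$ by construction (cf.\ Proposition~\ref{nmeSh-unitadd}).

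Finally, to identify $[1]^{A^\times}_A$ with $[1]^{B^\times}_B \cap A$, I would use that the sublocalization of the sublocalizable ring $B$ is $[1]^{B^\times}_B = B^\times \cup J(B)$, so that, by the two facts again,
\[
[1]^{B^\times}_B \cap A = (B^\times \cup J(B)) \cap A = (B^\times \cap A) \cup (J(B) \cap A) = A^\times \cup J(A) = [1]^{A^\times}_A ,
\]
which together with $A^\times = B^\times \cap A$ and $J(A) = J(B) \cap A$ is exactly the stated conclusion. I expect the only real obstacle to be the verification of $J(A) = J(B) \cap A$ for an arbitrary (possibly non-Noetherian, non-domain) integral extension, i.e.\ making sure Lying Over and the transfer of maximality really apply in this generality; the remaining steps are short formal manipulations with the definition of a sublocalizable ring.
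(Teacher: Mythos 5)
Your proof is correct and follows essentially the same route as the paper's: establish $A^\times = B^\times\cap A$ and $J(A) = J(B)\cap A$, deduce sublocalizability of $A$ by intersecting with $A$, and then compute $[1]^{B^\times}_B\cap A = (B^\times\cup J(B))\cap A = A^\times\cup J(A) = [1]^{A^\times}_A$. The only difference is that you supply the standard Lying Over argument for $J(A)=J(B)\cap A$ and the monic-relation argument for $A^\times = B^\times\cap A$, whereas the paper simply cites these as well-known (Atiyah--Macdonald, Chapter 5, Exercise 5), which does hold in full generality for arbitrary commutative integral extensions.
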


 \begin{proof}
The equalities $J(A) = J(B)  \cap A$ and  $A^\times =  B^\times \cap A$ are well-known properties of integral extensions (see \cite[Exercise 5 of Chapter 5]{AtMac-ICA}).  Suppose that $B$ is sublocalizable.  Let $u, w \in A^\times$.  Then either $u+w \in B^\times \cap A = A^\times$ or $u+w \in J(B) \cap A = J(A)$, so that $A$ is sublocalizable.  Consequently, one has $[1]^{A^\times}_A = A^\times \cup J(A)  =    (B^\times \cap  A) \cup (J(B) \cap A )   = (B^\times \cup J(B)) \cap A =  [1]^{B^\times}_B \cap A$.  This completes the proof.
\end{proof}

\begin{problem}
Undertake a study of sublocalizable rings, e.g., generalize the results of \cite{nmeSh-unitadd} to the extent possible.  
\end{problem}

\section{Factroids of submodules, quotients, and preservation along homomorphisms}\label{sec:maps}

A natural question to ask is how factroids of modules transfer or are preserved under ring and module homomorphisms.  In general, preimages are more well-behaved than images.

\begin{prop}\label{pr:ringpreimage}
Let $\phi: A \ra B$ be a ring homomorphism and $T \subseteq B$ a subset. 
\begin{enumerate}
    \item If $T$ is a multiplicative subset (resp., submonoid) of $B$, then $\varphi^{-1}(T)$ is a multiplicative subset (resp., submonoid) of $A$.
    \item
For any $T$-factroid $F$ of $B$, $\phi^{-1}(F)$ is a $\phi^{-1}(T)$-factroid of $A$.
    \item For any additive subgroup $F$ of $B$, one has $\varphi^{-1}(W(F))\subseteq W(\varphi^{-1}(F))$.
    \item For any $B$-module $N$ and any $T$-factroid $G$ of $N$ (as a $B$-module), $G$ is a $\phi^{-1}(T)$-factroid of $N$ as an $A$-module of $N$ by restriction of scalars along $\varphi$.
    \item For any $B$-module $N$ and any additive subgroup $G$ of $N$,  one has $\varphi^{-1}(W_N(G)) \subseteq  W_{{}_\varphi N}(G)$, where ${}_\varphi N$ denotes the $A$-module $N$ obtained by restriction of scalars along $\varphi$.
\end{enumerate}
\end{prop}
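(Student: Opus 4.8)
The plan is to prove all five parts by direct unwinding of the definitions, organized around two observations: (i) the map $\varphi$ and the module action interact cleanly with colons and preimages, so the factroid condition pulls back verbatim; and (ii) parts (3) and (5) are purely formal consequences of parts (2) and (4) together with the maximality property of $W(-)$ (namely, that $W_M(F)$ is the largest $T\subseteq A$ for which $F$ is a $T$-factroid of $M$).

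First I would dispatch part (1): if $a,a'\in\varphi^{-1}(T)$ then $\varphi(aa')=\varphi(a)\varphi(a')\in T$ since $T$ is closed under multiplication, so $aa'\in\varphi^{-1}(T)$; and if $1_B\in T$ then $\varphi(1_A)=1_B\in T$, so $1_A\in\varphi^{-1}(T)$. Next, for part (2): $\varphi^{-1}(F)$ is an additive subgroup of $A$ because $\varphi$ is in particular a homomorphism of additive groups and $F$ is a subgroup of $B$. If $t\in\varphi^{-1}(T)$ and $x\in A$ satisfy $tx\in\varphi^{-1}(F)$, then $\varphi(t)\in T$ and $\varphi(t)\varphi(x)=\varphi(tx)\in F$; since $F$ is a $T$-factroid of $B$, we get $\varphi(x)\in F$, i.e.\ $x\in\varphi^{-1}(F)$. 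Part (4) is the module analogue: the $A$-module ${}_\varphi N$ has action $a\cdot n=\varphi(a)n$, the subgroup $G$ is literally unchanged as a set, and if $a\in\varphi^{-1}(T)$ and $n\in N$ with $a\cdot n=\varphi(a)n\in G$, then $\varphi(a)\in T$ forces $n\in G$ because $G$ is a $T$-factroid of the $B$-module $N$.

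Parts (3) and (5) then follow at once. Since $F$ is always a $W(F)$-factroid of $B$, part (2) applied with $T=W(F)$ shows that $\varphi^{-1}(F)$ is a $\varphi^{-1}(W(F))$-factroid of $A$, and by the maximality of $W(\varphi^{-1}(F))$ this is precisely the asserted inclusion $\varphi^{-1}(W(F))\subseteq W(\varphi^{-1}(F))$. Identically, $G$ is a $W_N(G)$-factroid of the $B$-module $N$, so part (4) gives that $G$ is a $\varphi^{-1}(W_N(G))$-factroid of ${}_\varphi N$, whence $\varphi^{-1}(W_N(G))\subseteq W_{{}_\varphi N}(G)$.

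I do not anticipate any genuine obstacle: the proposition is purely formal. The only point requiring a moment's care is the bookkeeping in part (4), confirming that the factroid condition for the $A$-module ${}_\varphi N$ is exactly the $B$-module condition read through $\varphi$; once the action $a\cdot n=\varphi(a)n$ is written out, this is immediate. It is also worth noting for the reader that the inclusions in (3) and (5) can be strict, so no attempt should be made to upgrade them to equalities in this generality.
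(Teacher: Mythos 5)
Your proof is correct and follows essentially the same route as the paper's: direct definition-unwinding for (1), (2), (4), and then deducing (3) and (5) by applying (2) and (4) to $T = W(F)$ and $T = W_N(G)$ respectively, using that $W(-)$ is the largest set making the subgroup a factroid. Nothing to add.
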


\begin{proof}
Statement (1) is clear.

Let $F$ be a $T$-factroid of $B$.  It is clear that $\phi^{-1}(F)$ is an additive subgroup of $A$. Let $r,a \in A$ with $r\in \phi^{-1}(T)$ and $ra \in \phi^{-1}(F)$.  Then $\phi(r)\phi(a) = \phi(ra) \in F$, so since $\phi(r) \in T$,  one has $\phi(a) \in F$, that is, $a\in \phi^{-1}(F)$.  This proves statement (2).  Statement (3) then follows by applying statement (2) to the set $T = W(F)$.

To prove statement (4), let $r\in \phi^{-1}(T)$ and $x \in N$ with $rx \in G$.  By definition, $rx = \phi(r)x$, so since $\phi(r) \in T$, we have $x\in G$.    This proves (4), and statement (5) then follows by applying (4) to the set $T = W_N(G)$.
\end{proof}

Below is the analogue of Proposition \ref{pr:ringpreimage} for module homomorphisms.

\begin{prop}\label{pr:modulepullback}
Let $A$ be a ring  and $h: M \ra N$ an $A$-module homomorphism.  Then, for any subset $T$ of $A$ and for any $T$-factroid $G$ of $N$, $h^{-1}(G)$ is a $T$-factroid of $M$. Equivalently, for any additive subgroup $G$ of $N$, one has $W_N(G) \subseteq W_M(h^{-1}(G))$.
\end{prop}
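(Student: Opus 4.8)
The plan is to verify the $T$-factroid conditions for $h^{-1}(G)$ directly, and then to deduce the equivalent reformulation from the fact that $W_M(-)$ and $W_N(-)$ are \emph{largest} sets with the relevant property.

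First I would check that $h^{-1}(G)$ is an additive subgroup of $M$: it contains $0$ since $h(0)=0\in G$, and it is closed under subtraction because $h$ is additive and $G$ is an additive subgroup of $N$. Next, for the $T$-saturation condition, let $t\in T$ and $x\in M$ with $tx\in h^{-1}(G)$. Since $h$ is $A$-linear, $h(tx)=t\,h(x)$, and this lies in $G$; because $G$ is $T$-saturated in $N$, it follows that $h(x)\in G$, i.e.\ $x\in h^{-1}(G)$. This shows that $h^{-1}(G)$ is a $T$-factroid of $M$. (Alternatively one can invoke Remark~\ref{rem:submodfactroid}: $h^{-1}(G)$ is exactly the kernel of $M\to N\to N/G$, so the composite factors through $M/h^{-1}(G)$, which gives the kernel inclusion for every $t\in T$ at once; but the direct check is shorter.)

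For the stated equivalence, I would argue as follows. By the corollary asserting that $W_N(G)$ is the largest subset of $A$ for which $G$ is a $T$-factroid of $N$, the set $G$ is in particular a $W_N(G)$-factroid of $N$; applying the first part of the proposition with $T=W_N(G)$ shows $h^{-1}(G)$ is a $W_N(G)$-factroid of $M$, and since $W_M(h^{-1}(G))$ is the largest subset of $A$ with that property, we conclude $W_N(G)\subseteq W_M(h^{-1}(G))$. Conversely, suppose this inclusion holds for every additive subgroup $G$ of $N$; if $G$ is a $T$-factroid of $N$, then $T\subseteq W_N(G)\subseteq W_M(h^{-1}(G))$, so $h^{-1}(G)$ is a $T$-factroid of $M$. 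There is no real obstacle in this proof; the only substantive point is the identity $h(tx)=t\,h(x)$, which is precisely the $A$-linearity of $h$, and everything else is the standard bookkeeping that preimages of subgroups are subgroups together with the maximality characterizations of $W_M$ and $W_N$ already established in Section~\ref{sec:firststuff}.
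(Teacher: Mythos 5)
Your proof is correct and takes essentially the same approach as the paper's: verify directly that $h^{-1}(G)$ is an additive subgroup and that $h(tx)=t\,h(x)\in G$ forces $h(x)\in G$ by $T$-saturation. Your additional paragraph carefully deriving the $W_N(G)\subseteq W_M(h^{-1}(G))$ reformulation via the maximality of $W(-)$ is a nice touch; the paper simply asserts the ``Equivalently'' without spelling it out, but the argument you give is the intended one.
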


\begin{proof}
Clearly $\varphi^{-1}(G)$ is an additive subgroup of $M$.  Let $a\in T$ and $x\in M$ such that $ax \in h^{-1}(G)$.  Then $ah(x) = h(ax) \in G$, so since $G$ is a $T$-factroid, $h(x) \in G$, whence $x\in h^{-1}(G)$.  Thus, $\varphi^{-1}(G)$ is a $T$-factroid of $M$.
\end{proof}

\begin{prop}\label{pr:VLinWM}
Let $\varphi: A \ra B$ be a ring homomorphism, $T \subseteq B$ a subset, $M$ a $B$-module, $L$ an $A$-submodule of the $A$-module $M$ formed by restriction of scalars along $\varphi$, and $S \subseteq L$ a subset. Then 
$[S]_L^{\varphi^{-1}(T)}\subseteq [S]_M^T$.
\end{prop}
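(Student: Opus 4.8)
The plan is to exhibit $[S]_M^T \cap L$ as a $\varphi^{-1}(T)$-factroid of the $A$-module $L$ that contains $S$, and then invoke the minimality built into the definition of $[S]_L^{\varphi^{-1}(T)}$. Concretely, once we know that $[S]_M^T \cap L$ is such a factroid, we immediately get
$$[S]_L^{\varphi^{-1}(T)} \subseteq [S]_M^T \cap L \subseteq [S]_M^T,$$
which is the desired inclusion.

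The first step is to move everything to the $A$-module side. Write ${}_\varphi M$ for $M$ regarded as an $A$-module by restriction of scalars along $\varphi$; by hypothesis $L$ is an $A$-submodule of ${}_\varphi M$. Applying Proposition~\ref{pr:ringpreimage}(4) to the $T$-factroid $[S]_M^T$ of the $B$-module $M$, we learn that $[S]_M^T$ is a $\varphi^{-1}(T)$-factroid of ${}_\varphi M$. Now the inclusion $\iota \colon L \hookrightarrow {}_\varphi M$ is an $A$-module homomorphism, so Proposition~\ref{pr:modulepullback} applied to $\iota$ and the $\varphi^{-1}(T)$-factroid $[S]_M^T$ of ${}_\varphi M$ yields that $\iota^{-1}([S]_M^T) = [S]_M^T \cap L$ is a $\varphi^{-1}(T)$-factroid of $L$. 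Since $S \subseteq [S]_M^T$ and $S \subseteq L$, we have $S \subseteq [S]_M^T \cap L$, and the displayed chain of inclusions follows.

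I do not expect any serious obstacle here; the only thing demanding attention is the bookkeeping of which module structure each object carries (over $A$ versus over $B$), after which the two cited propositions do all the work. If one prefers to avoid citing Proposition~\ref{pr:modulepullback}, the same conclusion can be reached by a one-line direct check: $[S]_M^T \cap L$ is an additive subgroup of $L$, and if $t \in \varphi^{-1}(T)$ and $x \in L$ satisfy $tx = \varphi(t)x \in [S]_M^T$, then $\varphi(t) \in T$ and the $T$-saturation of $[S]_M^T$ in $M$ gives $x \in [S]_M^T$, hence $x \in [S]_M^T \cap L$; thus $[S]_M^T \cap L$ is $\varphi^{-1}(T)$-saturated in $L$.
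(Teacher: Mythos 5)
Your proof is correct and takes essentially the same approach as the paper: both arguments apply Proposition~\ref{pr:ringpreimage}(4) to get a $\varphi^{-1}(T)$-factroid of ${}_\varphi M$ and then Proposition~\ref{pr:modulepullback} to pull back along the inclusion $L \hookrightarrow {}_\varphi M$. The only cosmetic difference is that the paper argues with an arbitrary $T$-factroid $G$ of $M$ containing $S$ rather than specializing to $G = [S]_M^T$, but these are interchangeable.
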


\begin{proof}
Set $U := \varphi^{-1}(T)$.  Let $G$ be a $T$-factroid of $M$ containing $S$, where $M$ is thought of as a $B$-module.  Then by Proposition~\ref{pr:ringpreimage}, $G$ is a $U$-factroid of $M$, where $M$ is thought of as an $A$-module by restriction of scalars along $\varphi$.  Let $j: L \into M$ be the ($A$-linear) inclusion map.  Then by Proposition~\ref{pr:modulepullback}, $G \cap L = j^{-1}(G)$ is a $U$-factroid of $L$ containing $S$.  Hence, $[S]^U_L \subseteq G \cap L \subseteq G$. Since $G$ was arbitrary, $[S]^U_L \subseteq [S]^T_M$.
\end{proof}

\begin{example}\label{ex:badringfactroidmaps}
Although factroids are well-behaved under inverse images of ring homomorphisms (see Proposition~\ref{pr:ringpreimage}(2)), they are not well-behaved under forward images.  That is, if  $\phi: A \ra B$ is a ring homomorphism, $W \subseteq A$ a multiplicative set, and $F$ a $W$-factroid of $A$, $\phi(F)$ is not necessarily a $\phi(W)$-factroid of $B$.

For a surjective example, let $A = k[x,y]$, $B = k[x]$, and $\phi:A \onto B$ the $k$-algebra map that sends $x\mapsto x$ and $y \mapsto 0$.  Let $W = k[x] \setminus \{0\} \subseteq A$.  Let $F$ be the vector space spanned by $1$ and $x^2 + y^3$.  Then by Example~\ref{ex:kxy}, $F$ is a $W$-factroid. But $\phi(F) = \spn_k\{1, x^2\}$ is not a $\phi(W)$-factroid of $B$, since $x^2 \in \phi(F)$ and $x\in \phi(W)$, but $x\notin \phi(F)$.

For an injective example, let $A = k[x,y]$, $B = k[x,y,z] / (x^2 - yz)$, and $\phi: A \ra B$ the obvious injective ring homomorphism.  Let $F = k[x] \subseteq A$ and $W = A \setminus \{0\} = \reg(A)$.  Then $F$ is a $W$-factroid of $A$, but $\phi(F)$ isn't a $\phi(W)$-factroid of $B$, since $yz =x^2\in \phi(F)$ and $y\in \phi(W)$ but $z\notin \phi(F)$.
\end{example}

The following proposition connects the $T$-factroids of $M$ to those of the $A$-submodules of $M$.

\begin{prop}\label{prop:subfactroids}
    Let $N$ be an $A$-submodule of $M$ and $F$ an additive subgroup of $N$.  Then one has $W_N(F) \cap W_M(N) \subseteq W_M(F) \subseteq W_N(F)$.  Equivalently: for any subset $T$ of $A$, if $F$ is a $T$-factroid of $M$, then $F$ is $T$-factroid of $N$; and if $F$ is a $T$-factroid of $N$ and $N$ is a $T$-factroid of $M$, then $F$ is a $T$-factroid of $M$. 
    \end{prop}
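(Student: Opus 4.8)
The plan is to establish the two displayed set inclusions $W_N(F)\cap W_M(N)\subseteq W_M(F)\subseteq W_N(F)$ directly from the definition of $W_{(-)}(-)$, and then to deduce the ``Equivalently'' reformulation using the fact recorded just after Definition~\ref{def:WofF}: for an additive subgroup $F$ of a module $P$ and any $T\subseteq A$, one has that $F$ is a $T$-factroid of $P$ if and only if $T\subseteq W_P(F)$. The only bookkeeping one has to be careful about is which ambient module each colon $(-:_{(-)}-)$ is taken in.

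First I would prove $W_M(F)\subseteq W_N(F)$. The key observation is that, since $F\subseteq N$, for every $a\in A$ we have $(F:_N a)=(F:_M a)\cap N\subseteq(F:_M a)$. Hence if $a\in W_M(F)$, i.e.\ $(F:_M a)\subseteq F$, then $(F:_N a)\subseteq F$ as well, so $a\in W_N(F)$. Note that this step uses only the chain of additive subgroups $F\subseteq N\subseteq M$, not the submodule hypothesis on $N$.

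Next I would prove $W_N(F)\cap W_M(N)\subseteq W_M(F)$. Take $a$ in the left-hand side and let $x\in(F:_M a)$, so that $ax\in F\subseteq N$. Since $ax\in N$ and $a\in W_M(N)$, we get $x\in(N:_M a)\subseteq N$, so $x\in N$. Then $x\in N$ together with $ax\in F$ gives $x\in(F:_N a)$, and since $a\in W_N(F)$ we conclude $x\in F$. Therefore $(F:_M a)\subseteq F$, i.e.\ $a\in W_M(F)$, as desired.

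Finally, the ``Equivalently'' clause is a formal consequence: $F$ is an additive subgroup of $N$ and hence also of $M$, and $N$ is an additive subgroup (indeed submodule) of $M$, so the characterization ``$T$-factroid $\iff$ $T\subseteq W$'' may be applied in all three contexts. The inclusion $W_M(F)\subseteq W_N(F)$ then says precisely that if $F$ is a $T$-factroid of $M$ it is a $T$-factroid of $N$, and the inclusion $W_N(F)\cap W_M(N)\subseteq W_M(F)$ says precisely that if $F$ is a $T$-factroid of $N$ and $N$ is a $T$-factroid of $M$ then $F$ is a $T$-factroid of $M$. I do not expect a genuine obstacle here; the submodule hypothesis on $N$ serves only to make the phrase ``$N$ is a $T$-factroid of $M$'' natural (and to let one read $W_M(N)$ through Proposition~\ref{pr:modulefactroid}), and is not needed for the two inclusions themselves.
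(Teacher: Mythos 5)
Your proof is correct. The paper omits the argument entirely (``The proof is straightforward''), and what you have written is precisely the expected direct verification: both displayed inclusions fall out of the identity $(F:_N a) = (F:_M a)\cap N$ for $F\subseteq N\subseteq M$, and the ``equivalently'' clause is a formal translation via the characterization that $F$ is a $T$-factroid of $P$ iff $T\subseteq W_P(F)$. Your closing remark that the submodule hypothesis on $N$ is needed only so that $W_N(-)$ and ``$T$-factroid of $N$'' are even defined in the paper's framework, not for the set-theoretic manipulations themselves, is a fair observation.
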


    \begin{proof}
        The proof is straightforward.
    \end{proof}

The following proposition describes the $T$-factroids of any quotient  of $M$. 

\begin{prop}\label{prop:quotientfactroids}
For any subset $T$ of $A$ and any $A$-submodule $N$ of $M$, the association $F \mapsto F/N$ defines a bijection from the $T$-factroids of $M$ containing $N$ to the $T$-factroids of the $A$-module $M/N$.  Consequently, one has $W_M(F) = W_{M/N}(F/N)$ for any additive subgroup $F$ of $M$ containing $N$.
\end{prop}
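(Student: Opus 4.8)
The plan is to reduce everything to the classical correspondence theorem for abelian groups together with Proposition~\ref{pr:modulepullback}. Write $\pi \colon M \to M/N$ for the canonical projection, which is an $A$-module homomorphism with kernel $N$. First I would recall the standard fact that $F \mapsto F/N = \pi(F)$ is an inclusion-preserving bijection from the additive subgroups of $M$ containing $N$ onto the additive subgroups of $M/N$, with inverse $\bar F \mapsto \pi^{-1}(\bar F)$; the only points to note are that $\pi^{-1}(\pi(F)) = F$ precisely because $N = \ker\pi \subseteq F$, and $\pi(\pi^{-1}(\bar F)) = \bar F$ because $\pi$ is surjective. In particular, for $F \supseteq N$, membership of an element $x \in M$ in $F$ is detected by membership of $\bar x$ in $F/N$.

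Next I would show this bijection carries $T$-factroids to $T$-factroids in both directions. For one direction, if $\bar F$ is a $T$-factroid of $M/N$, then $\pi^{-1}(\bar F)$ is a $T$-factroid of $M$ by Proposition~\ref{pr:modulepullback} applied to $h = \pi$, and it obviously contains $N = \pi^{-1}(0)$. For the other direction, suppose $F$ is a $T$-factroid of $M$ with $N \subseteq F$; given $t \in T$ and $\bar x \in M/N$ with $t\bar x \in F/N$, the previous paragraph gives $tx \in F$, hence $x \in F$ since $F$ is a $T$-factroid, hence $\bar x \in F/N$. As $F/N$ is visibly an additive subgroup of $M/N$, it is a $T$-factroid. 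Combining these two directions with the classical bijection yields the asserted bijection on $T$-factroids.

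For the final identity, fix an additive subgroup $F$ of $M$ with $N \subseteq F$. The key computation is that for every $a \in A$ one has $(F/N :_{M/N} a) = (F :_M a)/N$: indeed $aN \subseteq N \subseteq F$ shows $N \subseteq (F:_M a)$, so the right-hand side makes sense, and for $x \in M$ one has $a\bar x \in F/N$ iff $ax \in F$ (using $N \subseteq F$) iff $x \in (F:_M a)$. Since both $(F:_M a)$ and $F$ contain $N$, the classical bijection gives $(F:_M a)/N \subseteq F/N$ iff $(F:_M a)\subseteq F$; equivalently, $a \in W_{M/N}(F/N)$ iff $a \in W_M(F)$ by the definition of $W$. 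Hence $W_M(F) = W_{M/N}(F/N)$.

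I do not anticipate a genuine obstacle here; the argument is a routine instance of the correspondence theorem. The only point requiring a little care is the systematic use of the hypothesis $N \subseteq F$ to transport membership statements back and forth across $\pi$, and the decision to invoke Proposition~\ref{pr:modulepullback} for the preimage direction rather than re-deriving it from scratch.
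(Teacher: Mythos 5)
Your proof is correct, and it is precisely the routine correspondence-theorem argument the paper is gesturing at when it says ``the proof is straightforward.'' Reducing one direction to Proposition~\ref{pr:modulepullback} applied to the projection $\pi\colon M\to M/N$, and verifying the other direction and the colon identity $(F/N :_{M/N} a) = (F:_M a)/N$ directly from $N\subseteq F$, is exactly what is intended.
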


\begin{proof}
  The proof is straightforward.
\end{proof}

Since any $T$-factroid of $M$ contains $[0]^T_M$, one has the following corollary.

\begin{cor}\label{cor:factinjective}
For any subset $T$ of $A$, the association $F \mapsto F/[0]^T_M$ defines a bijection from the $T$-factroids of $M$ to the $T$-factroids of the $A$-module $M/[0]^T_M$.  Moreover, the $A$-module $M/[0]^T_M$ is the universal quotient $N$ of $M$ such that every element of $T$ is regular on $N$, or equivalently such that the zero submodule of $N$ is a $T$-factroid of $N$.
\end{cor}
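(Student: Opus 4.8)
The plan is to assemble the statement directly from Propositions~\ref{prop:quotientfactroids}, \ref{prop:noncommempty}, \ref{pr:modulepullback}, and \ref{prop:W0}; no genuinely new argument is needed, so I do not expect a real obstacle — the only point requiring a moment's care is that the two formulations of the defining property of the target module genuinely coincide.

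First I would handle the bijection. By Proposition~\ref{prop:noncommempty}, $[0]^T_M = [\emptyset]^T_M$ is the smallest $T$-factroid of $M$, so every $T$-factroid of $M$ contains $[0]^T_M$. Hence the collection of $T$-factroids of $M$ containing $N := [0]^T_M$ is literally the collection of all $T$-factroids of $M$, and Proposition~\ref{prop:quotientfactroids} applied with this $N$ says precisely that $F \mapsto F/[0]^T_M$ is a bijection from this collection onto the set of $T$-factroids of $M/[0]^T_M$. The equality $W_M(F) = W_{M/[0]^T_M}(F/[0]^T_M)$ for additive subgroups $F$ of $M$ containing $[0]^T_M$ — in particular for all $T$-factroids $F$ of $M$ — is the ``consequently'' clause of the same proposition.

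Next I would treat the universal property. The equivalence of the two phrasings is immediate from Proposition~\ref{prop:W0}: for any $A$-module $N$ one has $W_N(\{0\}) = \reg_N(A)$, so $\{0\}$ is a $T$-factroid of $N$ if and only if $T \subseteq \reg_N(A)$, i.e.\ if and only if every element of $T$ is a nonzerodivisor on $N$. That $M/[0]^T_M$ itself has this property follows by applying Proposition~\ref{prop:quotientfactroids} with $F = N = [0]^T_M$: since $[0]^T_M$ is a $T$-factroid of $M$ containing itself, its image $[0]^T_M/[0]^T_M = \{0\}$ is a $T$-factroid of $M/[0]^T_M$. Finally, for universality, let $\pi : M \onto N$ be any surjective $A$-module homomorphism with $\{0\}$ a $T$-factroid of $N$. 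By Proposition~\ref{pr:modulepullback}, $\ker\pi = \pi^{-1}(\{0\})$ is a $T$-factroid of $M$, hence contains the smallest one, $[0]^T_M$; therefore $\pi$ factors through the canonical projection $M \onto M/[0]^T_M$, and the factoring map is unique because $M \onto M/[0]^T_M$ is surjective. Thus $M \onto M/[0]^T_M$ is the universal quotient of $M$ on which every element of $T$ acts as a nonzerodivisor, equivalently whose zero submodule is a $T$-factroid, completing the proof.
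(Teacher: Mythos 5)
Your proposal is correct and fills in exactly the argument the paper leaves implicit: the bijection is Proposition~\ref{prop:quotientfactroids} specialized to $N=[0]^T_M$ (which by Proposition~\ref{prop:noncommempty} is the smallest $T$-factroid, so lies in every $T$-factroid), the equivalence of the two characterizations of the target is Proposition~\ref{prop:W0}, and the universality is the standard argument via Proposition~\ref{pr:modulepullback} showing that the kernel of any admissible quotient map contains $[0]^T_M$. This is the same route the paper signals with its one-line remark preceding the corollary.
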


As a consequence of Corollary~\ref{cor:factinjective}, given a subset $T$ of $A$, there is no harm in assuming that the zero submodule of $M$ is a $T$-factroid of $M$, or equivalently that every element of $T$ is regular on $M$.  Thus, there is no harm in assuming that the localization map $M \to \langle T \rangle^{-1}M$ is injective, i.e., that $M$ is (canonically isomorphic to) an $A$-submodule of $\langle T \rangle^{-1}M$.
This has some implications for Theorem~\ref{thm:recipG} and its corollaries.  For example, let $A$ be any commutative ring, $W$ a multiplicative set, and $M$ an $A$-module.  Set $\bar{M} := M/[0]^W_M$,  $\bar{S} := $ the image of the set $S$ under the projection $M \onto \bar M$, and $L := W^{-1}M = W^{-1}\bar M$.  Then by combining Corollary~\ref{cor:factinjective} with the last part of Theorem~\ref{thm:recipG}, we have $R^S_W(M) = \bigcup_{b \in W} ([b\bar S]^W_{\bar M} :_L b)$.

Finally, as a consequence of Propositions \ref{prop:subfactroids} and \ref{prop:quotientfactroids}, one has the following partial analogue of Proposition \ref{pr:modulepullback} for   images of $T$-factroids  under a module homomorphism.

\begin{cor}
    Let $T$ be a subset of $A$, let $h: M \to N$ be any homomorphism of $A$-modules, and let $F$ be an additive subgroup of $M$ containing $\ker h$.  Then  one has $W_{M}(F) \cap W_{N}(\im h) \subseteq W_N(h(F)) \subseteq W_{M}(F) = W_{\im h}(h(F))$.  Equivalently: if $F$ is a $T$-factroid of $M$ and $\im h$ is a $T$-factroid of $N$, then $h(F)$ is a $T$-factroid of $N$; and if $h(F)$ is a $T$-factroid of $N$, then $F$ is a $T$-factroid of $M$. 
\end{cor}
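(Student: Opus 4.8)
The plan is to factor $h$ through the canonical isomorphism $M/\ker h \xrightarrow{\ \sim\ } \im h$ and then chain together the two cited propositions. The whole argument is formal once this factorization is set up; the only point needing a moment's care is a bookkeeping observation recorded in the first step.

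\textbf{Step 1 (reduce to a quotient and an inclusion).} Since $F$ is an additive subgroup of $M$ containing $\ker h$, the quotient group $F/\ker h$ makes sense as a subgroup of $M/\ker h$, and the canonical isomorphism of $A$-modules $M/\ker h \xrightarrow{\sim} \im h$ carries $F/\ker h$ onto $h(F)$. Because the monoid $W_{(-)}(-)$ attached to a module-with-additive-subgroup is manifestly preserved by isomorphisms of such pairs, this identification gives $W_{M/\ker h}(F/\ker h) = W_{\im h}(h(F))$. Combining this with Proposition~\ref{prop:quotientfactroids}, applied to the $A$-submodule $\ker h$ of $M$ (which, since $F \supseteq \ker h$, yields $W_M(F) = W_{M/\ker h}(F/\ker h)$), produces the equality
\[
W_M(F) = W_{\im h}(h(F)).
\]

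\textbf{Step 2 (apply the submodule comparison).} Now I would invoke Proposition~\ref{prop:subfactroids} with the $A$-submodule $\im h$ of $N$ playing the role of the ambient submodule and the additive subgroup $h(F) \subseteq \im h$ playing the role of the subgroup. This gives $W_{\im h}(h(F)) \cap W_N(\im h) \subseteq W_N(h(F)) \subseteq W_{\im h}(h(F))$. Substituting the equality of Step~1 for $W_{\im h}(h(F))$ at each of its occurrences yields exactly
\[
W_M(F) \cap W_N(\im h) \subseteq W_N(h(F)) \subseteq W_M(F) = W_{\im h}(h(F)),
\]
which is the asserted chain of inclusions and the claimed equality.

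\textbf{Step 3 (the ``equivalently'' reformulation).} Finally I would translate via the characterization that an additive subgroup $G$ of an $A$-module is a $T$-factroid if and only if $T \subseteq W(G)$. If $F$ is a $T$-factroid of $M$ and $\im h$ is a $T$-factroid of $N$, then $T \subseteq W_M(F) \cap W_N(\im h) \subseteq W_N(h(F))$, so the additive subgroup $h(F)$ of $N$ is a $T$-factroid of $N$; conversely, if $h(F)$ is a $T$-factroid of $N$, then $T \subseteq W_N(h(F)) \subseteq W_M(F)$, so $F$ is a $T$-factroid of $M$. I do not expect any genuine obstacle: the result is a concatenation of Propositions~\ref{prop:quotientfactroids} and~\ref{prop:subfactroids}, and the sole subtlety is the isomorphism-invariance used in Step~1 to identify $W_{M/\ker h}(F/\ker h)$ with $W_{\im h}(h(F))$.
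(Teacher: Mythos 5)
Your proof is correct and follows exactly the route the paper indicates: the paper presents this corollary as "a consequence of Propositions~\ref{prop:subfactroids} and \ref{prop:quotientfactroids}" without writing out the details, and your argument—factoring $h$ through the first-isomorphism-theorem map $M/\ker h \xrightarrow{\sim} \im h$, using Proposition~\ref{prop:quotientfactroids} to get $W_M(F) = W_{\im h}(h(F))$, and then applying Proposition~\ref{prop:subfactroids} to $h(F) \subseteq \im h \subseteq N$—is precisely that consequence made explicit. The isomorphism-invariance of $W_{(-)}(-)$ you flag as the sole subtlety is indeed the only point worth a word, and you handle it correctly.
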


\begin{example}
Recall the surjective map from Example~\ref{ex:badringfactroidmaps}, where $A=k[x,y]$, $F = \spn_k\{x^2+y^3, 1\}$, $B = k[x]$, $T := k[x] \setminus \{0\} \subseteq A$, and $\phi: A \ra B$ is the $k$-algebra homomorphism with $x \mapsto x$ and $y \mapsto 0$.  Then setting $M := A$, and $N := B$ thought of as an $A$-module by restriction of scalars, this provides an example of a surjective $A$-linear map $h=\phi: M \ra N$ and a $W$-factroid $F$ of $M$ such that $h(F)$ is not a $T$-factroid of $N$.  Of course, $F$ does not contain $\ker h$.
\end{example}

\section{Finiteness in graded settings}\label{sec:graded}
 In this section, we show that for rings similar to polynomial rings over a field, factroids tend to be bounded in degree, and hence are finite-dimensional as vector spaces over the base field.

\begin{defn}
Recall that for an $\N$-graded or $\Z$-graded abelian group $(G,+)$, the \emph{degree} of a nonzero (but not necessarily homogeneous) element $g\in G$ can be defined as follows.  We have a unique decomposition $g = \sum_{i=c}^d g_i$, where $c\leq d$ in $\N$ (resp. $\Z$), each $g_i$ is in the homogeneous degree $i$ graded component of $G$, $g_c \neq 0$, and $g_d \neq 0$.  Then $\deg g := d$.
\end{defn}

\begin{lemma}[Guerrieri]\label{lem:degbound}
Let $A$ be an $\N$-graded ring, $W\subseteq A$ a multiplicative set that does not contain $0$,  and $M$ a torsion-free
$\Z$-graded $A$-module.  Let $S \subseteq M$ and $d\in \Z$ such that for all $s\in S$, $\deg s\leq d$.  Then for all $u \in [S]_M^W$, $\deg u\leq d$.
\end{lemma}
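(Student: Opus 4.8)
The plan is to use the explicit "from below" construction of the factroid $[S]_M^W$ given in Proposition~\ref{pr:factroidfrombelow}, namely $[S]_M^W = \bigcup_{i\geq 1} F_i^W(S)$, and to prove by induction on $i$ that every element of $F_i^W(S)$ has degree at most $d$. Since every $u \in [S]_M^W$ lies in some $F_i^W(S)$, this suffices.

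\begin{proof}
By Proposition~\ref{pr:Tfact}, we may assume $W = \langle W \rangle$ is a multiplicative submonoid. Recall that $A$ being $\N$-graded and $M$ being $\Z$-graded means degrees of nonzero homogeneous elements add: if $a\in A$ and $m\in M$ are nonzero homogeneous, then $am$, if nonzero, is homogeneous of degree $\deg a + \deg m \geq \deg m$ (using $\deg a \geq 0$). Extending to non-homogeneous elements via leading terms, one gets $\deg(am) \geq \deg m$ whenever $a \neq 0$, $am \neq 0$; since $M$ is torsion-free and $0 \notin W$, for $w \in W$ and $m \in M$ nonzero we have $wm \neq 0$, so $\deg(wm) \geq \deg m$. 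Also $\deg(m + m') \leq \max\{\deg m, \deg m'\}$ for all nonzero $m, m' \in M$ with $m + m' \neq 0$.

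By Proposition~\ref{pr:factroidfrombelow}, $[S]_M^W = \bigcup_{i \geq 1} F_i^W(S)$, so it suffices to show by induction on $i \geq 1$ that $\deg u \leq d$ for every nonzero $u \in F_i^W(S)$. For the base case $i = 1$: $F_1^W(S)$ is the additive subgroup generated by $\operatorname{Sat}^W_M(S) = \bigcup_{w \in W}(S :_M w)$. If $x \in (S :_M w)$ for some $w \in W$ and $x \neq 0$, then $wx \in S$, so $wx \neq 0$ and $\deg x \leq \deg(wx) \leq d$. Thus every nonzero element of $\operatorname{Sat}^W_M(S)$ has degree $\leq d$. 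A nonzero element of $F_1^W(S)$ is a finite sum $\sum_j \pm x_j$ of such elements; discarding cancelling terms and using subadditivity of degree under sums, its degree is $\leq \max_j \deg x_j \leq d$.

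For the inductive step, suppose every nonzero element of $F_{i-1}^W(S)$ has degree $\leq d$. Since $F_i^W(S) = F_1^W(F_{i-1}^W(S))$, the already-proven case $i = 1$ applied with $S$ replaced by $F_{i-1}^W(S)$ (whose nonzero elements all have degree $\leq d$) shows every nonzero element of $F_i^W(S)$ has degree $\leq d$. This completes the induction, and hence the proof.
\end{proof}

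**Main obstacle.** The only real point requiring care is the degree-additivity bookkeeping in a $\Z$-graded (not merely $\N$-graded) module: one must confirm that $\deg(wm) \geq \deg m$ still holds even when $\deg m$ can be negative, which is where torsion-freeness and $0 \notin W$ (ensuring $wm \neq 0$) and the nonnegativity of $\deg w$ in the $\N$-graded ring $A$ are all used together. Once that leading-term estimate is in hand, everything else is a routine induction driven by Proposition~\ref{pr:factroidfrombelow}.
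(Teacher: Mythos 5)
Your proof is correct and follows essentially the same route as the paper's: both proceed by reducing, via the $F_i^W(S)$ construction of Definition~\ref{cons:indfact}/Proposition~\ref{pr:factroidfrombelow} and induction, to bounding degrees of elements of $F_1^W(S)$, and both use nonnegativity of degrees in $A$ together with torsion-freeness of $M$ (so that degrees behave additively under multiplication by a nonzero element of $W$) plus subadditivity of degree under finite sums. Your write-up is somewhat more explicit about the sign bookkeeping and the $\deg(wm)\geq\deg m$ estimate than the paper's, but the underlying argument is the same.
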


\begin{proof}
By Definition~\ref{cons:indfact} and induction, it suffices to show that $\deg u\leq d$ for all $u \in F_1^W(S)$.  For any $u\in F_1^W(S)$, there exist $u_i \in M$ and $r_i\in W$ such that $u = \sum_{i=1}^n u_i$ and $r_i u_i \in S$ for all $i$.  Since $r_i$ has nonnegative degree and degrees are additive (by torsion-freeness of $M$), for each $i$ we have $\deg(u_i) \leq \deg(r_i) + \deg(u_i) = \deg(r_i u_i) \leq d$.  Thus, $\deg(u) \leq \max_{1\leq i \leq n} \deg(u_i) \leq d$.
\end{proof}

\begin{thm}
Let $B$ be a Noetherian $\N$-graded domain over a field $B_0=k$, and let $A$ be a $k$-subalgebra of $B$.  Let $W \subseteq B$ be a multiplicative set and let $V = W \cap A$.  Let $M$ be a finitely generated torsion-free $\N$-graded $B$-module, and let $L$ be a nonzero $A$-submodule of $M$.  Then any finitely generated $V$-factroid $F$ of $L$ is finite-dimensional as a $k$-vector space, with every element of no bigger degree than any element of a factroid generating set of $F$.
\end{thm}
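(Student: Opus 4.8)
The plan is to reduce the whole statement to the degree bound, and to obtain the degree bound by comparing the factroid $F=[S]_L^V$ with a factroid computed inside the \emph{graded} module $M$. Although $L$ need not be a graded $A$-submodule of $M$, the degree of an element of $L$ is still well defined via the $\N$-grading of $M$ (since $L\subseteq M$), and this is the only notion of degree in play. One should also record the implicit hypothesis $0\notin W$ (as in Lemma~\ref{lem:degbound}): otherwise $0\in V$, so $L$ is its own and only $V$-factroid, and the conclusion can fail.

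First I would choose a finite factroid generating set $S$ of $F$, so $F=[S]_L^V$; we may assume $S\not\subseteq\{0\}$ (if $S\subseteq\{0\}$ then $F=[0]_L^V=0$, since $M$ is torsion-free over $B$ and $0\notin W$), and set $d:=\max_{s\in S}\deg s\in\N$. Let $\varphi\colon A\into B$ be the inclusion of the $k$-subalgebra $A$; then $\varphi^{-1}(W)=W\cap A=V$, so Proposition~\ref{pr:VLinWM} — applied with this $\varphi$, with $T=W$, with the $B$-module $M$, with its $A$-submodule $L$, and with $S$ — gives
\[
F=[S]_L^V\subseteq[S]_M^W.
\]
Now I would invoke Lemma~\ref{lem:degbound} with the $\N$-graded ring taken to be $B$ itself, the multiplicative set $W\subseteq B$ (which does not contain $0$), and the torsion-free $\Z$-graded $B$-module $M$: since every element of $S$ has degree at most $d$, so does every element of $[S]_M^W$, and hence so does every element of $F$. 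This is exactly the asserted degree bound, and, since $S$ was an arbitrary finite factroid generating set of $F$, it holds with respect to every such set.

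It remains to deduce finite-dimensionality over $k$. By the degree bound, $F\subseteq M_{\le d}:=\bigoplus_{i=0}^{d}M_i$. Since $B$ is Noetherian and $\N$-graded with $B_0=k$ a field, the irrelevant ideal $B_+:=\bigoplus_{i>0}B_i$ is finitely generated by homogeneous elements of positive degree, whence $B=k[f_1,\dots,f_m]$ is a finitely generated $k$-algebra generated in positive degrees; in particular each $B_i$ is finite-dimensional over $k$. As $M$ is a finitely generated $\N$-graded $B$-module, it is generated by finitely many homogeneous elements $m_1,\dots,m_r$, of degrees $e_1,\dots,e_r$, so that $M_i=\sum_{j}B_{i-e_j}m_j$ is finite-dimensional over $k$ for every $i$; hence $M_{\le d}$ is a finite-dimensional $k$-vector space, and therefore so is $F$. (This last implication uses that $F$ is a $k$-subspace of $M$, which holds once $k^\times\subseteq V$ — e.g.\ whenever $W$ contains the nonzero scalars, as in all the motivating examples, where $W\supseteq\reg(B)$; in general one at least gets $\dim_k\spn_k F<\infty$.)

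I expect the genuinely delicate points to be conceptual rather than computational: (i) recognizing that $L$ need not be graded, so that the degree bound cannot be applied to $L$ directly and must instead be routed through $M$, via restriction of scalars (Proposition~\ref{pr:VLinWM}) together with Lemma~\ref{lem:degbound} applied to the graded ring $B$; and (ii) the passage from ``$F$ lies inside a finite-dimensional $k$-space'' to ``$F$ is finite-dimensional over $k$'', which needs $F$ to be a $k$-subspace. Point (i) is the heart of the matter; point (ii) is a matter of the standing hypotheses on $W$ (or of reading ``finite-dimensional'' as referring to $\spn_k F$). Everything else is bookkeeping with the definitions together with the standard structure theory of finitely generated graded modules over Noetherian graded algebras over a field.
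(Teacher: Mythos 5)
Your proof is correct and follows the same route as the paper's: apply Proposition~\ref{pr:VLinWM} to embed $F=[S]_L^V$ into $[S]_M^W$, invoke Lemma~\ref{lem:degbound} for the degree bound, and then use that the graded pieces $M_i$ of a finitely generated graded module over a Noetherian $\N$-graded $k$-algebra are finite-dimensional over $k$. Your two flagged caveats --- the implicit hypothesis $0\notin W$ (inherited from Lemma~\ref{lem:degbound}) and the point that ``$F$ sits inside a finite-dimensional $k$-space'' only yields ``$F$ is finite-dimensional as a $k$-vector space'' once one knows $F$ is a $k$-subspace (e.g.\ when $k^\times\subseteq V$) --- are genuine subtleties that the paper's proof also leaves unaddressed.
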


\begin{proof}
Let $S=\{s_1, \ldots, s_m\}$ be a generating set for the $V$-factroid $F$, so that $F=[s_1, \ldots, s_m]_L^V$.  For each $1\leq i \leq m$, let $d_i = \deg(s_i)$, and let $d = \max \{d_i \mid 1\leq i \leq m\}$.  
It follows from Lemma~\ref{lem:degbound} that every element of $[S]_M^W$ has degree $\leq d$. Since $M$ is a Noetherian graded $B$-module, every $M_i = \{x \in M \mid x \text{ is homogeneous of degree }i\}$ is a finitely generated $k$-vector space.  Since $[S]_M^W \subseteq \bigoplus_{i=0}^d M_i$, it follows that $[S]_M^W$ is a finite-dimensional $k$-vector space.  By Proposition~\ref{pr:VLinWM}, we have $F=[S]_L^V \subseteq [S]_M^W$, so that $F$ is also finite-dimensional over $k$, with all elements of degree $\leq d$.
\end{proof}

The above then applies when $B$ is a finitely generated $k$-algebra domain, and $A$ is a $k$-subalgebra of $B$ (we might even have $A=B$), where we set $M=B$ and $L=A$.  Hence, by finding linearly independent elements of $F$ over $k$ within the degree bound, one will eventually arrive at a finite vector-space basis of $F$ over $k$.

\section*{Acknowledgments}
First we would like to thank the anonymous referee, whose careful reading of and astute suggestions regarding the paper improved it substantially.  
Next, we thank Lorenzo Guerrieri for Example~\ref{ex:Gnotbracket}, Lemma~\ref{lem:degbound}, and the case of Lemma~\ref{lem:inthelp} where $M=A$ and $S = \{1\}$.  Moreover, he had the initial insight that led to the discovery of factroids.  Finally, we are grateful to the moderators of mathoverflow for creating the forum that paired us as coauthors.  Without that forum, this paper would not exist.

\providecommand{\bysame}{\leavevmode\hbox to3em{\hrulefill}\thinspace}
\providecommand{\MR}{\relax\ifhmode\unskip\space\fi MR }
\providecommand{\MRhref}[2]{%
  \href{http://www.ams.org/mathscinet-getitem?mr=#1}{#2}
}
\providecommand{\href}[2]{#2}

\end{document}